\newtheorem{thm}{Theorem}[section]
\newtheorem{coro}[thm]{Corollary}
\newtheorem{prop}[thm]{Proposition}
\newtheorem{lem}[thm]{Lemma}
\theoremstyle{definition}
\newtheorem{defi}[thm]{Definition}
\newtheorem{rmk}[thm]{Remark}
\newtheorem{ex}[thm]{Example}
\newtheorem{question}[thm]{Question}
\newtheorem{assump}[thm]{Assumption}
\newtheorem*{notation*}{Notation}
\newcommand{\Z}{\mathbb N}
\newcommand{\N}{\mathbb N}
\newcommand{\R}{\mathbb R}
\newcommand{\Q}{\mathbb Q}
\newcommand{\mcg}{\mathrm{MCG}(S)}
\newcommand{\PSL}{\mathrm{PSL}(2,\mathbb{C})}
\newcommand{\HH}{\mathbb{H}^{3}}
\newcommand{\AH}{\mathrm{AH}(S)}
\newcommand{\QF}{\mathrm{QF}(S)}
\newcommand{\LQ}{\mathrm{LQ}(S)}
\newcommand{\T}{\mathcal{T}(S)}
\newcommand{\hatT}{\hat{\mathcal{T}}(S)}
\newcommand{\QD}{\mathrm{QD}}
\newcommand{\PFQ}{\partial_{\mathrm{fin}}(\mathcal{Q})}
\newcommand{\PIQ}{\partial_{\infty}(\mathcal{Q})}
\newcommand{\TT}{\mathcal{T}(S)\times\mathcal{T}(S)}
\newcommand{\VR}{V_{R}}
\newcommand{\Vol}{\mathrm{vol}}
\newcommand{\pvh}{\partial_{\mathrm{vh}}}
\newcommand{\Tvh}{\overline{\T}^{\mathrm{vh}}}
\newcommand{\pmf}{\mathcal{PMF}(S)}
\newcommand{\mf}{\mathcal{MF}(S)}
\newcommand{\ext}{\mathrm{Ext}}
\newcommand{\dir}{\mathbf{dir}}
\newcommand{\dt}{d_{\mathcal{T}}}
\newcommand{\dwp}{d_{\mathrm{wp}}}
\newcommand{\pwp}{\partial_{\mathrm{wp}}}
\newcommand{\Lip}{\mathrm{Lip}^{C}_{b}}
\newcommand{\Lipi}{\mathrm{Lip}^{1}_{b}}
\newcommand{\hb}{\partial_{h}}
\newcommand{\qf}{\mathrm{qf}}
\newcommand{\ConT}{6\pi(g-1)}
\newcommand{\Con}{3\sqrt{\pi(g-1)}}
\newcommand{\WPE}{\text{WP-end}}
\title[Compactification and distance on Teichm\"uller space via volume]{Compactification and distance on Teichm\"uller space via renormalized volume}
\author{Hidetoshi Masai}
\address{Department of Mathematics, Tokyo Institute of Technology, 2-12-1, Ookayama, Meguro-ku, Tokyo. 152-8551. Japan}
\email{masai@math.titech.ac.jp}
\begin{document}
\begin{abstract}
We introduce a variant of horocompactification which takes ``directions'' into account.
As an application, we construct a compactification of the Teichm\"uller spaces via 
the renormalized volume of quasi-Fuchsian manifolds.
Although we observe that the renormalized volume itself does not give a distance, the compactification allows us to define a new distance on the Teichm\"uller space.
We show that the translation length of pseudo-Anosov mapping classes with respect to this new distance is precisely the hyperbolic volume of their mapping tori.
A similar compactification via the Weil-Petersson metric is also discussed.
\end{abstract}
\maketitle
%\tableofcontents
%%%%%%%%%%%%%%%%%%%%%%%%%%%%%%%%%%%%%%%%%%%%%%%%%%%%%%%%%%%%%%%%%%%%%%%%%%%%%%%%%%%%%%%%%%%%%%%%%%%%%%%%%%%%%%%%%%%%%%%%%%%%%%%%%%%%%%%%%%%%%%%%%%%%%%%%%%%%%%%%%%%%%%%%%%%%%%%%%%%%%%%%%%%%%%%%%%%%%%%%%%%%%%%%%%%%%%%%%%%%%%%%%%%%%%%%%%%%%%%%%%%%%%%%%%%%%%%%%%%%%%%%%%%%%%%%%%%%%%%%%%%%%%%%%%%%%%%%%%%%%%%%%%%%%%%%
%\abstract{tes}
\section{Introduction}
On an orientable closed surface $S$ of genus $\geq 2$, the space of complex structures has a one-to-one correspondence with the space of hyperbolic structures.
Those complex or hyperbolic structures together with markings have the rich deformation space 
which is called the Teichm\"uller space, denoted $\mathcal{T}(S)$.
Complex structures and hyperbolic structures reveal similar but different features of $\mathcal{T}(S)$.
The Teichm\"uller (resp. Thurston) distance on $\mathcal{T}(S)$ is defined 
as a measurement of the deformation of complex (resp. hyperbolic) structures.
Similarly, the Gardiner-Masur \cite{GM} (resp. Thurston \cite{FLP}) boundary is a boundary of $\T$ constructed by regarding $\T$ as the space of complex (resp. hyperbolic) structures.

The theory of horoboundary, which is introduced by Gromov \cite{Gromov}, is a universal method to compactify any given metric space.
The horoboundary with respect to the Teichm\"uller distance and the Thurston distance is the Gardiner-Masur boundary \cite{LS} and the Thurston boundary \cite{Wal} respectively.
Thus one observes that the theory of horoboundary relates naturally distances and boundaries defined in the same context.
The main purpose of this paper is to develop a variant of ``horoboundary" to construct a boundary of $\T$ via the renormalized volume of quasi-Fuchsian manifolds.
To discuss the renormalized volume, let us first recall the Bers compactification \cite{Bers}.
With his celebrated simultaneous uniformization, Bers showed that the space $\TT$ parametrizes the space of quasi-Fuchsian manifolds.
Let $M(X,Y)$ denote the quasi-Fuchsian manifold with parameter $(X,Y)\in\TT$.
Fixing the second coordinate $Y$ and considering the Schwarzian derivatives, 
Bers showed that there is an embedding $\T\rightarrow\QD(Y)$ where $\QD(Y)$ is the space of holomorphic quadratic differentials on $Y$.
The closure of this embedding is compact and is called the Bers compactification.
Based on the ideas from Graham-Witten \cite{GW}, several authors (see e.g. \cite{BBB, BC, GMR, Kojima-McShane, KS, Schlenker-MRL, Schlenker}) define and discuss {\em the renormalized volume} of hyperbolic $3$-manifolds.
In particular the renormalized volume of quasi-Fuchsian manifolds $M(X,Y)$ defines a smooth function
$$\VR:\T\times\T\rightarrow\R.$$

Although the function $\VR$ does not define a distance on $\mathcal{T}(S)$ (the triangle inequality does not hold, see \S \ref{sec.not-distance} for more detail),
we may consider horofunctions defined via $\VR$.
Namely, for each $Z\in\T$, we may define a {\em volume horofunction} $\nu_Z:\T\rightarrow\R$ by
$$\nu_Z(X):=\VR(X,Z)-\VR(b,Z)$$
where $b\in\T$ is the fixed base point.
It turns out $\nu_Z$ is $\Con$-Lipschitz (Proposition \ref{prop.Lip}) with respect to the Weil-Petersson metric, where $g$ is the genus of $S$.
Thus we get a map from $\T$ into $\Lip\T$, the space of $C=\Con$-Lipschitz functions which vanishes at $b$.
This map is in fact injective and continuous (Proposition \ref{prop.horo}).
We may take the closure at this point, however, to discuss more properties of $\VR$ it is natural to take the Bers embedding into account.
By the work of Krasnov-Schlenker \cite{KS}, the differential of $\VR$ at $X\in\T$ is expressed 
in terms of the Bers embeddings.
The same proof works for $\nu_Z$ and its differential is given in terms of the Bers embeddings as well (see Proposition \ref{prop.int1}).
We introduce a space that encodes both $\nu_Z$ and Bers embeddings.
We denote the space by $\LQ$ (``L" stands for Lipschitz and ``Q" stands for quadratic differential), see Definition \ref{defi.LQ}.
Then the main result of the paper is the following.
\begin{thm}\label{thm.main-cpt}
	The space $\LQ$	is compact and metrizable, and there is a homeomorphism $\mathcal{V}:\T\rightarrow\LQ$ onto its image, defined via the volume horofunctions and Bers embeddings.
	In particular, the closure $\Tvh:=\overline{\mathcal{V}(\T)}\subset\LQ$ is a compactification of $\T$.
\end{thm}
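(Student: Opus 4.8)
The plan is to build the map $\mathcal{V}$ as a product map into a compact metrizable target and then verify the three ingredients separately: (i) the target space $\LQ$ is compact and metrizable; (ii) $\mathcal{V}$ is a topological embedding; (iii) consequently $\Tvh$ is a compactification. For (i), the natural target is a product of two pieces. The Lipschitz piece is $\Lip\T$, the space of $\Con$-Lipschitz functions on $(\T,\dwp)$ vanishing at the base point $b$. Equipped with the topology of uniform convergence on compact sets, this is a standard horofunction construction: by an Arzel\`a--Ascoli argument the set of such functions, normalized to vanish at $b$, is compact; since $(\T,\dwp)$ is separable, uniform-on-compacts convergence is metrizable on this equicontinuous family. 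The quadratic-differential piece records, for each $Z$, the Bers embedding data; this lands in (a product over $\T$, or after a suitable normalization a single copy of) the space of holomorphic quadratic differentials, which one compactifies projectively / via a suitable bounded model so that it too is compact and metrizable. One then defines $\LQ$ (Definition~\ref{defi.LQ}) as the relevant closed subset of this product cut out by the compatibility between the derivative of $\nu_Z$ and the Bers embedding (Proposition~\ref{prop.int1}); a closed subset of a compact metrizable space is compact metrizable, giving (i).

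For (ii), the map $\mathcal{V}$ sends $Z\in\T$ to the pair $\bigl(\nu_Z,\ (\text{Bers data of }Z)\bigr)$. Continuity of $Z\mapsto\nu_Z$ into $\Lip\T$ follows from smoothness of $\VR$ in both variables (the map $\T\to\Lip\T$ is continuous by Proposition~\ref{prop.horo}), and continuity of the Bers component is classical. So $\mathcal{V}$ is continuous; it is injective because its first component already is — indeed Proposition~\ref{prop.horo} asserts that $Z\mapsto\nu_Z$ is injective. The remaining point is that $\mathcal{V}$ is a homeomorphism onto its image, i.e.\ that the inverse is continuous. Here I would use that $\mathcal{V}(\T)$ sits inside the compact Hausdorff space $\LQ$: it suffices to show that $\mathcal{V}$ is a \emph{proper} map (preimages of compact sets are compact), equivalently that $\mathcal{V}$ does not send a divergent sequence in $\T$ to a convergent one. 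A sequence $Z_n\to\infty$ in $\T$ will, after passing to a subsequence, have its Bers embeddings (equivalently the Schwarzian data) accumulate on the Bers boundary, which is disjoint from the image of $\T$; this forces the second component of $\mathcal{V}(Z_n)$ out of $\mathcal{V}(\T)$, so $\mathcal{V}$ is proper and hence a closed embedding onto its image. With a proper continuous injection into a Hausdorff space, $\mathcal{V}$ is a homeomorphism onto its image.

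For (iii), once $\mathcal{V}:\T\hookrightarrow\LQ$ is a homeomorphism onto its image with $\LQ$ compact metrizable, the closure $\Tvh=\overline{\mathcal{V}(\T)}$ is automatically a compact metrizable space containing a dense homeomorphic copy of $\T$, i.e.\ a metrizable compactification; this is immediate and requires no further argument beyond observing that $\mathcal{V}(\T)$ is open in its closure (which again follows from properness of $\mathcal{V}$, so that $\T$ is not just dense but locally compactly embedded).

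The main obstacle I anticipate is the continuity of $\mathcal{V}^{-1}$, i.e.\ the properness step in (ii). Injectivity and continuity of $\mathcal{V}$ are comparatively soft, but ruling out that a sequence escaping every compact subset of $\T$ could have convergent image requires genuine control: one must understand the asymptotics of both the volume horofunctions $\nu_{Z_n}$ and the Bers embeddings as $Z_n$ degenerates. The Bers-embedding component is the lever here — it is precisely why the construction augments the bare horocompactification with quadratic-differential data — so the crux is a careful analysis, presumably via Krasnov--Schlenker's formula for $d\VR$ (Proposition~\ref{prop.int1}) together with the structure of the Bers boundary, showing that divergence in $\T$ is detected by this combined data. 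A secondary technical point is verifying that the compatibility relation defining $\LQ$ is a \emph{closed} condition in the product topology, so that $\LQ$ is genuinely compact; this should follow from the continuity of the derivative map $\nu_Z\mapsto d\nu_Z$ on the relevant equi-Lipschitz family.
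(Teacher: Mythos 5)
Your overall route coincides with the paper's (Propositions \ref{prop.LQ}, \ref{prop.horo} and \ref{prop.Nu}): compactness of the target from compact factors (Tychonoff together with Nehari's bound, which makes $\QD_B(X)$ a compact ball), injectivity and continuity of $\mathcal{V}$ from the volume-horofunction estimates, and continuity of the inverse detected through the Bers-embedding coordinate. One small discrepancy: in the paper $\LQ$ is simply the full product $\prod_{X\in\T}\left([-C\dwp(b,X),C\dwp(b,X)]\times\QD_B(X)\right)$, with no compatibility relation between the function data and the quadratic-differential data imposed in its definition; the integral compatibility is established only afterwards, on the closure (Theorem \ref{thm.integral}). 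So your ``secondary technical point'' about closedness of such a relation does not arise.

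The genuine problem is the properness step. The map $\mathcal{V}:\T\rightarrow\LQ$ cannot be proper ($\LQ$ is compact while $\T$ is not), and its image is not closed in $\LQ$: divergent sequences in $\T$ \emph{do} have convergent images — for instance a WP-Cauchy sequence tending to a point of $\hatT\setminus\T$ has $\mathcal{V}(Y_n)$ converging to a point of $\pvh^{\mathrm{fin}}\T$ (Theorem \ref{thm.vh-finite}) — and if no divergent sequence had convergent image, the boundary $\pvh\T$ would be empty and the compactification trivial. Hence ``$\mathcal{V}$ is proper and hence a closed embedding'' is false, and the claim in your step (iii) that $\mathcal{V}(\T)$ is open in its closure ``follows from properness'' is unfounded (openness happens to be true, but it is not needed: the paper's notion of compactification only asks that $\mathcal{V}$ be a homeomorphism onto its dense image in the compact closure). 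What is actually required — and what the sound kernel of your own sketch provides — is the weaker statement: if $\mathcal{V}(Z_n)\rightarrow\mathcal{V}(Z)$ with $Z\in\T$, then $Z_n\rightarrow Z$. For this, fix any $X$, note that the second coordinate gives $q_{Z_n}(X)\rightarrow q_Z(X)$, and use that the Bers embedding $Z\mapsto q_Z(X)$ is a homeomorphism onto its image, so $Z_n\rightarrow Z$. This one line is exactly the paper's proof of inverse continuity in Proposition \ref{prop.Nu}; your ``accumulation on the Bers boundary'' phrasing is fine once it is stated in this form rather than as properness or closedness of the image.
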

Here the ``vh" stands for \underline volume and \underline horofunction.
For more detail about Theorem \ref{thm.main-cpt}, see Proposition \ref{prop.LQ} and \ref{prop.Nu}.
With a similar idea, we may also define a compactification via Weil-Petersson metric, see \S \ref{sec.WP}.

For a metric space $(M,d)$, the horofunction $h_z:M\rightarrow\R$ at $z\in M$ is defined as $h_z(x):=d(x,z)-d(b,z)$ here $b\in M$  is a base point.
Then for $x,y\in M$, 
$$\sup_{z\in M}h_z(x)-h_z(y)$$
recovers the original distance $d$ by triangle inequality and the fact $h_y(x)-h_y(y) = d(x,y)$.
Due to the lack of the triangle inequality for $\VR:\TT\rightarrow\R$, the quantity
$$d_R(X,Y):=\sup_{Z\in \T}\nu_Z(X)-\nu_Z(Y)$$
does not coincide with $\VR(X,Y)$.
Interestingly, $d_R$ does satisfy the triangle inequality and 
hence $d_R$ becomes a distance\footnote{$d_R$ may be asymmetric, however, one may also consider its symmetrization.} (Theorem \ref{thm.dv-dist}).
As $\Tvh$ is compact, the supremum in the definition of $d_R$ is realized in $\Tvh$.
One good feature of $\Tvh$ is that the action of the mapping class group $\mcg$ extends continuously on $\Tvh$ (Proposition \ref{prop.bh-mcg-action}).
Similarly to other known compactifications (see \S \ref{sec.TTBcpt}), the action of any pseudo-Anosov map has unique attracting and repelling fixed points on $\Tvh$ (Lemma \ref{lem.RB-ue}). 
Using the north-south dynamics, we establish one remarkable feature of the distance $d_R$.
\begin{thm}\label{thm.TL=HV}
	Let $\psi\in\mcg$ be a pseudo-Anosov mapping class and $M(\psi)=S\times I/(x,1)\sim(\psi(x),0)$ denote the mapping torus of $\psi$.
	Then the translation distance of $\psi$ with respect to $d_R$ is equal to the hyperbolic volume of $M(\psi)$, that is, for any $X\in\T$, we have
	$$\lim_{n\rightarrow\infty}\frac{1}{n}d_R(X,\psi^n X) = \Vol(M(\psi)).$$
\end{thm}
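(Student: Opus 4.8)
The plan is to prove the existence and $X$-independence of $\ell_R(\psi):=\lim_{n\to\infty}\tfrac1n d_R(X,\psi^nX)$ by subadditivity, to identify $\ell_R(\psi)$ with $\nu_{\xi_+}(\psi^{-1}b)$ — where $\xi_+\in\Tvh$ is the attracting fixed point of $\psi$ and $b$ the basepoint — using the compactness of $\Tvh$ and the north-south dynamics, and finally to recognize $\nu_{\xi_+}(\psi^{-1}b)$ as $\Vol(M(\psi))$. Since $d_R$ is a distance (Theorem~\ref{thm.dv-dist}) and the marking action of $\mcg$ preserves $\VR$, hence acts on $\T$ by $d_R$-isometries, the sequence $a_n:=d_R(X,\psi^nX)$ is subadditive, $a_{n+m}\le d_R(X,\psi^nX)+d_R(\psi^nX,\psi^{n+m}X)=a_n+a_m$, so by Fekete's lemma $\ell_R(\psi)=\inf_n a_n/n$ exists; it is finite because $a_n\le\Con\,\dwp(X,\psi^nX)=O(n)$ by Proposition~\ref{prop.Lip}, and a two-fold use of the triangle inequality shows that it does not depend on $X$.

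The engine of the proof is a near-equivariance identity for the volume horofunctions: from $\VR(\psi A,\psi B)=\VR(A,B)$ one gets $\nu_{\psi Z}(\psi W)=\nu_Z(W)-\nu_Z(\psi^{-1}b)$ for all $Z,W\in\T$, and since both sides depend continuously on $Z$ in the topology of $\LQ$, this extends to $Z\in\Tvh$; iterating $n$ times gives $\nu_\xi(\psi^nW)=\nu_{\psi^{-n}\xi}(W)-\sum_{j=1}^n\nu_{\psi^{-j}\xi}(\psi^{-1}b)$ for all $\xi\in\Tvh$. As every $\nu_\xi$ with $\xi\in\Tvh$ is $\Con$-Lipschitz and vanishes at $b$, we have $|\nu_\xi(W)|\le\Con\,\dwp(W,b)$ uniformly in $\xi$, so $f(\xi):=\nu_\xi(\psi^{-1}b)$ is continuous and bounded on the compact space $\Tvh$. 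Two sign facts follow softly: because $\VR$ is bounded below on quasi-Fuchsian space (Schlenker~\cite{Schlenker}) while $\VR(b,\psi^{n+1}b)-\VR(b,\psi^nb)=\nu_{\psi^nb}(\psi^{-1}b)\to\nu_{\xi_+}(\psi^{-1}b)$, the partial sums $\VR(b,\psi^nb)$ are bounded below with convergent increments, forcing $f(\xi_+)\ge0$; the same argument on the $\psi^{-1}$-orbit gives $f(\xi_-)\le0$. Finally, the iterated identity with $W=X$, $\xi=\xi_+$ (so $\psi^{-j}\xi_+=\xi_+$), together with the fact that the supremum in $d_R$ is attained over $\Tvh$, yields $d_R(X,\psi^nX)\ge\nu_{\xi_+}(X)-\nu_{\xi_+}(\psi^nX)=n\,f(\xi_+)$, i.e.\ $\ell_R(\psi)\ge f(\xi_+)$.

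For the matching upper bound, write $d_R(X,\psi^nX)=\nu_{\xi_n}(X)-\nu_{\xi_n}(\psi^nX)$ with $\xi_n\in\Tvh$ a maximizer. The iterated identity gives
$$d_R(X,\psi^nX)=\bigl(\nu_{\xi_n}(X)-\nu_{\psi^{-n}\xi_n}(X)\bigr)+\sum_{j=1}^nf(\psi^{-j}\xi_n)\le 2\Con\,\dwp(X,b)+\sum_{j=1}^nf(\psi^{-j}\xi_n),$$
and I would estimate the sum using the north-south dynamics of $\psi$ on $\Tvh$ (Lemma~\ref{lem.RB-ue}, Proposition~\ref{prop.bh-mcg-action}) applied to the backward orbit $\psi^{-1}\xi_n,\dots,\psi^{-n}\xi_n$, which is driven toward the repelling point $\xi_-$. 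Given $\varepsilon>0$, choose a $\psi$-forward-invariant neighborhood $U_+\ni\xi_+$ with $f<f(\xi_+)+\varepsilon$ on $U_+$ and a $\psi^{-1}$-forward-invariant neighborhood $U_-\ni\xi_-$ with $f<\varepsilon$ on $U_-$ (possible since $f(\xi_-)\le0$). Forward invariance forces $\{\,j:\psi^{-j}\xi_n\in U_+\,\}$ to be an initial segment and $\{\,j:\psi^{-j}\xi_n\in U_-\,\}$ a terminal segment, while north-south dynamics bounds the number of intermediate indices by some $N=N(\varepsilon)$ independent of $n$ and $\xi_n$; on that middle block $|f|\le\Con\,\dwp(\psi^{-1}b,b)$. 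Using $f(\xi_+)\ge0$, the three blocks contribute at most $n(f(\xi_+)+\varepsilon)$, $O(1)$, and $n\varepsilon$ respectively, so $\sum_{j=1}^nf(\psi^{-j}\xi_n)<n\,(f(\xi_+)+2\varepsilon)+O(1)$, whence $\limsup_n\tfrac1n d_R(X,\psi^nX)\le f(\xi_+)+2\varepsilon$; letting $\varepsilon\to0$ we conclude $\ell_R(\psi)=f(\xi_+)=\nu_{\xi_+}(\psi^{-1}b)$.

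It remains to identify $\nu_{\xi_+}(\psi^{-1}b)$ with $\Vol(M(\psi))$. Since $\psi^nb\to\xi_+$ and horofunction evaluation is continuous, $\nu_{\xi_+}(\psi^{-1}b)=\lim_n\bigl(\VR(b,\psi^{n+1}b)-\VR(b,\psi^nb)\bigr)$, whence by Ces\`aro summation $\nu_{\xi_+}(\psi^{-1}b)=\lim_n\tfrac1n\VR(b,\psi^nb)$. Now $\VR$ of a quasi-Fuchsian manifold differs from the volume of its convex core by at most a constant depending only on $S$ (Schlenker~\cite{Schlenker}), and the convex cores of the $M(b,\psi^nb)$ geometrically contain arbitrarily long stretches of the infinite cyclic cover of $M(\psi)$, along each translation fundamental domain of which the volume equals $\Vol(M(\psi))$; hence $\tfrac1n\VR(b,\psi^nb)\to\Vol(M(\psi))$ (see \cite{Kojima-McShane,Schlenker}), and the theorem follows. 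The step I expect to be the real obstacle is precisely this last geometric input — the \emph{sharp} comparison between the renormalized volume along a pseudo-Anosov orbit and the hyperbolic volume of its mapping torus, a coarse version of which is classical but insufficient here; within the horofunction argument itself the delicate point is the uniform-in-$n$ control in the preceding paragraph of how long the backward orbit of the maximizer $\xi_n$ lingers near $\xi_+$ as opposed to near $\xi_-$, which is exactly what the $\psi$-invariant neighborhoods and the uniform Lipschitz bound on the compact space $\Tvh$ are arranged to furnish.
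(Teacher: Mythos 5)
Most of what you do is sound and matches ingredients the paper actually provides: your near-equivariance identity and its iterate are Proposition \ref{prop.bh-mcg-action} in cocycle form; the lower bound via the fixed point $\xi_+=\mathcal{O}^{-1}(F_+(\psi))$ and the identification $\nu_{\xi_+}(\psi^{-1}b)=\lim_n\frac1n\VR(b,\psi^nb)=\Vol(M(\psi))$ are exactly the content of Theorem \ref{thm.lim-nu}, which rests on Proposition \ref{prop.KoMA} (and, contrary to your closing worry, the coarse comparison between $\VR$ and the convex core volume, up to an additive constant depending only on $S$, is all that is needed once you divide by $n$). Your upper bound, though, takes a genuinely different route from the paper: the paper runs a Karlsson--Ledrappier-type argument --- average the Dirac masses at the maximizers along the $\psi$-orbit, take a weak limit, extract an ergodic component by Krein--Milman, apply the Birkhoff ergodic theorem, and then evaluate the a.e.\ limit using Theorem \ref{thm.lim-nu} and Corollary \ref{coro.lim_nu} --- precisely so that only the \emph{pointwise} convergence statement of Lemma \ref{lem.RB-ue} is ever invoked.

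This is where your argument has a genuine gap. The step ``north-south dynamics bounds the number of intermediate indices by some $N=N(\varepsilon)$ independent of $n$ and $\xi_n$,'' together with the prior choice of a $\psi$-forward-invariant neighborhood $U_+\ni\xi_+$ inside $\{f<f(\xi_+)+\varepsilon\}$ and a $\psi^{-1}$-forward-invariant neighborhood $U_-\ni\xi_-$ inside $\{f<\varepsilon\}$, requires \emph{locally uniform} north--south dynamics on $\Tvh$. Lemma \ref{lem.RB-ue} gives only pointwise convergence $\psi^n(\nu,q)\to\mathcal{O}^{-1}(F_+(\psi))$ for each fixed point of $\Tvh$ with $\mathcal{O}(\nu,q)\neq F_-(\psi)$; it supplies neither arbitrarily small invariant neighborhoods of $\xi_\pm$ nor any uniformity over the unknown, $n$-dependent maximizers $\xi_n$, and pointwise attraction alone does not in general yield small forward-invariant neighborhoods (a homeomorphism of the circle with a single fixed point attracting every orbit has none). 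Your claim can in fact be salvaged here, because convergence holds in both time directions at every point of the compact metrizable space $\Tvh$ (the exceptional sets are the single points $\xi_\pm$, by unique ergodicity of $F_\pm(\psi)$ and the injectivity statement in Lemma \ref{lem.RB-ue}); a compactness argument with last-exit times and a diagonal subsequence upgrades this to uniform convergence on compact subsets away from the repelling point, after which the trapping neighborhoods and the bound $N(\varepsilon)$ follow. But that upgrade is an additional lemma which you assert rather than prove, and it is exactly the difficulty the paper's ergodic-theoretic proof is designed to bypass: the invariant-measure/Birkhoff argument needs convergence only along a single $\mu$-generic orbit. Either supply the uniformization lemma explicitly, or replace your middle-block estimate by the paper's argument.
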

It is worth mentioning that the theory of horofunctions has found interesting applications in ergodic theory, see e.g. \cite{Karlsson-two, KL, MT, Masai}.
In our proof of Theorem \ref{thm.TL=HV}, ergodic theory plays a key role.
See \S \ref{sec.dist} for the detail.
%%%%%%%%%%%%%%%%%%%%%%%%%%%%%%%%%%%%%%%%%%%%%%%%%%%%%%%%%%%%%%%%%%%%%%%%%%%%%%%%%%%%%%%%%%%%%%%%%%%%%%%%%%%%%%%%%%%%%%%%%%%%%%%%%%%%%%%%%%%%%%%%%%%%%%%%%%%%%%%%%%%%%%%%%%%%%%%%%%%%%%%%%%%%%%%%%%%%%%%%%%%%%%%%%%%%%%%%%%%%%%%%%%%%%%%%%%%%%%%%%%%%%%%%%%%%%%
\subsection{(In)finite horofunctions and visual spheres}
With its relation to the theory of 3-dimensional topology and geometry (e.g. the ending lamination conjecture, resolved by Brock-Canary-Minsky \cite{BCM}), Bers compactification reveals interesting relations between 2 and 3-dimensional hyperbolic geometry.
However, one difficulty of studying Bers compactification is its dependence on the base point $Y$.
Moreover, Kerckhoff-Thurston \cite{KT} proved that the action of the mapping class group does not extend continuously to the boundary of the Bers compactification.
It turns out that the dependence of the Bers compactifications on the base point comes from the non complete nature of the renormalized volume.

To understand the situation better, let us consider first the Weil-Petersson (WP) metric on $\T$.
We refer e.g. \cite{Wolpert-book, Wolpert-survey} for details about WP metric.
Let $\dwp$ denote the distance on $\T$ defined by the WP metric.
As is well-known, $(\T,\dwp)$ is not complete and its completion (denoted $\hatT$) is identified with so-called the augmented Teichmuller space \cite{Masur}.
At the same time, $(\T,\dwp)$ is known to be CAT(0) space \cite{Yamada}, which makes it possible to consider the visual sphere around each point $X\in\T$.
It is shown by Brock \cite{Brock2} that the WP visual spheres depend on the base point $X\in\T$ as well.
In WP metric, the distance between $X\in\T$ and $Y_\infty\in\hatT$ is always finite.
Roughly speaking, due to this finiteness, visual spheres depend on the base point.
As a more basic example, let us consider $\R^2$ for a moment.
Imagine the visual sphere at each $p\in\R^2$ which is the set of directions from $p$.
Then if we pick a point $x\in\R^2$, then the direction from $p$ to $x$ apparently depends on $p$.
On the other hand, consider a ray ${\bf r}(t)=(t\cos\theta,t\sin\theta)\subset\R^2$.
The direction $\dir_p(t)\in T^1_p\R^2$ (the unit tangent sphere at $p$, which we identify with the visual sphere) from $p$ to ${\bf r}(t)$ converges to define a point $\dir_p(\bf r)$ in the visual sphere of $p$.
One readily sees that $\dir_p(\bf r)$ does not depend on the base point $p$.
This illustrates a rough picture of $(\T,\dwp)$, see \S \ref{sec.horo-nonproper} for more detailed examples to explain the situation.

Furthermore, non completeness of $\dwp$ implies non properness of $\dwp$ (i.e. closed metric balls are not necessarily compact).
If a metric space is not proper, the theory of horofunctions does not immediately give a compactification (see \S \ref{sec.horo-nonproper}).
To overcome this difficulty, when we compactify $(\T,\dwp)$, we consider not only horofunctions with respect to $\dwp$, but also the visual spheres.
The data of directions in the visual sphere makes the situation much simpler, and one may readily obtain a compactification.
As horofunctions together with the directions look like magnitudes and arguments of polar coordinates, we suggestively call the above compactification a {\em horocoordinate compactification}.
The data of directions turns out to be very natural for horofunctions.

In Bridson-Haefliger \cite{BH}, it is observed that in CAT(0) spaces, such data of directions actually gives differentials of horofunctions (see \S \ref{sec.Angles} for more detail).
One of the main observation of this paper is that the theory of horocoordinates works well with the renormalized volume $V_R$ together with the Bers embeddings.
Since the Bers embeddings take value in the space of quadratic differentials $\QD(S)$, it defines a Weil-Petersson gradient flow (see e.g. \cite{BBB,BBP}).
Hence we may regard Bers embeddings as the space of ``directions".
Moreover as we have mentioned above, the differential of the renormalized volume $\VR$ can be expressed in terms of Bers embeddings.
Hence we may naturally consider horocoordinates.
The horocoordinate compactification with respect to $\VR$ and Bers embeddings is what we obtain in Theorem \ref{thm.main-cpt}.

\subsection{Organization of the paper}
The paper is organized as follows.
Before discussing the Teichm\"uller space $\T$, we first consider horocoordinates.
In \S \ref{sec.horo-coord}, we define the horocoordinate boundary and demonstrate examples.
Then in \S \ref{sec.pre}, we recall basic facts on $\T$ that we need in this paper.
\S\ref{sec.TTBcpt} is devoted to a quick review of known compactifications of $\T$.
In \S \ref{sec.WP}, we construct the horocoordinate compactification of $\T$ with respect to the Weil-Petersson metric.
Several basic properties of its boundary $\pwp\T$ so-obtained are also discussed in \S \ref{sec.WP}.
Our main compactification $\Tvh$ (Theorem \ref{thm.main-cpt}) is defined in \S \ref{sec.main}.
After discussing several properties of $\Tvh$ in \S \ref{sec.main}, we discuss the new distance $d_R$ and prove Theorem \ref{thm.TL=HV} in \S \ref{sec.dist}.

Notations which are not standard in the literature are emphasized as {\bf Notation.}
Although we believe \S \ref{sec.horo-coord} is useful to understand the idea of our compactification, \S \ref{sec.horo-coord} is logically independent of the main section \S \ref{sec.main}.
Hence {\em the readers who are familiar to the Teichm\"uller theory can just check notations in \S \ref{sec.pre} and \S\ref{sec.TTBcpt} and 
go directly to \S \ref{sec.main}} where the compactification of $\T$ via the renormalized volume is defined.

\subsection*{Acknowledgement}
The author would like to thank 
Sadayoshi Kojima,
Hideki Miyachi,
Ken'ichi Ohshika,
Hiroshige Shiga,
Dong Tan,
and Sumio Yamada
for helpful conversations and comments.
The work of the author is partially supported by 
JSPS KAKENHI Grant Number 19K14525.

%%%%%%%%%%%%%%%%%%%%%%%%%%%%%%%%%%%%%%%%%%%%%%%%%%%%%%%%%%%%%%%%%%%%%%%%%%%%%%%%%%%%%%%%%%%%%%%%%%%%%%%%%%%%%%%%%%%%%%%%%%%%%%%%%%%%%%%%%%%%%%%%%%%%%%%%%%%%%%%%%%%%%%%%%%%%%%%%%%%%%%%%%%%%%%%%%%%%%%%%%%%%%%%%%%%%%%%%%%%%%%%%%%%%%%%%%%%%%%%%%%%%%%%%%%%%%%%%%%%%%%%%%%%%%%%%%%%%%%%%%%%%%%%%%%%%%%%%%%%%%%%%%%%%%%%%%%%%%%%%%%%%%%%%%%%%%%%%%%%%%%%%%%%%%%%%%%%%%%%%%%%%
\section{Horocompactifications}\label{sec.horo-coord}
In this section we consider several spaces which are not proper, and hence the spaces of horofunctions are not compactifications in a strict sense.
Namely, the space of horofunctions has weaker topology than the original space.
The purpose of this section is to demonstrate an idea of how we get a compactification by taking the data of ``directions'' into account.
We first review the notion of horofunctions.
\subsection{Horofunctions}\label{sec.horo}
The idea of horofunctions are first introduced by Gromov \cite{Gromov}.
%Although we do not need directly, our idea of compactifying $\mathcal{T}(S)$ does come from the theory of horoboundary.
%For this reason, 
We only recall basic definitions and properties and omit proofs in this subsection even though most properties are very easy to establish.
See \cite{Gromov, Wal, LS, MT} for more details about horofunctions.

Let $(X,d)$ be a separable metric space. Let $b\in X$ denote a base point which we fix throughout the subsection.
A function $f:X\rightarrow \mathbb{R}$ is called {\em $C$-Lipschitz} if $|f(x)-f(y)|\leq C\cdot d(x,y)$ for all $x,y\in X$.
We define $$\Lip(X):=\{f:X\rightarrow\mathbb{R}\mid f \text{ is }C\text{-Lipschitz and }f(b)=0\}.$$
We equip $\Lip(X)$ with the topology of uniform convergence on compact sets.
Note that by the $C$-Lipschitz property, this topology is equivalent to the topology of pointwise convergence.
Hence we may regard $\Lip(X)$ as a closed subset of $\Pi_{x\in X}[-d(b,x),d(b,x)]$, a compact space by Tychonoff's theorem.
\begin{prop}[see e.g. {\cite[Proposition 3.1]{MT}}]\label{prop.Lip-compact}
Let $(X,d)$ be a separable metric space. Then the space $\Lip(X)$ of $C$-Lipschitz maps is a compact Hausdorff second countable (hence metrizable) space.
\end{prop}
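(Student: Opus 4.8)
The plan is to realize $\Lip(X)$ as a closed subspace of a compact Hausdorff product space, and then to deduce second countability, hence metrizability, from the separability of $X$. First I would consider the evaluation map $e\colon\Lip(X)\to P:=\prod_{x\in X}[-d(b,x),d(b,x)]$, $f\mapsto(f(x))_{x\in X}$; it is well defined because $|f(x)|=|f(x)-f(b)|\le C\,d(x,b)$, and it is injective. By Tychonoff's theorem $P$ is compact and Hausdorff, and, as already observed, the uniform Lipschitz bound forces the topology of uniform convergence on compacta on $\Lip(X)$ to coincide with that of pointwise convergence, i.e. with the subspace topology inherited via $e$. So it suffices to see that $e(\Lip(X))$ is closed in $P$: a point $(a_x)_x\in P$ lies in the image if and only if $|a_x-a_y|\le C\,d(x,y)$ for all $x,y\in X$ (the vanishing at $b$ being automatic since $[-d(b,b),d(b,b)]=\{0\}$), and each such inequality is a closed condition involving only two coordinates, so the image is an intersection of closed sets and hence closed. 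Thus $\Lip(X)$ is compact, and it is Hausdorff as a subspace of $P$.

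For second countability I would pass to a countable dense subset $D\subseteq X$ and consider the restriction map $r\colon\Lip(X)\to\prod_{x\in D}[-d(b,x),d(b,x)]$, $f\mapsto(f(x))_{x\in D}$. This $r$ is continuous, being the composition of $e$ with the projection onto the coordinates indexed by $D$, and it is injective because a continuous function on $X$ is determined by its restriction to the dense set $D$. Since $\Lip(X)$ is compact and the target Hausdorff, $r$ is a homeomorphism onto its image; alternatively one checks directly that pointwise convergence on $D$ implies pointwise convergence on $X$ within $\Lip(X)$, via the estimate
$$|f_n(x)-f(x)|\le|f_n(x)-f_n(d)|+|f_n(d)-f(d)|+|f(d)-f(x)|\le 2C\,d(x,d)+|f_n(d)-f(d)|$$
for $d\in D$ chosen close to $x$. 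As the target is a countable product of second countable metrizable spaces, it is itself second countable, and therefore so is the subspace $r(\Lip(X))\cong\Lip(X)$.

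Finally, a compact Hausdorff second countable space is metrizable by Urysohn's metrization theorem, which would finish the proof. There is no real obstacle here; the one point requiring a little care is the identification of the topology of uniform convergence on compacta with pointwise convergence (and then with pointwise convergence on $D$), but this is exactly where the uniform Lipschitz constant $C$ enters, and each such reduction is the elementary $\varepsilon/3$-type estimate displayed above.
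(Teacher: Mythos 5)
Your proof is correct and takes essentially the same route the paper indicates (following \cite{MT}): embed $\Lip(X)$ by pointwise evaluation as a closed subspace of a Tychonoff product of compact intervals, identify the pointwise and compact-open topologies via the uniform Lipschitz constant, obtain second countability by restricting to a countable dense subset of $X$, and conclude metrizability by Urysohn. The only adjustment is that for $C>1$ the target intervals must be $[-C\,d(b,x),\,C\,d(b,x)]$ rather than $[-d(b,x),d(b,x)]$ --- your own estimate $|f(x)|\le C\,d(x,b)$ is exactly what membership in the scaled interval requires, and this is the scaling the paper itself uses later in Definition \ref{defi.LQ}.
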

\begin{rmk}
Although in the references, it is only considered the case where $C=1$, the same argument works for any $C>0$.
\end{rmk}
We now define horofunctions.
\begin{defi}
A {\em horofunction} at $z\in X$ is a function $\psi_{z}:X\rightarrow\mathbb{R}$ with
$\psi_{z}(x) = d(x,z)-d(b,z)$ for any $x\in X$.
\end{defi}
By the triangle inequality, we see that $\psi_{z}\in\Lipi(X)$ for every $z\in X$.
Thus, the space $X$ embeds into $\Lipi(X)$.
\begin{lem}[{\cite[Proposition 2.1 and 2.2]{Wal}}]\label{lem.horo}
Let $(X,d)$ be a geodesic separable metric space.
Then the map $\psi:X\rightarrow \Lipi(X)$ defined by $\psi(z):=\psi_{z}$ is continuous and injective.
Furthermore, if $(X,d)$ is a proper space (i.e. every closed metric ball is compact),
then $\psi$ is a homeomorphism onto its image.
\end{lem}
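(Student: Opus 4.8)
The plan is to treat the three assertions in turn, arguing throughout with sequences, which is legitimate because $X$ is metric and $\Lip(X)$ is metrizable by Proposition \ref{prop.Lip-compact}. For \emph{continuity}, I would take $z_n\to z$ in $X$ and note that, since the topology on $\Lipi(X)$ is that of pointwise convergence, it suffices to check $\psi_{z_n}(x)=d(x,z_n)-d(b,z_n)\to d(x,z)-d(b,z)=\psi_z(x)$ for each fixed $x$, which is immediate from continuity of the distance function. For \emph{injectivity}, suppose $\psi_z=\psi_w$: evaluating at $x=z$ gives $-d(b,z)=d(z,w)-d(b,w)$, and evaluating at $x=w$ gives $d(z,w)-d(b,z)=-d(b,w)$; adding the two equalities yields $2d(z,w)=0$, hence $z=w$. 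Neither the geodesic nor the properness hypothesis is needed for these two steps.

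For the \emph{homeomorphism} statement, assume $(X,d)$ is proper. Since $\psi$ is continuous and injective, I only need to show that $\psi_{z_n}\to\psi_z$ forces $z_n\to z$. If $(z_n)$ is bounded, then by properness every subsequence has a sub-subsequence converging to some $z'\in X$; by the continuity already established $\psi_{z'}$ is the limit of the corresponding subsequence of $(\psi_{z_n})$, so $\psi_{z'}=\psi_z$, and injectivity forces $z'=z$; as every subsequence of $(z_n)$ therefore has a sub-subsequence converging to $z$, we conclude $z_n\to z$. If instead $(z_n)$ is unbounded, pass to a subsequence with $d(b,z_n)\to\infty$ and set $r:=2d(b,z)+1>0$. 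For $n$ with $d(b,z_n)>r$, choose on a geodesic from $b$ to $z_n$ the point $w_n$ with $d(b,w_n)=r$; then $w_n$ lies between $b$ and $z_n$, so $\psi_{z_n}(w_n)=d(w_n,z_n)-d(b,z_n)=-r$. All the $w_n$ lie in $\overline{B}(b,r)$, compact by properness, so after a further subsequence $w_n\to w$ with $d(b,w)=r$. Since convergence in $\Lip(X)$ is uniform on compact sets, $\psi_{z_n}(w_n)\to\psi_z(w)$, whence $\psi_z(w)=-r$; but $\psi_z(w)=d(w,z)-d(b,z)\geq -d(b,z)>-r$, a contradiction. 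Hence $(z_n)$ is bounded and the previous case applies.

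The delicate point is this unbounded case: one must rule out that $(\psi_{z_n})$ converges to an \emph{interior} horofunction $\psi_z$ while $z_n$ escapes every compact set — exactly the Busemann-type phenomenon that gives horofunction compactifications their content. It is the geodesic hypothesis that lets us ``reel in'' an escaping sequence to the bounded family $(w_n)$, where properness together with uniform-on-compacts convergence produces the contradiction; when $X$ fails to be proper this step breaks down, which is precisely the difficulty motivating the directional enrichment developed in the rest of this section.
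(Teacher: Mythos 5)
Your proposal is correct: the continuity and injectivity arguments are the standard direct computations, and your treatment of inverse continuity under properness — ruling out an escaping sequence by picking the point $w_n$ at fixed distance $r$ on a geodesic from $b$ to $z_n$, extracting a limit in the compact ball, and contradicting $\psi_z(w)\geq -d(b,z)$ — is exactly the classical argument. The paper itself omits the proof and cites Walsh's Propositions 2.1 and 2.2, whose reasoning your write-up reproduces in the symmetric-metric setting, so there is nothing to object to.
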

By Proposition \ref{prop.Lip-compact} and Lemma \ref{lem.horo}, the closure $\overline{\psi(X)}$ of $\psi(X)$ in $\Lipi(X)$ is compact.
The space $\partial_{h}X:=\overline{\psi(X)}\setminus\psi(X)$ is called the {\em horoboundary} of $X$.
%From now on, we suppose that $X$ is proper and geodesic.
%\begin{notation*}
By abuse of notations, we write $\overline{\psi(X)}$ by $X\cup\hb X$.
For $z\in X\cup\hb X$, we write the associated horofunction by $\psi_{z}$.
%\end{notation*}

Let us consider a group $G$ acting on $X$ by isometries.

\begin{lem}[{\cite[Lemma 3.4]{MT}}]\label{lem.action}
Let $G$ be a group of isometries of $X$. 
Then the action of $G$ on $X$ extends to a continuous action by homeomorphisms on $\overline{\psi(X)}$, defined as
$$g\cdot \psi_{\xi}(z):=\psi_{\xi}(g^{-1}z)-\psi_{\xi}(g^{-1}b)$$
for each $g\in G$ and $\psi_{\xi}\in\overline{\psi(X)}$.
\end{lem}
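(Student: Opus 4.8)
The plan is to verify directly that the displayed formula defines a $G$-action on $\overline{\psi(X)}$ by homeomorphisms, using only that the topology on $\Lipi(X)$ is the topology of pointwise convergence and that each $g\in G$ acts on $X$ by an isometry. First I would check that the formula makes sense at the level of function spaces: for $g\in G$ and $\psi_{\xi}\in\overline{\psi(X)}$, the function $z\mapsto\psi_{\xi}(g^{-1}z)-\psi_{\xi}(g^{-1}b)$ is $1$-Lipschitz, since $g^{-1}$ preserves $d$ and $\psi_{\xi}$ is $1$-Lipschitz, and it vanishes at $z=b$ by the subtracted constant. Hence $g\cdot$ is at least a well-defined map $\Lipi(X)\to\Lipi(X)$.

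Next I would verify that $g\cdot$ restricts correctly on $\psi(X)$: for $\xi\in X$, substituting $\psi_{\xi}(x)=d(x,\xi)-d(b,\xi)$ and using that $g^{-1}$ is an isometry, the two $d(b,\xi)$ terms cancel and one gets $g\cdot\psi_{\xi}=\psi_{g\xi}$, so the formula genuinely extends the given action on $X\cong\psi(X)$. For continuity, suppose $\psi_{\xi_n}\to\psi_{\xi}$ in $\Lipi(X)$, i.e. pointwise; then for each fixed $z$ both $\psi_{\xi_n}(g^{-1}z)$ and $\psi_{\xi_n}(g^{-1}b)$ converge to their limits, so $g\cdot\psi_{\xi_n}\to g\cdot\psi_{\xi}$ pointwise, and $g\cdot$ is continuous on all of $\Lipi(X)$. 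Since a continuous map carries the closure of a set into the closure of its image and $g\cdot\psi(X)=\psi(X)$, we obtain $g\cdot\overline{\psi(X)}\subseteq\overline{\psi(X)}$; applying the same reasoning to $g^{-1}$ gives the reverse inclusion, so $g\cdot$ is a bijection of $\overline{\psi(X)}$ with continuous inverse $g^{-1}\cdot$, hence a homeomorphism.

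Finally I would check the action axioms: plugging the formula into itself and telescoping the constant terms yields $(gh)\cdot\psi_{\xi}=g\cdot(h\cdot\psi_{\xi})$, and the identity acts trivially because $\psi_{\xi}(b)=0$. This completes the proof. I do not expect any serious obstacle; the only point demanding care is the bookkeeping of the correction term $-\psi_{\xi}(g^{-1}b)$, which is precisely what is needed both to keep the image in $\Lipi(X)$ (vanishing at $b$) and to make $g\mapsto(g\cdot)$ an honest homomorphism rather than merely a projective one.
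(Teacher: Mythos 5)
Your verification is correct and is exactly the standard argument: the paper itself omits the proof and simply cites \cite{MT}*{Lemma 3.4}, where the same direct checks (the formula lands in $\Lipi(X)$, restricts to $g\cdot\psi_{\xi}=\psi_{g\xi}$ on $\psi(X)$, is continuous for pointwise convergence, preserves $\overline{\psi(X)}$ with inverse $g^{-1}\cdot$, and satisfies the action axioms via the telescoping constant) constitute the proof. No gaps; the care you note about the correction term $-\psi_{\xi}(g^{-1}b)$ is precisely the only delicate point.
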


%%%%%%%%%%%%%%%%%%%%%%%%%%%%%%%%%%%%%%%%%%%%%%%%%%%%%%%%%%%%%%%%%%%%%%%%%%%%%%%%%%%%%%%%%%%%%%%%%%%%%%%%%%%%%%%%%%%%%%%%%%%%%%%%%%%%%%%%%%%%%%%%%%%%%%%%%%%%%%%%%%%%
\subsection{Non proper spaces and horocoordinates}\label{sec.horo-nonproper}
The first example we consider here is the set of rays each of which corresponds to a natural number.
\begin{ex}[see Figure \ref{fig.R}]\label{ex.noncompact1}
For every $n\in \N$, let $R^{+}_{n}$ denote a copy of $\R_{\geq 0}$.
We denote by $r_{n}\in R^{+}_{n}$ the element corresponding to $r\in\R_{\geq 0}$.
Let us consider the space $$\mathcal{R} := \{b\}\sqcup\bigsqcup_{n\in\N} R^{+}_{n}/\sim$$ where $\sim$ is defined so that $b\sim 0_{n}$ for every $n\in\N$.
Equipped with the natural metric inherited from $\R_{\geq 0}$'s,  $\mathcal{R}$ is a metric space which is not proper.
Then we have 
\begin{prop}\label{prop.noncompact1}
The map $$\psi:\mathcal{R}\rightarrow \Lipi(\mathcal{R})$$ is not a homeomorphism onto its image.
\end{prop}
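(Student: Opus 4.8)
The plan is to exhibit a sequence $(z_k)$ in $\mathcal{R}$ such that the horofunctions $\psi_{z_k}$ converge in $\Lipi(\mathcal{R})$ to $\psi_b$, the horofunction of the base point $b$, while $(z_k)$ does not converge to $b$ in $\mathcal{R}$. Since $\mathcal{R}$ is a geodesic separable metric space (it is a countable union of separable rays, and any two points are joined by a segment, passing through $b$ if they lie on different rays), Lemma \ref{lem.horo} already tells us that $\psi$ is continuous and injective, so producing such a sequence is enough: if $\psi$ were a homeomorphism onto its image, then $\psi_{z_k}=\psi(z_k)\to\psi(b)$ would force $z_k\to b$, a contradiction.

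For the concrete choice I would take $z_k:=1_k\in R^+_k$, the point at distance $1$ from $b$ along the $k$-th ray. First I would record the horofunctions explicitly: since a geodesic joining points on two distinct rays passes through $b$, for $x=r_m\in R^+_m$ one gets
$$\psi_{z_k}(r_m)=d(r_m,1_k)-d(b,1_k)=\begin{cases}|r-1|-1,& m=k,\\ r,& m\neq k,\end{cases}\qquad \psi_{z_k}(b)=0.$$
Then I would pass to the pointwise limit: for a fixed $x\in\mathcal{R}$, either $x=b$, in which case $\psi_{z_k}(x)=0$ for all $k$, or $x=r_m\in R^+_m$, in which case $\psi_{z_k}(x)=r=d(x,b)$ as soon as $k>m$. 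Hence $\psi_{z_k}(x)\to d(x,b)=\psi_b(x)$ for every $x$, i.e.\ $\psi_{z_k}\to\psi_b$ pointwise; as the topology on $\Lipi(\mathcal{R})$ is that of pointwise convergence, this is convergence in $\Lipi(\mathcal{R})$, and $\psi_b=\psi(b)\in\psi(\mathcal{R})$. On the other hand $d(z_k,b)=1$ for every $k$ (indeed $d(z_k,z_\ell)=2$ for $k\neq\ell$, so the sequence has no convergent subsequence at all), so $z_k\not\to b$. This completes the argument.

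There is no serious obstacle in this proof; the only point to be slightly careful about is that the pointwise limit of the $\psi_{z_k}$ is the horofunction of an honest point of $\mathcal{R}$ — namely $b$ — rather than a new function lying in the horoboundary. It is precisely this collapse of a bounded, uniformly discrete sequence onto $\psi_b$ that witnesses the failure of $\psi$ to be an embedding, and it reflects the non-properness of $\mathcal{R}$: the closed ball $\{x : d(x,b)\le 1\}$ is not compact, so boundedness of $(z_k)$ buys no convergent subsequence even though the associated horofunctions behave as if $(z_k)$ were converging to $b$.
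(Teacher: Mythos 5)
Your proof is correct and follows essentially the same route as the paper: both take points at a fixed positive distance along the rays $R^+_n$, observe that the associated horofunctions converge (pointwise, hence in $\Lipi(\mathcal{R})$) to $\psi_b$, and conclude from injectivity/continuity of $\psi$ that the inverse cannot be continuous since the sequence has no limit in $\mathcal{R}$. The only cosmetic difference is that you compute the horofunctions explicitly and get convergence of the full sequence directly, whereas the paper first extracts a subsequential limit using compactness of $\Lipi(\mathcal{R})$ and then identifies it as $\psi_b$.
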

\begin{proof}
We may see this by looking at the image of $\ell_{n}\in R_n^+$ for a fixed $\ell\in\R_{\geq 0}$.
Since $\Lipi(\mathcal{R})$ is compact, the sequence $\{\psi_{\ell_{n}}\}$ has a convergent subsequences with limit $\psi_{\infty}\in \Lipi(\mathcal{R})$.
Let $\psi_{n}:=\psi_{\ell_{n}}$.
For any $r_{k}\in R_k^+\subset\mathcal{R}$ where $k\in\N$, we see that for all $N\geq k$ , we have $\psi_{N}(r_{k}) = d(r_{k}, b)$.
Hence we have $\psi_{\infty}(r_{k}) = d(r_{k}, b)$ which is equal to $\psi_{b}(r_{k})$.
This holds for arbitrarily $r_{k}$, which implies $\psi_{\infty} = \psi_{b}$.
On the other hand the sequence $\{\ell_{n}\}_{n\in\N}$ does not have any convergent subsequence in $\mathcal{R}$.
Hence the inverse of $\psi$ is not continuous.
\end{proof}
%In this example, even if we consider a data of ``directions'', we can not obtain a compactification.
\end{ex}

\begin{figure}[h]
	\begin{tabular}{cc}
		 \begin{minipage}[t]{0.45\hsize}
			\centering
	    		\includegraphics[scale = 0.3]{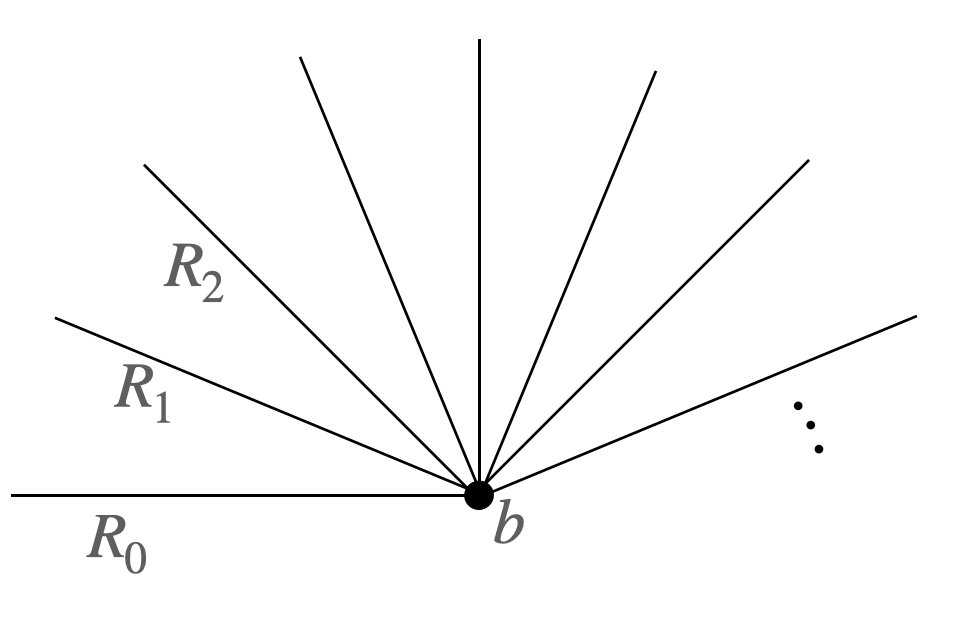}
		  	\caption{$\mathcal{R}$}
	       \label{fig.R}			
		 \end{minipage} 
      \begin{minipage}[t]{0.45\hsize}
	        \centering
	       \includegraphics[scale = 0.3]{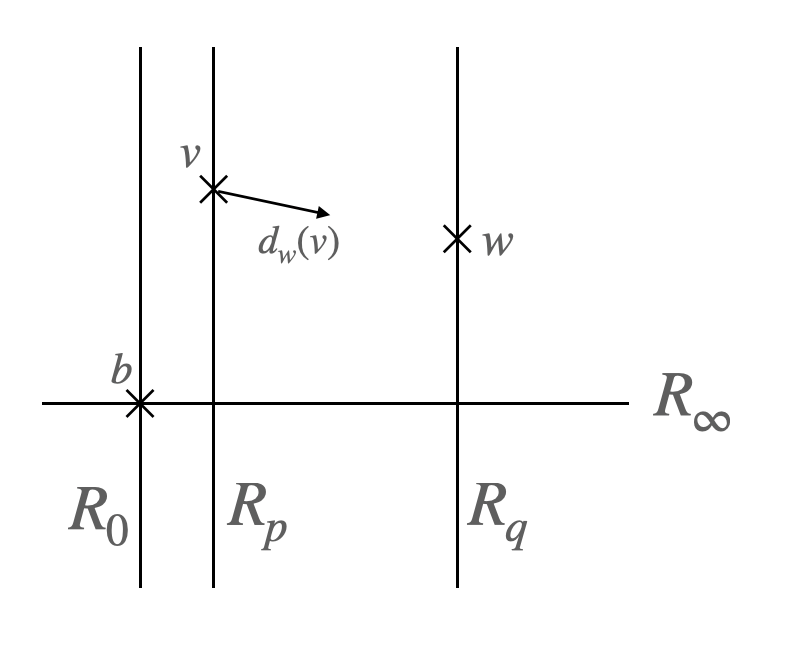}
	       \caption{$\mathcal Q$}
	       \label{fig.Q}
	        \end{minipage}
    \end{tabular}
\end{figure}
The second example illustrates better the situation of the Teichm\"uller space together with distance(-like) functions given by the renormalized volume $\VR$ or the Weil-Petersson metric.
We will observe later that those spaces are not proper and hence the space of ``horofunctions'' may have weaker topology 
as we observed in Example \ref{ex.noncompact1}.
In this paper, by the word {\em compactification}, we require that its restriction to the original space is a homeomorphism.
One may obtain a compactification by introducing certain data of ``directions''.
\begin{ex}
Let $R_{q}$ denote a copy of $\R$ for each $q\in\Q\sqcup\{\infty\}$ and denote $i_q:R_q\rightarrow\R$ the identification.
Let $$\mathcal{Q} := R_{\infty} \sqcup \bigsqcup_{q\in\Q} R_{q}/\sim$$
where we define $\sim$ so that $i_\infty(q)\sim i_q(0)$ for every $q\in \Q$ (see Figure \ref{fig.Q}).
Let $\pi:\mathcal{Q}\rightarrow R_\infty$ denote the map defined by $\pi(R_q) = i_\infty(q)$.
We consider the metric on $\mathcal{Q}$ given by the path metric on $\R$'s.
By considering a sequence of rational numbers converging to an irrational number, the same argument as in Proposition \ref{prop.noncompact1} shows that $\Lipi(\mathcal{Q})$ has weaker topology than $\mathcal{Q}$.
We now suppose that we have an embedding $\iota:\mathcal{Q}\rightarrow\R^{2}$ so that $\iota(R_{\infty})$ is the horizontal axis
and every $\iota(R_{q})$ is a vertical line intersecting with $\iota(R_{\infty})$ at $\iota(i_\infty(q))$.
Then for any given two points $x,y\in\mathcal{Q}$, 
we get a natural direction $\overline \dir_{y}(x)\in T^{1}_{\iota(x)}\R^{2}$ where $T^{1}_{\iota(x)}\R^{2}$ is the unit tangent space at $\iota(x)$.
Now consider a strictly increasing, continuous function $f:\R_{\geq 0}\rightarrow[0,1)$ with $f(0)=0$
 (any such function would do, for example $f(x)=\frac{e^x-1}{e^x+1}$).
 Let $T^{\leq 1}_x\R^2$ denote the space of tangent vectors of length (measured by the standard metric on $\R^2$) $\leq 1$.
Then define $\dir_y(x):=f(|x-y|)\cdot \overline \dir_y(x)\in T^{\leq 1}_x\R^2$.
Note that if $x = y$, we have the zero vector.
For notational simplicity we denote $T^{\leq 1}_{\iota(x)}\R^2$ by $T^{\leq 1}_{x}\R^2$.
We emphasize that the metric on $\mathcal{Q}$ has nothing to do with the embedding $\iota:\mathcal{Q}\rightarrow\R^{2}$.
We define the space $LD(\mathcal{Q})$ ($LD$ stands for \underline Lipschitz and \underline Directions) by
$$LD(\mathcal{Q}):=\prod_{x\in\mathcal{Q}} \left([-d(b,x), d(b,x)]\times T^{\leq 1}_{x}\mathbb{R}^{2}\right).$$
In other words, $LD(\mathcal{Q})$ is the space of sections of a bundle over $\mathcal{Q}$.
$LD(\mathcal{Q})$ is compact by Tychonoff's theorem.
We naturally obtain a map $\Psi:\mathcal{Q}\rightarrow LD(\mathcal{Q})$ by 
$$\Psi(z) := (\psi_{z}(x), \dir_{z}(x))_{x\in\mathcal{Q}}.$$
One may notice some similarity between each $\Psi(z)$ and the polar coordinate.
The first coordinate of $\Psi(z)$, horofunctions, may be seen as the magnitude.
We need some care for the second coordinate of $\Psi(z)$, data of directions. 
For the polar coordinate, directions are defined at the chosen point.
On the other hand, in $\Psi(z)$, $\dir_z(x)$ is the direction from $x$ towards the chosen point $z$.
However, essentially these two are equivalent information.
Hence suggestively, we call each $\Psi(z)$ a {\em horocoordinate} in this paper.
Since $\R^{2}$ is flat, we may simultaneously identify each $T_{x}^{1}(\R^{2})$ with $\mathbb{S}^{1}:=\{\theta\mid \theta\in[0,2\pi)\}$ so that $0\in[0,2\pi)$ corresponds to the positive direction of horizontal lines.
\begin{prop}\label{prop.homeo-image}
The map $\Psi:\mathcal{Q} \rightarrow LD(\mathcal{Q})$ is a homeomorphism onto its image.
\end{prop}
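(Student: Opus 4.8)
The plan is to verify the two things a homeomorphism onto its image requires: that $\Psi$ is continuous and injective, and that its inverse on the image is continuous. Injectivity is immediate, since already the horofunction coordinate $z\mapsto(\psi_z(x))_{x\in\mathcal{Q}}$ is injective by Lemma \ref{lem.horo} (the path metric on $\mathcal{Q}$ makes it a geodesic separable space), so distinct points have distinct images. Continuity of $\Psi$ splits into continuity of the two coordinate functions: the map $z\mapsto(\psi_z(x))_{x}$ into $\prod_x[-d(b,x),d(b,x)]$ is continuous by Lemma \ref{lem.horo}, and for the direction coordinate one checks that for each fixed $x$ the assignment $z\mapsto\dir_z(x)=f(|x-z|)\cdot\overline{\dir}_z(x)\in T^{\leq1}_x\mathbb{R}^2$ is continuous in $z$; since $f(0)=0$ and $f$ is continuous, the product $f(|x-z|)\cdot\overline{\dir}_z(x)$ extends continuously across $z=x$ even though $\overline{\dir}_z(x)$ itself is undefined there. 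Pointwise continuity in each factor gives continuity into the product $LD(\mathcal{Q})$.

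The substantive point is that $\Psi^{-1}$ is continuous on $\Psi(\mathcal{Q})$, i.e.\ if $\Psi(z_n)\to\Psi(z)$ in $LD(\mathcal{Q})$ then $z_n\to z$ in $\mathcal{Q}$. This is exactly where the direction data repairs the failure exhibited in Proposition \ref{prop.noncompact1}: there, a sequence $\ell_{n}$ marching off to infinity along distinct rays had horofunctions converging to $\psi_b$, so the horofunction coordinate alone cannot detect escape. Here I would argue as follows. Suppose $\Psi(z_n)\to\Psi(z)$. Evaluate the direction coordinate at the base point $b$: $\dir_{z_n}(b)\to\dir_z(b)$ in $T^{\leq1}_b\mathbb{R}^2$. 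Write $\dir_{z_n}(b)=f(|z_n-b|)\cdot\overline{\dir}_{z_n}(b)$, where $\overline{\dir}_{z_n}(b)$ is the unit vector at $\iota(b)$ pointing toward $\iota(z_n)$, and $|z_n-b|=d(b,z_n)$ is the distance in $\mathcal{Q}$. Convergence of this product forces two facts. First, $f(|z_n-b|)\to f(|z-b|)$, and since $f$ is strictly increasing hence injective, $d(b,z_n)\to d(b,z)$; in particular the $z_n$ stay in a bounded region of $\mathcal{Q}$, which is the key compactness-restoring input. Second, if $z\neq b$ then $\overline{\dir}_{z_n}(b)\to\overline{\dir}_z(b)$ as unit vectors; because $\iota$ sends $R_\infty$ to the horizontal axis and each $R_q$ to a distinct vertical line, the direction from $\iota(b)$ together with the already-known distance pins down which sheet $R_q$ (or $R_\infty$) the point $z_n$ eventually lies on and where on it — concretely, the angle $\overline{\dir}_{z_n}(b)$ together with $d(b,z_n)$ determines $\iota(z_n)$ uniquely in $\mathbb{R}^2$ for $z_n\neq b$, and distinct sheets meet only at points of $R_\infty$, so for large $n$ all $z_n$ lie on a single sheet $R_{q_0}$ (the one containing $z$) and $\iota(z_n)\to\iota(z)$ there. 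Since $\iota$ restricted to each sheet is an isometric embedding of $\mathbb{R}$, this gives $z_n\to z$ in $\mathcal{Q}$. The remaining case $z=b$ is handled the same way using $d(b,z_n)\to 0$ directly.

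The main obstacle is the bookkeeping in the last step: making rigorous the claim that ``$\iota(b)$-direction plus distance determines the sheet,'' including the degenerate configurations where $z$ or infinitely many $z_n$ sit on the shared axis $R_\infty$ (so that the direction from $b$ is horizontal and one must instead read off the position from $\pi$ and a direction evaluated at a different reference point). I would organize this by fixing, once and for all, a second reference point $x_0\in R_\infty$ distinct from $b$, and noting that the pair of directions $(\dir_{z_n}(b),\dir_{z_n}(x_0))$ together with the two distances $(d(b,z_n),d(x_0,z_n))$ — all of which converge — triangulate $\iota(z_n)$ in $\mathbb{R}^2$ unambiguously, after which the disjoint-sheet structure of $\iota(\mathcal{Q})$ finishes the argument. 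No estimate here is deep; the content is simply that the embedding $\iota$ is injective with closed, locally finite image, so convergence in $\mathbb{R}^2$ of $\iota(z_n)$ to a point of $\iota(\mathcal{Q})$ lifts to convergence in $\mathcal{Q}$.
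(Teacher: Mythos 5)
The forward half of your argument (injectivity, continuity of both coordinates) is fine and matches the paper. The gap is in your proof that $\Psi^{-1}$ is continuous, specifically the final ``lifting'' step. Your triangulation from the two axis points $b,x_0$ only yields $\iota(z_n)\to\iota(z)$ in $\R^2$, and you then assert that convergence in $\R^2$ of $\iota(z_n)$ to a point of $\iota(\mathcal{Q})$ lifts to convergence in $\mathcal{Q}$ because $\iota$ has ``closed, locally finite image''. That is false: $\iota(\mathcal{Q})$ is neither closed (vertical lines over irrationals lie in its closure) nor locally finite near the axis, and the lifting statement itself fails. Concretely, take $z$ at height $h>0$ on the sheet $R_{q_0}$ and $z_n$ at heights $h_n\to h$ on sheets $R_{p_n}$ with rationals $p_n\to q_0$, $p_n\neq q_0$. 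Then $\iota(z_n)\to\iota(z)$ in $\R^2$, and the unit directions and distances seen from $b$ and from any $x_0\in R_\infty$ all converge to those of $z$, so every piece of data your argument uses converges; yet $d(z_n,z)=h_n+|p_n-q_0|+h\to 2h>0$, so $z_n\not\to z$ in $\mathcal{Q}$, and in particular the $z_n$ never end up on the sheet containing $z$.

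This scenario is not a counterexample to the proposition, because the hypothesis $\Psi(z_n)\to\Psi(z)$ gives convergence at \emph{every} $x\in\mathcal{Q}$, not just at points of $R_\infty$: evaluating the horofunction coordinate at $x=z$ gives $\psi_{z_n}(z)=d(z,z_n)-d(b,z_n)\to 2h-d(b,z)\neq -d(b,z)=\psi_z(z)$, so such a sequence cannot converge to $\Psi(z)$. That is exactly the ingredient your argument never uses: after obtaining $\iota(z_n)\to\iota(z)$ (which is also what the paper's proof produces, by intersecting the lines determined by $\dir_{z_n}(x)$ and $\dir_{z_n}(y)$ at two generically chosen points $x,y$), you must still invoke the hypothesis at a point on the sheet of $z$ --- for instance the horofunction at $z$ itself, or, under your reading of the magnitude as $f(d(x,z_n))$, simply the norm of $\dir_{z_n}(z)$, which forces $d(z,z_n)\to 0$ --- to rule out this parallel-sheet scenario. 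A minor additional point: in the paper the factor $|x-y|$ in $\dir_y(x)=f(|x-y|)\cdot\overline\dir_y(x)$ is the Euclidean distance $|\iota(x)-\iota(y)|$ (this is how it is used in the continuity argument), not the path metric $d(x,y)$ as you assume; with that reading your step ``$f$ strictly increasing gives $d(b,z_n)\to d(b,z)$'' only yields Euclidean distance convergence, although the triangulation of $\iota(z_n)$ survives.
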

\begin{proof}
By Lemma \ref{lem.horo}, $\Psi$ is injective and continuous to the first coordinate.

%then $\iota(z_{n})$ also converges to $\iota(z_{\infty})$.
Since $\iota:\mathcal{Q}\rightarrow \R^{2}$ is continuous,  
if a sequence $\{z_{n}\}\subset\mathcal{Q}$ converges to $z_{\infty}$, 
then at each $x\in\mathcal{Q}\setminus\{z_\infty\}$, 
corresponding sequence of directions $\overline \dir_{z_{n}}(x)$ converges to $\overline \dir_{z_{\infty}}(x)$, 
and $|\iota(x)-\iota(z_n)|\rightarrow|\iota(x)-\iota(z_\infty)|$ in $\R^2$.
Hence we have $\dir_{z_n}(x)\rightarrow \dir_{z_\infty}(x)$.
If $z_\infty = x$, then $|\iota(x)-\iota(z_n)|\rightarrow 0$, and hence $d_{z_n}(x)$ converges to the zero vector.
Thus continuity of $\Psi$ follows.

Now suppose that a sequence $\{\Psi(z_{n})\}$ converges to $\Psi(z_{\infty})$ for some $z_{\infty}\in \mathcal{Q}$.
Let us pick a point $x\in\mathcal{Q}\setminus\bigcup_{n\in\N\cup\{\infty\}}\{z_n\}$. 
%Then by definition the direction $\dir_{z_{n}}(x)$ converges to $\dir_{z_{\infty}}(x)$.
For each $n\in \N\cup\{\infty\}$, the direction $\dir_{z_{n}}(x)$ determines a line $\ell_{n}^{x}$.
Now pick another point $y\in\mathcal{Q}\setminus\bigcup_{n\in\N\cup\{\infty\}}\ell_{n}^{x}$.
Such $y$ exists as there are only countably many $\ell_{n}^{x}$'s.
%Similarly we have $\dir_{z_{n}}(y)\rightarrow \dir_{z_{\infty}}(y)$.
Then for $n\in \N\cup\{\infty\}$, the line $\ell_{n}^{y}$ determined by $\dir_{z_{n}}(y)$ satisfies $\ell_{n}^{x}\cap\ell_{n}^{y} = z_{n}$.
Since $\dir_{z_{n}}(x)\rightarrow \dir_{z_{\infty}}(x)$ and $\dir_{z_{n}}(y)\rightarrow \dir_{z_{\infty}}(y)$, we have 
$z_{n}\rightarrow z_{\infty}$.
\end{proof}

By Proposition \ref{prop.homeo-image} and the compactness of $LD(\mathcal{Q})$, 
we see that the closure $\overline{\Psi(\mathcal{Q})}$ is a compactification of $\mathcal{Q}$.
Let us denote each element of $\overline{\Psi(\mathcal{Q})}$ by $(\psi_{\xi}(\cdot), \dir_{\xi}(\cdot))$, 
where $\psi_{\xi}\in\Lipi(\mathcal{Q})$ and $\dir_{\xi}(x)\in T^{\leq 1}_{x}(\R^2)$.
Let $\partial\mathcal{Q}:=\overline{\Psi(\mathcal{Q})}\setminus \Psi(\mathcal{Q})$ denote the boundary.
\end{ex}

%\begin{rmk}
%The definition of the``directions'' depends on the embeddings.
%For example, let $\iota_{1}:\mathcal{R}\rightarrow\R^{2}$ be the map characterized as $\iota_{1}(b) = (0,-1)$ and $\iota_{1}(R_{n})$ equals the line that is an isometric copy of $\R_{\geq 0}$ emanating from $(0,-1)$ and passing through $(n,0)$.
%We define another embedding $\iota_{2}:\mathcal{R}\rightarrow\R^{2}$ so that $\iota_{2}(b) = (0,-1)$ and $\iota_{1}(R_{n})$ equals the line that is an isometric copy of $\R_{\geq 0}$ emanating from $(0,-1)$ and passing through $(1/n,0)$.
%We define 
%$$LD(\mathcal{R}):=\prod_{x\in\mathcal{R}} \left([-d(b,x), d(b,x)]\times T^{\leq 1}_{x}\mathbb{R}^{2}\right).$$
%Both $\iota_{1}$ and $\iota_{2}$ determine contiuous injections $\Psi_{1}:\mathcal{R}\rightarrow LD(\mathcal{R})$ and $\Psi_{2}:\mathcal{R}\rightarrow LD(\mathcal{R})$. 
%However, it is an easy exercise to see that $\Psi_{1}$ is not a homeomorphism onto its image whereas $\Psi_{2}$ is.
%\end{rmk}
\subsection{Finite horofunctions}\label{sec.finite-horo}
We now consider a decomposition of the boundary $\partial \mathcal{Q}$. 
Let $$\mathrm{Ray}\mathcal Q:={\{\gamma:[0,\infty)\rightarrow\iota(\mathcal{Q}),~ \gamma(0) = \iota(b), \gamma\text{ is an isometry}\}},$$ and
$\partial_\mathrm{ray}\mathcal{Q}:=\overline{\mathrm{Ray}\mathcal Q}$,
where we take the closure in the space of closed subsets of $\R^2$ with Hausdorff topology.
The space $\partial_\mathrm{ray}\mathcal{Q}$ is naturally identified with 
$\R^+\sqcup\R^-\sqcup\{\pm\infty\}$
 where $\R^+$ (resp. $\R^-$) is a copy of $\R$,
 corresponding to infinite rays starting from $\iota(b)$, travel along $R_\infty$ to $i_\infty(r)$, and then going up (resp. down) vertically.
 The last $\{\pm\infty\}$ correspond to two rays (toward $-\infty$ and $\infty$) on $R_\infty$.
 Each ray naturally determines a point in $\partial\mathcal{Q}$.
 
After taking the data of directions into account, our boundary $\partial\mathcal{Q}$ contains further points than $\partial_{\mathrm{ray}}\mathcal{Q}$.
Let
\begin{align*}
\PFQ&:=\{(\psi, d)\in\partial\mathcal{Q}\mid \inf_{x\in\mathcal{Q}} \psi(x) > -\infty\} \text{ and }\\
\PIQ&:=\partial\mathcal{Q}\setminus\PFQ.
\end{align*}
The first coordinates of elements in $\PFQ$ are called {\em finite horofunctions}.
\begin{prop}\label{prop.QRS}
The closure $\overline{\Psi(\mathcal{Q})}$
is identified with 
$$\mathbb{R}^{2}\sqcup ((\mathbb{S}^{1}\setminus\{\pi/2,3\pi/2\})\cup\partial_\mathrm{ray}\mathcal{Q}).$$
Moreover we have
\begin{align*}
\PFQ &\cong \R^{2}\setminus\iota(\mathcal{Q}),\text{ and } 
\PIQ \cong (\mathbb{S}^{1}\setminus\{\pi/2,3\pi/2\})\cup\partial_\mathrm{ray}\mathcal{Q}.
\end{align*}

\end{prop}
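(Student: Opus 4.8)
The plan is to analyze the closure $\overline{\Psi(\mathcal{Q})}$ by separating it according to whether the limiting horocoordinate ``sits over'' a point of $\iota(\mathcal{Q})$ in $\R^2$ or escapes to infinity in $\R^2$, and to match these two cases with $\PFQ$ and $\PIQ$ respectively. First I would take a sequence $\{z_n\}\subset\mathcal{Q}$ with $\Psi(z_n)\to(\psi_\xi,\dir_\xi)\in\partial\mathcal{Q}$, and distinguish two alternatives: either $\{\iota(z_n)\}$ stays in a bounded region of $\R^2$, or $|\iota(z_n)|\to\infty$. In the bounded case, passing to a subsequence we may assume $\iota(z_n)\to p\in\R^2$; since $\{z_n\}$ leaves every compact set of $\mathcal{Q}$ (otherwise $\xi$ would not be a boundary point, by Proposition \ref{prop.homeo-image}), the limit point $p$ must lie in $\R^2\setminus\iota(\mathcal{Q})$ — concretely, $p$ is an irrational point $i_\infty(\alpha)$ on the horizontal axis (with $\alpha$ irrational), approached along the rationals, or a limit of points with second coordinate tending to a value not attained. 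For such a $p$ one checks directly that the horofunction $\psi_\xi(x)=d(x,p_\infty)$ computed through $\R^2$-geometry restricted to $\iota(\mathcal{Q})$ is bounded below (indeed $\inf_x\psi_\xi(x)>-\infty$ because $d(b,p)<\infty$ in $\R^2$), so $(\psi_\xi,\dir_\xi)\in\PFQ$; conversely every $p\in\R^2\setminus\iota(\mathcal{Q})$ arises this way, and the resulting horocoordinates are distinct for distinct $p$ by the same intersecting-lines argument used in Proposition \ref{prop.homeo-image} (two generic directions $\dir_\xi(x),\dir_\xi(y)$ recover $p$). This gives the bijection $\PFQ\cong\R^2\setminus\iota(\mathcal{Q})$.

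Next I would treat the case $|\iota(z_n)|\to\infty$. Here the normalization by $d(b,x)$ forces $\inf_x\psi_\xi(x)=-\infty$, so the limit lies in $\PIQ$, and what survives is (a) the asymptotic direction data and (b) the coarse ``which ray'' data. If the $z_n$ escape along a fixed vertical fiber $R_q$ (or converge to escaping along a fiber over a rational limit of basepoints of fibers), the Hausdorff limit of the corresponding geodesic rays from $\iota(b)$ is one of the rays recorded in $\partial_{\mathrm{ray}}\mathcal{Q}$ — going out along $R_\infty$ to $i_\infty(r)$ and then up or down — giving the $\R^+\sqcup\R^-\sqcup\{\pm\infty\}$ piece. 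If instead the $z_n$ escape ``diagonally'' in $\R^2$ (horizontal and vertical coordinates both unbounded, or along fibers over an irrational point pushed to infinity), the direction vectors $\dir_\xi(x)$ all point in a common asymptotic direction $\theta\in\mathbb{S}^1$, and the constraint that the rays must stay asymptotic to $\iota(\mathcal{Q})$ — whose only unbounded ``width'' is along the horizontal axis — excludes purely vertical asymptotic directions, which is exactly why $\theta\notin\{\pi/2,3\pi/2\}$. So $\PIQ\cong(\mathbb{S}^1\setminus\{\pi/2,3\pi/2\})\cup\partial_{\mathrm{ray}}\mathcal{Q}$, and the total space is the asserted disjoint union $\R^2\sqcup((\mathbb{S}^1\setminus\{\pi/2,3\pi/2\})\cup\partial_{\mathrm{ray}}\mathcal{Q})$.

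Finally I would verify that these set-theoretic identifications are homeomorphisms onto the respective subspaces of $\overline{\Psi(\mathcal{Q})}$, i.e. that the topology induced from $LD(\mathcal{Q})$ matches the natural topology on each piece; this is a routine continuity/convergence check using that $\iota$ is continuous and proper onto its image away from infinity, combined with the explicit description of convergence of directions and of $\R^2$-distances, much as in the proof of Proposition \ref{prop.homeo-image}.

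\textbf{Main obstacle.} The delicate point is the case analysis for escaping sequences: showing that the \emph{only} asymptotic directions that can occur in $\PIQ$ are $\mathbb{S}^1\setminus\{\pi/2,3\pi/2\}$ together with the genuinely ray-like limits in $\partial_{\mathrm{ray}}\mathcal{Q}$, and that no ``hybrid'' limits (say, a sequence whose $\R^2$-image escapes along a fiber over an irrational point) produce extra boundary points. This requires carefully tracking, for a geodesic in $\mathcal{Q}$ from $b$ to $z_n$, both its image curve in $\R^2$ and its intrinsic length, and arguing that the pair (direction field $\dir_\xi(\cdot)$, horofunction $\psi_\xi(\cdot)$) is already determined by one of the listed model limits — in other words, that the map from escaping sequences to $(\mathbb{S}^1\setminus\{\pi/2,3\pi/2\})\cup\partial_{\mathrm{ray}}\mathcal{Q}$ is well-defined and injective. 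Everything else is bookkeeping with Tychonoff compactness and the already-proven Proposition \ref{prop.homeo-image}.
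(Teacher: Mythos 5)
Your overall strategy is the same as the paper's (split the boundary according to whether the limiting direction field singles out a finite point of $\R^2$ or a common asymptotic direction, and use the Hausdorff limit of segments from $b$ in the vertical case), but two decisive steps are missing or incorrectly justified. First, in the bounded case your key inference -- ``$\{z_n\}$ leaves every compact set of $\mathcal{Q}$, hence the limit $p$ of $\iota(z_n)$ lies in $\R^{2}\setminus\iota(\mathcal{Q})$'' -- does not hold as stated. Take $z_n$ to be the point at height $1$ on the fiber $R_{q_n}$ with rationals $q_n\to q\in\Q$, $q_n\neq q$: this sequence has no convergent subsequence in $\mathcal{Q}$, yet $\iota(z_n)\to (q,1)\in\iota(\mathcal{Q})$, and $\Psi(z_n)$ does converge in $LD(\mathcal{Q})$ (the first coordinate converges to the horofunction of the axis point $i_\infty(q)$, which is bounded below, and the directions converge to those pointing at $(q,1)$). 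So ``escaping $\mathcal{Q}$-compacta'' does not force the $\R^2$-limit off $\iota(\mathcal{Q})$, and the matching between bounded-below horofunctions and points of $\R^{2}\setminus\iota(\mathcal{Q})$ has to be argued by analyzing which pairs $(\psi_\xi,\dir_\xi)$ actually arise, as the paper does via the map $B$ defined by intersecting the lines determined by $\dir_\xi$; this is genuinely the delicate point of the finite part, not a formality. Relatedly, your concrete description of $\R^{2}\setminus\iota(\mathcal{Q})$ is wrong: an ``irrational point on the horizontal axis'' lies in $\iota(\mathcal{Q})$, since $\iota(R_\infty)$ is the whole axis; the complement consists of the points off the axis whose first coordinate is irrational.

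Second, for $\PIQ$ the heart of the proposition is exactly what you flag as the ``main obstacle'' and never prove: when all limiting directions agree and are not vertical, the horofunction is already determined by the angle $\theta$. The paper establishes this by an explicit computation using the structure of geodesics in $\mathcal{Q}$ (for $\cos\theta>0$, $\psi_\theta(q)=d(\pi(q),q)-d(\pi(q),b)$ if $\pi(q)>0$ and $d(q,b)$ otherwise), and this computation is what gives both well-definedness and injectivity of the identification with $\mathbb{S}^{1}\setminus\{\pi/2,3\pi/2\}$. Without it your map from escaping sequences to $(\mathbb{S}^{1}\setminus\{\pi/2,3\pi/2\})\cup\partial_\mathrm{ray}\mathcal{Q}$ is not shown to be well defined or injective. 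Moreover your stated reason for excluding $\pi/2,3\pi/2$ is off: vertical asymptotic directions certainly occur (e.g. escaping up a single fiber); they are removed from the circle factor not because they are impossible, but because for a vertical common direction the direction field alone does not pin down the boundary point, and the additional data retained is the Hausdorff limit of the segments from $\iota(b)$, i.e. a point of $\partial_\mathrm{ray}\mathcal{Q}$. As it stands the proposal is a plausible outline in the spirit of the paper, but both the finite-part matching and the infinite-part injectivity require arguments you have not supplied.
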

\begin{proof}
We will define a bijection
$B:\overline{\Psi(\mathcal{Q})}\rightarrow \R^{2}\sqcup((\mathbb{S}^{1}\setminus\{\pi/2,3\pi/2\})\cup\partial_\mathrm{ray}\mathcal{Q})$.
Let $\xi = (\psi_{\xi}(\cdot), \dir_{\xi}(\cdot))\in\overline{\Psi(\mathcal{Q})}$.
By definition there is a sequence $\{q_{n}\}\subset\mathcal{Q}$ so that $\Psi(q_{n})$ converges to $\xi$.
Let us denote the angle $\theta_{\xi}(x)\in \mathbb{S}^{1}$ corresponding to the direction $\dir_{\xi}(x)$.

If there exist $x,y\in\mathcal{Q}$ such that $\theta_{\xi}(x) \neq \theta_{\xi}(y)$, 
then the lines determined by the directions $\dir_{\xi}(x)$ and $\dir_{\xi}(y)$ must intersect at some point $r\in\R^{2}$.
Note that for each $q_{n}\in\mathcal{Q}$, $\psi(q_{n})\in LD(\mathcal{Q})$ is characterized so that 
the line determined by the direction $\dir_{q_{n}}(x)$ for every $x\in\mathcal{Q}$ passes through $\iota(q_{n})$.
Hence in this case we have that $\iota(q_{n})$ converges to $r$ and in particular $r$ does not depend on the choice of the sequence $\{q_{n}\}$.
In this case, we let $B(\xi) := r\in\R^{2}$.
Note that for $q\in\mathcal{Q}$, we have $B(q) = \iota(q)$.

If all the angles $\theta_{\xi}(\cdot)\in \mathbb{S}^{1}$ are the same, 
let $\theta:=\theta_{\xi}(\cdot)$.
If $\theta\neq \pi/2, 3\pi/2$ then we define $B(\xi) = \theta\in\mathbb{S}^{1}\setminus\{\pi/2,3\pi/2\}$.
If $\theta = \pi/2, 3\pi/2$, we associate the point in $\partial_\mathrm{ray}\mathcal{Q}$ given as the limit (in the sense of Hausdorff topology) of finite rays connecting $\iota(b)$ and $\iota(q_n)$.
Such limit exists as $\psi_{q_n}\rightarrow\psi_\xi$.
Thus we have defined the map $B:\overline{\Psi(\mathcal{Q})}\rightarrow \R^{2}\sqcup((\mathbb{S}^{1}\setminus\{\pi/2,3\pi/2\})\cup\partial_\mathrm{ray}\mathcal{Q})$.

First note that $B(\mathcal{Q}) = \iota(\mathcal Q)$ and $B$ is injective on $\mathcal Q$.
For each $r=(r_{x}, r_{y})\in\R^{2}\setminus \iota(\mathcal{Q})$, 
there is a sequence $\{q_{n}\}\subset\mathcal{Q}$ so that $\iota(q_{n})\rightarrow r$ in $\R^{2}$.
Then $\Psi(q_{n})$ converges to $(\psi_{\infty}, \dir_{\infty})\in LD(\mathcal{Q})$, 
where $\psi_{\infty} = \psi_{r_{x}}$ and $\dir_{\infty}(x)$ is the direction from $\iota(x)$ to $r$.
Hence $B((\psi_{\infty}, \dir_{\infty})) = r$ and $(\psi_{\infty}, \dir_{\infty})\in\PFQ$. 
Moreover as such $(\psi_{\infty}, \dir_{\infty})$ is characterized so that every line given by $\dir_{\infty}(\cdot)$ passes through $r$, 
the map $B$ is injective on $\PFQ$.
Conversely, if $(\psi_{\infty}, \dir_{\infty})\in\PFQ$ and $\Psi(q_n)\rightarrow(\psi_{\infty}, \dir_{\infty})$, then $d(b,q_n)$ must be finite and hence $\{\iota(q_n)\}$ must converge in $\R^2$.
Therefore we have $\PFQ = B^{-1}(\R^2\setminus\iota(\mathcal Q))$.
This shows the second statement provided $B$ is bijective.

For $\theta\in\mathbb{S}^{1}\setminus\{\pi/2,3\pi/2\}$, there is a sequence $\{q_{n}\}\subset\mathcal{Q}$ so that $\{q_n\}$ leaves every compact set and $\dir_{q_{n}}(b)\rightarrow\theta$.
Then we have $\dir_{q_{n}}(x)\rightarrow\theta$ for every $x\in\mathcal{Q}$ and hence $\theta$ is in the image of $B$.
Furthermore, for each $r_+\in\R^+$, there is a corresponding point $r_\infty\in R_\infty$.
Then there is a sequence $\{s_n\}\subset\mathcal Q$ such that $\pi(s_n)\rightarrow r_\infty$ and
$\dir_{s_n}(b)\rightarrow\pi/2$.
Then the sequence of rays connecting $b$ and $s_n$ converges to the ray corresponding to $r_+$.
The same holds for every $r_-\in\R_-$.
Hence $B$ is surjective.

It remains to prove the injectivity on $\PIQ$.
Suppose $\xi\in\PIQ$ and let $\{t_n\}$ be a sequence so that $\Psi(t_n)\rightarrow\xi$.
Then the directions given by $\xi$ are the same everywhere. % for both $B(\xi)$ and $B(\xi')$.
Let $\theta\in\mathbb{S}^1$ denote the corresponding angle.
Suppose first that $\cos(\theta)>0$. 
In this case, we have $\pi(t_n)\rightarrow\infty$.
Let $q\in\mathcal Q$ be an arbitrary point.
Then for large enough $n>>1$ we have
\begin{itemize}
	\item if $\pi(q)>0$ then the geodesic connecting $b$ and $t_n$ passes through $\pi(q)$, and
	\item if $\pi(q)\leq 0$, then the geodesic connecting $q$ and $t_n$ passes through $b$.
\end{itemize}
Hence the associated horofunction which we denote by $\psi_{\theta}$ is expressed as follows.
\begin{equation*}
\psi_{\theta}(q) =
\begin{cases}
	d(\pi(q), q) - d(\pi(q), b) \text{, if $\pi(q)>0$}\\
	d(q,b)\text{, otherwise.}
\end{cases}
\end{equation*}
In particular $\psi_\theta$ is independent of $\theta$ as long as $\cos\theta>0$.
Hence $\xi$ is determined by the direction $\dir_\xi(b)$, which is equal to $\theta$.
Therefore $B$ is injective at $\xi$. %$B^{-1}(B(\xi)) = \xi$.
Similar arguments shows the injectivity of $B$ at $\xi$ when $\cos\theta<0$ as well.
When $\cos\theta = 0$, in other words $\theta = \pi/2$ or $3\pi/2$.
Then $\pi(t_n)$ must converge to some $r\in R_\infty$, and $\xi$ is determined by $r$ and $\theta$.
Hence we see that $B$ is injective at $\xi$ as well.% $B^{-1}(B(\xi)) = \xi$.
\end{proof}

\begin{rmk}
	As demonstrated in \cite{KT, Brock2}, 
the Bers boundaries and the Weil-Petersson visual spheres depend on the base points.
Roughly speaking, these phenomena are due to the dependence on the base points of ``finite'' points.
In fact, certain ``infinite points'' of the Bers compactification and the Weil-Petersson visual spheres are independent of the base points 
(for more detail, see \S \ref{sec.WP} below).
We may observe similar phenomenon for $\partial\mathcal{Q}$.
The ``visual boundary" in this case is the unit tangent sphere $T_{x}^{1}(\R^{2})$ 
in the ``$x$-th'' coordinate of $\overline{\Psi(Q)}$.
As we have seen above if $\xi\in\PFQ$, then the direction $\dir_\xi(x)$ does depend on $x$.
On the other hand, if $\xi\in\PIQ$, all the direction $\dir_\xi(x)$ are the same, and independent of $x$.
Thus, our example $\mathcal{Q}$ illustrates some situation of boundaries of $\T$.
\end{rmk}

\subsection{Angles and distances on Riemannian manifolds of nonpositive curvature}\label{sec.Angles}
A metric space is CAT(0) if every geodesic triangle is thinner than the comparison triangle in the Euclidean space, see \cite{BH} for more details.
Let $(M, g)$ be a (possibly incomplete and non-proper) smooth Riemannian manifold which is CAT(0), or equivalently let us suppose $(M,g)$ is uniquely geodesic and has nonpositive curvature.

We observe that ``directions" have natural relation with horofunctions on $M$.
First let $d:M\times M\rightarrow \R_{\geq 0}$ denote the distance function and 
$\theta_x(v,w)$ the angle between $v,w\in T_x^1(M)$ at $x\in M$, both given by the Riemannian metric $g$.
We fix a base point $b\in M$.
Note that any CAT(0) space is a uniquely geodesic space, i.e. given any two points there exists a unique geodesic connecting the two (see e.g. \cite[Proposition II.1.4]{BH}).
Hence given any two points $x,y\in M$, the direction at $x$ toward $y$ is well-defined in 
$T_x^1(M)$, which we denote by $\overline \dir_y(x)$.
Let $\sigma:[0,T]\rightarrow M$ be a length minimizing geodesic, i.e. $d(\sigma(s),\sigma(t)) = |s-t|$ for any $s,t\in[0,T]$.
In Bridson-Haefliger \cite{BH}, the following variation formula of the distance function is observed.
\begin{prop} [{\cite[Corollary II.3.6]{BH}}]\label{prop.BH-variation}
	Let $\sigma:[0,T]\rightarrow M$ be a length minimizing geodesic and $p:=\sigma(0)$. Then 
	$$\lim_{s\rightarrow 0} \frac{d(\sigma(0),b)- d(\sigma(s),b)}{s} = \cos\theta_p(\dot\sigma(0), \overline \dir_b(p)),$$
	where $\dot\sigma(0)\in T_p^1(M)$ is the unit tangent vector given by $\sigma$.
\end{prop}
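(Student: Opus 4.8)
The plan is to derive the formula from the first variation of the energy functional, the curvature hypothesis entering only to guarantee that the geodesics involved are unique and vary smoothly. We may assume $p=\sigma(0)\neq b$, since $\overline \dir_b(p)$ is undefined otherwise; put $L:=d(p,b)>0$. Let $c:[0,1]\to M$ be the unique constant-speed geodesic from $p$ to $b$, so $|\dot c|\equiv L$, $\dot c(0)=L\,\overline \dir_b(p)$, and the energy $E(c):=\int_0^1|\dot c(u)|^2\,du$ equals $L^2=d(p,b)^2$. Because $(M,g)$ is uniquely geodesic with nonpositive curvature (so no conjugate points obstruct inverting the exponential map), for all $s$ near $0$ there is a unique constant-speed geodesic $c_s:[0,1]\to M$ from $\sigma(s)$ to $b$; it depends smoothly on $s$ by smooth dependence of geodesics on their endpoints, which is a local statement, so the possible incompleteness of $M$ is no obstruction. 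Moreover $c_0=c$ and $E(c_s)=d(\sigma(s),b)^2$. Let $V(u):=\frac{\partial}{\partial s}\big|_{s=0}c_s(u)$ be the variation field, so that $V(0)=\dot\sigma(0)$ (the source point traces $\sigma$) and $V(1)=0$ (the target $b$ is held fixed).

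Next I would apply the first variation of energy. Since each $c_s$ is a geodesic we have $\nabla_{\dot c}\dot c\equiv 0$, and the first variation formula gives
\[
\tfrac12\,\frac{d}{ds}\Big|_{s=0}E(c_s)=\big\langle V(u),\dot c(u)\big\rangle\Big|_{u=0}^{u=1}=\langle V(1),\dot c(1)\rangle-\langle V(0),\dot c(0)\rangle=-\big\langle\dot\sigma(0),\,L\,\overline \dir_b(p)\big\rangle .
\]
Using $|\dot\sigma(0)|=1$ (recall $\sigma$ is length minimizing, hence unit speed) and $|\overline \dir_b(p)|=1$, this reads $\frac{d}{ds}\big|_{0}E(c_s)=-2L\cos\theta_p(\dot\sigma(0),\overline \dir_b(p))$. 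Since $s\mapsto d(\sigma(s),b)=\sqrt{E(c_s)}$ is then differentiable at $0$ and $d(p,b)=L>0$, the chain rule yields
\[
\frac{d}{ds}\Big|_{s=0}d(\sigma(s),b)=\frac{1}{2L}\,\frac{d}{ds}\Big|_{s=0}E(c_s)=-\cos\theta_p(\dot\sigma(0),\overline \dir_b(p)),
\]
and negating this gives exactly $\lim_{s\to0}\big(d(\sigma(0),b)-d(\sigma(s),b)\big)/s=\cos\theta_p(\dot\sigma(0),\overline \dir_b(p))$, as claimed.

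The one genuinely delicate point is hidden in the setup: on a general Riemannian manifold the squared distance to $b$ need not be differentiable at $p$ (several minimizing geodesics, or conjugate points, can spoil it), so $c_s$ need not be well defined or depend smoothly on $s$; it is precisely the uniquely-geodesic, nonpositively curved hypothesis that turns $c_s$ into an honest smooth variation and legitimizes the computation above. Everything else is the routine first variation of energy. As an alternative, one can run the Bridson--Haefliger argument intrinsically: form the Euclidean comparison triangle on the vertices $p,\sigma(s),b$, use the Euclidean law of cosines to get $d(\sigma(s),b)=L-s\cos\bar\alpha(s)+O(s^2)$ where $\bar\alpha(s)$ is the comparison angle at $p$, and then invoke the CAT(0) fact that $\bar\alpha(s)$ converges as $s\to0$ to the Alexandrov angle at $p$ between $\sigma$ and the segment from $p$ to $b$ --- which in a smooth Riemannian manifold coincides with $\theta_p(\dot\sigma(0),\overline \dir_b(p))$; this variant never refers to differentiability of the distance function.
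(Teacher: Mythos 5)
Your proof is correct, but it takes a genuinely different route from the paper: the paper gives no argument for this proposition at all, it simply quotes Corollary II.3.6 of Bridson--Haefliger, whose proof is purely metric (Euclidean comparison triangles, the law of cosines, and convergence of comparison angles to the Alexandrov angle) and hence valid in any CAT(0) space with no smooth structure. You instead use the smooth Riemannian structure that the paper's hypotheses do provide and derive the formula from the first variation of energy along the family $c_s$ of geodesics from $\sigma(s)$ to $b$; the computation, the sign conventions, and the passage from $E(c_s)=d(\sigma(s),b)^2$ to the one-sided derivative of the distance are all correct (and your closing sketch of the comparison-triangle argument is essentially the cited Bridson--Haefliger proof). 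What each approach buys: yours is a short calculus-of-variations computation but is tied to the Riemannian category, while the metric proof is what makes the statement available verbatim from the reference and generalizes beyond manifolds. The one step you assert rather than prove is the smooth dependence of $c_s$ on $s$: nonpositive curvature rules out conjugate points, so the implicit function theorem applied to the exponential map yields a smooth family of geodesics from $\sigma(s)$ to $b$ with initial velocities near $\dot c_0(0)$, but one must still identify this family with the \emph{distance-realizing} geodesics $c_s$, since otherwise the identity $E(c_s)=d(\sigma(s),b)^2$ is not justified; this follows from the CAT(0) fact that geodesics vary continuously with their endpoints (so $\dot c_s(0)\rightarrow\dot c_0(0)$) together with the local uniqueness in the implicit function theorem. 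That point deserves a sentence, but it is a matter of completeness of exposition rather than a gap in the method; note also that, as you observe, both the statement and the proof implicitly assume $p\neq b$.
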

As before, we define $\dir_y(x) = f(d(x,y))\overline \dir_y(x)$ where $f(x)=(e^x-1)/(e^x+1)$.
Now let us consider the embedding 
$$\Psi:M\rightarrow \prod_{x\in M} ([-d(b,x), d(b,x)]\times T_x^{\leq 1}(M)),$$
defined by $\Psi(z):= \prod_x (\psi_z(x), \dir_z(x))$, where $\psi_z(\cdot)$ is the horofunction associated to $z$ with base point $b$.
We make the following assumption.
\begin{assump}\label{assump}
	$(M,g)$ is a CAT(0), smooth Riemannian manifold of dimension $\geq 2$,
	and the exponential map from any point is a diffeomorphism from 
	its open domain onto $M$.
\end{assump}
Then we have 
\begin{prop}\label{prop.horo-homeo-riemann}
	If $(M,g)$ satisfies Assumption \ref{assump}, then 
	$\Psi$ is a homeomorphism onto its image, and its closure $\overline{\Psi(M)}$ is compact.
\end{prop}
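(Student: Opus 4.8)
The plan is to establish, in order: (i) $\overline{\Psi(M)}$ is compact; (ii) $\Psi$ is continuous; (iii) $\Psi$ is injective; (iv) $\Psi^{-1}$ is continuous on $\Psi(M)$. Part (i) is immediate: each factor $[-d(b,x),d(b,x)]\times T_x^{\leq 1}(M)$ is the product of a compact interval with a closed Euclidean ball, hence compact and Hausdorff, so by Tychonoff the ambient product is compact Hausdorff, and $\overline{\Psi(M)}$, being closed in it, is compact.

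The computational heart of the argument is to express the direction coordinate through the exponential map. Fix $x\in M$. Since $(M,g)$ is CAT(0) it is uniquely geodesic, so for $z\neq x$ there is a unique minimizing geodesic from $x$ to $z$; parametrized by arclength such a path is a smooth Riemannian geodesic, hence equals $s\mapsto \exp_x(s\,\overline\dir_z(x))$ on $[0,d(x,z)]$, and evaluating at $s=d(x,z)$ together with the injectivity of $\exp_x$ from Assumption \ref{assump} yields
\[
\exp_x^{-1}(z)=d(x,z)\,\overline\dir_z(x),\qquad d(x,z)=|\exp_x^{-1}(z)|.
\]
Thus $\dir_z(x)=f(|\exp_x^{-1}(z)|)\,\exp_x^{-1}(z)/|\exp_x^{-1}(z)|$ for $z\neq x$, which depends continuously on $z$ because $\exp_x^{-1}$ is a diffeomorphism, while as $z\to x$ we have $|\dir_z(x)|=f(d(x,z))\to 0$, so $\dir_z(x)\to 0=\dir_x(x)$; hence $z\mapsto\dir_z(x)$ is continuous on all of $M$. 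Combined with the obvious continuity of $z\mapsto\psi_z(x)=d(x,z)-d(b,z)$, this gives (ii).

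For (iii) and (iv) I would use only the coordinate at the basepoint $b$. Since $|\dir_z(b)|=f(d(b,z))$ and $f$ is injective, $\dir_z(b)$ determines $d(b,z)$, and if $d(b,z)>0$ also $\overline\dir_z(b)=\dir_z(b)/|\dir_z(b)|$, so that $z=\exp_b(d(b,z)\,\overline\dir_z(b))$ is recovered (and $z=b$ when $d(b,z)=0$); this gives injectivity, and one may alternatively invoke Lemma \ref{lem.horo}. If $\Psi(z_n)\to\Psi(z_\infty)$ with $z_\infty\in M$, then $\dir_{z_n}(b)\to\dir_{z_\infty}(b)$, so $f(d(b,z_n))\to f(d(b,z_\infty))$ and, $f$ being a homeomorphism onto $[0,1)$, $d(b,z_n)\to d(b,z_\infty)$; if $d(b,z_\infty)=0$ this already gives $z_n\to b$, and otherwise $\overline\dir_{z_n}(b)\to\overline\dir_{z_\infty}(b)$ as well, whence $\exp_b^{-1}(z_n)=d(b,z_n)\,\overline\dir_{z_n}(b)\to\exp_b^{-1}(z_\infty)$ and, applying the continuous map $\exp_b$, $z_n\to z_\infty$. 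This proves (iv), and together with (i)--(iii) the proposition.

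The step I expect to require the most care is the identification $\exp_x^{-1}(z)=d(x,z)\,\overline\dir_z(x)$: one must be sure that the ``direction at $x$ toward $z$'' used to define $\dir$ — the unit initial velocity of the CAT(0) geodesic from $x$ to $z$ — genuinely coincides with the radial direction of the Riemannian exponential map and that this radial geodesic reaches all of $M$. This is precisely where both the CAT(0) hypothesis (uniqueness and regularity of minimizers) and Assumption \ref{assump} (injectivity and surjectivity of $\exp_x$) enter. Everything after that is a routine chase through the product topology, and it is worth noting that a single coordinate, at the basepoint $b$, already suffices to recover the point.
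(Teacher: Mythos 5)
Your proof is correct, and it takes a genuinely different route from the paper's in the injectivity/inverse-continuity half. For continuity of $\Psi$ you and the paper use the same input: the paper simply asserts that continuity of $z\mapsto\dir_z(x)$ follows from the exponential map being a diffeomorphism, which is exactly what your identity $\exp_x^{-1}(z)=d(x,z)\,\overline\dir_z(x)$ (valid because the CAT(0) minimizer, parametrized by arclength, is a smooth Riemannian geodesic with initial velocity $\overline\dir_z(x)$, and $\exp_x$ is injective by Assumption \ref{assump}) makes explicit. For the inverse, however, the paper follows the model of Proposition \ref{prop.homeo-image}: the directions at two distinct points determine two geodesic rays whose intersection recovers $z$, combined with Lemma \ref{lem.horo} for the horofunction coordinate. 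You instead use the single coordinate at the basepoint $b$: since $|\dir_z(b)|=f(d(b,z))$ with $f$ a homeomorphism onto $[0,1)$, the map $z\mapsto\dir_z(b)$ is $\exp_b^{-1}$ followed by the radial rescaling $v\mapsto f(|v|)\,v/|v|$, hence a homeomorphism of $M$ onto the open unit ball of $T_bM$, and projecting $\Psi$ to that one coordinate yields injectivity and continuity of $\Psi^{-1}$ simultaneously. Your argument is shorter, makes the role of Assumption \ref{assump} completely explicit, and in fact shows more (one coordinate already embeds $M$); the paper's ray-intersection argument is uniform with its toy example $\mathcal{Q}$ and does not rely on the norm of the direction encoding the distance, a feature that is the relevant one when analyzing boundary points, where a single coordinate no longer determines a point. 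One cosmetic remark: you check continuity of the inverse with sequences inside a product over an uncountable index set, which is not first countable; either phrase it with nets or note, as your computation already shows, that $\Psi^{-1}$ is the restriction of the continuous map given by the $b$-th coordinate projection followed by the radial inverse and $\exp_b$ --- the paper argues with sequences too, so this is not a substantive gap.
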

\begin{proof}
Since $\dir_x(z)\in T_x(M)$ for any $z\in M$, that $\Psi$ on the second coordinate (i.e. $\dir_z(x)$'s ) is continuous follows from the assumption that exponential maps are diffeomorphic.
Also as $\dir_z(x)$ determines a unique geodesic, let $\dir_z(x)$ also denote the geodesic ray starting at $x$ determined by the direction.

Then similarly to the argument of the proof of Proposition \ref{prop.homeo-image},
one sees that intersection of rays $\dir_z(x)$ is $z$.
This shows that inverse of $\Psi$ is continuous.
Combined with Lemma \ref{lem.horo}, we see that $\Psi$ is a homeomorphism onto its image.
\end{proof}

By Proposition \ref{prop.horo-homeo-riemann}, we see that the closure $\overline{\Psi(M)}$ is compact.
Hence we may define the following.
\begin{defi}
We call $\overline{\Psi(M)}$ the {\em horocoordinate compactification} and $\overline{\Psi(M})\setminus\Psi(M)$ the {\em horocoordinate boundary} of $M$ with respecet to $\Psi$.
\end{defi}
Then together with Proposition \ref{prop.BH-variation}, we have:
\begin{coro}\label{cor.int-formula}
Let $\xi = (\psi_\xi, \dir_\xi)\in\overline{\Psi(M)}$ and $\gamma:[0,d(b,x)]\rightarrow M$ denote the unique geodesic connecting $b$ and $x$ parametrized by arc length.
Then
$$\psi_\xi(x) = 	\int_0^{d(b,x)} \cos\theta_{\gamma(t)}\left(\dot\gamma(t), \dir_\xi(\gamma(t))\right)dt.$$
In particular, given $(\psi,\dir),(\psi', \dir')\in \overline{\Psi(M)}$ we have $(\psi,\dir) = (\psi', \dir')$ if and only if $\dir = \dir'$.
\end{coro}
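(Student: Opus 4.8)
The plan is to prove the integral formula first for interior points $z\in M$ by integrating the first variation formula of Proposition \ref{prop.BH-variation} along $\gamma$, then to extend it to all of $\overline{\Psi(M)}$ by a limiting argument; the final ``if and only if'' is then immediate. For the interior case, fix $z,x\in M$, put $T:=d(b,x)$, and let $\gamma:[0,T]\to M$ be the unit-speed geodesic from $b$ to $x$ (unique and length-minimizing by Assumption \ref{assump}). Set $h(t):=\psi_z(\gamma(t))=d(\gamma(t),z)-d(b,z)$, so $h(0)=0$ and $h(T)=\psi_z(x)$. Applying Proposition \ref{prop.BH-variation} at each $t\in(0,T)$ to the geodesic segment $s\mapsto\gamma(t+s)$, with $z$ in place of the fixed point there, one obtains $h'(t)=\cos\theta_{\gamma(t)}(\dot\gamma(t),\dir_z(\gamma(t)))$; here the angle depends only on the direction of $\dir_z(\gamma(t))$, which is well defined because $\gamma(t)$ and $z$ are joined by a unique geodesic. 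As $\gamma$ is smooth and $t\mapsto\overline\dir_z(\gamma(t))$ is continuous (the exponential maps being diffeomorphisms, by Assumption \ref{assump}), $h'$ is continuous; hence $h\in C^1([0,T])$, and the Fundamental Theorem of Calculus gives $\psi_z(x)=\int_0^T\cos\theta_{\gamma(t)}(\dot\gamma(t),\dir_z(\gamma(t)))\,dt$.

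For a general $\xi=(\psi_\xi,\dir_\xi)\in\overline{\Psi(M)}$, pick $z_n\in M$ with $\Psi(z_n)\to\xi$; if $\xi\in\Psi(M)$ the previous paragraph applies, so assume $\xi\notin\Psi(M)$. Fix $x\in M$ and the geodesic $\gamma:[0,T]\to M$ from $b$ to $x$, which does not depend on $n$. Convergence in the product topology of $\overline{\Psi(M)}$ gives $\psi_{z_n}(x)\to\psi_\xi(x)$ and $\dir_{z_n}(\gamma(t))\to\dir_\xi(\gamma(t))$ for every $t\in[0,T]$. Since $\Psi$ is a homeomorphism onto its image (Proposition \ref{prop.horo-homeo-riemann}) and $\xi\notin\Psi(M)$, no subsequence of $\{z_n\}$ converges to a point of the compact arc $\gamma([0,T])$; because $|\dir_{z_n}(\gamma(t))|=f(d(\gamma(t),z_n))$ and $f^{-1}$ is continuous with $f^{-1}(0)=0$, this forces $\dir_\xi(\gamma(t))\neq 0$ for each $t$, and also $z_n\notin\gamma([0,T])$ for all large $n$, so the interior-point identity $\psi_{z_n}(x)=\int_0^T\cos\theta_{\gamma(t)}(\dot\gamma(t),\dir_{z_n}(\gamma(t)))\,dt$ is available for such $n$. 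Then $\cos\theta_{\gamma(t)}(\dot\gamma(t),\dir_{z_n}(\gamma(t)))\to\cos\theta_{\gamma(t)}(\dot\gamma(t),\dir_\xi(\gamma(t)))$ pointwise in $t$, the integrands are bounded by $1$ on the finite interval $[0,T]$, and dominated convergence yields $\psi_\xi(x)=\int_0^T\cos\theta_{\gamma(t)}(\dot\gamma(t),\dir_\xi(\gamma(t)))\,dt$.

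Finally, for the ``if and only if'': the forward direction is trivial, and if $(\psi,\dir),(\psi',\dir')\in\overline{\Psi(M)}$ satisfy $\dir=\dir'$, then, since for each $x\in M$ the geodesic $\gamma$ from $b$ to $x$ depends only on $b$ and $x$, the integral formula gives $\psi(x)=\psi'(x)$ for every $x$, hence $(\psi,\dir)=(\psi',\dir')$. I expect the only subtle point to be the limiting step of the second paragraph, and specifically the claim that $\dir_\xi(\gamma(t))$ never degenerates to the zero vector --- equivalently, that a sequence $z_n$ with $\Psi(z_n)\to\xi\notin\Psi(M)$ stays at a positive distance from each point of the compact arc $\gamma([0,T])$ --- which is what keeps the angle, and hence the integrand, continuous at the limiting configuration. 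Granting this and the finiteness of the interval of integration, the remainder is a routine combination of Proposition \ref{prop.BH-variation}, the Fundamental Theorem of Calculus, and dominated convergence.
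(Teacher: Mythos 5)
Your argument is essentially the paper's proof of Corollary \ref{cor.int-formula}: the interior case by integrating the first-variation formula of Proposition \ref{prop.BH-variation} along $\gamma$ (the paper invokes continuity of the angle from \cite{BH} where you use the exponential-map assumption, an immaterial difference), the general case by pointwise convergence of the directions $\dir_{z_n}(\gamma(t))\rightarrow\dir_\xi(\gamma(t))$ plus dominated convergence on the compact interval $[0,d(b,x)]$, and the final equivalence read off from the integral formula together with $\psi(b)=\psi'(b)=0$. Your additional check that $\dir_\xi(\gamma(t))$ cannot degenerate to the zero vector when $\xi\notin\Psi(M)$ (so the limiting angles, hence cosines, are well defined) is a worthwhile point the paper leaves implicit; note, however, that with the orientation $\gamma(0)=b$ the proposition as stated actually gives $h'(t)=-\cos\theta_{\gamma(t)}\left(\dot\gamma(t),\dir_z(\gamma(t))\right)$, so your claim $h'(t)=+\cos\theta_{\gamma(t)}\left(\dot\gamma(t),\dir_z(\gamma(t))\right)$ is a sign-bookkeeping slip shared with the paper's own proof (compare the minus sign appearing in the analogous Theorem \ref{thm.integral}).
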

\begin{proof}
	First, let us suppose $\xi = \Psi(z)$ for some $z\in M$.
	In CAT(0) spaces, the angles $\theta_t:=\theta_{\gamma(t)}\left(\dot\gamma(t), \dir_z(\gamma(t))\right)$ 
	is a continuous function of $t$, see \cite[Chapter II, Proposition 3.3]{BH}.
	Hence by Proposition \ref{prop.BH-variation}, we see that $\frac{d}{dt}d(\gamma(t),z) = \cos\theta_t$ is continuous and 
	$$d(x,z) - d(b,z) =  \int_0^{d(b,x)} \frac{d}{dt}d(\gamma(t),z)dt = \int_0^{d(b,x)} \cos\theta_t dt.$$
	
	Now if $\Psi(z_n)\rightarrow\xi$, then by definition, for any $x\in M$ and $v\in T_x^1(M)$, we have
	$$\cos\theta_{x}\left(v, \dir_{z_n}(x)\right)\rightarrow \cos\theta_{x}\left(v, \dir_\xi(x)\right).$$
	Then since $[0,d(b,x)]$ is compact, by the dominated convergence theorem,
	\begin{align*}
	\psi_\xi(x) = \lim_{n\rightarrow\infty}\psi_{z_n}(x) 
	&= 	
	\lim_{n\rightarrow\infty}\int_0^{d(b,x)} \cos\theta_{\gamma(t)}\left(\dot\gamma(t), \dir_{z_n}(\gamma(t))\right)dt\\
	&=
	\int_0^{d(b,x)} \lim_{n\rightarrow\infty}\cos\theta_{\gamma(t)}\left(\dot\gamma(t), \dir_{z_n}(\gamma(t))\right)dt\\
	&= \int_0^{d(b,x)} \cos\theta_{\gamma(t)}\left(\dot\gamma(t), \dir_\xi(\gamma(t))\right)dt.\\
	\end{align*}
	The last assertion follows from the integral formula and the fact $\psi(b) = \psi'(b) = 0$.
\end{proof}
\begin{rmk}
	As we have fixed the base point $b$, and $M$ is CAT(0), the geodesic $\gamma$ in the statement of Corollary \ref{cor.int-formula} is determined by $x$.
	Therefore the second coordinate of $\Psi(z)$ can be regarded as the data of derivatives of the horofunction in the first coordinate of $\Psi(z)$.
\end{rmk}

\begin{rmk}\label{rmk.1-lip}
	That horofunctions are $1$-Lipschitz is crucial when we compactify a given metric space.
	This follows from the triangle inequality.
	However, as we shall prove in \S \ref{sec.not-distance}, the function on the Teichm\"uller space given by the renormalized volume does not satisfy the triangle inequality.
	We remark here that as $\cos\theta\leq 1$, Corollary \ref{cor.int-formula} provides us an alternative proof of the $1$-Lipschitz property (compare with Theorem \ref{thm.integral} below).
\end{rmk}

%%%%%%%%%%%%%%%%%%%%%%%%%%%%%%%%%%%%%%%%%%%%%%%%%%%%%%%%%%%%%%%%%%%%%%%%%%%%%%%%%%%%%%%%%%%%%%%%%%%%%%%%%%%%%%%%%%%%%%%%%%%%%%%%%%%%%%%%%%%%%%%%%%%%%%%%%%%%%%%%%%%%%%%%%%%%%%%%%%%%%%%%%%%%%%%%%%%%%%%%%%%%%%%%%%%%%%%%%%%%%%%%%%%%%%%%%%%%%%%%%%%%%%%%%%%%%%%%%%%%%%%%%%%%%%%%%%%%%%%%%%%%%%%%%%%%%%%%%%%%%%%%%%%%%%%%%%%%%%%%%%%%%%%%%%%%%%%%%%%%%%%%%%%%%%%%%%%%%%%%%%%%%%%%%%%%%%%%%%%%%%%%%%%%%%%%%%%%%%%%%%%%%%%%%%%%%%%%%%%%%%%%%%%%%%%%%%%%%%%%%%%%%%%%%%%%%%%%%%%%%%%%%%%%%%%%%%%%%%%%%%%%%%%%%%%%%%%%%%%%%%%%%%%%%%%%%%%%%%%%%%%%%%%%%%%%%%%%%%%%%%%%%%%%%%%%%%%%%%%%%%%%%%%%%%%%%%%%%%%%%%%%%%%%%%%%%%%%%%%%%%%%%%%%%%%%%%%%%%%%%%%%%%%%%%%%%%%%%%%%%%%%%%%%%%%%%%%%

\section{Preliminaries for Teichm\"uller theory}\label{sec.pre}
Our goal is to compactify the Teichm\"uller space by using the renormalized volume.
There will be an interplay of the theories of 2 and 3-dimensional hyperbolic geometry, complex analysis, and the geometric group theory.
In this section, we quickly review basic properties, mainly for the Teichm\"uller theory, that we need later.
Throughout the paper, we fix an orientable closed surface $S$ of genus greater than $1$.

%%%%%%%%%%%%%%%%%%%%%%%%%%%%%%%%%%%%%%%%%%%%%%%%%%%%%%%%%%%%%%%%%%%%%%%%%%%%%%%%%%%%%%%%%%%%%%%%%%%%%%%%%%%%%%%%%%%%%%%%%%%%%%%%%%%%%%%%%%%%%%%%%%%%%%%%%%%%%%%%%%%%
\subsection{Beltrami differentials and quadratic differentials}
The Teichm\"uller space $\mathcal{T}(S)$ is the space of marked hyperbolic or complex structures on $S$.
Let $X\in\mathcal{T}(S)$. 
By abuse of notation, we regard $X$ as a Riemann surface or a hyperbolic surface depending on the context.
Let us first consider $X$ as a Riemann surface.
In Teichm\"uller theory, there are two important differentials, Beltrami differentials and quadratic differentials.
Let $T^{1,0}X$ and $T^{0,1}X$ denote the subspaces of the cotangent bundle which corresponds to holomorphic and anti-holomorphic part respectively.
A {\em Beltrami differential} is a section of $T^{0,1}X\otimes (T^{1,0}X)^{*}$ which is locally expressed by $\beta(z)d\bar z/dz$.
A {\em quadratic differential} is a section of $T^{1,0}X\otimes T^{1,0}X$ whose local expression is $q(z)dz^{2}$.
If moreover, $q(z)$ is holomorphic on each local chart, it is called a holomorphic quadratic differential.
The space of holomorphic quadratic differentials on $X$ is denoted by $\QD(X)$.
By the Riemann-Roch theorem, $\QD(X)$ is isomorphic to $\mathbb{C}^{3g-3}$.
Hence the space of all holomorphic quadratic differentials on $S$ defines a vector bundle over $\T$, that we denote by $\QD(S)$.

Quadratic differentials arise in several natural ways in the theory of Teichm\"uller spaces, see e.g. \cite{Gupta} for details.
In this paper, quadratic differentials that appear as {\em Schwarzian derivatives} play an important role, 
whose definition we postpone until \S \ref{sec. QF and S}.
Here we will recall so-called Hubbard-Masur differentials \cite{HM}.
A measured foliation on $S$ is a singular foliation with a transverse measure.
Let $\mathcal{MF}(S)$ denote the space of measured foliations on $S$, 
see \S \ref{sec. compact T} below for the topology on $\mathcal{MF}(S)$ 
and its relation to simple closed curves and so on.
Given a quadratic differential $q$ locally expressed as $q(z)dz^{2}$ on $X$,
 its horizontal directions is the set of directions $v\in T_{z}X$ defined by $q(z)v^{2}\in\mathbb{R}_{>0}$.
The horizontal direction equipped with the transversal measure defined for any transversal arc $\alpha$ by
$$\int_{\alpha}|\mathrm{Im}~ q(z)^{1/2}dz|$$
determines a measured foliation which is called the {\em horizontal foliation} of $q$.
Let $h_{X}(q)$ denote the horizontal foliation of $q$.
Thus we get a map
$h_{X}:\QD(X) \rightarrow \mathcal{MF}(S)$.
Hubbard-Masur differential is defined as the converse of this map:
\begin{thm}[\cite{HM}]\label{thm.HM}
The map $h_{X}:\QD(X) \rightarrow \mathcal{MF}(S)$ is a homeomorphism.
\end{thm}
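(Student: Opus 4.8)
The plan is to treat $h_X$ as a map between two real manifolds of the same dimension and to run a covering-space argument. By the Riemann--Roch theorem $\QD(X)\cong\mathbb{C}^{3g-3}$, and $\mf\cong\mathbb{R}^{6g-6}$ by Thurston's parametrization of measured foliations (see \cite{FLP} and \S\ref{sec. compact T}); both spaces are connected, and removing the zero element makes each simply connected since $6g-6\ge 3$. I would reduce the theorem to three claims: $h_X$ is continuous, proper, and a local homeomorphism on $\QD(X)\setminus\{0\}$. Granting these, a proper local homeomorphism between such manifolds is a finite covering map, and a covering of a simply connected space with connected total space has a single sheet; hence $h_X$ restricts to a homeomorphism $\QD(X)\setminus\{0\}\to\mf\setminus\{0\}$. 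Since $h_X(0)$ is the empty foliation and properness forces $q_n\to 0$ whenever $h_X(q_n)\to 0$, this extends to a homeomorphism $\QD(X)\to\mf$. (One could equally conclude from injectivity of $h_X$ together with invariance of domain and properness.)

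Continuity is immediate from the definition: the transverse measure that $h_X(q)$ assigns to a fixed arc $\alpha$ is $\int_\alpha|\mathrm{Im}\,q^{1/2}\,dz|$, which varies continuously with $q$. Properness I would deduce from the homogeneity $h_X(tq)=\sqrt{t}\,h_X(q)$ for $t>0$, together with the fact that $h_X(q)=0$ forces $q=0$: a nonzero holomorphic quadratic differential has only finitely many zeros, so its horizontal foliation assigns positive mass to some transverse arc, hence has positive intersection number with some simple closed curve. Thus the image of the unit sphere of $\QD(X)$ is a compact subset of $\mf$ bounded away from the empty foliation, and rescaling shows $\|h_X(q)\|\to\infty$ as $\|q\|\to\infty$; so preimages of compact sets are bounded, hence compact.

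The heart of the matter --- and the step I expect to be the main obstacle --- is that $h_X$ is a local homeomorphism at every $q\neq 0$. On the open, dense stratum of differentials with only simple zeros I would pass to the orienting double cover, where $\pi^*q$ becomes the square of a holomorphic $1$-form $\omega$, and work in period coordinates: a neighbourhood of $q$ in $\QD(X)$ and a neighbourhood of $h_X(q)$ in $\mf$ both carry natural coordinates modeled on $\mathbb{R}^{6g-6}$ (relative periods of $\omega$ on the one side, weights on a train track carrying the horizontal foliation on the other), in which $h_X$ is $\mathbb{R}$-linear with nonsingular Jacobian; equivalently, one shows that the differential $dh_X|_q$ is an isomorphism. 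At differentials with zeros of higher order the period coordinates degenerate, and a stratified limiting argument is required to propagate the local-homeomorphism property across all of $\QD(X)\setminus\{0\}$; carrying this out is the technical core of Hubbard--Masur \cite{HM}, and is where the genuine work lies. An alternative route to injectivity, meeting the same difficulty, is via the identity that $i(h_X(q),\gamma)$ equals the $q$-height of every simple closed curve $\gamma$, combined with the extremal-length uniqueness of Jenkins--Strebel differentials for weighted multicurves and a limiting argument over foliations.
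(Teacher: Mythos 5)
Your overall architecture (continuity, properness via homogeneity and the compact unit sphere, then a proper local homeomorphism between spaces whose punctured versions are simply connected, hence a one-sheeted covering) is a reasonable skeleton, and the continuity and properness parts are essentially fine. But the proposal has a genuine gap exactly where you flag it: the local homeomorphism claim on $\QD(X)\setminus\{0\}$ is never proved, and it is not a technical afterthought — it \emph{is} the Hubbard--Masur theorem. Note that the paper offers no proof to compare against: it quotes the result from \cite{HM}, so the burden of the argument cannot be discharged by citing that same paper inside the proof.

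Moreover, the sketch you give for the principal stratum does not actually bear on the needed statement. Period coordinates (relative periods of $\omega$ on the orienting double cover) linearize the horizontal-foliation map on the \emph{stratum} of the bundle $\QD(S)$, where the underlying complex structure is allowed to vary; the imaginary parts of the periods are indeed $\mathbb{R}$-linear coordinates for the foliation there. But $h_X$ is the restriction of that map to the fixed fiber $\QD(X)$, which is not a linear subspace in period coordinates, so the assertion that $h_X$ is ``$\mathbb{R}$-linear with nonsingular Jacobian'' in such charts is not meaningful as stated, and the nonsingularity of $dh_X|_q$ on the fiber is precisely the transversality statement one must prove (this is where Hubbard--Masur, and the later proofs by Kerckhoff, Gardiner via second variation of extremal length, and Wolf via harmonic maps, do their real work). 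The same applies to your alternative injectivity route through Jenkins--Strebel differentials: the ``limiting argument over foliations'' is again the substantive step. As it stands, the proposal is a correct reduction of the theorem to its hard core, together with an acknowledgement that the core is left to the literature; it is not a proof.
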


\begin{notation*}
By Theorem \ref{thm.HM}, each $F\in\mathcal{MF}(S)$ corresponds to a quadratic differential 
$q_{F}(X):=h_{X}^{-1}(F)$. This foliation $q_{F}(X)$ is called {\em the Hubbard-Masur quadratic differential}
associated to $F$ on $X$.
\end{notation*}

There are several types of natural norms on $\QD(X)$.
First, we recall the $L^{1}$-norm, the area of the quadratic differential, 
and $L^2$-norm.
$$||q||_{1}:=\int_{X}|q| 
\text{~~, and }~~ ||q||_2:=\left(\int_X\frac{|q|^2}{\rho^2}\right)^{1/2},$$
where $\rho |dz|$ is the hyperbolic metric determined by $X$.
We also need so-called the $L^{\infty}$-norm, or the sup norm:
$$||q||_{\infty}:=\sup_{z\in X}\frac{|q(z)|}{\rho^2(z)}.$$

Let $\beta$ be a Beltrami differential and $q\in \QD(X)$.
The product of $\beta$ and $q$ gives a section in $T^{0,1}X\otimes T^{1,0}X$.
Hence there is a natural pairing of $q$ and $\beta$.
$$\langle q,\beta\rangle:=\int_{X} q\beta.$$
The real part $\mathrm{Re}\langle q,\beta\rangle$ turns out to be very important for the study of the renormalized volume (see Theorem \ref{thm.dRvol} below).
By the pairing, we define the $L^{\infty}$-norm of Beltrami differentials:
$$||\beta||_{\infty}:=\sup_{||q||_{1} = 1}\langle q,\beta\rangle.$$
Then let
$L^{\infty}(X):=\{\beta\mid ||\beta||_{\infty}<\infty\}$ be 
the space of Beltrami differentials with bounded $L^{\infty}$-norm.
It has a subspace
$$K := \{\beta\in L^{\infty}(X)\mid \langle q,\beta\rangle = 0 \text{ for any }q\in \QD(X)\}.$$
The quotient $L^{\infty}(X)/K$ can be identified with the tangent space of $\mathcal{T}(S)$ at $X$.
See e.g. \cite[Chapter 11]{FaM}.
For a differentiable path $\sigma$ in $\T$ with $\sigma(t) = X$, we denote by $\dot\sigma(t)$ 
the corresponding Beltrami differential in $L^{\infty}(X)/K$.

%%%%%%%%%%%%%%%%%%%%%%%%%%%%%%%%%%%%%%%%%%%%%%%%%%%%%%%%%%%%%%%%%%%%%%%%%%%%%%%%%%%%%%%%%%%%%%%%%%%%%%%%%%%%%%%%%%%%%%%%%%%%%%%%%%%%%%%%%%%%%%%%%%%%%%%%%%%%%%%%%%%%
\subsection{Extremal length and Kerckhoff's formula}
A closed curve on $S$ is called {\em simple} if it is homotopic to a curve without self-intersections.
A simple closed curve is called {\em essential} if it does not homotopic to a point.
Let $\mathcal{S}$ be the set of homotopy classes of essential simple closed curves.
The extremal length is a complex analogue of hyperbolic length.
Note that we may regard $\mathcal{S}$ as a subset of $\mathcal{MF}(S)$ by giving intersection numbers as transverse measures.
Given $F\in\mathcal{MF}(S)$, the {\em extremal length} $\ext_{X}(F)$ of $F$ on $X$ is
$$\ext_{X}(F):=||q_{F}(X)||_{1},$$
where $q_{F}(X)$ is the Hubbard-Masur differential defined above.
On $\mathcal{T}(S)$, the Teichm\"uller distance is defined as the measurement of the deformation of complex structures.
% between $X,Y\in\mathcal{T}(S)$ is defined as 
%$$d_{\mathcal{T}}(X,Y):=\inf_{f:X\rightarrow Y}\log K(f),$$
%where the infimum is taken over all quasiconformal mappings respecting the markings of $X$ and $Y$.
Kerckhoff's formula \cite{Kerckhoff} gives an alternative description of the Teichm\"uller distance in terms of the extremal length:
\begin{equation}\label{eq.Kerckhoff}
d_{\mathcal{T}}(X,Y) = \frac{1}{2}\log\sup_{\alpha\in\mathcal{S}}\frac{\ext_{Y}(\alpha)}{\ext_{X}(\alpha)}.
\end{equation}
We may regard this formula as the definition of the Teichm\"uller distance.
%%%%%%%%%%%%%%%%%%%%%%%%%%%%%%%%%%%%%%%%%%%%%%%%%%%%%%%%%%%%%%%%%%%%%%%%%%%%%%%%%%%%%%%%%%%%%%%%%%%%%%%%%%%%%%%%%%%%%%%%%%%%%%%%%%%%%%%%%%%%%%%%%%%%%%%%%%%%%%%%%%%%
\subsection{Quasi-Fuchsian manifolds and Schwarzian derivatives}\label{sec. QF and S}
A {\em Kleinian group} is a discrete subgroup of $\PSL$.
In this paper we only consider surface Kleinian groups (i.e. Kleinian groups that are isomorphic to $\pi_{1}(S)$).
Let $\Gamma$  be a surface Kleinian group.
$\Gamma$ acts on the hyperbolic 3-space $\mathbb{H}^{3}$, and {\em the limit set} $\Lambda(\Gamma)\subset\partial\HH$ of $\Gamma$ is the set of accumulation points of any orbit of $\Gamma$ in $\HH$.
The complement $\partial\HH\setminus\Lambda(\Gamma)$ is called {\em the domain of discontinuity}, denoted $\Omega(\Gamma)$.
$\Gamma$ is called {\em quasi-Fuchsian} if $\Lambda(\Gamma)$ is a Jordan curve and hence $\Omega(\Gamma)$ has exactly two components.
In this paper, we denote those components of domain of discontinuity by $\Omega_{+}(\Gamma)$ and $\Omega_{-}(\Gamma)$ and call the {\em top component} and the {\em bottom component} respectively.
Let $$\AH:= \left\{\rho\in\mathrm{Hom}(\pi_{1}(S),\PSL)\mid \rho\text{ is discrete and faithful}\right\}/\text{conjugation}$$ denote the space of complete hyperbolic metrics on the surface group with the topology of representations.
The space $\mathrm{QF}(S)$ of quasi-Fuchsian surface Kleinian groups is a subset of $\AH$.
As $\partial\HH\cong\hat{\mathbb{C}}$, both $\Omega_{+}(\Gamma)/\Gamma$ and $\Omega_{-}(\Gamma)/\Gamma$ have natural complex structures.
By the Bers simultaneous uniformization theorem, this correspondence is a parametrization i.e. the map
$$\qf:\TT\rightarrow\QF(\subset\AH)$$
is a homeomorphism. %, where $\bar S$ is $S$ with the orientation reversed.
Namely, given any $(X,Y) \in \TT$, $\qf(X,Y)$ is quasi-Fuchsian and its top component and bottom component have complex structures $X$ and $Y$ respectively.
We denote by $M(X,Y)$ the quotient $\HH/\qf(X,Y)$ of the quasi-Fuchsian group $\qf(X,Y)$.
The quotient $M(X,Y)$ is called a {\em quasi-Fuchsian manifold}.
Note that the space $\mathcal{T}(S)\times\mathcal{T}(\bar S) $ where $\bar S$ is $S$ with the orientation reversed parametrizes $\QF$ more naturally.
But as $\mathcal{T}(S)\cong\mathcal{T}(\bar S)$ and for later convenience, we adopt $\TT$ as a parametrization space of $\QF$.
With this parametrization $\qf(X,X)$ corresponds to the Fuchsian representation corresponding to $X\in\T$.

We will now discuss associated Schwarzian derivatives.
Let $U\subset\hat{\mathbb{C}}$ be a connected open set.
Given a holomorphic map $f:U\rightarrow\hat{\mathbb{C}}$, 
the {\em Schwarzian derivative} $S(f)$ of $f$ is a holomorphic quadratic differential on $U$ defined by 
$$S(f):=\left(\left(\frac{f''}{f'}\right)'-\frac{1}{2}\left(\frac{f''}{f'}\right)^{2}\right)dz^{2}.$$
To define the Schwarzian derivative associated to quasi-Fuchsian manifolds,
 we first recall complex projective structures.
A {\em complex projective structure} on $S$ is a complex structure with every transition map between local charts is a restriction of 
the action of  some element in $\PSL$.
Let $P(S)$ denote the space of complex projective structures.
Since the action of $\PSL$ is holomorphic, we have a natural forgetful map
$$P(S)\rightarrow\T.$$
Let $P(X)$ denote the preimage of $X\in\T$ in $P(S)$.
Suppose we have two projective structures $Z_{1}, Z_{2}\in P(X)$.
For sufficiently small open set $U\subset S$ so that there are projective coordinate charts $z_{i}:U\rightarrow\hat{\mathbb{C}}$ of $Z_{i}$ for $i=1,2$, we have a holomorphic quadratic differential
$z_{1}^{*}S(z_{2}\circ z_{1}^{-1})$ on $U\subset X$.
Since the Schwarzian derivative of any M\"obius transformation vanishes, 
we have a well-defined holomorphic quadratic differential, denoted $Z_{2}-Z_{1}$ in $\QD(X)$ by covering $X$ with such open sets.

The restriction of any quasi-Fuchsian representatives of $\pi_{1}(S)$ on a component of domain of discontinuity gives a complex projective structure.
Let $P_{Y}(X)\in P(X)$ denote the complex projective structure given as a quotient $\Omega_{+}/\qf(X,Y)$ of the top component $\Omega_{+}$ of $\qf(X,Y)$.

\begin{notation*}
We denote by $q_{Y}(X):=P_{Y}(X)-P_{X}(X)\in \QD(X)$
the holomorphic quadratic differential obtained as the Schwarzian derivative.
\end{notation*}
Note that $q_{Y}(X)$ is determined only by $\qf(X,Y)$.
We now recall the famous Nehari's inequality on the Schwarzian derivatives:
\begin{thm}[\cite{Nehari}]\label{thm.Nehari}
Let $\qf(X,Y)$ be a quasi-Fuchsian surface Kleinian group.
Then we have 
%$$||q_{Y}(X)||_{\infty}\leq \frac{3}{2} \text{, and }||q_{Y}(X)||_{1}(=\ext_{X}(Y))\leq \ConT .$$
$$||q_{Y}(X)||_{\infty}\leq \frac{3}{2}.$$
%, ||q_{Y}(X)||_{1}\leq \ConT \text{, and } ||q_Y(X)||_2\leq \Con.$$
\end{thm}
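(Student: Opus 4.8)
The plan is to deduce Theorem~\ref{thm.Nehari} from the classical Nehari inequality for univalent maps of the disk: if $\phi$ is holomorphic and injective on the unit disk $\mathbb{D}$, then
\[
\frac{|S(\phi)(z)|}{\rho_{\mathbb{D}}(z)^{2}}\le\frac{3}{2}\qquad(z\in\mathbb{D}),
\]
where $\rho_{\mathbb{D}}|dz|=2|dz|/(1-|z|^{2})$ is the Poincar\'e metric of curvature $-1$ (equivalently, $|S(\phi)(z)|\le 6/(1-|z|^{2})^{2}$). I would take this as the classical input \cite{Nehari}; it reduces, by pre-composing with disk automorphisms and post-composing with affine maps, to the bound $|S(\phi)(0)|\le 6$ for a normalized $\phi(z)=z+a_{2}z^{2}+a_{3}z^{3}+\cdots$, where $S(\phi)(0)=6(a_{3}-a_{2}^{2})$, so that the point is the estimate $|a_{3}-a_{2}^{2}|\le 1$ furnished by the area theorem applied to the function $z\mapsto 1/\phi(1/z)$, univalent on $\{|z|>1\}$.

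To make the reduction, recall that the limit set of $G:=\qf(X,Y)$ is a Jordan curve in $\hat{\mathbb{C}}$, so its top component $\Omega_{+}=\Omega_{+}(G)$ is a simply connected domain with $\Omega_{+}/G\cong X$ carrying the projective structure $P_{Y}(X)$. Let $\Gamma$ be a Fuchsian group acting on $\mathbb{D}$ with $\mathbb{D}/\Gamma\cong X$; lifting this isomorphism to universal covers produces a conformal homeomorphism $\phi\colon\mathbb{D}\to\Omega_{+}$ that conjugates $\Gamma$ to $G$. Thus $\phi$ is a developing map of $P_{Y}(X)$, whereas the developing map of the Fuchsian structure $P_{X}(X)$ is the inclusion $\mathbb{D}\hookrightarrow\hat{\mathbb{C}}$, and the hyperbolic metric $\rho$ of $X$ is the pushforward of $\rho_{\mathbb{D}}$. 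By the very definition of $P_{Y}(X)-P_{X}(X)$ as a Schwarzian difference, the $\Gamma$-invariant holomorphic quadratic differential on $\mathbb{D}$ that descends to $q_{Y}(X)$ is therefore $S(\phi)$.

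Since $\phi$ is injective, the classical inequality gives $|S(\phi)(z)|/\rho_{\mathbb{D}}(z)^{2}\le 3/2$ pointwise on $\mathbb{D}$, and as $S(\phi)$ and $\rho_{\mathbb{D}}$ are $\Gamma$-equivariant they descend to $X$; hence $\|q_{Y}(X)\|_{\infty}\le 3/2$. Beyond the classical Nehari inequality, which I would simply cite, I expect the only delicate point to be the bookkeeping of the second paragraph: verifying that the conformal uniformization $\phi$ of $\Omega_{+}$ conjugates $G$ to a Fuchsian group and induces exactly the projective structure $P_{Y}(X)$, that the Fuchsian uniformization can be normalized to have the inclusion $\mathbb{D}\hookrightarrow\hat{\mathbb{C}}$ as developing map, and that the metric $\rho$ appearing in $\|\cdot\|_{\infty}$ is precisely the Poincar\'e metric transported from $\mathbb{D}$. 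These are standard facts about complex projective structures on quotient Riemann surfaces, but they are what make the pointwise estimate on $\mathbb{D}$ transfer verbatim to $X$.
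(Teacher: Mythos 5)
Your proposal is correct: the paper does not prove Theorem \ref{thm.Nehari} but simply cites \cite{Nehari}, and your argument is exactly the standard derivation behind that citation --- the classical pointwise bound $|S(\phi)(z)|(1-|z|^{2})^{2}\le 6$ coming from the area theorem estimate $|a_{3}-a_{2}^{2}|\le 1$, transferred to $X$ by identifying $q_{Y}(X)$ with $S(\phi)$ for the Riemann map $\phi:\mathbb{D}\to\Omega_{+}$ that conjugates a Fuchsian uniformization of $X$ to $\qf(X,Y)$, which is available precisely because the limit set is a Jordan curve and $\Omega_{+}$ is simply connected. The only cosmetic point is that $\Omega_{+}$ may contain $\infty$, so the normalization step should allow post-composition by a general M\"obius map rather than just an affine one; since the Schwarzian is invariant under any such post-composition, this changes nothing in the argument.
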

The Nehari's inequality can be re-interpreted as follows.
Recall that $g$ is the genus of $S$.

\begin{coro}[\cite{Nehari}, see also \cite{Schlenker-MRL, Kojima-McShane}]\label{lem.Nehari}
Let $\qf(X,Y)$ be a quasi-Fuchsian surface Kleinian group.
Then we have 
%$$||q_{Y}(X)||_{\infty}\leq \frac{3}{2} \text{, and }||q_{Y}(X)||_{1}(=\ext_{X}(Y))\leq \ConT .$$
$$
%||q_{Y}(X)||_{\infty}\leq \frac{3}{2}.
||q_{Y}(X)||_{1}\leq \ConT \text{, and } ||q_Y(X)||_2\leq \Con.$$
\end{coro}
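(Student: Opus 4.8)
The plan is to deduce both inequalities in Corollary~\ref{lem.Nehari} from the sup-norm bound $\|q_Y(X)\|_\infty\le 3/2$ of Theorem~\ref{thm.Nehari} by comparing the $L^1$- and $L^2$-norms against the $L^\infty$-norm using the hyperbolic area of $X$. By definition,
\[
\|q_Y(X)\|_1=\int_X|q_Y(X)|=\int_X\frac{|q_Y(X)|}{\rho^2}\,\rho^2\le \|q_Y(X)\|_\infty\int_X\rho^2,
\]
where $\rho\,|dz|$ is the hyperbolic metric on $X$. The integral $\int_X\rho^2$ is precisely the hyperbolic area of $X$, which by Gauss--Bonnet equals $2\pi|\chi(S)|=4\pi(g-1)$ since $S$ is closed of genus $g$. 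Hence $\|q_Y(X)\|_1\le \tfrac32\cdot 4\pi(g-1)=6\pi(g-1)=\ConT$, which is the first claim.

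For the $L^2$-bound I would argue similarly, writing
\[
\|q_Y(X)\|_2^2=\int_X\frac{|q_Y(X)|^2}{\rho^2}=\int_X\left(\frac{|q_Y(X)|}{\rho^2}\right)^2\rho^2\le \|q_Y(X)\|_\infty^2\int_X\rho^2\le \left(\tfrac32\right)^2\cdot 4\pi(g-1).
\]
Taking square roots gives $\|q_Y(X)\|_2\le \tfrac32\cdot 2\sqrt{\pi(g-1)}=3\sqrt{\pi(g-1)}=\Con$, which is the second claim. Both estimates are thus immediate consequences of Gauss--Bonnet together with the pointwise bound; no compactness or analytic subtlety beyond Theorem~\ref{thm.Nehari} is needed.

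The only genuine point to be careful about is the normalization of the hyperbolic metric and the resulting area constant: one must confirm that ``the hyperbolic metric determined by $X$'' is the one of constant curvature $-1$, so that Gauss--Bonnet yields area $=2\pi|\chi(S)| = 4\pi(g-1)$ rather than some rescaled value, and that the $L^2$-norm in the paper is defined with exactly the weight $\rho^{-2}$ used above. Granting these conventions, the main ``obstacle'' is purely bookkeeping with the numerical factors $\tfrac32$, $4\pi(g-1)$, and the square root; there is no conceptual difficulty. I would present the two displays above essentially verbatim as the proof.
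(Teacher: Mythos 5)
Your proof is correct and is exactly the intended derivation: the paper states this corollary as a re-interpretation of Nehari's bound $\|q_Y(X)\|_\infty\le 3/2$, and the cited references obtain the constants precisely by integrating against the hyperbolic area $4\pi(g-1)$ from Gauss--Bonnet, as you do. Your bookkeeping with the norms $\|\cdot\|_1$, $\|\cdot\|_2$, $\|\cdot\|_\infty$ matches the paper's definitions, so nothing further is needed.
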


%%%%%%%%%%%%%%%%%%%%%%%%%%%%%%%%%%%%%%%%%%%%%%%%%%%%%%%%%%%%%%%%%%%%%%%%%%%%%%%%%%%%%%%%%%%%%%%%%%%%%%%%%%%%%%%%%%%%%%%%%%%%%%%%%%%%%%%%%%%%%%%%%%%%%%%%%%%%%%%%%%%%
\subsection{Renormalized volume}
Although quasi-Fuchsian manifolds have infinite hyperbolic volume, there is a notion called the renormalized volume which is finite for any quasi-Fuchsian manifold.
The idea of the renormalized volume comes from Graham-Witten \cite{GW} and it is studied by several authors for hyperbolic $3$-manifolds (see e.g. \cite{BBB, BBB2,BBP, BC, GMR, Kojima-McShane, KS, Schlenker-MRL, Schlenker}).
\begin{notation*}
Via renormalized volume, we get a function
$$\VR:\TT\rightarrow \mathbb{R}$$
defined so that $\VR(X,Y)$ is the renormalized volume of the quasi-Fuchsian manifold $M(X,Y)$.
\end{notation*}

The formal definition involves the mean curvature, Epstein surfaces, and other notions that we do not need for the discussion in this paper.
We refer \cite{Schlenker, Schlenker-MRL, KS, BBB, BC,  Kojima-McShane} and references therein for more details.
Instead of giving the original definition, we adopt the following formula of the first variation of the renormalized volume as the definition.
\begin{thm}[{\cite[Lemma 2.4]{Kojima-McShane}, \cite[Corollary 3.13]{Schlenker}}]\label{thm.dRvol}
For any $Y\in\T$, $\VR(\cdot, Y)$ is differentiable on $\T$.
If $\sigma:[-1, 1]\rightarrow\T$ is a differentiable path,
$$\left.\frac{d}{dt}\right|_{t=0}\VR({\sigma(t)}, Y) = -\mathrm{Re}\langle q_{Y}(\sigma(0)),\dot\sigma(0)\rangle.$$
\end{thm}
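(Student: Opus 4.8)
The plan is to deduce the formula from the definition of the renormalized volume via Epstein surfaces and the associated $W$-volume, following Krasnov--Schlenker \cite{KS} (see also \cite{Kojima-McShane, Schlenker}); the guiding principle is that the renormalized volume is the one value, among the family of regularized volumes attached to conformal metrics on the boundary at infinity, for which the first variation collapses to a clean boundary pairing. To set up, recall that the conformal boundary of $M(X,Y)$ consists of the two Riemann surfaces $X$ and $Y$ (the latter with reversed orientation). To a conformal metric $g$ on it the Epstein construction associates an equidistant family of smooth surfaces $S_r\subset M(X,Y)$ foliating neighbourhoods of the two ends; letting $M_r$ be the compact piece they bound, one sets
$$W(M(X,Y),g) \;=\; \lim_{r\to\infty}\Bigl(\Vol(M_r)-\tfrac12\int_{S_r}H\,dA\Bigr),$$
with $H$ the mean curvature of $S_r$, and one \emph{defines} $\VR(X,Y):=W(M(X,Y),g_{\mathrm{hyp}})$ for $g_{\mathrm{hyp}}$ the curvature $-1$ representative. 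Equivalently $W(M(X,Y),g)=\Vol(N)-\tfrac12\int_{\partial N}H\,dA$ for any convex compact $N\subset M(X,Y)$ with smooth boundary containing the convex core, depending on $N$ only through the conformal data it induces at infinity.

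The substantive step is the variational formula for $W$ under a deformation of the hyperbolic structure on $M$. Using a Schl\"afli-type identity for $\Vol(N)-\tfrac12\int_{\partial N}H\,dA$ and letting $\partial N\to\infty$, one shows the variation is purely a boundary integral over the conformal boundary,
$$dW \;=\; c\int_{\partial_{\infty}M} \bigl\langle \dot g,\, h\bigr\rangle\, dA_g,$$
where $h$ is the traceless part of the renormalized second fundamental form at infinity attached to $g$ and $c$ is a universal constant. The crucial geometric input — the technical heart of \cite{KS} — is the asymptotic analysis of the Epstein surfaces showing that, when $g$ is the hyperbolic metric, $h$ on the boundary component $X$ equals, up to a universal constant, $\mathrm{Re}\,q_Y(X)$, where $q_Y(X)=P_Y(X)-P_X(X)$ is the Schwarzian quadratic differential of \S\ref{sec. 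QF and S}; the component $Y$ contributes the analogous term for the bottom end.

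Specialising to a family $\sigma(t)$ with $Y$ fixed: since $Y$ does not move, the bottom conformal structure is frozen and its end contributes nothing, while the top conformal structure varies with derivative the Beltrami differential $\dot\sigma(0)\in L^{\infty}(X)/K$. Converting $\dot g$ into $\dot\sigma(0)$ and tracking normalizations, the boundary integral becomes $-\mathrm{Re}\langle q_Y(\sigma(0)),\dot\sigma(0)\rangle$, the pairing of a quadratic differential with a Beltrami differential introduced in \S\ref{sec.pre}. Differentiability of $\VR(\cdot,Y)$ is then inherited from the smooth dependence of the Epstein surfaces, and of $X\mapsto q_Y(X)$, on $X$.

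The main obstacle is this second step: one must control the regularization — showing that the variation of the divergent part as $r\to\infty$ vanishes, so that only the finite boundary term survives — and, above all, identify the renormalized second fundamental form at infinity with $\mathrm{Re}\,q_Y(X)$, which rests on the precise asymptotic expansion of the equidistant surfaces near the conformal boundary. Granting that analysis from \cite{KS}, the rest — bookkeeping of conformal factors, the universal constant, and the sign — is routine.
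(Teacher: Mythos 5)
The paper offers no proof of this statement to compare against: it is imported verbatim from \cite{Kojima-McShane}*{Lemma 2.4} and \cite{Schlenker}*{Corollary 3.13}, and the text explicitly says it \emph{adopts the first-variation formula as the definition} of $\VR$ rather than deriving it from the Epstein/$W$-volume construction. So the relevant comparison is with the cited literature, and there your outline is the standard Krasnov--Schlenker route that those references rest on: Epstein surfaces, the $W$-volume, a Schl\"afli-type variation collapsing to a boundary term at infinity, and the identification of the (traceless) data at infinity with $\mathrm{Re}\,q_Y(X)$. As a roadmap it is accurate, but be aware that the two steps you defer to \cite{KS} --- controlling the regularization so that only the finite boundary term survives, and identifying the renormalized second fundamental form at infinity with the Schwarzian --- \emph{are} the theorem; nothing of substance is proved independently of that citation, which is acceptable here only because the paper itself treats the statement as an external input.

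One imprecision worth fixing: the $W$-volume $W(M(X,Y),g)$ depends on the actual metric $g$ chosen at infinity, not merely on ``the conformal data it induces at infinity''; its change under conformal rescaling is governed by an explicit anomaly formula, and it is only after fixing the hyperbolic (curvature $-1$) representative $g_{\mathrm{hyp}}$ that one obtains a quantity depending on $(X,Y)$ alone --- which is precisely why $\VR$ is defined with that choice. Relatedly, the final conversion of $\dot g$ into the Beltrami pairing $-\mathrm{Re}\langle q_Y(\sigma(0)),\dot\sigma(0)\rangle$ uses the identification of $T_X\T$ with $L^{\infty}(X)/K$ and the smooth dependence of $q_Y(X)$ on $X$; you gesture at this but, as with the rest, it is carried by \cite{KS} and \cite{Schlenker} rather than by your argument.
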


%%%%%%%%%%%%%%%%%%%%%%%%%%%%%%%%%%%%%%%%%%%%%%%%%%%%%%%%%%%%%%%%%%%%%%%%%%%%%%%%%%%%%%%%%%%%%%%%%%%%%%%%%%%%%%%%%%%%%%%%%%%%%%%%%%%%%%%%%%%%%%%%%%%%%%%%%%%%%%%%%%%%
%%%%%%%%%%%%%%%%%%%%%%%%%%%%%%%%%%%%%%%%%%%%%%%%%%%%%%%%%%%%%%%%%%%%%%%%%%%%%%%%%%%%%%%%%%%%%%%%%%%%%%%%%%%%%%%%%%%%%%%%%%%%%%%%%%%%%%%%%%%%%%%%%%%%%%%%%%%%%%%%%%%%%%%%%%%%%%%%%%%%%%%%%%%%%%%%%%%%%%%%%%%%%%%%%%%%%%%%%%%%%%%%%%%%%%%%%%%%%%%%%%%%%%%%%%%%%%%%%%%%%%%%%%%%%%%%%%%%%%%%%%%%%%%%%%%%%%%%%%%%%%%%%%%%%%%%%%%%%%%%%%%%%%%%%%%%%%%%%%%%%%%%%%%%%%%%%%%%%%%%%%%%
\section{Known compactifications of Teichm\"uller space}\label{sec.TTBcpt}
In this section we recall several known natural compactifications of the Teichm\"uller space $\T$.
Similarly to the boundary of the hyperbolic space, the Teichm\"uller space is compactified with boundaries ``at infinity''.
Each compactification captures different kinds of asymptotic behaviors in the Teichm\"uller space.
Recall that the mapping class group $\mcg$ is the group of isotopy classes of homeomorphisms on $S$.
The group $\mcg$ acts on $\T$ by the change of markings.
The action sometimes extends to the boundaries, and sometimes does not.
%%%%%%%%%%%%%%%%%%%%%%%%%%%%%%%%%%%%%%%%%%%%%%%%%%%%%%%%%%%%%%%%%%%%%%%%%%%%%%%%%%%%%%%%%%%%%%%%%%%%%%%%%%%%%%%%%%%%%%%%%%%%%%%%%%%%%%%%%%%%%%%%%%%%%%%%%%%%%%%%%%%%
\subsection{Thurston compactification and Gardiner-Masur compactification}\label{sec. compact T}
Let $\mathcal{S}$ denote the space of homotopy classes of essential simple closed curves.
If we have any ``length'' function determined by a point $X\in\mathcal{T}(S)$, we get a point in 
$\mathbb{R}_{\geq 0}^{\mathcal{S}}$.
By giving the geometric intersection number $i(\cdot,\cdot):\mathcal{S}\times\mathcal{S}\rightarrow \Z$, 
the set $\mathcal{S}\times\mathbb{R}_{>0}$, that is $\mathcal{S}$ together with positive real weights, embeds into $\mathbb{R}_{\geq 0}^{\mathcal{S}}$ by sending $(\alpha,t)\in\mathcal{S}\times\R_{>0}$ to $t\cdot i(\alpha,\cdot)\in\R^\mathcal{S}_{\geq 0}$.

The space $\mf$ of measured foliations is obtained as the completion of 
$\mathcal{S}\times\mathbb{R}_{>0}$ in $\mathbb{R}_{\geq 0}^{\mathcal{S}}$.
The positive real numbers $\mathbb{R}_{>0}$ acts on $\mathbb{R}_{\geq 0}^{\mathcal{S}}$ by multiplication and its quotient is denoted by $P\mathbb{R}_{\geq 0}^{\mathcal{S}}$.
Thurston (c.f. \cite{FLP}) proved that if we use the hyperbolic length and define the map $i_{\mathrm{Th}}:\mathcal{T}(S)\rightarrow P\mathbb{R}_{\geq 0}^{\mathcal{S}}$ by $i_\mathrm{Th}(X)(\alpha) = \ell_X(\alpha)$, then $i_\mathrm{Th}$ is an embedding.
The closure $\overline{i_{\mathrm{Th}}(\mathcal{T}(S))}$ is called the {\em Thurston compactification}.
Let $\partial_{\mathrm{Th}}\mathcal{T}(S):= \overline{i_{\mathrm{Th}}(\mathcal{T}(S))}\setminus i_{\mathrm{Th}}(\mathcal{T}(S))$ denote the added boundary called the {\em Thurston boundary}.
The image of $\mathcal{MF}(S)$ in $P\mathbb{R}_{\geq 0}^{\mathcal{S}}$ is called the space of projective measured foliations and denoted $\mathcal{PMF}(S)$.
Thurston \cite{FLP} proved that in $P\mathbb{R}_{\geq 0}^{\mathcal{S}}$ 
the closure $\overline{i_{\mathrm{Th}}(\mathcal{T}(S))}$ is identified with $\mathcal{T}(S)\cup\mathcal{PMF}(S)$.

Similarly, Gardiner-Masur \cite{GM} considered (the square root of) the extremal length to define
$i_{\mathrm{GM}}:\mathcal{T}(S) \rightarrow P\mathbb{R}_{\geq 0}^{\mathcal{S}}$.
The closure $\overline{i_{\mathrm{GM}}(\mathcal{T}(S))}$ in $P\mathbb{R}_{\geq 0}^{\mathcal{S}}$ is called the {\em Gardiner-Masur compactification} and the boundary 
$\partial_{\mathrm{GM}}\mathcal{T}(S):= \overline{i_{\mathrm{GM}}(\mathcal{T}(S))}\setminus i_{\mathrm{GM}}(\mathcal{T}(S))$ is called the {\em Gardiner-Masur boundary.}
Gardiner-Masur also proved that $\partial_{\mathrm{GM}}\mathcal{T}(S)$ strictly contains $\mathcal{PMF}(S)$.

Similarly to Kerckhoff's formula of the Teichm\"uller distance, the {\em Thurston distance}, denoted $d_{\mathrm{Th}}(\cdot, \cdot)$ is characterized as
\begin{equation}\label{eq.dTh}
d_{\mathrm{Th}}(X,Y):=\log\sup_{\alpha\in\mathcal{S}}\frac{\ell_{Y}(\alpha)}{\ell_{X}(\alpha)},
\end{equation}
where $\ell_{X}(\alpha)$ is the hyperbolic length of the simple closed curve $\alpha$
 with respect to the hyperbolic metric $X$ (\cite{Thurston}).
 Both the Thurston boundary and the Gardiner-Masur boundary are identified with horoboundaries.
\begin{thm}[Walsh \cite{Wal}]\label{thm.horoWal}
The horoboundary with respect to the Thurston distance is homeomorphic to the Thurston boundary 
$\partial_{\mathrm{Th}}\mathcal{T}(S)$.
\end{thm}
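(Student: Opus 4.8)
The plan is to compute the limits of the horofunctions $\psi_{Y}$ for the Thurston metric as $Y$ leaves every compact subset of $\T$ and to match them with the points of $\pmf=\partial_{\mathrm{Th}}\T$. Throughout I use that the hyperbolic length extends to a continuous, positively homogeneous function $\ell_{X}\colon\mf\to\R_{\geq 0}$, that $\mathcal{S}\times\R_{>0}$ is dense in $\mf$, and that the intersection pairing $i\colon\mf\times\mf\to\R_{\geq 0}$ is continuous \cite{FLP}; I also pass to the symmetrization of $d_{\mathrm{Th}}$, which is a proper genuine metric inducing the usual topology, so that Lemma~\ref{lem.horo} and Proposition~\ref{prop.Lip-compact} apply and $\overline{\psi(\T)}=\T\cup\hb\T$ is compact. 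The first step is to rewrite \eqref{eq.dTh} as a supremum over a compact set: fixing the compact cross-section $\Sigma:=\{F\in\mf\mid\ell_{b}(F)=1\}$, homogeneity and density give
\[
d_{\mathrm{Th}}(X,Y)=\log\sup_{F\in\Sigma}\frac{\ell_{Y}(F)}{\ell_{X}(F)},
\]
and since $(X,F)\mapsto\ell_{X}(F)$ is jointly continuous and, for fixed $X$, bounded below by a positive constant on $\Sigma$, the supremum is attained, is finite and positive, and depends continuously on $X$. Thus $\psi_{Y}(X)$ is the logarithm of a ratio of two such suprema.

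Next I would identify the boundary horofunctions. If $Y_{n}$ leaves every compact set, then after passing to a subsequence $Y_{n}\to[\mu]$ in the Thurston compactification, i.e.\ there are scalars $\lambda_{n}>0$ with $\lambda_{n}\ell_{Y_{n}}\to i(\mu,\cdot)$ pointwise on $\mathcal{S}$; realizing the Thurston compactification inside the projectivization of $C(\pmf)$, this convergence is uniform on $\Sigma$. Multiplying the numerator and denominator of $e^{\psi_{Y_{n}}(X)}$ by $\lambda_{n}$ and using uniform convergence on $\Sigma$ together with the positive lower bound for $\ell_{X}$ there, one gets
\[
\psi_{Y_{n}}(X)\longrightarrow h_{[\mu]}(X):=\log\frac{\displaystyle\sup_{F\in\mf}i(\mu,F)/\ell_{X}(F)}{\displaystyle\sup_{F\in\mf}i(\mu,F)/\ell_{b}(F)},
\]
where the suprema are well defined by homogeneity and are finite and positive because $i(\mu,\cdot)$ is continuous and not identically $0$ and $\ell_{X}$ is bounded below on $\Sigma$. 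The right-hand side is continuous in $X$ and is unchanged when $\mu$ is scaled, so it depends only on $[\mu]$. The same computation shows that if $Y_{n}$ stays in a compact set, its horofunctions subconverge to interior ones. Combined with compactness of $\overline{\psi(\T)}$, this shows that the tautological map $\T\cup\pmf\to\overline{\psi(\T)}=\T\cup\hb\T$ (equal to $\psi$ on $\T$ and to $[\mu]\mapsto h_{[\mu]}$ on $\pmf$) is well defined, continuous, and surjects $\pmf$ onto $\hb\T$: any $\eta\in\hb\T$ is a limit of $\psi_{Y_{n}}$ with $Y_{n}$ eventually off every compact set, and a Thurston-subconvergent $Y_{n}\to[\mu]$ forces $\eta=h_{[\mu]}$.

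The remaining, and \emph{main}, point is injectivity of $[\mu]\mapsto h_{[\mu]}$. On $\T$ the map is the injection $\psi$ of Lemma~\ref{lem.horo}, and an interior horofunction cannot equal any $h_{[\mu]}$ (the former attains its infimum at a point of $\T$ with finite value, the latter does not). To see that $h_{[\mu]}$ determines $[\mu]$, note first that $h_{[\mu]}$ recovers $L_{\mu}(X):=\sup_{F\in\mf}i(\mu,F)/\ell_{X}(F)$ up to a single positive multiplicative constant. For a simple closed curve $\gamma$ choose a pinching path $X_{t}$ with $\ell_{X_{t}}(\gamma)\to 0$ and thick part controlled; by the collar lemma every $F$ with $i(F,\gamma)>0$ has $\ell_{X_{t}}(F)\to\infty$, while those with $i(F,\gamma)=0$ keep $\ell_{X_{t}}(F)$ bounded below, so the term indexed by $\gamma$ dominates and $L_{\mu}(X_{t})\,\ell_{X_{t}}(\gamma)\to i(\mu,\gamma)$ (the limit being $0$ precisely when $i(\mu,\gamma)=0$). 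Hence $h_{[\mu]}$ determines $i(\mu,\gamma)$ for every $\gamma\in\mathcal{S}$ up to one global scalar, and since a measured foliation is determined by its intersection numbers with all simple closed curves, $h_{[\mu]}$ determines $[\mu]$.

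I expect the hard part to be precisely this last step: controlling where the supremum defining $L_{\mu}(X_{t})$ is (nearly) attained along the pinching path, and keeping track of the boundary strata of the Thurston compactification — this is where the genuine work lies, following Walsh \cite{Wal}. Granting injectivity, the tautological map is a continuous bijection from the compact Thurston compactification to the compact Hausdorff space $\overline{\psi(\T)}$, hence a homeomorphism; restricting it to the added boundaries yields $\partial_{\mathrm{Th}}\T\cong\hb\T$.
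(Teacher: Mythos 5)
The paper does not prove this statement at all---it is quoted from Walsh \cite{Wal} (and Remark \ref{rmk} immediately after warns that the asymmetry of $d_{\mathrm{Th}}$ requires extra care)---so the comparison is with Walsh's own argument. Your outline does match its shape: the limiting horofunction attached to $[\mu]\in\pmf$ is indeed $X\mapsto \log\sup_F i(\mu,F)/\ell_X(F)-\log\sup_F i(\mu,F)/\ell_b(F)$, and the endgame (continuous bijection from a compact space to a Hausdorff space) is the right one. But there are genuine gaps at exactly the places where the content lies. First, the theorem is about the asymmetric metric: passing to the symmetrization lets you quote Proposition \ref{prop.Lip-compact} and Lemma \ref{lem.horo}, but those are statements about horofunctions of a genuine metric, whereas the functions you then analyze are $d_{\mathrm{Th}}(\cdot,Y)-d_{\mathrm{Th}}(b,Y)$; the injectivity, continuity and ``homeomorphism onto image'' assertions for this asymmetric horofunction map must be established separately, which is precisely part of Walsh's setup. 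Second, the step ``$Y_n\to[\mu]$ in the Thurston compactification implies $\lambda_n\ell_{Y_n}\to i(\mu,\cdot)$ uniformly on the cross-section $\Sigma$'' is asserted, not proved: Thurston convergence is pointwise on $\mathcal S$, and upgrading it even to pointwise convergence on all of $\mf$, let alone locally uniform convergence, needs an equicontinuity argument (e.g.\ in train-track charts). Your computation of $\lim\psi_{Y_n}$ rests entirely on this.

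Third, and most seriously, the injectivity step is not a proof. The dichotomy you invoke along the pinching path---$i(F,\gamma)>0$ forces $\ell_{X_t}(F)\to\infty$, while $i(F,\gamma)=0$ keeps $\ell_{X_t}(F)$ bounded below---is false as written: without normalization both clauses fail by homogeneity, and even after restricting to $\Sigma$ the second clause fails for $F$ proportional to $\gamma$ itself, which is exactly the direction where the supremum concentrates, while the first clause is not uniform as $i(F,\gamma)\to 0$ (consider large twists $T_\gamma^N F_0$, whose ratios $i(\mu,T_\gamma^N F_0)/\ell_{X_t}(T_\gamma^N F_0)$ approach, and must be shown never to exceed asymptotically, $i(\mu,\gamma)/\ell_{X_t}(\gamma)$). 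Hence the key asymptotic $L_\mu(X_t)\,\ell_{X_t}(\gamma)\to i(\mu,\gamma)$---of which only the lower bound is immediate---does not follow from what you wrote; controlling foliations that cross and wrap the thin collar is precisely the estimate you defer to Walsh. Since that estimate (or some substitute, such as Walsh's analysis showing $\mu$ is determined up to scale by $L_\mu$) is the heart of the theorem, your proposal is a faithful reconstruction of the strategy but not a complete proof.
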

\begin{rmk}
The Thurston distance is asymmetric.
Hence we need more care to define horoboundaries, see \cite{Wal} for the details.
\end{rmk}

Similarly if one uses the Teichm\"uller distance, one gets:
\begin{thm}[Liu-Su \cite{LS}]\label{thm.horoGM}
The horoboundary with respect to the Teichm\"uller distance is homeomorphic to the Gardiner-Masur boundary
$\partial_{\mathrm{GM}}\mathcal{T}(S)$.
\end{thm}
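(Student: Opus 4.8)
The plan is to exhibit an explicit homeomorphism between the horoboundary $\hb\T$ (taken with respect to $\dt$, which is a proper geodesic metric, so Lemma \ref{lem.horo} applies and $\hb\T$ is a genuine boundary) and the Gardiner--Masur boundary $\partial_{\mathrm{GM}}\T$, by showing that a $\dt$-horofunction and a point of the Gardiner--Masur embedding encode exactly the same datum: the projective extremal-length spectrum. The bridge is Kerckhoff's formula \eqref{eq.Kerckhoff}. Given $Y\in\T$, I would put $K_Y:=\sup_{\beta\in\mathcal S}\ext_Y(\beta)/\ext_b(\beta)=e^{2\dt(b,Y)}$ and let $\mathcal E_Y\colon\mathcal S\to\R_{>0}$ be $\mathcal E_Y(\alpha):=\bigl(\ext_Y(\alpha)/K_Y\bigr)^{1/2}$; this is the representative of the projective class $i_{\mathrm{GM}}(Y)\in\PRs$ normalised so that $\sup_\alpha \mathcal E_Y(\alpha)/\sqrt{\ext_b(\alpha)}=1$. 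Then \eqref{eq.Kerckhoff} gives at once
$$\psi_Y(X)=\dt(X,Y)-\dt(b,Y)=\log\sup_{\alpha\in\mathcal S}\frac{\mathcal E_Y(\alpha)}{\sqrt{\ext_X(\alpha)}},$$
so $\psi_Y$ depends only on $\mathcal E_Y$; conversely $\mathcal E_Y$ is recovered from $\psi_Y$ by
$$\mathcal E_Y(\alpha)=\inf_{X\in\T}e^{\psi_Y(X)}\sqrt{\ext_X(\alpha)}$$
(the inequality ``$\le$'' for every $X$ is immediate from the displayed supremum, and taking $X=Y$ realises the infimum). Thus $Y\mapsto\psi_Y$ and $Y\mapsto i_{\mathrm{GM}}(Y)$ carry the same information on $\T$; the task is to extend this to the two compactifications.

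For a Gardiner--Masur boundary point $\xi$ I would first check that its projective class has a representative $\mathcal E_\xi$ normalised by $\sup_\alpha\mathcal E_\xi(\alpha)/\sqrt{\ext_b(\alpha)}=1$: the supremum is finite because the Gardiner--Masur estimates \cite{GM} bound $\sqrt{\ext_X(F)}$ above by a multiple of $\sqrt{\ext_b(F)}$ uniformly over $F\in\mf$, and it is positive since the class is non-zero. I would then define the candidate boundary horofunction $\psi_\xi(X):=\log\sup_\alpha\mathcal E_\xi(\alpha)/\sqrt{\ext_X(\alpha)}$. Using the bi-Lipschitz comparison $e^{-2\dt(X,X')}\le\ext_{X'}(\alpha)/\ext_X(\alpha)\le e^{2\dt(X,X')}$ (itself a consequence of \eqref{eq.Kerckhoff}) one sees that $\psi_\xi$ is $1$-Lipschitz for $\dt$ and vanishes at $b$, so $\psi_\xi\in\Lipi(\T)$. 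To see $\psi_\xi$ is a genuine horofunction, and that $\xi\mapsto\psi_\xi$ is continuous, I would take any sequence $Y_n\to\xi$ in the Gardiner--Masur compactification; the chosen normalisation makes $\mathcal E_{Y_n}\to\mathcal E_\xi$ pointwise on $\mathcal S$, and I would upgrade this to $\psi_{Y_n}\to\psi_\xi$ pointwise, which by Proposition \ref{prop.Lip-compact} is the required convergence in $\Lipi(\T)$. Injectivity of $\xi\mapsto\psi_\xi$ then follows by extending the recovery formula $\mathcal E_\xi(\alpha)=\inf_{X}e^{\psi_\xi(X)}\sqrt{\ext_X(\alpha)}$ to the boundary (``$\le$'' is again immediate; ``$\ge$'' follows from $\mathcal E_{Y_n}(\alpha)=\inf_X e^{\psi_{Y_n}(X)}\sqrt{\ext_X(\alpha)}$ together with $\psi_{Y_n}\to\psi_\xi$), and surjectivity onto $\hb\T$ follows because every point of $\hb\T$ is $\lim\psi_{Y_n}$ along some sequence leaving every compact set, a subsequence of which has $i_{\mathrm{GM}}(Y_n)$ converging to some $\xi$ by compactness of the Gardiner--Masur compactification.

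The main obstacle is the convergence $\psi_{Y_n}(X)\to\psi_\xi(X)$: the suprema defining these values run over the non-compact set $\mathcal S$, so pointwise convergence $\mathcal E_{Y_n}(\alpha)\to\mathcal E_\xi(\alpha)$ does not by itself control them. The remedy is that each such supremum is essentially over a compact set: from $\ext_X(F)\ge e^{-2\dt(X,b)}\ext_b(F)$ together with continuity of $F\mapsto\ext_X(F)$ on $\mf$, the uniform upper bound on the $\mathcal E_{Y_n}$, and the compactness of $\pmf$, one shows that the normalised functions $\alpha\mapsto\mathcal E_{Y_n}(\alpha)/\sqrt{\ext_X(\alpha)}$ get within $\varepsilon$ of their supremum only on a compact subset of $\pmf$ that can be chosen independent of $n$; on that subset the (continuously extended, homogeneous) limit functions control everything and the suprema converge. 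A secondary, purely bookkeeping difficulty is that the normalising constants $K_{Y_n}=e^{2\dt(b,Y_n)}$ tend to infinity, so $i_{\mathrm{GM}}(Y_n)\to\xi$ only projectively; one must carry the normalisation $\sup_\alpha\mathcal E(\alpha)/\sqrt{\ext_b(\alpha)}=1$ (equivalently $\psi(b)=0$) through the argument and verify it behaves continuously in the limit, which it does because the limiting value of this supremum lies in $(0,\infty)$. Granting these two points, $\xi\mapsto\psi_\xi$ is a continuous bijection from a compact space, hence a homeomorphism $\partial_{\mathrm{GM}}\T\to\hb\T$.
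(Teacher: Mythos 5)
First, note that the paper does not prove this statement at all: it is quoted from Liu--Su \cite{LS}, so your proposal can only be measured against their argument, and in outline you have reconstructed exactly their route (Kerckhoff's formula turns horofunctions into normalized extremal-length spectra, one extends this identification to the Gardiner--Masur boundary, and compactness of $\partial_{\mathrm{GM}}\T$ plus continuity and injectivity gives the homeomorphism). However, the two places you flag as "obstacles" are precisely where the substance of Liu--Su's proof lives, and your proposed remedies do not close them. For the interchange of limit and supremum, your compactness argument is not right as stated: $\pmf$ is already compact, so "restricting to a compact subset of $\pmf$ independent of $n$" buys nothing; the genuine issue is that Gardiner--Masur convergence only gives pointwise (projective) convergence of $\mathcal{E}_{Y_n}$ on the countable set $\mathcal S$, and pointwise convergence of a uniformly bounded family of continuous functions on a compact set does not imply convergence of their suprema. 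What is needed is locally uniform convergence on $\mf$ (equivalently equicontinuity of the normalized spectra $e^{-\dt(b,Y_n)}\sqrt{\ext_{Y_n}(\cdot)}$), together with the fact that a boundary point even admits a continuous homogeneous representative on $\mf$ and that your chosen normalization does not degenerate (i.e.\ the limiting value of $\sup_\alpha\mathcal{E}_{Y_n}(\alpha)/\sqrt{\ext_b(\alpha)}$ stays away from $0$, which is a "no mass escapes in $\mathcal S$" statement, not a bookkeeping remark). These are exactly the results of Gardiner--Masur and Miyachi that Liu--Su invoke; none of them follows from the facts you list ($\ext_X\geq e^{-2\dt(X,b)}\ext_b$, the uniform upper bound, compactness of $\pmf$).

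The second gap is the injectivity step. From $\psi_{Y_n}\to\psi_\xi$ pointwise you can only deduce $e^{\psi_\xi(X)}\sqrt{\ext_X(\alpha)}\geq\mathcal{E}_\xi(\alpha)$ for every $X$, i.e.\ the inequality $\mathcal{E}_\xi(\alpha)\leq\inf_X e^{\psi_\xi(X)}\sqrt{\ext_X(\alpha)}$ you call immediate; the reverse inequality does not follow from the interior identity $\mathcal{E}_{Y_n}(\alpha)=\inf_X e^{\psi_{Y_n}(X)}\sqrt{\ext_X(\alpha)}$, because that infimum is attained at $X=Y_n$, which leaves every compact set, and the convergence $\psi_{Y_n}\to\psi_\xi$ (uniform only on compacta) gives no control of $\psi_\xi$ along such a sequence. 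So the recovery of $\mathcal{E}_\xi$ from $\psi_\xi$, hence injectivity of $\xi\mapsto\psi_\xi$, is not established; in Liu--Su this step again requires genuine input (Miyachi's analysis of the boundary functions), not a formal limit argument. In short: right skeleton, but the two analytic pillars carrying the proof are asserted rather than proved, and the justifications offered for them would not survive scrutiny.
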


Thus we see that horoboundaries give natural boundaries for the distances.

%%%%%%%%%%%%%%%%%%%%%%%%%%%%%%%%%%%%%%%%%%%%%%%%%%%%%%%%%%%%%%%%%%%%%%%%%%%%%%%%%%%%%%%%%%%%%%%%%%%%%%%%%%%%%%%%%%%%%%%%%%%%%%%%%%%%%%%%%%%%%%%%%%%%%%%%%%%%%%%%%%%%
\subsection{Bers compactification and ending laminations}\label{sec.Bers-end}
As discussed in \S \ref{sec. QF and S}, the space $\QF$ of quasi-Fuchsian groups is parametrized by 
$\TT$.

%, we have a map $b_{Y}:\mathcal{T}(S)\rightarrow \AH$, which in fact is an embedding by the work of Bers.
%The image $b_{Y}(\mathcal{T}(S))$ is called a {\em Bers slice}.
Let us fix $Y\in\mathcal{T}(S)$ and consider $\qf(\cdot, Y):\T\rightarrow\QF$.
By considering the Schwarzian derivative $q_{X}(Y)$, Bers considered the map $b_{Y}:\mathcal{T}(S)\rightarrow \QD(Y)$.

Let $\QD_{B}(Y):=\{q\in\QD(Y)\mid ||q||_{\infty}\leq 3/2\}$.
Nehari's inequality (Theorem \ref{thm.Nehari}) implies that 
the closure $\overline{b_{Y}(\T)}$ is contained in $\QD_{B}(Y)$.
Thus Bers showed that the closure $\overline{b_{Y}(\mathcal{T}(S))}$ is compact,
and it is called the {\em Bers compactification} with base point $Y$.
The boundary $\partial^{Y} _{B}\mathcal{T}(S)$ is called the {\em Bers boundary}.
Note that Bers boundaries depend on the base point $Y$, and the action of the mapping class group does {\em not} extend continuously (see Kerckhoff-Thurston \cite{KT}).

To characterize points in $\partial_{B}^{Y}\mathcal{T}(S)$, we recall the so-called ending laminations.
As the detailed discussion is not necessary in this paper, the exposition here is very brief, see e.g. \cite{BCM,Ohshika} and references therein for more details.
Let us fix a hyperbolic metric on $S$ for a moment.
A {\em geodesic lamination} is a closed subset of $S$ consisting of simple geodesics,
each of which is called a {\em leaf}.
A geodesic lamination $\lambda$ is called {\em minimal} if every leaf is dense in $\lambda$.
%By considering 
Let $\mathcal{GL}(S)$ denote the space of geodesic lamination with Hausdorff topology.
The set of simple closed curves $\mathcal{S}\subset\mathcal{GL}(S)$ is known to be dense.
The space $\mathcal{GL}(S)$ is known to be independent of the choice of the hyperbolic structure on $S$, see e.g. \cite{CB}.
Note as the topologies of $\AH$ and $\QD(S)$ are compatible, we may regard each  $\xi\in\partial_{\mathrm{B}}^{Y}\mathcal{T}(S)$
as a point in $\AH$.
Let $\rho_{\xi}:\pi_{1}(S)\rightarrow\PSL$ be an arbitrarily chosen representative corresponding to $\xi$.
The properties we are discussing below are invariant under conjugation and therefore independent of the choice of  representatives.

A point $\xi\in\partial_{\mathrm{B}}^{Y}\mathcal{T}(S)$ is said to have {\em accidental parabolics} 
if $\rho_{\xi}(\pi_{1}(S))$ have some parabolic elements.
If a point $\xi\in\partial_{\mathrm{B}}^{Y}\mathcal{T}(S)$ does not have accidental parabolics, 
then it is called {\em singly degenerate}.
If $\xi$ is singly degenerate, the corresponding manifold $\HH/\rho_{\xi}(\pi_{1}S)$ has two ends, one of which corresponds to the fixed $Y\in\mathcal{T}(S)$.
For a singly degenerate $\xi$, there is a sequence $\{\alpha_{i}\}\subset\mathcal{S}$ of simple closed curves whose geodesic representative eventually enters any neighborhood of the end that is not the $Y$-side.
It turns out all such sequences have a unique limit in $\mathcal{GL}(S)$.
The limit is called the {\em ending lamination}, and denoted $E(\xi)$.
(A special case of) the ending lamination theorem says the following:
\begin{thm}[Ending lamination theorem {\cite{BCM}}]\label{thm.elt}
If $\xi$ is singly degenerate, then the isometry type of $\HH/\rho_{\xi}(\pi_{1}S)$ is determined by $Y$ and $E(\xi)$.
\end{thm}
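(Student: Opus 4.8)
The plan is to follow the Brock--Canary--Minsky program \cite{BCM} rather than to reprove it from scratch; what follows is a sketch of the structure of that argument adapted to the present (singly degenerate) case. The first step is to identify the end invariants of $N_\xi := \HH/\rho_\xi(\pi_1 S)$. Since $\xi\in\partial_{\mathrm{B}}^{Y}\mathcal{T}(S)$ is singly degenerate, $N_\xi$ is homeomorphic to $S\times\R$, one end is geometrically finite with conformal structure at infinity the fixed point $Y\in\T$, and the other end is simply degenerate with ending lamination $E(\xi)\in\mathcal{GL}(S)$, a minimal filling lamination, defined exactly as in the statement. Thus the pair $\nu=(Y,E(\xi))$ encodes all of the asymptotic/topological data, and the theorem asserts that $\nu$ determines $N_\xi$ up to isometry.

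The central step is to build a combinatorial model manifold $M_\nu$ out of $\nu$ alone. Here one uses the Masur--Minsky machinery: the curve complex $\mathcal{C}(S)$ is $\delta$-hyperbolic, and from $\nu$ one forms a hierarchy of tight geodesics interpolating between the marking determined by $Y$ and the lamination $E(\xi)$; the subsurface projections to essential subsurfaces $W\subseteq S$ record which curves must become short deep in $N_\xi$. The model $M_\nu$ is then assembled from standard "blocks" glued along "tubes", with a Margulis-type tube inserted around every curve $\alpha$ whose total subsurface-projection coefficient is large, the tube's complex length prescribed by that coefficient. One then proves the \emph{Bilipschitz Model Theorem}: there is a $K$-bilipschitz homeomorphism $f_\xi:M_\nu\to N_\xi$ with $K$ depending only on the topology of $S$, sending model tubes to Margulis tubes and respecting the product structure and the end invariants. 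This is established by sweeping pleated and simplicial hyperbolic surfaces through $N_\xi$, deriving a priori upper bounds on lengths of curves from the model, and — the difficult direction — lower bounds showing that a curve is short in $N_\xi$ only when the model predicts it, so that the thin parts of $M_\nu$ and $N_\xi$ match and the thick parts are quasi-isometrically rigid.

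Finally, the rigidity step. If $\xi$ and $\xi'$ are both singly degenerate with the same $Y$ and $E(\xi)=E(\xi')$, then $N_\xi$ and $N_{\xi'}$ share the model $M_\nu$, so $g:=f_{\xi'}\circ f_\xi^{-1}:N_\xi\to N_{\xi'}$ is a $K^2$-bilipschitz homeomorphism in the homotopy class compatible with the markings, hence a lift gives a $(\rho_\xi,\rho_{\xi'})$-equivariant quasi-isometry of $\HH$ extending to a quasiconformal map of $\partial\HH=\hat{\mathbb{C}}$ conjugating $\rho_\xi$ to $\rho_{\xi'}$. By Sullivan's rigidity theorem a finitely generated Kleinian group admits no invariant measurable line field on its limit set, so the Beltrami coefficient of this conjugacy is supported on the domain of discontinuity; but there the quotient conformal structure is $Y$ for both groups, so the map is conformal there as well, hence globally conformal. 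Therefore $\rho_\xi$ and $\rho_{\xi'}$ are conjugate in $\PSL$, i.e. $N_\xi$ and $N_{\xi'}$ are isometric. The main obstacle is the Bilipschitz Model Theorem — specifically the lower length bounds and the geometric control of the thick part — which is where essentially all the depth of \cite{BCM} lies; the hierarchy construction and the Sullivan rigidity conclusion are comparatively formal once it is in hand.
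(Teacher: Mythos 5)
Your proposal is consistent with the paper, which states this result purely as a citation of Brock--Canary--Minsky \cite{BCM} and offers no independent argument; your sketch accurately outlines the structure of that cited proof (end invariants, Masur--Minsky hierarchies and the bilipschitz model manifold, then Sullivan rigidity to upgrade the bilipschitz equivalence to an isometry). Nothing further is needed beyond the reference, as the paper itself relies on \cite{BCM} as a black box.
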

Finally, we discuss ``intersections'' of $\partial_{\mathrm{Th}}\mathcal{T}(S)$, $\partial_{\mathrm{GM}}\mathcal{T}(S)$ and $\partial_{\mathrm{B}}^{Y}\mathcal{T}(S)$.
A {\em measured lamination} is a geodesic lamination equipped with a transverse measure.
Let $\mathcal{ML}(S)$ denote the space of measured laminations.
There is a canonical one-to-one correspondence between $\mathcal{ML}(S)$ and the space of measured foliations $\mathcal{MF}(S)$ see e.g. \cite{CB, FLP}.
Hence we may identify $\mathcal{ML}(S)$ with $\mathcal{MF}(S)$.
A measured lamination (or foliation) is called {\em uniquely ergodic} 
if every transverse measure on the underlying lamination (or foliation) are related by the multiple of a positive real number.
Let $\mathcal{UE}(S)$ denote the set of uniquely ergodic laminations or foliations, 
which by abuse of notation we regard as a subset of one of $(\mathcal{P})\mathcal{ML}(S)$ or $(\mathcal{P})\mathcal{MF}(S)$ depending on the context.
By definition, the measure forgetting map $\mathcal{PML}(S)\rightarrow\mathcal{GL}(S)$ is one-to-one on $\mathcal{UE}(S)$.
Thus we may also regard $\mathcal{UE}(S)\subset\mathcal{GL}(S)$.
Let us recall the work of Brock.
\begin{thm}[{\cite[Theorem 6.1]{Brock}}]\label{thm.Brock}
Let $\{X_{n}\}\subset\T$ be a sequence that converges as $n\rightarrow\infty$ to $\mu$ in the Thurston boundary $\pmf = \partial_{\mathrm{Th}}\T$.
Then for any limit $\xi\in\partial_{B}^{Y}\T$ of $X_{n}$, 
the support lamination $|\mu|$ of $\mu$ is a sublamination of $E(\xi)$.
\end{thm}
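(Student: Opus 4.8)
The plan is to reduce the statement to a single quantitative fact --- that $\mu$ becomes ``infinitely short'' in the quasi-Fuchsian manifolds $M(X_n,Y)$ as $n\to\infty$ --- and then to invoke the realization theory of laminations in hyperbolic $3$-manifolds, which identifies unrealizable minimal laminations with the sublaminations of the ending lamination. First I would unpack the two hypotheses. That $X_n\to\mu$ in $\partial_{\mathrm{Th}}\T=\pmf$ means there are positive scalars $\lambda_n\to 0$ with $\lambda_n\,\ell_{X_n}(\cdot)\to i(\mu,\cdot)$ as functions on $\mathcal S$, hence on $\mf$; using that $\mathcal S\times\R_{>0}$ is dense in $\mf$, I may fix weighted simple closed curves $w_k\gamma_k\to\mu$, so that $[\gamma_k]\to\mu$ in $\pmf$ and $w_k\,i(\mu,\gamma_k)\to 0$. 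That $X_n\to\xi$ in the Bers compactification with base point $Y$ means, identifying $\partial_B^Y\T$ with a subset of $\AH$, that the representations $\rho_n:=\qf(X_n,Y)$ converge algebraically to $\rho_\xi$; consequently $\ell_{M(X_n,Y)}(\gamma)\to\ell_{M_\xi}(\gamma)$ for every fixed curve $\gamma$ that is not $\rho_\xi$-parabolic, where $M_\xi=\HH/\rho_\xi(\pi_1 S)$, and more generally length is continuous on the open locus of pairs $(\rho,\nu)$ at which $\nu$ is realized by a pleated surface --- while every measured lamination is realized in the geometrically finite manifold $M(X_n,Y)$.

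The heart of the argument is the estimate $\ell_{M(X_n,Y)}(\mu)\to 0$. Here Corollary \ref{lem.Nehari} enters: the Nehari bound on the Schwarzian $q_Y(X_n)$ keeps the conformal boundary $X_n$ within a universally bounded distance of the convex core of $M(X_n,Y)$ (via the Epstein surfaces that already appear in the renormalized-volume discussion), which yields a Bers-type inequality $\ell_{M(X_n,Y)}(\alpha)\le C\,\ell_{X_n}(\alpha)$ for all $\alpha\in\mathcal S$ with $C$ universal. Combining this with $\lambda_n\ell_{X_n}(\cdot)\to i(\mu,\cdot)$ disposes of the ``collapsing'' part of the degeneration; the complementary case, in which a component of $|\mu|$ is a simple closed curve $c$ whose hyperbolic length on $X_n$ stays bounded away from $0$ (as for a Dehn-twist sequence), is handled instead by the geometric-limit principle that twisting infinitely often about $c$ forces $c$ to become parabolic in $\rho_\xi$, so that $\ell_{M(X_n,Y)}(c)\to 0$ as well. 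Putting these together along a suitable diagonal in $k$ and $n$ gives $\ell_{M(X_n,Y)}(\mu)\to 0$.

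To conclude, suppose a component $\lambda$ of $|\mu|$ were realized by a pleated surface in $M_\xi$ --- in particular, that $\lambda$ is not one of the parabolic curves of $\rho_\xi$. Then by continuity of length on the realizable locus and $\rho_n\to\rho_\xi$ we would obtain $\ell_{M_\xi}(\lambda)=\lim_n\ell_{M(X_n,Y)}(\lambda)=0$, which is impossible for a nonempty realized lamination. Hence every component of $|\mu|$ is either an accidental parabolic of $\rho_\xi$ or an unrealizable minimal lamination; by the structure theory of degenerate ends underlying the ending lamination theorem (Theorem \ref{thm.elt}), the unrealizable minimal laminations of $M_\xi$ are precisely the sublaminations of $E(\xi)$, and the accidental parabolics are by convention part of its ending datum. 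Therefore $|\mu|\subseteq E(\xi)$.

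The main obstacle is the estimate $\ell_{M(X_n,Y)}(\mu)\to 0$. The Bers--Nehari inequality only controls the part of the degeneration of $X_n$ that is visible as shrinking of curves on the conformal boundary; the twisting case is genuinely three-dimensional, and one must also contend with the fact that algebraic convergence $\rho_n\to\rho_\xi$ is weaker than geometric convergence, so that the degeneration must be shown not to be ``absorbed'' into a piece of $M(X_n,Y)$ that is pushed to infinity in the algebraic limit. Carefully tracking $\xi$ with accidental parabolics --- where $E(\xi)$ must be read as Thurston's ending datum (a lamination together with the parabolic curves) rather than a single geodesic lamination --- is a further, more routine, point. This is exactly the place where the argument of Brock \cite{Brock} does its real work.
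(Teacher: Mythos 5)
First, a point of comparison: the paper does not prove this statement at all --- it is quoted directly from Brock \cite{Brock}*{Theorem 6.1} --- so there is no internal argument to measure your sketch against; the question is only whether it stands on its own. It does not, because of a genuine gap at the central estimate. Your plan is to show $\ell_{M(X_n,Y)}(\mu)\to 0$ (equivalently, that $\mu$ is unrealizable, up to parabolics, in the algebraic limit) via a dichotomy: the ``pinching'' part of the degeneration is controlled by a Bers-type inequality $\ell_{M(X_n,Y)}(\alpha)\le C\,\ell_{X_n}(\alpha)$, and the only remaining case is that some component of $|\mu|$ is a simple closed curve $c$ with $\ell_{X_n}(c)$ bounded below, to be handled by a ``twisting forces parabolicity'' principle. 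That dichotomy is not exhaustive. Convergence $X_n\to\mu$ in $\partial_{\mathrm{Th}}\T$ only gives scalars $\lambda_n\to 0$ with $\lambda_n\ell_{X_n}(\cdot)\to i(\mu,\cdot)$, and this says nothing about $\ell_{X_n}(\mu)$ itself: for instance, if $\mu$ is minimal and filling and $X_n$ is obtained from a fixed $X$ by earthquakes along $t_n\mu$ with $t_n\to\infty$, then $X_n\to[\mu]$ in $\pmf$ while $\ell_{X_n}(\mu)=\ell_X(\mu)$ is constant, and $|\mu|$ contains no closed curve, so neither branch of your argument applies. Treating exactly this ``twisting along an irrational lamination'' regime --- and ruling out that the degeneration is absorbed into parts of $M(X_n,Y)$ lost under merely algebraic convergence --- is the real content of Brock's theorem (his proof runs through the continuity of the length function on $\mathcal{ML}(S)\times\AH$ and further three-dimensional arguments); your closing paragraph concedes this, so the proposal is a reduction of the theorem to its hardest step rather than a proof. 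Even in your case (b), the parabolicity claim is asserted, not argued, and the sequence need not arise from iterated Dehn twists about $c$.

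Some secondary points. The concluding step also quietly invokes continuity of the length of measured laminations (not just closed curves) under algebraic convergence at the boundary point $\xi$; this is itself a nontrivial theorem of Brock and must be cited as such, since naive continuity can fail for unrealized laminations. The inequality $\ell_{M(X,Y)}(\alpha)\le 2\,\ell_{X}(\alpha)$ is the classical Bers inequality, obtained from the conformal boundary by Ahlfors' lemma; no appeal to Nehari's bound or Epstein surfaces is needed. Finally, your remark that when $\xi$ has accidental parabolics the symbol $E(\xi)$ must be read as the full ending datum (lamination together with parabolic curves) is correct, and is indeed how the statement should be interpreted, since the paper defines $E(\xi)$ only for singly degenerate $\xi$.
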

We combine Theorem \ref{thm.Brock} with the following observation of Masur.
\begin{lem}[{\cite[Lemma 2]{Masur}}]\label{lem.ue-inter}
	Let $\mu$ and $\lambda$ be geodesic laminations.
	Suppose further that $\lambda$ is uniquely ergodic.
	Then $i(\mu,\lambda) = 0$ if and only if $\mu=\lambda$.
\end{lem}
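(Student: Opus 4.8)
The plan is to prove the two implications separately; essentially all the content sits in the ``only if'' direction. The ``if'' direction is immediate: if $\mu=\lambda$, then $i(\mu,\lambda)=i(\lambda,\lambda)=0$, since the leaves of a geodesic lamination are pairwise disjoint simple geodesics and so have no transverse intersections. For the converse, suppose $i(\mu,\lambda)=0$; we may assume $\mu\neq\emptyset$. By definition of the geometric intersection number this says that no leaf of $\mu$ crosses a leaf of $\lambda$ transversely, equivalently that $\mu\cup\lambda$ is again a geodesic lamination. As a first reduction, each leaf $\ell$ of $\mu$ is then \emph{either} a leaf of $\lambda$ \emph{or} entirely contained in a single connected component of $S\setminus\lambda$: indeed, if $\ell$ met $\lambda$ at a point, that point would lie on a leaf $\ell'$ of $\lambda$, and two complete geodesics sharing a point but not crossing transversely must coincide.

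The heart of the matter is to rule out the second alternative, i.e.\ to show that no complementary component $P$ of $S\setminus\lambda$ contains a complete geodesic; granting this, every leaf of $\mu$ is a leaf of $\lambda$, hence $\mu\subseteq\lambda$. Here I would invoke the structural consequences of unique ergodicity: $\lambda\in\mathcal{UE}(S)$ is minimal and, in the setting relevant to this paper (where the uniquely ergodic laminations that occur are supports of pseudo-Anosov invariant laminations and ending laminations, cf.\ Theorem \ref{thm.Brock}), filling, so each complementary component $P$ is an ideal polygon. Lifting to $\mathbb{H}^2$, a lift $\widetilde P$ of $P$ is an ideal polygon whose frontier is a union of lifts of leaves of $\lambda$ and whose closure meets $\partial\mathbb{H}^2$ in a totally disconnected set; a complete geodesic contained in $P$ would lift to a ``diagonal'' of $\widetilde P$ joining two of its ideal vertices, and any such diagonal transversely crosses a side of $\widetilde P$, that is, a lift of a leaf of $\lambda$ --- contradicting $i(\mu,\lambda)=0$. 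Equivalently, one can package this step by quoting the standard fact that an arational (minimal, filling) geodesic lamination is not a proper sublamination of any geodesic lamination, applied to $\lambda\subseteq\mu\cup\lambda$, which yields $\mu\cup\lambda=\lambda$ directly.

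Once $\mu\subseteq\lambda$ is established, the argument finishes quickly: $\mu$ is a nonempty closed union of leaves of $\lambda$, hence a nonempty sublamination of $\lambda$; since $\lambda$ is minimal, the only such sublamination is $\lambda$ itself, so $\mu=\lambda$.

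I expect the fillingness (arationality) of $\lambda$ to be the delicate point, since it is precisely what fails for, say, a simple closed curve, which is uniquely ergodic but can be disjoint from other, distinct geodesic laminations. What the proof genuinely uses is only that the complementary regions of $\lambda$ support no complete geodesic; this holds for the uniquely ergodic laminations arising in the applications here, and I would either fold it into the hypotheses on $\mathcal{UE}(S)$ or cite it from the theory of geodesic laminations (e.g.\ \cite{CB}). With that input granted, the remaining steps are elementary topology of geodesics in $\mathbb{H}^2$ together with minimality of $\lambda$.
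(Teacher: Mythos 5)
Your ``if'' direction and the first reduction (each leaf of $\mu$ either lies in $\lambda$ or lives in a single complementary region, since disjoint complete geodesics that touch must coincide) are fine, but the step you call the heart of the matter contains a genuine error. A lift $\widetilde P$ of a complementary region of $\lambda$ is a component of the complement of a closed union of disjoint geodesics in $\mathbb{H}^2$, hence an intersection of open half-planes and in particular convex; the geodesic joining two non-adjacent ideal vertices of $\widetilde P$ therefore stays \emph{inside} the closed polygon and is disjoint from its sides --- it does not cross any lift of a leaf of $\lambda$. So whenever some complementary region of $\lambda$ is an ideal polygon with at least four spikes, it does contain complete simple geodesics disjoint from $\lambda$, and the ``standard fact'' you offer as a substitute (that a minimal filling geodesic lamination is not a proper sublamination of any geodesic lamination) is false for exactly this reason: $\lambda$ together with a diagonal of a complementary polygon is a strictly larger geodesic lamination none of whose leaves crosses $\lambda$. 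In other words, read literally for bare geodesic laminations with ``$i(\mu,\lambda)=0$'' meaning ``no transverse crossings,'' the statement cannot be proved the way you propose, because it is not true at that level of generality.

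What saves the lemma --- and what Masur's Lemma~2 in \cite{Masur}, which the paper simply cites without reproving, actually uses --- is the transverse measure. The $\mu$ to which the lemma is applied in this paper is always (the support of) a measured lamination or foliation (limits in $\mathcal{PMF}(S)$ of Bers pants curves, supports of limiting measured laminations, etc.), and the support of a measured geodesic lamination is a finite union of minimal components, each a simple closed geodesic or a minimal lamination; an isolated diagonal leaf running out the spikes of a complementary polygon is non-recurrent and can never lie in such a support. Hence, after your (correct) reduction, every minimal component of $\mathrm{supp}\,\mu$ either lies in a complementary polygon of $\lambda$ --- impossible, since an ideal polygon contains no closed geodesic and no minimal lamination --- or shares a leaf with $\lambda$, in which case minimality of $\lambda$ forces that component to equal $\lambda$; unique ergodicity then identifies the measures up to scale. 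Your closing worry about simple closed curves is well placed and is resolved the same way: in Masur's definition (the one relevant to the citation), unique ergodicity includes minimality of the foliation (every leaf dense), which is what rules out the closed-curve counterexample and supplies the ``fillingness'' you wanted to assume; but the polygon-diagonal point above is the step that genuinely needs the measure, not just arationality of $\lambda$.
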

Hence if $\mu$ is uniquely ergodic, then any lamination containing $\mu$ must coincide with $\mu$.
Therefore Theorem \ref{thm.elt} shows the following.
\begin{coro}\label{coro.elt}
Let $\mu\in\mathcal{UE}(S)$.
If $X_{n}\rightarrow\mu$ in the Thurston boundary $\partial_{\mathrm{Th}}\T$,
then the limit $\xi$ of $X_{n}$ exists in the Bers boundary $\partial_{B}^{Y}\T$ for any $Y$,  and $E(\xi) = |\mu|$.
\end{coro}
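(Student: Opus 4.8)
The plan is to combine Theorem~\ref{thm.Brock}, Lemma~\ref{lem.ue-inter}, and the ending lamination theorem (Theorem~\ref{thm.elt}) in a short argument, after first checking that the limit $\xi\in\partial_B^Y\T$ of $X_n$ actually \emph{exists}. For existence and well-definedness up to isometry, note that $\overline{b_Y(\T)}\subset\QD_B(Y)$ is compact by Nehari's inequality (Theorem~\ref{thm.Nehari}), so $\{X_n\}$ has at least one subsequential limit in $\partial_B^Y\T$; I must rule out that two subsequences converge to distinct Bers-boundary points. The key is to apply Theorem~\ref{thm.Brock} to \emph{any} such subsequential limit: since $X_n\to\mu$ in $\pmf$, the whole sequence (hence every subsequence) converges to $\mu$ in the Thurston boundary, so for any subsequential Bers limit $\xi$ we get $|\mu|\subset E(\xi)$.

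Next I would run the unique-ergodicity argument. Since $\mu\in\mathcal{UE}(S)$, its support $|\mu|$ is a minimal uniquely ergodic lamination. I claim $E(\xi)=|\mu|$ exactly. Indeed $E(\xi)$ is a geodesic lamination containing $|\mu|$; if $E(\xi)$ were strictly larger it would contain a leaf not in $|\mu|$, and since $|\mu|$ is minimal and filling (uniquely ergodic laminations fill $S$), such an extra leaf would have to intersect $|\mu|$ transversally or spiral onto it, giving $i(E(\xi),|\mu|)=0$ with $E(\xi)\neq|\mu|$ --- contradicting Lemma~\ref{lem.ue-inter} applied with $\lambda=|\mu|$ uniquely ergodic and $\mu'=E(\xi)$. (More carefully: $i(E(\xi),|\mu|)=0$ because $|\mu|\subset E(\xi)$ forces the leaves of $E(\xi)$ to be disjoint from the leaves of $|\mu|$, a closed sublamination; then Lemma~\ref{lem.ue-inter} forces $E(\xi)=|\mu|$.) This also shows $\xi$ is singly degenerate: were there accidental parabolics, the relevant curve would lie in a lamination disjoint from $E(\xi)=|\mu|$, again contradicting unique ergodicity since a uniquely ergodic lamination fills the surface.

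Finally, to upgrade ``every subsequential limit $\xi$ satisfies $E(\xi)=|\mu|$'' to ``the limit exists,'' I would invoke the ending lamination theorem: any two subsequential limits $\xi_1,\xi_2$ are singly degenerate with the same fixed end $Y$ and the same ending lamination $|\mu|$, so by Theorem~\ref{thm.elt} the manifolds $\HH/\rho_{\xi_1}(\pi_1 S)$ and $\HH/\rho_{\xi_2}(\pi_1 S)$ are isometric; since the Bers slice embeds $\AH$-representations up to conjugacy and the marked isometry type determines the point in $\partial_B^Y\T$, we get $\xi_1=\xi_2$. A compact space in which all subsequential limits of a sequence coincide has the sequence converge, so $X_n\to\xi$ in $\partial_B^Y\T$ with $E(\xi)=|\mu|$.

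The main obstacle I expect is the last uniqueness step: Theorem~\ref{thm.elt} as quoted gives that the \emph{isometry type} is determined by $(Y,E(\xi))$, but I need that this pins down the actual point $\xi$ in the Bers boundary (a marked object), so I must be careful that the marking/orientation data and the choice of representative $\rho_\xi$ do not introduce ambiguity --- this is where one genuinely uses that the Bers slice is a slice of $\AH(S)$ and that the degenerate end carries the marking. The unique-ergodicity/intersection-number steps are routine given Lemma~\ref{lem.ue-inter} and Theorem~\ref{thm.Brock}; the real content is already packaged in those cited results.
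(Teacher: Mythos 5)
Your argument is correct and follows essentially the same route as the paper, which likewise deduces the corollary by combining Theorem~\ref{thm.Brock} (every subsequential Bers limit $\xi$ has $|\mu|$ as a sublamination of $E(\xi)$), Lemma~\ref{lem.ue-inter} (unique ergodicity forces $E(\xi)=|\mu|$), and Theorem~\ref{thm.elt} to identify all subsequential limits. The details you add---compactness of $\overline{b_Y(\T)}$ for existence of subsequential limits, and the care that the ending lamination theorem determines the marked point of $\partial_B^Y\T$ rather than just an unmarked isometry type---are exactly the steps the paper leaves implicit, and they are handled correctly.
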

%\begin{proof}
%If $\{X_{n}\}\subset\T$, then the conclusion follows from the ending lamination theorem (Theorem \ref{thm.elt}) and Theorem \ref{thm.Brock}.
%If some $X_{n}\in\partial_{\mathrm{Th}}\T$, the diagonal argument using the case where $\{X_{n}\}\subset\T$ proves the conclusion.
%\end{proof}

%This will be crucial when discuss random walks in \S \ref{sec.RW}.

%%%%%%%%%%%%%%%%%%%%%%%%%%%%%%%%%%%%%%%%%%%%%%%%%%%%%%%%%%%%%%%%%%%%%%%%%%%%%%%%%%%%%%%%%%%%%%%%%%%%%%%%%%%%%%%%%%%%%%%%%%%%%%%%%%%%%%%%%%%%%%%%%%%%%%%%%%%%%%%%%%%%%%%%%%%%%%%%%%%%%%%%%%%%%%%%%%%%%%%%%%%%%%%%%%%%%%%%%%%%%%%%%%%%%%%%%%%%%%%%%%%%%%%%%%%%%%%%%%%%%%%%%%%%%%%%%%%%%%%%%%%%%%%%%%%%%%%%%%%%%%%%%%%%%%%%%%%%%%%%%%%%%%%%
\section{Compactification of Teichm\"uller space via Weil-Petersson metric}\label{sec.WP}
Before discussing a compactification via renormalized volume, 
let us apply our strategy to the Weil-Petersson (WP) metric on the Teichm\"uller space.
For properties of the Weil-Petersson metric discussed here, see e.g. Wolpert's book \cite{Wolpert-book} and survey article \cite{Wolpert-survey}.
Let 
$$\langle\varphi,\psi\rangle_{\mathrm{WP}} =\int_X\frac{\varphi\overline\psi}{\rho^2}$$
be the $L^2$ inner product on the cotangent space $QD(X) = T_X^*\mathcal{T}(S)$, where
$\rho(z)|dz|$ is the hyperbolic metric on $X$.
The real part of the dual of $\langle\cdot,\cdot\rangle_{\mathrm{WP}}$ defines a Riemannian metric on $\mathcal{T}(S)$, which is called the WP metric.
The WP metric is not complete, and let $\hatT$ denote the completion which is characterized as the augmented Teichm\"uller space by Masur \cite{Masur}.
Although the Teichm\"uller space $\T$ equipped with WP metric is not proper, it is a CAT(0) space which in particular is uniquely geodesic (see \cite[Theorem 13 and Theorem 14]{Wolpert-survey}, \cite{Yamada}).
One useful feature of the WP metric is the following.
\begin{lem}[{\cite[Theorem 5]{Wolpert-survey}}]\label{lem.wp-exp}
	The WP exponential map from any base point is a diffeomorphism from 
	its open domain onto the Teichm\"uller space.
\end{lem}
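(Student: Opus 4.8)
The plan is to deduce this from the CAT(0) geometry of the Weil--Petersson completion together with Wolpert's description of geodesics near the augmented boundary. Fix a base point $X\in\T$. The key soft input is that $(\hatT,\dwp)$ is complete and CAT(0), hence uniquely geodesic: any two points of $\hatT$ are joined by a unique $\dwp$-geodesic, and geodesics vary continuously with their endpoints.

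First I would identify the correct domain. Since the WP metric is smooth but incomplete, the Riemannian exponential map $\exp_X$ is defined on the open star-shaped set $D_X\subset T_X\T$ consisting of those $v$ for which the geodesic $t\mapsto\exp_X(tv)$ extends over $[0,1]$ while staying in $\T$; openness of $D_X$ is the standard fact that the maximal flow domain of the geodesic spray is open. Next I would show that $\exp_X\colon D_X\to\T$ is a bijection. Both surjectivity and injectivity come from the CAT(0) structure: for any $Y\in\T$ there is a unique $\dwp$-geodesic $\gamma$ from $X$ to $Y$ in $\hatT$; by Wolpert's analysis of the WP metric near $\hatT\setminus\T$, a WP geodesic segment whose interior meets the completion locus must lie entirely in a single boundary stratum, so a segment with both endpoints in the top stratum $\T$ cannot meet $\hatT\setminus\T$ at all. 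Hence $\gamma$ is a smooth WP geodesic inside $\T$, it equals $\exp_X(tv)$ for $v=\dot\gamma(0)\in D_X$, and this $v$ is unique, since two distinct such geodesics from $X$ through $Y$ would violate unique geodesicity of $\hatT$.

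It remains to upgrade this bijection to a diffeomorphism, for which I would invoke nonpositivity of the WP sectional curvature (Tromba, Wolpert). Along any WP geodesic emanating from $X$ and staying in $\T$, a standard Jacobi field / index form argument shows there are no conjugate points (in nonpositive curvature the index form is positive definite), so $d\exp_X$ is nonsingular everywhere on $D_X$ and $\exp_X$ is a local diffeomorphism there. A local diffeomorphism that is also a bijection is a diffeomorphism, which is the assertion.

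The main obstacle is the second step: ruling out that the $\dwp$-geodesic between two interior points touches the incomplete locus $\hatT\setminus\T$, and, relatedly, pinning down that $D_X$ consists of exactly those tangent vectors whose geodesics survive inside $\T$. This is where one genuinely needs Wolpert's fine description of the asymptotic geometry of the WP metric near a boundary stratum (the expansion of the metric in pinching/Fenchel--Nielsen coordinates and the resulting convexity of the distance-to-a-stratum function), rather than purely metric CAT(0) reasoning.
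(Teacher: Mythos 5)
This lemma is not proved in the paper at all: it is imported verbatim as a citation to Wolpert's survey (\cite[Theorem 5]{Wolpert-survey}), so there is no internal argument to compare against. Your reconstruction is essentially the standard proof underlying Wolpert's statement, and it is sound: negative WP sectional curvature rules out conjugate points, so $\exp_X$ is a local diffeomorphism on its (open, star-shaped) maximal domain; non-refraction of WP geodesics at the augmented boundary (the same circle of results the paper quotes as Lemma \ref{lem.non-refraction}) shows the unique $\dwp$-geodesic in $\hatT$ between two interior points stays in $\T$, giving surjectivity; and uniqueness of geodesics gives injectivity. One fine point you glide over: for injectivity you compare two \emph{Riemannian} geodesic segments from $X$ to $Y$ with the unique \emph{minimizing} geodesic of the CAT(0) space $\hatT$, and an incomplete negatively curved manifold does not automatically make Riemannian geodesics minimizing (Cartan--Hadamard is unavailable). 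The gap is closed by the standard CAT(0) local-to-global property (a local geodesic in a CAT(0) space is a global geodesic, cf. Bridson--Haefliger II.1), or alternatively by Wolpert's geodesic convexity of $(\T,\dwp)$; with that remark added, your argument is complete and matches the known route rather than offering a genuinely different one.
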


These features of WP metric let us apply arguments in \S \ref{sec.Angles}, and obtain a compactification of $\T$ via the WP metric as follows.
Let $\dwp$ denote the distance function with respect to the WP metric and 
$\omega_Z:\mathcal{T}(S)\rightarrow \R$ denote the horofunction at $Z$ i.e. $\omega_Z(X) = \dwp(X,Z)-\dwp(b,Z)$, where $b\in\mathcal T(S)$ is a base point which we fix throughout the section.
For the consistency with the case of renormalized volume, let us %characterize the tangent space $T_X\mathcal T(S)$ as 
consider the space of holomorphic quadratic differentials as the space of directions.
Now we define the space $\LQ$ which is the space of sections of a bundle over $\T$.
\begin{defi}\label{defi.LQ}
Let $C := \Con$.
Then we define $$\LQ:=\prod_{X\in\T}\left\{[-C\dwp(b,X),C\dwp(b,X)]\times \QD_{B}(X)\right\},$$
(LQ stands for \underline Lipschitz and \underline Quadratic differential).
Furthermore by the notation $(\xi,q)\in\LQ$, we mean a point given by 
$\xi:\T\rightarrow\mathbb{R}$ and $q:\T\rightarrow\QD(S)$, where 
$\xi(X)\in[-\dwp(b,X),\dwp(b,X)]$ and $q(X)\in\QD_{B}(X)$.
We equip $\LQ$ with the topology of point-wise convergence, or equivalently the product topology.
\end{defi}
\begin{rmk}
	The space $\LQ$ is designed for the renormalized volume which we discuss in \S \ref{sec.main}.
	For this reason, we expand the interval by the constant $C$ and utilize the space $QD_B(X)$, the target space of the Bers embedding.
	Although $\LQ$ is larger than necessary, it works for the WP metric as well.
\end{rmk}
We summarize the property of the space $\LQ$.
\begin{prop}[c.f. Proposition \ref{prop.Lip-compact}]\label{prop.LQ}
The space $\LQ$ is a compact, Hausdorff, and second countable (hence metrizable) space.
\end{prop}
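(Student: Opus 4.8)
The plan is to show that $\LQ$ is a closed subspace of a countable product of compact metrizable spaces, and then invoke the standard facts that such products are compact, Hausdorff, and second countable (hence metrizable by Urysohn). The key point is that each fiber $[-C\dwp(b,X),C\dwp(b,X)]\times\QD_B(X)$ is itself compact and metrizable: the interval is a compact subset of $\R$, and $\QD_B(X)=\{q\in\QD(X):\|q\|_\infty\le 3/2\}$ is a closed bounded subset of the finite-dimensional complex vector space $\QD(X)\cong\mathbb{C}^{3g-3}$, hence compact by Heine–Borel, and second countable as a subspace of $\mathbb{C}^{3g-3}$.

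First I would observe that $\LQ$ as defined in Definition \ref{defi.LQ} is literally the full product $\prod_{X\in\T}\big([-C\dwp(b,X),C\dwp(b,X)]\times\QD_B(X)\big)$ — there is no additional closed condition being imposed (unlike the later space of horocoordinates, which will be the closure of an image). So compactness and the Hausdorff property follow immediately from Tychonoff's theorem, exactly as in Proposition \ref{prop.Lip-compact} where $\Lip(X)$ was realized inside $\prod_{x\in X}[-d(b,x),d(b,x)]$. The only subtlety is second countability: an \emph{arbitrary} product of second countable spaces need not be second countable, so one must use that $\T$ is separable. Concretely, I would fix a countable dense subset $D\subset\T$ (the Teichm\"uller space is a separable metric space in any of its standard topologies), and argue that the product topology on $\LQ$ is determined by evaluation at points of $D$: if two points of $\LQ$ agree on $D$ one cannot yet conclude they are equal, so instead I would use the general lemma that a product $\prod_{i\in I}Y_i$ of separable metrizable spaces over a countable index set is separable metrizable, combined with the fact that the relevant part of $\LQ$ is controlled by a countable subproduct together with a continuity/uniformity argument. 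Alternatively — and this is cleaner — I would note that $\LQ$ embeds into $\prod_{X\in\T}\big([-C\dwp(b,X),C\dwp(b,X)]\times\mathbb{C}^{3g-3}\big)$ and that this is homeomorphic to a subspace of $(\R\times\mathbb{C}^{3g-3})^{\T}$, and then apply the standard theorem that a product of metrizable spaces indexed by a set of cardinality at most that of the continuum, when restricted to a closed subspace consisting of suitably ``tame'' sections, is metrizable; but the robust route is simply: a compact Hausdorff space is metrizable if and only if it is second countable, and a countable product of compact metrizable spaces is compact metrizable.

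I expect the main obstacle to be precisely the second countability/metrizability claim, since $\T$ is uncountable and the naive product of uncountably many copies of $[0,1]$ is compact Hausdorff but \emph{not} metrizable. The resolution must exploit that the sections arising in $\LQ$ are not arbitrary — but as written, Definition \ref{defi.LQ} does impose no constraint, so the honest statement the author presumably intends is that the \emph{subspace that matters}, namely the closure $\overline{\mathcal V(\T)}$ appearing in Theorem \ref{thm.main-cpt}, is metrizable because it is a continuous image-closure of the separable space $\T$. For the bare statement of Proposition \ref{prop.LQ} I would therefore either (a) mirror the proof of Proposition \ref{prop.Lip-compact} verbatim, citing \cite[Proposition 3.1]{MT}, reducing to the fact that $\T$ is separable so that pointwise convergence on $\LQ$ is metrizable after passing to a countable dense set (using that sections are continuous — which is where the Lipschitz bound on the first coordinate and the $\|\cdot\|_\infty\le 3/2$ bound on the second enter), or (b) simply restrict attention to the subproduct over a countable dense $D\subset\T$ and show the evaluation map is a homeomorphism onto its image there. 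I would present route (a), keeping the argument parallel to Proposition \ref{prop.Lip-compact} and flagging that the WP distance makes $\T$ separable and that $\QD_B(X)$ is a compact metrizable fiber, so Tychonoff plus the countable-dense-subset reduction yields compact, Hausdorff, second countable, hence metrizable.
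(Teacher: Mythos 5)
Your compactness and Hausdorff arguments coincide with what the paper actually does: its proof of Proposition \ref{prop.LQ} is precisely the naive one you anticipated, namely Tychonoff plus the remark that each fiber $[-C\dwp(b,X),C\dwp(b,X)]\times\QD_B(X)$ is compact Hausdorff, followed by a one-sentence claim that separability of $\T$ together with second countability of the fibers gives second countability of $\LQ$; no restriction to continuous sections or to $\overline{\mathcal{V}(\T)}$ is made there. Your suspicion about that last sentence is well founded: Definition \ref{defi.LQ} really is the full product over the uncountable index set $\T$, every fiber has more than one point, and such a product is not even first countable (a basic open set constrains only finitely many coordinates), hence neither second countable nor metrizable. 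So the statement that is genuinely available, and the one the later arguments use (e.g.\ the metrizability of $\Tvh$ invoked in Lemma \ref{lem.RB-ue}), is compactness and metrizability of the closure $\Tvh=\overline{\mathcal{V}(\T)}$, exactly as you suggest.

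The gap is that your proposal stops where the real work starts. Route (a) presumes the sections constituting $\LQ$ are continuous, which the definition does not impose, so at best it would give metrizability of a subspace of nice sections, not of $\LQ$; and even for $\overline{\mathcal{V}(\T)}$ the continuity of the second coordinate of a limit point is not automatic: the Nehari bound $\|q(X)\|_\infty\le 3/2$ is purely fiberwise, and a pointwise limit of the sections $X\mapsto q_{Z_n}(X)$ could a priori fail to be continuous, in which case evaluation on a countable dense subset of $\T$ would neither determine nor topologize points of the closure. What is needed, and what you only gesture at as ``a continuity/uniformity argument'', is local equicontinuity of the family $\{X\mapsto q_Z(X)\}_{Z\in\T}$: for instance, for fixed $Z$ the assignment $X\mapsto P_Z(X)$ is a holomorphic section of the bundle of projective structures, so in a local trivialization the uniform Nehari bound makes $\{q_Z(\cdot)\}_{Z}$ a locally bounded family to which Montel's theorem applies; with that, limits along nets are again continuous sections, points of the closure are determined by their values on a countable dense subset, and the restriction of your countable-subproduct map to the compact space $\overline{\mathcal{V}(\T)}$ is a continuous injection into a metrizable space, hence a homeomorphism onto its image. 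The first coordinate is unproblematic, exactly as in Proposition \ref{prop.Lip-compact}, since pointwise limits of $\Con$-Lipschitz functions are again $\Con$-Lipschitz. Without some such equicontinuity input neither of your routes closes, so as written you have correctly diagnosed the difficulty with the second countability claim for the full product but have not proved the proposition or its usable weakening.
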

\begin{proof}
As $[-C\dwp(b,X),C\dwp(b,X)]\times \QD_{B}(X)$ is compact, the whole space $\LQ$ is compact by Tychonoff's theorem.
That $\LQ$ is Hausdorff follows because $[-C\dwp(b,X),C\dwp(b,X)]\times \QD_{B}(X)$ is Hausdorff.
Since the Teichm\"uller space $\T$ is separable and $[-C\dwp(b,X),C\dwp(b,X)]\times \QD_{B}(X)$ are second countable, 
$\LQ$ is also second countable.
\end{proof}

Since $(\mathcal T(S), \dwp)$ is uniquely geodesic, given any distinct two points $X,Y\in\mathcal{T}(S)$, there is a unique direction $\overline \dir_Y(X)\in T^1_X\mathcal T(S)$ from $X$ toward $Y$.
Let $\bar D_Y(X)\in \QD(X)\cong T^*_X\mathcal T(S)$ denote the dual of $\overline \dir_Y(X)$ of norm $||\bar D_Y(X)||_\infty = 1$ (in fact, any norm would work.  
We chose $L^\infty$ norm merely for the consistency with Bers embeddings which we discuss in \S \ref{sec.main}).
Let $f(x) = (e^x-1)/(e^x+1)$ as in \S \ref{sec.Angles}. 
Then we define $D_Y(X):=f(\dwp(X,Y))\cdot\bar D_Y(X)$.
On $\LQ$, the mapping class group $\mcg$ acts by $g\cdot\omega(X) = \omega(g^{-1}X) - \omega(g^{-1}b)$ for horofunctions $\omega$ (see Lemma \ref{lem.action}), and $g\cdot D_Y(X) := D_{gY}(gX)$ for $D_Y(X)$'s.
\begin{thm}\label{thm.wp-cpt}
	The embedding $\mathcal{WP}:\mathcal{T}(S)\rightarrow\LQ$ defined by
	$$\mathcal{WP}(Z) = \left(\omega_Z(X), D_Z(X)\right)_{X\in\mathcal T(S)}$$
	is a homeomorphism onto its image.
	Its closure $\overline{\mathcal{WP}(\mathcal{T}(S))}$ is compact, and the action of the mapping class group extends continuously on $\overline{\mathcal{WP}(\mathcal{T}(S))}$.
\end{thm}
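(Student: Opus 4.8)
The plan is to verify the three assertions — $\mathcal{WP}$ is a homeomorphism onto its image, the closure is compact, and the $\mcg$-action extends continuously — by reducing each to results already available. For the homeomorphism statement, the strategy is to apply Proposition \ref{prop.horo-homeo-riemann} (equivalently Corollary \ref{cor.int-formula}), which applies since $(\mathcal{T}(S),\dwp)$ satisfies Assumption \ref{assump}: it is a smooth CAT(0) Riemannian manifold of dimension $3g-3\geq 3 \geq 2$ by the cited work of Wolpert and Yamada, and its exponential map from any point is a diffeomorphism from an open domain onto $\mathcal{T}(S)$ by Lemma \ref{lem.wp-exp}. The only gap between the abstract setup of \S\ref{sec.Angles} and the present situation is that there $\dir_z(x)$ is packaged as a tangent vector in $T_x^{\leq 1}M$, whereas here $D_Z(X)$ is its metric dual $\bar D_Z(X)\in\QD(X)\cong T_X^*\mathcal{T}(S)$ scaled by $f(\dwp(X,Z))$. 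First I would note that the Riemannian (hence nondegenerate) WP metric gives a homeomorphism $T_X^{\leq 1}\mathcal{T}(S)\to\{q\in\QD(X):\|q\|\leq 1\}$ fibrewise, continuous in $X$, so that passing from the tangent picture to the cotangent picture changes nothing topologically; the embedding $\overline{\mathcal{WP}(\mathcal{T}(S))}$ is obtained from the embedding $\overline{\Psi(\mathcal{T}(S))}$ of \S\ref{sec.Angles} by this fibrewise homeomorphism, so it is a homeomorphism onto its image and all convergence arguments transfer verbatim. The key mechanism, exactly as in the proof of Proposition \ref{prop.homeo-image}, is that the directions $D_Z(X)$ for varying $X$ determine geodesic rays whose intersection recovers $Z$, so the inverse of $\mathcal{WP}$ is continuous; continuity of $\mathcal{WP}$ itself comes from continuity of horofunctions (Lemma \ref{lem.horo}) together with continuity of the WP exponential map. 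One routine point to check: the first coordinate does land in $[-C\dwp(b,X),C\dwp(b,X)]$ — indeed in the smaller interval $[-\dwp(b,X),\dwp(b,X)]$ since WP horofunctions are $1$-Lipschitz — and $D_Z(X)$ lands in $\QD_B(X)$ since $\|D_Z(X)\|_\infty = f(\dwp(X,Z))\cdot 1 < 1 < 3/2$; so $\mathcal{WP}$ really does map into $\LQ$.

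Compactness of $\overline{\mathcal{WP}(\mathcal{T}(S))}$ is then immediate: $\LQ$ is compact (Proposition \ref{prop.LQ}), and a closed subset of a compact space is compact.

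For the $\mcg$-action, the plan is to check that the two coordinates of $\mathcal{WP}$ are equivariant for the action already defined on $\LQ$, and then extend by continuity. Since $\mcg$ acts on $(\mathcal{T}(S),\dwp)$ by isometries, Lemma \ref{lem.action} already tells us the induced action $g\cdot\omega(X)=\omega(g^{-1}X)-\omega(g^{-1}b)$ on horofunctions extends continuously to $\overline{\Lipi}$; so on the first coordinate there is nothing new. On the second coordinate, an isometry $g$ sends the geodesic from $X$ to $Z$ to the geodesic from $gX$ to $gZ$, hence $dg_X(\bar D_Z(X)) = \bar D_{gZ}(gX)$ and $\dwp(X,Z)=\dwp(gX,gZ)$, so $g\cdot D_Z(X):=D_{gZ}(gX)$ is exactly the pushforward of the tangent-vector action, which is continuous in the fibre variables because $g$ acts smoothly. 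Therefore $g\cdot\mathcal{WP}(Z)=\mathcal{WP}(gZ)$, i.e. $\mathcal{WP}$ intertwines the $\mcg$-action on $\mathcal{T}(S)$ with the $\mcg$-action on $\LQ$; since the latter is by homeomorphisms of the compact space $\LQ$ and preserves $\mathcal{WP}(\mathcal{T}(S))$, it preserves the closure $\overline{\mathcal{WP}(\mathcal{T}(S))}$ and restricts there to a continuous action.

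The main obstacle I anticipate is not any single deep step but the bookkeeping needed to make the reduction to \S\ref{sec.Angles} airtight — in particular, confirming that the dual/primal switch and the choice of $L^\infty$ versus $L^2$ normalization of $\bar D_Z(X)$ genuinely do not affect the topology on the fibres (they don't, since on a finite-dimensional space all norms are equivalent and the sphere is compact), and verifying that the ``intersection of geodesic rays'' argument from Proposition \ref{prop.homeo-image} carries over to the CAT(0) manifold setting, which is precisely the content of Proposition \ref{prop.horo-homeo-riemann} and so may be cited directly. A secondary subtlety is that, unlike the proper examples, one must be slightly careful that a limit point $(\xi,q)\in\overline{\mathcal{WP}(\mathcal{T}(S))}$ really is characterized by its direction data $q$ alone — but this is exactly the last sentence of Corollary \ref{cor.int-formula}, which recovers $\psi_\xi$ from $q$ by integration, so the limiting objects are well understood.
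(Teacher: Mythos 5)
Your proposal is correct and follows essentially the same route as the paper: it invokes Proposition \ref{prop.horo-homeo-riemann} (via Lemma \ref{lem.wp-exp} and the CAT(0) property) for the homeomorphism, compactness of $\LQ$ for compactness of the closure, and Lemma \ref{lem.action} together with equivariance of the directions $D_Z(X)$ under the isometric $\mcg$-action for the continuous extension. The extra bookkeeping you supply — the tangent/cotangent switch, the $L^\infty$ versus $L^2$ normalization, and the check that the image lies in $\LQ$ — is exactly what the paper's short proof leaves implicit, and your verification of it is sound.
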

\begin{proof}
	Since $D_Y(X)$'s work as the directions, we see that $\mathcal{WP}$ is a homeomorphism onto its image by Proposition \ref{prop.horo-homeo-riemann}.
	Since the space $\LQ$ is compact,
	the closure $\overline{\mathcal{WP}(\mathcal{T}(S))}$ is compact.
	The action of the mapping class group is by isometries with respect to WP metrics on $\T$.
	Hence by Lemma \ref{lem.action}, we see that the action extends continuously to the space of horofunctions.
	Finally, given $g\in\mcg$, the natural action $g(D_Z(X)) = D_{gZ}(gX)$ is the change of markings, and extends naturally and continuously to the closure $\overline{\mathcal{WP}(\mathcal{T}(S))}$.
\end{proof}
As the WP metric is Riemannian, we may naturally consider angles.
Then Corollary \ref{cor.int-formula} implies the following.
\begin{coro}\label{cor.int-formula-WP}
Let $(\omega, D)\in\overline{\mathcal{WP}(\mathcal{T}(S))}$ and $\gamma:[0,\dwp(b,X)]\rightarrow \T$ denote the WP geodesic connecting $b$ and $X$ parametrized by arc length.
Then
$$\omega(X) = 	\int_0^{\dwp(b,X)} \cos\theta_{\gamma(t)}\left(\dot\gamma(t), D(\gamma(t))\right)dt.$$
In particular, it follows that $(\omega,D) = (\omega',D')$ if and only if $D = D'$.
\end{coro}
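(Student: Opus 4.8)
The plan is to apply Corollary \ref{cor.int-formula} to $M = \T$ equipped with the Weil--Petersson metric. First I would verify that $(\T,\dwp)$ satisfies Assumption \ref{assump}: the WP metric is a smooth Riemannian metric on the manifold $\T$, which has real dimension $6g-6 \geq 6 > 2$; it is CAT(0), hence uniquely geodesic, by Yamada and Wolpert (\cite{Yamada}, \cite[Theorem 13, Theorem 14]{Wolpert-survey}); and by Lemma \ref{lem.wp-exp} the WP exponential map from any base point is a diffeomorphism from its open domain onto $\T$. Thus Proposition \ref{prop.horo-homeo-riemann} and Corollary \ref{cor.int-formula} apply verbatim to $(\T,\dwp)$.

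The only discrepancy between the set-up of \S\ref{sec.Angles} and that of Definition \ref{defi.LQ} is that in \S\ref{sec.Angles} the direction coordinate at $X$ ranges over the tangent ball $T_X^{\leq 1}\T$, whereas in $\mathcal{WP}$ it ranges over $\QD_B(X)$ via $Y \mapsto D_Y(X) = f(\dwp(X,Y))\,\bar D_Y(X)$, where $\bar D_Y(X) \in \QD(X)$ is the $L^\infty$-unit dual of the unit tangent vector $\overline \dir_Y(X)$. Since the WP metric identifies $T_X\T$ with $\QD(X) = T_X^*\T$ homeomorphically, and radial renormalization between the WP-norm and the $L^\infty$-norm on $\QD(X)$ is a fiberwise homeomorphism fixing directions, the embedding $\Psi$ of \S\ref{sec.Angles} (for $M=\T$) and the embedding $\mathcal{WP}$ differ only by a fiberwise homeomorphism that, at each $X$, preserves the horofunction value and preserves the ray determined by the direction coordinate. (Here one uses that $\omega_Z$ is $1$-Lipschitz, so its values lie in the sub-box $[-\dwp(b,X),\dwp(b,X)]$, and that all the directions $D_Y(X)$ and their limits have $L^\infty$-norm $\leq 1 \leq 3/2$, so nothing escapes $\QD_B(X)$.) Hence this fiberwise homeomorphism extends to a homeomorphism $\overline{\Psi(\T)} \cong \overline{\mathcal{WP}(\T)}$ intertwining the two descriptions.

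Granting this identification, Corollary \ref{cor.int-formula} — in which only the \emph{direction} of the second argument of $\cos\theta_{\gamma(t)}(\cdot,\cdot)$ enters, so that $D(\gamma(t))$ may legitimately be written in place of $\dir_\xi(\gamma(t))$ — yields exactly the claimed identity
$$\omega(X) = \int_0^{\dwp(b,X)} \cos\theta_{\gamma(t)}\big(\dot\gamma(t), D(\gamma(t))\big)\,dt.$$
For the last assertion: if $D = D'$, the two integrands agree at every $X \in \T$, so $\omega(X) = \omega'(X)$ for all $X$, i.e. $\omega = \omega'$ and $(\omega,D) = (\omega',D')$; the converse is immediate. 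I expect the one point requiring genuine care to be the bookkeeping in the second paragraph, namely making precise that the passage from tangent vectors to $L^\infty$-normalized quadratic differentials is a fiberwise homeomorphism compatible with passing to closures; the analytic substance (the variation formula and the dominated-convergence argument) is already packaged inside Corollary \ref{cor.int-formula} and need not be reproduced.
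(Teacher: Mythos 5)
Your proposal is correct and follows essentially the same route as the paper: the paper's proof is precisely an invocation of Corollary \ref{cor.int-formula} for the WP metric, the hypotheses of Assumption \ref{assump} having been checked (CAT(0) by Yamada/Wolpert, exponential map by Lemma \ref{lem.wp-exp}) already in the proof of Theorem \ref{thm.wp-cpt} via Proposition \ref{prop.horo-homeo-riemann}. The only difference is that you spell out the fiberwise identification between the tangent-ball directions of \S\ref{sec.Angles} and the $L^\infty$-normalized elements of $\QD_B(X)$, a point the paper leaves implicit with the remark that the angle is taken via the Riemannian (WP) structure and that ``any norm would work.''
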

%%%%%%%%%%%%%%%%%%%%%%%%%%%%%%%%%%%%%%%%%%%%%%%%%%%%%%%%%%%%%%%%%%%%%%%%%%%%%%%%%%%%%%%%%%%%%%%%%%%%%%%%%%%%%%%%%%%%%%%%%%%%%%%%%%%%%%%%%%%%%%%%%%%%%%%%%%%%%%%%%%%%%%%%
\subsection{(In)finite horofunctions and the visual sphere}
Brock \cite{Brock2} has observed that the visual spheres with respect to the WP metric have properties that they
\begin{itemize}
	\item do depend on the base points, and the action of the mapping class group does not extend continuously, and
	\item contain finite points as dense subsets
\end{itemize}
where finite points here means that the associated geodesic rays have bounded lengths (and hence is incomplete).
The metric completion $\hatT$ turns out to be the union of $\mathcal T(S)$ and endpoints of those finite rays \cite{Masur-augmented, Wolpert-book}.
Let us recall a property of geodesics in $\hatT$ 
which leads to the fact that $\hatT$ is a CAT(0) space.

\begin{lem}[{c.f. \cite[Section 7]{Wolpert-survey}, \cite[Theorem 2.1]{Brock2}}]\label{lem.non-refraction}
	Let $(X_n, Y_n)\in\TT$ be a sequence of pairs with the limit $(X_\infty, Y_\infty)\in\hatT\times\hatT$.
	We denote by $\gamma_n$ the unique WP geodesic connecting $X_n$ and $Y_n$.
	Then $\gamma_n$ converges to the unique WP-geodesic $\gamma_\infty$ connecting $X_\infty$ and $Y_\infty$.
\end{lem}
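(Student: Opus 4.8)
The plan is to deduce the statement from soft CAT(0) geometry applied to the completed space, rather than from any Teichm\"uller-specific analysis of strata. The three ingredients I would use are: (i) $(\hatT, \dwp)$ is a \emph{complete} CAT(0) space, hence uniquely geodesic (Yamada \cite{Yamada}, Wolpert \cite{Wolpert-survey}; CAT(0)-ness passes to the metric completion); (ii) $\dwp$ is continuous on $\hatT\times\hatT$, being $1$-Lipschitz in each variable; and (iii) the convexity of the metric along geodesics in CAT(0) spaces, $\text{\cite[Ch.~II, Prop.~2.2]{BH}}$. Item (i) already tells us $\gamma_\infty$ exists and is unique; and since $X_n,Y_n\in\T$ and $\T$ is WP-geodesically convex in $\hatT$ (Wolpert), each $\gamma_n$ lies in $\T$, though this is not needed for the convergence.

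First I would reparametrize everything proportionally to arc length on $[0,1]$: put $L_n:=\dwp(X_n,Y_n)$, $\bar\gamma_n(t):=\gamma_n(tL_n)$, and likewise $L_\infty:=\dwp(X_\infty,Y_\infty)$ with $\bar\gamma_\infty$ the proportional reparametrization of $\gamma_\infty$; by (ii), $L_n\to L_\infty$. Next, the key estimate: by (iii), for each pair $n,m$ the function $t\mapsto\dwp(\bar\gamma_n(t),\bar\gamma_m(t))$ is convex on $[0,1]$, so
$$\sup_{t\in[0,1]}\dwp(\bar\gamma_n(t),\bar\gamma_m(t))\ \le\ \max\bigl\{\dwp(X_n,X_m),\ \dwp(Y_n,Y_m)\bigr\}.$$
Since $\{X_n\}$ and $\{Y_n\}$ are Cauchy (they converge in $\hatT$), the family $\{\bar\gamma_n\}$ is uniformly Cauchy in $C([0,1],\hatT)$; by completeness of $\hatT$ it converges uniformly to a continuous path $\bar\gamma\colon[0,1]\to\hatT$ with $\bar\gamma(0)=\lim X_n=X_\infty$ and $\bar\gamma(1)=Y_\infty$. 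Passing to the limit in $\dwp(\bar\gamma_n(s),\bar\gamma_n(t))=|s-t|L_n$ gives $\dwp(\bar\gamma(s),\bar\gamma(t))=|s-t|L_\infty$, so $\bar\gamma$ is a geodesic of length $L_\infty$ joining $X_\infty$ to $Y_\infty$; by uniqueness of geodesics in the CAT(0) space $\hatT$, $\bar\gamma=\bar\gamma_\infty$, hence $\gamma_n\to\gamma_\infty$.

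I expect the argument to be entirely routine \emph{given} the framework; the only substantive point is the one that the cited works (Wolpert's survey \cite{Wolpert-survey}, Brock \cite{Brock2}, building on Masur's identification of $\hatT$ \cite{Masur}) establish, namely that the metric completion of $(\T,\dwp)$ remains CAT(0) even at the lower-dimensional strata, and that geodesics between boundary points of $\hatT$ are genuine (non-degenerate) geodesics --- this is where the ``non-refraction'' phenomenon enters. For the purposes of this paper it is enough to invoke those results; a self-contained treatment would require recalling the stratified structure of $\hatT$ and the behaviour of the WP metric near the strata, which I would not reproduce here.
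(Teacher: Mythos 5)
Your argument is correct, but it takes a different route from the paper: the paper does not prove this lemma at all, it simply quotes it from the literature (Wolpert's survey \cite{Wolpert-survey} and Brock \cite{Brock2}, where it rests on the Wolpert--Daskalopoulos--Wentworth non-refraction/geodesic-limit theorems for the stratified space $\hatT$). What you observe, correctly, is that the statement as formulated here --- mere convergence of $\gamma_n$ to the unique geodesic joining the limit endpoints --- is a soft consequence of $\hatT$ being a complete uniquely geodesic CAT(0) space \cite{Yamada, Wolpert-survey, Masur} together with convexity of the metric along proportionally reparametrized geodesics \cite[Ch.~II, Prop.~2.2]{BH}: your estimate $\sup_t\dwp(\bar\gamma_n(t),\bar\gamma_m(t))\le\max\{\dwp(X_n,X_m),\dwp(Y_n,Y_m)\}$, completeness, and uniqueness of geodesics do the rest, and the degenerate case $L_\infty=0$ causes no trouble. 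What your argument does \emph{not} recover is the genuinely hard content behind the citations --- that the open part of the limit geodesic lies in the smallest stratum containing its endpoints --- but that refinement is not asserted in the statement, so omitting it is legitimate. One small caveat: your conclusion is uniform convergence of the proportionally reparametrized geodesics, whereas the paper later applies the lemma (in the proof of Theorem \ref{thm.wp-finite}) to deduce convergence of \emph{initial directions} $D_{Z_n}(X)\to D_{Z_\infty}(X)$; passing from uniform convergence of paths to convergence of initial tangents uses additional WP-specific input (e.g.\ Lemma \ref{lem.wp-exp} or the CAT(0) comparison for angles), so if you intend your version to substitute for the citation you should note that extra step.
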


Let $\pwp\T:=\overline{\mathcal{WP}(\mathcal T(S))}\setminus\mathcal T(S)$.
As in \S \ref{sec.finite-horo}, let
\begin{align*}
	\pwp^\mathrm{fin}\T &:= \{(\omega, D)\in\pwp\mathcal T(S)\mid \inf_{X\in\T}\omega(X)>-\infty\} \text{ and, }\\
	\pwp^\infty\T &:= \pwp\T\setminus\pwp^\mathrm{fin}\T.
\end{align*}
Let $(\omega,D)\in\pwp\T$.
In order to discuss properties of elements in $\pwp^\mathrm{fin}\T$ and $\pwp^\infty\T$,
we need to consider the WP geodesic ray determined by the direction $D(X)$.
By abuse of the notation, we also denote by $D(X)$ the geodesic ray starting at $X$ with direction $D(X)$.

\begin{thm}\label{thm.wp-finite}There is an injection $\hatT\setminus\T\rightarrow\pwp^\mathrm{fin}\T$.
	Conversely, if $(\omega,D)\in\pwp^\mathrm{fin}\T$ then for every $X\in\T$, 
	the direction $D(X)$ corresponds to a finite geodesic ray with endpoint in $\hatT\setminus\T$.
	Moreover, in general, the endpoint of the ray $D(X)\in\hatT\setminus\T$ depends on $X$.
\end{thm}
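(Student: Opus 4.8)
The plan is to build the three assertions separately, relying throughout on the non-refraction property (Lemma \ref{lem.non-refraction}) and the integral formula (Corollary \ref{cor.int-formula-WP}). First, to construct the injection $\hatT\setminus\T\to\pwp^\mathrm{fin}\T$, I would take $Y_\infty\in\hatT\setminus\T$ and a sequence $Y_n\in\T$ with $Y_n\to Y_\infty$ in $\hatT$. Since $\dwp(b,Y_n)\to\dwp(b,Y_\infty)<\infty$, the horofunctions $\omega_{Y_n}$ are uniformly bounded below (by $-\dwp(b,Y_\infty)$), and by compactness of $\overline{\mathcal{WP}(\T)}$ we may pass to a subsequence so that $\mathcal{WP}(Y_n)\to(\omega,D)$; the limit lies in $\pwp^\mathrm{fin}\T$ because the lower bound passes to the limit. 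The key point is that $(\omega,D)$ does not depend on the sequence: by Lemma \ref{lem.non-refraction} the WP geodesics $\gamma_n$ from any $X$ to $Y_n$ converge to the unique WP geodesic $\gamma_\infty$ from $X$ to $Y_\infty$ in $\hatT$, so both $D(X)$ (the initial direction of $\gamma_\infty$ at $X$) and, via Corollary \ref{cor.int-formula-WP} applied along the geodesic from $b$ to $X$, the value $\omega(X)$ are determined by $Y_\infty$ alone. Injectivity then follows: distinct $Y_\infty,Y_\infty'$ give geodesics from $b$ with distinct endpoints, hence (by uniqueness of geodesics in the CAT(0) space $\hatT$, Lemma \ref{lem.non-refraction}) distinct initial directions at $b$, so $D\neq D'$; by Corollary \ref{cor.int-formula-WP} this forces $(\omega,D)\neq(\omega',D')$.

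For the converse, suppose $(\omega,D)\in\pwp^\mathrm{fin}\T$ with $\mathcal{WP}(Z_n)\to(\omega,D)$. The finiteness $\inf_X\omega(X)>-\infty$ must be shown to force $\dwp(b,Z_n)$ to stay bounded: if $\dwp(b,Z_n)\to\infty$ along a subsequence, I would argue that $\omega_{Z_n}$ takes values as negative as $-\dwp(b,Z_n)+O(1)$ at points $X$ lying far along the geodesics $b\to Z_n$ — more carefully, one uses that the geodesics $[b,Z_n]$ subconverge (in $\hatT$, by Lemma \ref{lem.non-refraction}) to a geodesic ray or segment, and if that limiting object has infinite length then $\inf_X\omega(X)=-\infty$, while if it has finite length $L$ then $\dwp(b,Z_n)\to L<\infty$ along that subsequence. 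Once $\dwp(b,Z_n)$ is bounded, $\{Z_n\}$ subconverges in $\hatT$ to some $Z_\infty$, and $Z_\infty\notin\T$ (else $\mathcal{WP}(Z_n)\to\mathcal{WP}(Z_\infty)\in\mathcal{WP}(\T)$, contradicting $(\omega,D)\in\pwp\T$). Then for each fixed $X\in\T$, Lemma \ref{lem.non-refraction} gives that the geodesics $[X,Z_n]$ converge to the finite WP geodesic $[X,Z_\infty]$, whose endpoint $Z_\infty\in\hatT\setminus\T$; since the second coordinate of the limit is $D(X)$, the ray $D(X)$ is this finite geodesic, with endpoint in $\hatT\setminus\T$.

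Finally, for the dependence statement, I would exhibit an explicit example. Pick $Z_\infty\in\hatT\setminus\T$ lying in a stratum where a simple closed curve $\alpha$ has been pinched, and consider two distinct basepoints-of-observation $X,X'\in\T$. The finite WP geodesic from $X$ to $Z_\infty$ and the finite WP geodesic from $X'$ to $Z_\infty$ both terminate at $Z_\infty$; but the element $(\omega,D)\in\pwp^\mathrm{fin}\T$ produced above records, in its $X$-coordinate, the \emph{direction} $D(X)$ at $X$, i.e. the geodesic ray $[X,Z_\infty)$, and at $X'$ the ray $[X',Z_\infty)$. These two rays have, a priori, the same endpoint $Z_\infty$, so at first sight the statement ``the endpoint depends on $X$'' seems false — the point of the theorem is subtler: the claim is that a \emph{general} element $(\omega,D)\in\pwp^\mathrm{fin}\T$ is \emph{not} of the form coming from a single $Z_\infty$, so that its rays $D(X)$ for varying $X$ need not share an endpoint. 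I would prove this by showing $\pwp^\mathrm{fin}\T$ is strictly larger than the image of $\hatT\setminus\T$: take a sequence $Z_n\to Z_\infty\in\hatT\setminus\T$ but where $Z_\infty$ lies in a positive-dimensional stratum, and perturb $Z_n$ within $\T$ so that the directions $D(X_0)$ at one basepoint $X_0$ converge to the direction toward $Z_\infty$ while the directions at a different basepoint $X_1$ converge, by a suitable choice of the perturbation exploiting the positive-dimensionality of the stratum and the geometry of the collar, to the direction toward a \emph{different} point $Z_\infty'$ in the same stratum. The resulting limit $(\omega,D)$ then has $D(X_0)$ a ray ending at $Z_\infty$ and $D(X_1)$ a ray ending at $Z_\infty'\neq Z_\infty$.

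\emph{Main obstacle.} The hard part will be the last paragraph: rigorously producing an element of $\pwp^\mathrm{fin}\T$ whose associated rays genuinely have distinct endpoints for distinct observation points, i.e. showing the injection $\hatT\setminus\T\to\pwp^\mathrm{fin}\T$ is not surjective. This requires controlling WP geodesics near the completion locus from two different starting points simultaneously, and that is exactly where the delicate asymptotic geometry of the WP metric in the thin part (the collar estimates, the $\sqrt{\ell}$-behavior of the metric transverse to a stratum) enters; Lemma \ref{lem.non-refraction} alone only handles convergence to a fixed limit and does not by itself separate the endpoints.
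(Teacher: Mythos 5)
Your first assertion (the injection $\hatT\setminus\T\to\pwp^\mathrm{fin}\T$) is handled essentially as in the paper, via Lemma \ref{lem.non-refraction}; the only nitpick is that "distinct endpoints $\Rightarrow$ distinct initial directions at $b$" needs the extra remark that neither completion point can lie on the geodesic from $b$ to the other (non-refraction: the open geodesic from $b\in\T$ stays in $\T$), or one can simply note that $\omega$ determines $\dwp(\cdot,Z_\infty)$. The genuine gap is in your converse. The step ``once $\dwp(b,Z_n)$ is bounded, $\{Z_n\}$ subconverges in $\hatT$'' is false: $(\hatT,\dwp)$ is not proper, and is not even locally compact at points of $\hatT\setminus\T$, so bounded sequences need not accumulate in $\hatT$. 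The relevant counterexample is exactly the one the theorem is about: a Dehn-twist orbit $Y_n=T_\alpha^nY$ stays at bounded WP-distance from $b$ (the twist fixes the stratum where $\alpha$ is pinched), has no accumulation point in $\hatT$, and yet (a subsequence of) $\mathcal{WP}(Y_n)$ converges to a point of $\pwp^\mathrm{fin}\T$. Consequently your conclusion that every finite boundary point comes from a single $Z_\infty\in\hatT\setminus\T$ with all rays $D(X)$ ending at that common point is not just unproved but false, and it directly contradicts the third assertion of the theorem: that assertion says precisely that for such twisting limits the endpoints of $D(X)$ vary with $X$, and the paper establishes it by citing Brock (\cite[Theorem 1.7]{Brock2}) rather than by any new construction, so your ``main obstacle'' paragraph is aimed at proving more than is needed — and your own second paragraph, were it correct, would rule it out.

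The paper's converse avoids producing any limit of $\{Y_n\}$ in $\hatT$ at all: it argues by contradiction that some ray $D(X)$ is infinite. If $\sup_n\dwp(X,Y_n)<\infty$, then $Y_n$ subconverges to an interior point of that ray, i.e.\ to a point of $\T$, contradicting $(\omega,D)\in\pwp\T$. If $\dwp(X,Y_n)\to\infty$, then for each $s>0$ the point $s_n$ at distance $s$ along $D_{Y_n}(X)$ converges, by Lemma \ref{lem.wp-exp} and the assumed infiniteness of $D(X)$, to the point $s_\infty$ at distance $s$ along $D(X)$, and the triangle inequality gives $\omega_{Y_n}(s_n)\le -s+\dwp(b,X)$, hence $\omega(s_\infty)\le -s+\dwp(b,X)$; letting $s\to\infty$ contradicts $\inf_X\omega(X)>-\infty$. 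This yields finiteness of every ray $D(X)$ (whose endpoint then lies in $\hatT\setminus\T$ since geodesics extend inside $\T$), while leaving open — as it must — that those endpoints depend on $X$. To repair your write-up, replace your second paragraph by this contradiction argument and your third paragraph by the appeal to Brock's theorem.
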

\begin{proof}
	Let $\{Z_n\}$ be a Cauchy sequence converging to $Z_\infty\in\hatT\setminus\T$.
	Then by the triangle inequality, we have $\dwp(X,Z_n)\rightarrow\dwp(X,Z_\infty)$ for any $X\in\T$.
	Hence the horofunction $\omega_{Z_\infty}$ at $Z_\infty$ is well-defined and we have $\omega_{Z_n}\rightarrow\omega_{Z_\infty}$ without taking subsequences.
	Furthermore, by Lemma \ref{lem.non-refraction},
	the direction $D_{Z_\infty}(X)$ from any $X$ to $Z_\infty$ is uniquely determined and coincides with $\lim_{n\rightarrow\infty} D_{Z_n}(X)$.
	As $\dwp(b,Z_\infty)<\infty$, we see that $(\omega_{Z_\infty}, D_{Z_\infty})\in\pwp^\mathrm{fin}\T$.
	Thus we get a map $\hatT\setminus\T\rightarrow\pwp^\mathrm{fin}\T$ and this is an injection since $\dwp$ extends to the completion $\hatT$.

	Conversely, let $(\omega, D)\in \pwp^\mathrm{fin}\T$.
	Let $\{Y_n\}\subset\T$ be a sequence so that $\mathcal{WP}(Y_n)\rightarrow (\omega, D)$. 
	Now suppose contrary that the ray $D(X)$ were infinite for some $X\in\T$.
	In this case, if $\sup\dwp(X,Y_n)$ is finite, then $Y_n$ must converge to an interior point of the ray $D(X)$, which contradicts the assumption $(\omega,D)\in\pwp\T$ by Theorem \ref{thm.wp-cpt}.
	If $\dwp(X,Y_n)\rightarrow\infty$, then for any $s\in\R_{>0}$, 
	we have a point $s_n$ on the ray $D_{Y_n}(X)$ with $\dwp(X,s_n)=s$.
	As $D_{Y_n}(X)\rightarrow D(X)$ and the exponential map is diffeomorphic (Lemma \ref{lem.wp-exp}), the sequence $s_n$ must converge to a point $s_\infty$ on the ray $D(X)$.
	Now notice that the triangle inequality implies
	\begin{align*}
		\omega_{Y_n}(s_n) &= \dwp(s_n, Y_n) - \dwp(b ,Y_n)\\
		&\leq \dwp(s_n, Y_n) - \dwp(X , Y_n) + \dwp(b,X)= -s + \dwp(b,X).
	\end{align*}

	The diagonal argument shows that $\omega(s_\infty) \leq -s + \dwp(b,X)$.
	Since $s\in\R_{>0}$ was arbitrary, it follows $(\omega,D)\in\pwp^{\infty}\T$, which contradicts our assumption.
	Thus we see that $D(X)$ is a finite ray for any $X\in\T$.
	
	Finally, it is the work of Brock \cite[Theorem 1.7]{Brock2} 
	that in general the limit point in $\hatT\setminus\T$ of $D(X)$ depend on $X$.
	This happens for example when $Y_n$ are obtained by iterating Dehn twists and the sequence $\{Y_n\}$ does not have accumulation points in $\hatT$.
\end{proof}

To discuss the properties of infinite horocoordinates,
 we briefly recall the work of Brock-Masur-Minsky \cite{BMM},
 which defines the ``end invariants" of Weil-Petersson geodesic rays.
 End invariants are defined through the theory of measured {\em laminations} rather than the foliations.
Recall that there are natural one-to-one correspondence between the space of measured foliations and the space of measured laminations(see e.g. \cite[Section 4]{CB} for details).
 The end invariants of WP geodesic rays are defined as an analogue of 
 the end invariants of quasi-Fuchsian manifolds that we discussed in \S \ref{sec.Bers-end}.
 In order to avoid the confusion, in this paper we call the end invariants in \cite{BMM} as {\em $\WPE$ invariants}.
Given a hyperbolic surface $X\in\T$, we call a family $P$ of disjoint simple closed curves {\em a Bers pants decomposition} for $X$ if
\begin{itemize}
	\item $X\setminus P$ is a union of pairs of pants, and
	\item $\ell_\alpha(X)<L$ for all $\alpha\in P$, where $L = L(S)$ is called the Bers constant which depends only on the topology of $S$.
\end{itemize} 
The existence of such pants decompositions is due to Bers 
(see e.g. \cite{Buser}).
Let $r:[0,\infty)\rightarrow\T$ be an infinite WP geodesic ray parametrized by arc length.
A simple closed curve $\alpha$ is called a {\em Bers curve} of $r$ 
if there exists $t\in[0,\omega)$ such that $\alpha$ is a curve in a Bers pants decomposition for $r(t)$.
Then the $\WPE$ invariant is the supporting laminations of 
the union of pinching curves and all the limits of Bers curves of $r$ in $\pmf$ as $t\rightarrow\infty$.
We refer \cite{BMM} for the precise definition and well-definedness of $\WPE$ invariants.

Let $D_+(X)$ denote the end point in $\hatT$ or the WP-end invariant of the ray $D(X)=:r:[0,\omega)\to\T$, where $\omega$ arrowed to be infinity.
We define $$\dwp(Z,D_+(X)) := \lim_{t\to\omega} \dwp(Z,r(t)).$$
Note that $\dwp(b,D_+(X))$ is finite if and only if $D_+(X)\in\hatT$.
\begin{thm}
	Let $(\omega,D)\in\pwp\T$.
	If the ray $D(X)$ is infinite for some $X\in\T$, then $(\omega,D)\in\pwp^\infty\T$.
	
	Conversely, if $(\omega,D)\in\pwp^\infty\T$, then we have
	$$\sup_{X\in\T} \dwp(b,D_+(X)) = \infty.$$
\end{thm}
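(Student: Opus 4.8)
The plan is to treat the two implications separately, the first being immediate and the second being the real content. For the first assertion, note that $(\omega,D)\in\pwp\T=\pwp^\mathrm{fin}\T\sqcup\pwp^\infty\T$, so if $(\omega,D)\notin\pwp^\infty\T$ then $(\omega,D)\in\pwp^\mathrm{fin}\T$, and the ``conversely'' part of Theorem \ref{thm.wp-finite} would then force $D(X)$ to be a finite geodesic ray for \emph{every} $X\in\T$, contradicting the hypothesis that $D(X)$ is infinite for some $X$. So I would dispatch the first statement with this one-line contrapositive.

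For the converse, fix a sequence $\{Y_n\}\subset\T$ with $\mathcal{WP}(Y_n)\to(\omega,D)$. The first step is to observe that $\dwp(b,Y_n)\to\infty$: if some subsequence satisfied $\dwp(b,Y_{n_k})\le M$, then from
$$\omega(X)=\lim_{k\to\infty}\bigl(\dwp(X,Y_{n_k})-\dwp(b,Y_{n_k})\bigr)\ \ge\ -M$$
one would get $\inf_{X\in\T}\omega(X)\ge -M$, i.e. $(\omega,D)\in\pwp^\mathrm{fin}\T$, contrary to assumption. Consequently $\dwp(X,Y_n)\ge\dwp(b,Y_n)-\dwp(b,X)\to\infty$ for every $X$, so $f(\dwp(X,Y_n))\to 1$ and the unit directions $\overline\dir_{Y_n}(X)$ converge to the initial direction of the ray $D(X)$.

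The core of the argument is the claim that
$$\dwp\bigl(b,D_+(X)\bigr)\ \ge\ -\,\omega(X)\qquad\text{for every }X\in\T.$$
When $D(X)$ is an infinite ray this is immediate, since $\dwp\bigl(b,D(X)(t)\bigr)\ge t-\dwp(b,X)\to\infty$ forces $\dwp(b,D_+(X))=\infty$. When $D(X)$ is finite, of length $L_X<\infty$ and with endpoint $D_+(X)\in\hatT\setminus\T$, I would argue as follows. Fix $\varepsilon\in(0,L_X)$. Since $\dwp(X,Y_n)\to\infty$, for all large $n$ the point $p_n:=\exp_X\!\bigl((L_X-\varepsilon)\,\overline\dir_{Y_n}(X)\bigr)$ lies on the geodesic $[X,Y_n]$ at distance $L_X-\varepsilon$ from $X$; since $(L_X-\varepsilon)$ times the initial direction of $D(X)$ lies in the open domain of $\exp_X$ and $\overline\dir_{Y_n}(X)$ converges to that direction, Lemma \ref{lem.wp-exp} gives $p_n\to D(X)(L_X-\varepsilon)$. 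The triangle inequality then yields
$$\dwp(b,Y_n)\ \le\ \dwp(b,p_n)+\dwp(p_n,Y_n)\ =\ \dwp(b,p_n)+\dwp(X,Y_n)-(L_X-\varepsilon),$$
so that $\dwp(X,Y_n)-\dwp(b,Y_n)\ge (L_X-\varepsilon)-\dwp(b,p_n)$; letting $n\to\infty$ gives $\omega(X)\ge (L_X-\varepsilon)-\dwp\bigl(b,D(X)(L_X-\varepsilon)\bigr)$, and then letting $\varepsilon\to 0$, using that $D(X)(L_X-\varepsilon)\to D_+(X)$ in $\hatT$ and that $\dwp$ is continuous on the completion $\hatT$, gives $\omega(X)\ge L_X-\dwp(b,D_+(X))\ge -\dwp(b,D_+(X))$, which is the claim. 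Taking the supremum over $X$ and recalling that $(\omega,D)\in\pwp^\infty\T$ means exactly $\inf_{X\in\T}\omega(X)=-\infty$, I would conclude
$$\sup_{X\in\T}\dwp\bigl(b,D_+(X)\bigr)\ \ge\ \sup_{X\in\T}\bigl(-\omega(X)\bigr)\ =\ -\inf_{X\in\T}\omega(X)\ =\ \infty.$$

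I expect the only genuinely delicate point to be the convergence $p_n\to D(X)(L_X-\varepsilon)$ inside the key estimate: this is precisely where one uses that the WP exponential map from $X$ is a diffeomorphism from its open (star-shaped) domain onto $\T$ (Lemma \ref{lem.wp-exp}), so that points at a fixed distance $L_X-\varepsilon<L_X$ along geodesics $[X,Y_n]$ with converging initial directions converge to the corresponding point on the ray $D(X)$, together with the continuity of the extended metric $\dwp$ on $\hatT$. By comparison, the reduction $\dwp(b,Y_n)\to\infty$ and the dichotomy according to whether some ray $D(X)$ is infinite are routine.
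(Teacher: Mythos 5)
Your proof is correct and follows essentially the same route as the paper: the first claim is reduced to the ``conversely'' part of Theorem \ref{thm.wp-finite}, and the converse rests on the estimate $\omega(X)\geq \dwp\bigl(X,D_+(X)\bigr)-\dwp\bigl(b,D_+(X)\bigr)$, obtained from the convergence of initial directions together with Lemma \ref{lem.wp-exp}. The differences are only presentational: you argue directly (treating infinite rays trivially) rather than by contraposition from $\sup_X\dwp(b,D_+(X))<\infty$, and your intermediate point $p_n$ on the geodesic $[X,Y_n]$ is just a more explicit justification of the paper's near-additivity inequality (\ref{eq.inv-tri}).
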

\begin{proof}
	The first statement was proven in Theorem \ref{thm.wp-finite}.
	
	Suppose that $$\sup_{X\in\T} \dwp(b,D_+(X))<\infty.$$
	In this case, $D_+(X)\in\hatT$ for all $X$.
	Let $\{Y_n\}\subset\T$ be a sequence so that $\mathcal{WP}(Y_n)\rightarrow(\omega,D)\in\pwp^\infty\T$.
	For notational simplicity let $\omega_n := \omega_{Y_n}$ and $D_n:=D_{Y_n}$.
	Since $D_n(X)\rightarrow D(X)$, Lemma \ref{lem.wp-exp} implies that for any $\epsilon>0$ there exists $N>0$ such that for any $n\geq N$, we have the inverse of triangle inequality:
	\begin{equation}\label{eq.inv-tri}
			\dwp(X,Y_n)\geq \dwp(X,D_+(X))+\dwp(D_+(X),Y_n)-\epsilon.
	\end{equation}
	Hence we have
	\begin{align*}
		\omega_n(X) &= \dwp(X,Y_n) -\dwp(b,Y_n)\\
					&\geq \dwp(X,Y_n) - \dwp(b,D_+(X)) - \dwp(D_+(X),Y_n)~~\hspace{0.1cm}&\text{ (by triangle inequality)}\\
					&\geq \dwp(X,D_+(X)) -\epsilon - \dwp(b,D_+(X))&(\text{by }(5.1))\\
					&\geq -\dwp(b,D_+(X))-\epsilon
	\end{align*}
	Since $\epsilon$ was arbitrary and $\omega_n\rightarrow\omega$, we see that $\omega(X)\geq-\dwp(b,D_+(X))$.
	Hence $$\inf_{X\in\T}\omega(X)\geq-\sup_{X\in\T}\dwp(b,D_+(X))>-\infty$$ which implies that $(\omega,D)\in\pwp^\text{fin}\T$.
	
\end{proof}

For the case where WP-end invariants are uniquely ergodic laminations, the situation becomes simpler.
In \cite{BMM}, Brock-Masur-Minsky shows the following.
\begin{lem}[{\cite[Lemma 2.10 and Corollary 2.12]{BMM}}]\label{lem.wp-intersection}
	Let $X\in\T$ and 
	$r_n$ be WP-geodesic rays with $r_n(0) = X$ where $n\in\N\cup\{\infty\}$.
	Let $\mu_n$ be the $\WPE$ invariants of $r_n$.
	Suppose the initial tangent of $r_n$ converges to the initial tangent of $r_\infty$.
	Then any limit $\mu$ of the sequence $\{\mu_n\}$ in the $\pmf$ satisfies
	$$i(\mu,\mu_\infty) = 0.$$
\end{lem}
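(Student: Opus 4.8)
The plan is to reproduce the argument of \cite[Lemma 2.10 and Corollary 2.12]{BMM}; here is the shape it would take. First I would reduce $i(\mu,\mu_\infty)=0$ to a comparison of Bers curves. Since $i(\cdot,\cdot)$ is continuous on $\mf\times\mf$ and the support of a weak limit of measured foliations is contained in every Hausdorff limit of the supports, it suffices to produce Bers curves $\gamma_n$ of $r_n$ with $\gamma_n\to\mu$ in $\pmf$ — these exist since $\mu_n$ is itself a limit of Bers curves of $r_n$, so one diagonalizes — together with curves $\alpha^{(k)}$ that are Bers curves of $r_\infty$ and accumulate onto $\mu_\infty$, and then to show that $i(\gamma_n,\alpha^{(k)})=0$ once $n$ is large, for each fixed $k$. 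Passing to the limit in $n$ (using weights to interpret $\gamma_n\to\mu$ in $\mf$) would give $i(\mu,\alpha^{(k)})=0$, and then letting $k\to\infty$ and using that disjointness of laminations is a closed condition would yield $i(\mu,\mu_\infty)=0$; when $\mu_\infty$ is uniquely ergodic this further upgrades to $\mu=\mu_\infty$ by Lemma \ref{lem.ue-inter}.

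The argument would then rest on two inputs. The first is \emph{continuity of WP geodesics on compact time intervals}: since $r_n(0)=r_\infty(0)=X$ and $\dot r_n(0)\to\dot r_\infty(0)$, the non-refraction property (Lemma \ref{lem.non-refraction}) together with smoothness of the WP metric on $\T$ and Lemma \ref{lem.wp-exp} gives $r_n|_{[0,T]}\to r_\infty|_{[0,T]}$ uniformly for every $T$; hence, for a fixed time $t$, if $P$ is a Bers pants decomposition for $r_\infty(t)$ then $\ell_\alpha(r_n(t))<L$ for every $\alpha\in P$ once $n$ is large (length functions are continuous on $\T$ and $P$ is finite), so $P$ is also a Bers pants decomposition for $r_n(t)$. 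The second is the \emph{coarse geometry of the curve complex $\mathcal C(S)$}: along any WP geodesic the successive Bers curves fellow-travel, with uniform constants, a quasi-geodesic of $\mathcal C(S)$ joining the Bers markings of its endpoints, in parallel with the Masur--Minsky theory of hierarchies (see \cite{BMM}). Combining the two, the $\mathcal C(S)$-quasigeodesic rays traced by the Bers curves of $r_n$ fellow-travel the one traced by $r_\infty$ along initial segments whose $\mathcal C(S)$-length tends to infinity; by Gromov-hyperbolicity of $\mathcal C(S)$ this forces the Gromov products of their forward limit points to tend to infinity when $r_\infty$ leaves every bounded subset of $\mathcal C(S)$ (equivalently, pinches no curve), and otherwise forces the Bers curves of $r_n$ to stay within bounded $\mathcal C(S)$-distance of the curves pinched by $r_\infty$ over arbitrarily long intervals. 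In either regime one reads off that every Hausdorff limit of the supports of $\mu_n$ is compatible with $\mu_\infty$, which is what the reduction needs.

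The hard part will be exactly this passage from compact-interval control to the end invariants. The initial tangent vector determines the whole ray, yet the assignment of a WP geodesic ray to its $\WPE$ invariant is only upper semicontinuous, not continuous, so convergence of $\dot r_n(0)$ does not on its own produce a limit relation among the $\mu_n$. Closing this gap is what forces in the hyperbolicity of $\mathcal C(S)$ and the fact that Bers curves along a WP geodesic shadow a $\mathcal C(S)$-quasigeodesic, and it is also what makes the dichotomy ``$r_\infty$ pinches a curve or not'' unavoidable; in practice I would import the relevant estimates from \cite[\S2]{BMM} rather than reprove them.
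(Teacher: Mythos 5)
The paper does not prove this lemma at all: it is quoted verbatim from Brock--Masur--Minsky, so the ``proof'' in the paper is the citation itself, and your stated plan to import the estimates of \cite[\S 2]{BMM} is, in the end, just that citation. Judged as a reconstruction of the BMM argument, however, your sketch has a genuine gap at its core. First, the reduction you propose asks for exact disjointness $i(\gamma_n,\alpha^{(k)})=0$ between Bers curves of $r_n$ at late times and Bers curves of $r_\infty$ once $n$ is large; this is too strong and false in general (even along a single WP geodesic, Bers curves at different times intersect). What one actually has is only \emph{bounded} intersection, coming from bounded length, and the vanishing of $i(\mu,\mu_\infty)$ is produced by the weights tending to zero in the weighted limits defining $\mu$ and $\mu_\infty$, not by eventual disjointness. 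Second, the bridge you build from compact-interval convergence to the end invariants rests on the assertion that the Bers curves along any WP geodesic uniformly fellow-travel a quasi-geodesic of the curve complex; this is not a theorem of Masur--Minsky or of \cite{BMM}, and unlike the Teichm\"uller-geodesic case it is not known (and should not be assumed) that WP geodesics shadow quasi-geodesics in $\mathcal{C}(S)$. So the step that you yourself identify as ``the hard part'' is carried by a tool that is unavailable.

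The mechanism that actually closes the gap in \cite{BMM} is elementary Weil--Petersson geometry rather than curve-complex hyperbolicity: Wolpert's convexity of geodesic length functions along WP geodesics, combined with convergence of $r_n$ to $r_\infty$ on compact time intervals (which you correctly obtain from the CAT(0)/non-refraction input). Writing $\mu$ as a limit of weighted Bers curves $c_n\gamma_n$ of $r_n$ at times $t_n\to\infty$ with $c_n\,\ell_{\gamma_n}(r_n(t_n))\leq c_nL\to 0$, convexity gives $\ell_{c_n\gamma_n}(r_n(s))\leq\max\{c_n\ell_{\gamma_n}(X),\,c_nL\}$ for $0\leq s\leq t_n$, and passing to the limit for each fixed $s$ yields that $\ell_\mu(r_\infty(s))$ is bounded, uniformly in $s$, by $\ell_\mu(X)$. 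Since $\mu_\infty$ is a weighted limit, with weights tending to zero, of pinching curves and of curves of length at most the Bers constant at $r_\infty(s_j)$ with $s_j\to\infty$, a collar-lemma estimate applied on the surfaces $r_\infty(s_j)$ then forces $i(\mu,\mu_\infty)=0$. None of this requires hyperbolicity of $\mathcal{C}(S)$ or any fellow-traveling statement, and this is the argument you would need to reproduce (or cite precisely) in place of the second and third paragraphs of your proposal.
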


Contrary to the finite horofunctions, we have the following independence of the base points for uniquely ergodic case.
\begin{thm}\label{thm.wp-ue}
	Let $\{Y_n\}\subset\T$ be a sequence converging to $(\omega,D)\in\pwp\T$ in $\overline{\mathcal{WP}(\T})$.
	Suppose that there exists $X_0$ such that the ray $D(X_0)$ has a uniquely ergodic lamination $\lambda$ as its WP-end invariant.
	Then for any $X\in\T$, the WP-end invariant of $D(X)$ is $\lambda$.
\end{thm}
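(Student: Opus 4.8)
The plan is to show that the rays $D(X)$ and $D(X_0)$ remain at bounded Weil--Petersson distance of one another for all time, and then to transport the $\WPE$ invariant across this bounded perturbation, using crucially that $\lambda$ is uniquely ergodic.

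\textbf{Step 1: every $D(X)$ is infinite.} Since $D(X_0)$ is infinite, the theorem preceding this one gives $(\omega,D)\in\pwp^\infty\T$, i.e. $\inf_{Z}\omega(Z)=-\infty$. If some $D(X_1)$ were a finite ray, then $\dwp(X_1,Y_n)$ would stay bounded, so $\{Y_n\}$ would lie in a bounded Weil--Petersson region; but then $\omega_{Y_n}(Z)=\dwp(Z,Y_n)-\dwp(b,Y_n)$ would be bounded below uniformly in $Z$ by the triangle inequality, contradicting $(\omega,D)\in\pwp^\infty\T$. Hence every $D(X)$ is infinite, and in particular $\dwp(X,Y_n)\to\infty$ for every $X$, so by Lemma~\ref{lem.wp-exp} and the convergence $\mathcal{WP}(Y_n)\to(\omega,D)$ the geodesic segments $[X_0,Y_n]$ and $[X,Y_n]$ converge, uniformly on compact subsets, to the rays $D(X_0)$ and $D(X)$.

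\textbf{Step 2: $D(X)$ and $D(X_0)$ fellow travel.} I would compare $[X_0,Y_n]$ and $[X,Y_n]$, which share the endpoint $Y_n$. Parametrizing both by arc length from their initial points, convexity of the Weil--Petersson metric along pairs of geodesics (the space is CAT(0)) makes the distance between the two segments at a common parameter a convex function of that parameter which equals $\dwp(X_0,X)$ at parameter $0$ and is at most $|\dwp(X,Y_n)-\dwp(X_0,Y_n)|\le\dwp(X_0,X)$ at the common endpoint, hence $\le c:=\dwp(X_0,X)$ throughout. Passing to the limit $n\to\infty$ at each fixed $t$ (using Step 1) gives $\dwp\!\bigl(D(X_0)(t),D(X)(t)\bigr)\le c$ for all $t\ge 0$.

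\textbf{Step 3: the $\WPE$ invariant of $D(X)$ is $\lambda$.} Write $\nu$ for the $\WPE$ invariant of $D(X)$ and suppose $\nu\neq\lambda$. Since $\lambda$ is uniquely ergodic, Lemma~\ref{lem.ue-inter} gives $i(\lambda,\nu)>0$, so along $D(X)$ there are Bers curves whose projective classes converge in $\pmf$ to a lamination crossing $\lambda$; fix $t_j\to\infty$ and Bers curves $\beta_j$ for $D(X)(t_j)$ realizing this, together with Bers curves $\alpha_j$ for $D(X_0)(t_j)$, and pass to a subsequence so that $[\alpha_j]\to[\lambda]$ in $\pmf$. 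By Step 2, $\dwp(D(X_0)(t_j),D(X)(t_j))\le c$, so the standard Lipschitz bound for $X\mapsto\ell_X(\gamma)^{1/2}$ with respect to $\dwp$ (see \cite{Wolpert-book}) shows that the hyperbolic length of $\beta_j$ on $D(X_0)(t_j)$ is bounded by a constant depending only on the Bers constant and $c$; as $\alpha_j$ is already a Bers curve there, the collar lemma bounds $i(\alpha_j,\beta_j)$ uniformly in $j$. On the other hand, $[\alpha_j]\to[\lambda]$ and $[\beta_j]\to$ a lamination crossing $\lambda$ force $i(\alpha_j,\beta_j)\to\infty$, a contradiction; hence $\nu=\lambda$. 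This last step is the heart of the argument (and is in the spirit of Lemma~\ref{lem.wp-intersection}, adapted to a change of base point): the delicate point is controlling Bers pants decompositions under a bounded Weil--Petersson perturbation, and the hypothesis that $\lambda$ is uniquely ergodic is precisely what upgrades the resulting ``zero geometric intersection'' to genuine equality of laminations.
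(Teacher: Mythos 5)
Your Steps 2 and 3 are in themselves a reasonable variant of the paper's endgame: the paper identifies the invariant of $D(X)$ by running geodesics from $X$ to the maximal cusps $P_t$ of Bers pants decompositions of $r(t)$ (using Brock's bound $\dwp(r(t),P_t)<A$ from \cite{Brock-vol}) and then invoking Lemma \ref{lem.wp-intersection}, whereas you transport Bers curves across the fellow-traveling bound via the Lipschitz property of $\ell^{1/2}$ and the collar lemma. (Even there, ``$[\alpha_j]\to[\lambda]$ and $[\beta_j]\to$ a crossing lamination force $i(\alpha_j,\beta_j)\to\infty$'' is not automatic: you need the projective convergences to occur with scalars tending to $0$, which follows because infinitely many distinct simple closed curves have lengths tending to infinity on a fixed surface, and you must treat separately the case that $\beta_j$ is eventually a fixed curve. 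This is fixable.)

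The genuine gap is in Step 1, and it propagates. You assert that if some $D(X_1)$ were a finite ray then $\dwp(X_1,Y_n)$ would stay bounded. Finiteness of the maximal geodesic ray in the unit direction $\overline{D}(X_1)$ gives no control on the lengths of the approximating segments $[X_1,Y_n]$: these segments can have lengths tending to infinity while their initial unit tangents converge to a direction whose maximal geodesic leaves $\T$ and hits $\hatT\setminus\T$ in finite time; the domain of the WP exponential map is open (Lemma \ref{lem.wp-exp}), but a sequence inside it may converge to a vector outside it, so neither Lemma \ref{lem.wp-exp} nor the CAT(0) convexity you use elsewhere rules this scenario out, and you offer no argument. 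Note that the paper is deliberately careful on exactly this point: for $(\omega,D)\in\pwp^\infty\T$ it proves only $\sup_{X}\dwp(b,D_+(X))=\infty$, not that every ray $D(X)$ is infinite. Yet everything downstream in your proposal requires precisely the stronger statement: Step 2's estimate $\dwp(D(X_0)(t),D(X)(t))\le c$ only makes sense for $t$ below the (possibly finite) length of $D(X)$, and Step 3 cannot start unless $D(X)$ is an infinite ray possessing a $\WPE$ invariant $\nu$. So as written the proof establishes the conclusion only under the unproved hypothesis that all the rays $D(X)$ are infinite. The paper's mechanism closes this hole implicitly: applying Lemma \ref{lem.wp-intersection} (from \cite{BMM}) to the geodesics $g_t$ from $X$ to the cusps $P_t$, whose pinching data converge to $\lambda$, forces the invariant of the limiting ray at $X$ to intersect $\lambda$ trivially, and unique ergodicity (Lemma \ref{lem.ue-inter}) then forces it to be $\lambda$ itself, which is incompatible with $D(X)$ terminating at a noded surface. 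Your proposal needs a replacement for this step, not merely the one-line implication in Step 1.
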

\begin{proof}
		Let $r:[0,\infty)\rightarrow\T$ denote the WP geodesic ray $D(X_0)$.
		By the work of Brock \cite{Brock-vol}, there is a universal bound $A>0$ such that 
		$\dwp(r(t), P_t)<A$ where $P_t\in\hatT$ is the maximal cusp corresponding to a Bers pants decomposition for $r(t)$.
		Let $X\in\T$ be an arbitrarily chosen point.
		Let $g_t$ (resp. $g'_t$) denote the WP-geodesic connecting $X$ and $P_t$ (resp. r(t)).
		By the CAT(0) property of $\hatT$, the initial tangent of $g_t$ and $g'_t$ converge to the same direction $D'$.
		The direction $D'$ should be $D(X)$ because CAT(0) property tells us that $\dwp(r(t),D(X)(t))\leq \dwp(X,X_0)$ (we apply \cite[Chapter II, Proposition 2.2]{BH} to the rays $D_{Y_n}(X_0)$ and $D_{Y_n}(X)$, and then take limit as $n\to\infty$).
		Then by the definition of WP-end invariants, we have $P_t\rightarrow\lambda$ since $\lambda$ is uniquely ergodic.
		By Lemma \ref{lem.wp-intersection}, we see that the WP-end invariant $\mu$ of $D(X)$ satisfies $i(\mu,\lambda) = 0$.
		Hence we see that $\mu = \lambda$ by Lemma \ref{lem.ue-inter}.
\end{proof}

Thanks to Theorem \ref{thm.wp-ue}, we may define uniquely ergodic points of $\pwp\T$.
\begin{defi}
	A horocoordinate $(\xi,D)\in \pwp\T$	 is {\em uniquely ergodic} if for some (hence any by Theorem \ref{thm.wp-ue}) $X$, the WP-end invariant of $D(X)$ is uniquely ergodic.
	For a uniquely ergodic horocoordinate corresponding to uniquely ergodic lamination $\lambda$, we say that $\lambda$ is {\em the WP-end invariant} of $(\omega,D)$.
\end{defi}

%It turns out that there is a bijection between the space of uniquely ergodic foliations and the space of uniquely ergodic horocoordinates.
%First note that uniquely ergodic horocoordinate is determined by its WP-end invariant.
%\begin{prop}
%	Let $(\omega,D),(\omega',D')\in\pwp\T$ be uniquely ergodic horocoordinates.
%	Then $(\omega,D) = (\omega',D')$ if and only if their WP-end invariants coincide.
%\end{prop}
%\begin{proof}
%	Note that by the work of Brock, 
%	By Corollary \ref{cor.int-formula-WP}
%\end{proof}
\begin{thm}\label{thm.wp-ue-thurston}
	Let $r:[0,\infty)\to\T$	be a WP geodesic ray which converges to 
	a uniquely ergodic lamination $\lambda$ in Thurston's compactification.
	Then $\{r(t)\}$ also converges to some $(\omega, D) \in \overline{\mathcal{WP}(\T)}$ which is uniquely ergodic with WP-end invariant $\lambda$.
\end{thm}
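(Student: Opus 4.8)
The plan is to combine the compactness of $\overline{\mathcal{WP}(\T)}$, the behaviour of the map $\mathcal{WP}$ at the basepoint of $r$, Theorem \ref{thm.wp-ue}, and the CAT(0) comparison used in the proof of Theorem \ref{thm.wp-ue}. Since $\overline{\mathcal{WP}(\T)}$ is compact and metrizable (Proposition \ref{prop.LQ}), it suffices to show that every subsequential limit of $\{\mathcal{WP}(r(t))\}_{t\to\infty}$ equals one and the same point $(\omega,D)$, and that this point is uniquely ergodic with WP-end invariant $\lambda$. So I would fix $t_n\to\infty$ with $\mathcal{WP}(r(t_n))\to(\omega,D)$; since $\dwp(r(0),r(t_n))=t_n\to\infty$ and $\mathcal{WP}$ is a homeomorphism onto its image (Theorem \ref{thm.wp-cpt}), this limit lies in $\pwp\T$. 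Because $[r(0),r(t_n)]$ is the subsegment $r|_{[0,t_n]}$, the direction from $r(0)$ to $r(t_n)$ is the $n$-independent initial direction $\dot r(0)$, and $f(\dwp(r(0),r(t_n)))\to 1$; hence $D(r(0))$ is the unit covector dual to $\dot r(0)$, and the WP geodesic ray issued from $r(0)$ in the direction $D(r(0))$ is exactly $r$. (For a general $X\in\T$ one likewise has $\dwp(X,r(t_n))\to\infty$, so the initial directions $\overline\dir_{r(t_n)}(X)$ of $[X,r(t_n)]$ converge to the unit direction corresponding to $D(X)$.)

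The step I expect to be the main obstacle is to prove that the forward WP-end invariant of the infinite ray $r$ is exactly $\lambda$; this is where the hypotheses that $r(t)\to\lambda$ in Thurston's compactification and that $\lambda$ is uniquely ergodic are used. I would argue that any curve pinched along $r$ and any projective limit of Bers curves of $r$ must have zero geometric intersection number with $\lambda$: for a pinched curve $\alpha$ this is immediate, since $\ell_{r(t)}(\alpha)\to 0$ while the rescaled hyperbolic length functionals of $r(t)$ converge to $i(\lambda,\cdot)$; for Bers curves it should follow from the collar lemma (a Bers curve has hyperbolic length at most the Bers constant along a subray, so its intersection number with a fixed curve is controlled by the hyperbolic length of that curve on $r(t)$) together with the continuity of the intersection pairing on $\mathcal{MF}(S)$ — alternatively this is essentially the Brock--Masur--Minsky description of WP-end invariants. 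By Lemma \ref{lem.ue-inter} and the unique ergodicity of $\lambda$, all these limits then coincide with $\lambda$, so the WP-end invariant of $r$ is $\lambda$. Together with the first paragraph and the definition of a uniquely ergodic horocoordinate (applied at $X_0=r(0)$), the limit $(\omega,D)\in\pwp\T$ is uniquely ergodic with WP-end invariant $\lambda$, and Theorem \ref{thm.wp-ue} then gives that $D(X)$ is, for every $X\in\T$, an infinite WP geodesic ray with WP-end invariant $\lambda$.

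Finally I would show that $(\omega,D)$ is independent of the chosen subsequence, which by Corollary \ref{cor.int-formula-WP} amounts to proving that $D(X)$ is determined by $r$ and $X$ for each $X$. Applying the CAT(0) comparison inequality \cite[Chapter II, Proposition 2.2]{BH} to the geodesics $[X,r(t_n)]$ and $[r(0),r(t_n)]=r|_{[0,t_n]}$ and letting $n\to\infty$ — using $\dwp(X,r(t_n))/t_n\to 1$, the convergence of initial directions, and the continuity of the WP exponential map from $X$ (Lemma \ref{lem.wp-exp}) — one obtains $\dwp(D(X)(\tau),r(\tau))\le\dwp(X,r(0))$ for every $\tau\ge 0$, exactly as in the proof of Theorem \ref{thm.wp-ue}. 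Hence the rays $D^{(1)}(X)$ and $D^{(2)}(X)$ coming from any two subsequences both start at $X$ and remain within distance $2\dwp(X,r(0))$ of one another for all $\tau$; since $\hatT$ is CAT(0), the function $\tau\mapsto\dwp(D^{(1)}(X)(\tau),D^{(2)}(X)(\tau))$ is convex, nonnegative, bounded, and vanishes at $\tau=0$, hence is identically zero, so $D^{(1)}(X)=D^{(2)}(X)$. As $X$ was arbitrary, all subsequential limits coincide, so $\{\mathcal{WP}(r(t))\}$ converges to $(\omega,D)$, which is uniquely ergodic with WP-end invariant $\lambda$ as required.
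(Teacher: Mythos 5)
Your proposal is correct in substance, but it reaches the convergence statement by a different route than the paper. For the key identification of the forward WP-end invariant with $\lambda$ you do essentially what the paper does: bound intersection numbers of the rescaled surfaces $s_t r(t)$ against Bers curves (the paper uses whole Bers pants decompositions, giving $i(s_tr(t),P_t)\le s_t(3g-3)L\to 0$ directly) and invoke Lemma \ref{lem.ue-inter} plus Theorem \ref{thm.wp-ue}; your collar-lemma detour for Bers curves is unnecessary and a bit muddled, but your alternative phrasing (rescaled length functionals converge to $i(\lambda,\cdot)$, Bers curves have uniformly bounded length) is exactly the paper's estimate and works as stated. Where you genuinely diverge is the convergence of $\mathcal{WP}(r(t))$ itself: the paper simply cites \cite[Theorem 1.2]{BMM} to get convergence of the initial tangents of the geodesics from any $Y$ to $r(t)$ and then applies Corollary \ref{cor.int-formula-WP}, whereas you prove convergence by hand — compactness and metrizability of $\LQ$, passing to subsequential limits, the CAT(0) convexity/fellow-traveling bound $\dwp(D(X)(\tau),r(\tau))\le\dwp(X,r(0))$ (the same comparison the paper uses in the proof of Theorem \ref{thm.wp-ue}), and convexity of the distance between two rays issuing from $X$ to force any two subsequential direction fields to coincide, then Corollary \ref{cor.int-formula-WP} to conclude equality of the horofunctions. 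This buys self-containedness: you do not need the imported BMM convergence theorem, only the CAT(0) structure, Lemma \ref{lem.wp-exp}, and results already proved in the paper; the cost is that you must know each $D(X)$ is an infinite ray before running the convexity argument (which you correctly extract from Theorem \ref{thm.wp-ue} applied with $X_0=r(0)$, since the ray $D(r(0))$ is $r$ itself), and you must handle the proportional-versus-arclength reparametrization and the limit through the exponential map with the same care the paper takes in Theorems \ref{thm.wp-finite} and \ref{thm.wp-ue}. The paper's citation-based route is shorter; yours is more elementary and arguably more informative about why the limit is base-subsequence independent.
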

\begin{proof}
		First note that by \cite[Theorem 1.2]{BMM}, we see that for any $Y\in\T$, 
		the initial tangent of geodesics connecting $Y$ and $r(t)$ converges.
		Then by Corollary \ref{cor.int-formula-WP}, we see that $\{r(t)\}$ converges to some $(\omega,D)\in\overline{\mathcal{WP}(\T)}$.
		
		As $\{r(t)\}$ converges in Thurston's compactification, there exists $s_t\rightarrow 0$ such that $s_tr(t)$ converges to a measured lamination representative of $\lambda$.
		Let $P_t$ be a Bers pants decomposition of $r(t)$.
		By definition we have $\ell_{r(t)}(P_t) = i(r(t),P_t)\leq (3g-3)L$ where $L>0$ is the Bers constant ($(3g-3)$ is the number of curves in the pants decompositions of $S$).
		By taking a subsequence if necessary, we may suppose that $P_t$ also has a limit $\mu$ in $\mathcal{ML}(S)$.
		Then we have 
		$$0\leq i(\lambda,\mu) = \lim_{t\rightarrow\infty} i(s_tr(t),P_t)\leq \lim_{t\to\infty} s_t\cdot (3g-3)L = 0.$$
		By Lemma \ref{lem.ue-inter}, we see that $\lambda = \mu$ in $\pmf$.
		It holds for any accumulation point $\mu$ of $P_t$, and hence we see that $P_t$ converges to $\lambda$.
		Therefore we see that the WP-end invariant $D_+(r(0)) = \lambda$.
		Hence Theorem \ref{thm.wp-ue} implies that $(\omega,D)$ is uniquely ergodic with WP-end invariant $\lambda$.
		%We chose a subsequence arbitrarily, thus we see that the original sequence $Y_n$ does converge to $(\omega',D')$ in $\overline{\mathcal{WP}(\T)}$.
\end{proof}

\begin{rmk}
	Let $r:[0,\infty)\rightarrow\T$ be a WP-geodesic ray with uniquely ergodic end invariant $\lambda$.
	It has pointed out by \cite{BLMR}
	that the limit set in the Thurston compactification of $r$ consists of the single point $\lambda$.
\end{rmk}

%%%%%%%%%%%%%%%%%%%%%%%%%%%%%%%%%%%%%%%%%%%%%%%%%%%%%%%%%%%%%%%%%%%%%%%%%%%%%%%%%%%%%%%%%%%%%%%%%%%%%%%%%%%%%%%%%%%%%%%%%%%%%%%%%%%%%%%%%%%%%%%%%%%%%%%%%%%%%%%%%%%%%%%%%%%%%%%%%%%%%%%%%%%%%%%%%%%%%%%%%%%%%%%%%%%%%%%%%%%%%%%%%%%%%%%%%%%%%%%%%%%%%%%%%%%%%%%%%%%%%%%%%%%%%%%%%%%%%%%%%%%%%%%%%%%%%%%%%%%%%%%%%%%%%%%%%%%%%%%%%%%%%%%%
\section{Compactification of Teichm\"uller space via renormalized volume}\label{sec.main}
We have seen several boundaries of the Teichm\"uller space $\mathcal{T}(S)$.
We now apply the idea of horocoordinate boundary to the notion of the renormalized volume to construct a boundary of $\mathcal{T}(S)$.
To define a horocoordinate, it is necessary to have data of ``directions".
Recall that in Corollary \ref{cor.int-formula}, 
directions are the data that give derivatives.
Hence one may utilize the formula of the derivative of the renormalized volume (Theorem \ref{thm.dRvol})
in order to define ``directions".
Later in \S \ref{sec.wp-grad}, we will also consider WP gradient flows to see such formula of derivatives can actually be seen as directions.
%%%%%%%%%%%%%%%%%%%%%%%%%%%%%%%%%%%%%%%%%%%%%%%%%%%%%%%%%%%%%%%%%%%%%%%%%%%%%%%%%%%%%%%%%%%%%%%%%%%%%%%%%%%%%%%%%%%%%%%%%%%%%%%%%%%%%%%%%%%%%%%%%%%%%%%%%%%%%%%%%%%%%%%%%%%%%%%%%%%%%%%%
\subsection{Lipschitz property of the renormalized volume}
In \cite{Schlenker} and \cite{Kojima-McShane}, Schlenker and Kojima-McShane proved that there are explicit upper bounds of the renormalized volume $\VR$ in terms of the WP metric and the Teichm\"uller distance.
Recall that $S$ is an orientable closed surface of genus $g\geq 2$.

\begin{thm}[{\cite[Theorem 5.4]{Schlenker}},\cite{Kojima-McShane}]\label{thm.Rvol-WP-T}
Let $X,Y\in\T$. Then we have
\begin{enumerate}
	\item $\VR(X,Y)\leq \Con \dwp(X,Y)$, and
	\item $\VR(X,Y)\leq \ConT d_{\mathcal{T}}(X,Y).$
\end{enumerate}
%where $\chi(S)$ is the Euler characteristics of $S$.
\end{thm}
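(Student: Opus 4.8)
The strategy is to integrate the first variation formula for $\VR$ (Theorem \ref{thm.dRvol}) along a path between $X$ and $Y$, bound the integrand using the norm estimates on Schwarzian derivatives (Corollary \ref{lem.Nehari}), and recognize the resulting integral as (a multiple of) the length of the path. Taking the infimum over paths then produces the distance on the right-hand side.

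For part (1), I would fix $Y$ and let $\sigma:[0,L]\to\T$ be a smooth path from $X$ to $Y$ parametrized by WP arc length, so that $\|\dot\sigma(t)\|$ (measured in the WP cotangent/tangent duality, i.e.\ the $L^2$ norm paired against quadratic differentials) equals $1$ for all $t$. By Theorem \ref{thm.dRvol},
$$\VR(X,Y) = \VR(Y,Y) - \int_0^L \frac{d}{dt}\VR(\sigma(t),Y)\,dt = \int_0^L \mathrm{Re}\langle q_Y(\sigma(t)),\dot\sigma(t)\rangle\,dt,$$
using that $\VR(Y,Y)=0$ (the renormalized volume of a Fuchsian manifold vanishes; this should be recalled or cited). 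The key pointwise estimate is
$$\big|\mathrm{Re}\langle q_Y(\sigma(t)),\dot\sigma(t)\rangle\big| \leq \|q_Y(\sigma(t))\|_2\cdot\|\dot\sigma(t)\|_2 \leq \Con,$$
by Cauchy--Schwarz for the $L^2$ pairing together with the bound $\|q_Y(X)\|_2\leq\Con$ from Corollary \ref{lem.Nehari}. Hence $\VR(X,Y)\leq \Con\cdot L$, and taking the infimum over all such paths $\sigma$ gives $\VR(X,Y)\leq\Con\,\dwp(X,Y)$.

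For part (2), the same scheme applies but one should pair against the appropriate dual norm for the Teichm\"uller metric: the Teichm\"uller metric is the one for which the cotangent norm is the $L^1$ norm $\|\cdot\|_1$ on $\QD(X)$ and the tangent norm is the corresponding $L^\infty$ norm on Beltrami differentials. Parametrizing a path $\sigma$ from $X$ to $Y$ by Teichm\"uller arc length so that $\|\dot\sigma(t)\|_\infty = 1$, one uses the duality pairing bound $|\mathrm{Re}\langle q,\beta\rangle|\leq\|q\|_1\|\beta\|_\infty$ together with $\|q_Y(X)\|_1\leq\ConT$ from Corollary \ref{lem.Nehari} to get integrand bounded by $\ConT$, and integrating plus taking infimum over paths yields $\VR(X,Y)\leq\ConT\,d_{\mathcal T}(X,Y)$. (Here one must be slightly careful: the Teichm\"uller metric is a Finsler, not Riemannian, metric, but the length-integral characterization still holds, and Teichm\"uller geodesics realize the distance, so an infimum-over-paths argument is legitimate; alternatively integrate directly along the Teichm\"uller geodesic.)

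The main obstacle, and the point requiring the most care, is matching up the norms correctly: the pairing $\mathrm{Re}\langle q,\beta\rangle$ that appears in Theorem \ref{thm.dRvol} must be estimated by the product of a norm on $q_Y(\sigma(t))$ (coming from Nehari, Corollary \ref{lem.Nehari}) and the \emph{dual} norm on the velocity $\dot\sigma(t)$ (coming from the chosen metric on $\T$), and one needs the two metrics — WP and Teichm\"uller — to be exactly the ones whose infinitesimal structure is dual to the $L^2$ and $L^1$ norms respectively. Once the duality bookkeeping is set up, the argument is a one-line Cauchy--Schwarz (resp.\ H\"older) estimate followed by integration; the vanishing $\VR(Y,Y)=0$ and the fact that $\VR(\cdot,Y)$ is differentiable (guaranteed by Theorem \ref{thm.dRvol}) complete the reduction. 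I would also note in passing that this argument gives the asymmetric bound only in the stated direction; the reversed inequality need not hold, which is consistent with the paper's later observation that $\VR$ is not a distance.
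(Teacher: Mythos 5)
Your proposal is correct and follows essentially the same route as the paper: the paper also integrates the first variation formula (Theorem \ref{thm.dRvol}) along the WP, respectively Teichm\"uller, geodesic and bounds the integrand via the $L^2$/$L^1$ duality together with the Nehari bounds of Corollary \ref{lem.Nehari} (this is exactly Lemma \ref{lem.diff-bound}). Your remark that one needs $\VR(Y,Y)=0$ for the Fuchsian locus is a point the paper leaves implicit, and your duality bookkeeping matches the paper's identification of the Teichm\"uller and WP metrics with the $L^1$ and $L^2$ pointwise norms.
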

To see the proof of Theorem \ref{thm.Rvol-WP-T}, let us summarise upper bounds of differentials given in Theorem \ref{thm.dRvol}.
Let $X\in\T$ and $q\in\QD(X)$.
For any $z\in X$, the point-wise norm is defined by
$$||q(z)||=\frac{|q(z)|}{\rho_X(z)},$$
where $\rho_X$ is the hyperbolic metric on $X$.
Then the Teichm\"uller metric is the $L^1$ metric and WP metric is the $L^2$ metric with respect to this point-wise metric.
Therefore we have
\begin{lem}[{\cite[Proof of Theorem 1.2]{Schlenker-MRL}}, {\cite[Proof of Theorem 1.4]{Kojima-McShane}}]\label{lem.diff-bound}
	Let $Y\in\T$ and $\sigma:[0,T]\rightarrow\T$ be a differentiable path.
	\begin{enumerate}
		\item If $\sigma$ is a geodesic with respect to the Teichm\"uller metric, then $$\left|\frac{d}{dt}\VR(\sigma(t),Y)\right|\leq \ConT.$$
		\item If $\sigma$ is a geodesic with respect to the WP metric, then $$\left|\frac{d}{dt}\VR(\sigma(t),Y)\right|\leq \Con.$$
	\end{enumerate}
	Here, we suppose that geodesics are parametrized by their arc length.
\end{lem}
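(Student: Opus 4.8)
The plan is to obtain the lemma as the infinitesimal form of Theorem~\ref{thm.Rvol-WP-T} by integrating the first‑variation formula. First I would note that Theorem~\ref{thm.dRvol}, applied after re‑centering the parametrization at an arbitrary time, gives, for every $t$,
$$\frac{d}{dt}\VR(\sigma(t),Y)=-\mathrm{Re}\langle q_{Y}(\sigma(t)),\dot\sigma(t)\rangle,$$
where $\dot\sigma(t)\in L^{\infty}(\sigma(t))/K$ is the tangent vector to $\sigma$ at $\sigma(t)$. Thus the whole problem reduces to bounding $|\mathrm{Re}\langle q_{Y}(\sigma(t)),\dot\sigma(t)\rangle|$ uniformly in $t$, using that $\sigma$ has unit speed for the metric under consideration.

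For this I would regard $-q_{Y}(\sigma(t))\in\QD(\sigma(t))$ as the differential of $\VR(\cdot,Y)$ at $\sigma(t)$ with respect to the real pairing $\mathrm{Re}\langle\cdot,\cdot\rangle$, so that $\frac{d}{dt}\VR(\sigma(t),Y)$ is the value of this cotangent vector on the unit tangent vector $\dot\sigma(t)$. Since, as recalled just before the lemma, the Teichm\"uller (resp.\ WP) metric is the $L^{1}$ (resp.\ $L^{2}$) metric for the point‑wise norm, the corresponding co‑norm of this differential is $\|q_{Y}(\sigma(t))\|_{1}$ (resp.\ $\|q_{Y}(\sigma(t))\|_{2}$). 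Concretely, in the Teichm\"uller case one uses that the velocity of a unit‑speed Teichm\"uller geodesic is represented by a Beltrami differential of Teichm\"uller form $\overline{\phi_{t}}/|\phi_{t}|$, of point‑wise norm $1$ almost everywhere, whence the point‑wise triangle inequality yields $|\mathrm{Re}\langle q_{Y}(\sigma(t)),\dot\sigma(t)\rangle|\leq\int_{\sigma(t)}|q_{Y}(\sigma(t))|=\|q_{Y}(\sigma(t))\|_{1}$; in the WP case one writes $\dot\sigma(t)$ as the WP‑dual of a quadratic differential $\psi_{t}$ with $\|\psi_{t}\|_{2}=1$ (unit speed) and applies the Cauchy--Schwarz inequality for the WP co‑metric, giving $|\mathrm{Re}\langle q_{Y}(\sigma(t)),\dot\sigma(t)\rangle|\leq\|q_{Y}(\sigma(t))\|_{2}$. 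In both cases Corollary~\ref{lem.Nehari} supplies $\|q_{Y}(\sigma(t))\|_{1}\leq\ConT$ and $\|q_{Y}(\sigma(t))\|_{2}\leq\Con$, which are exactly the asserted bounds; re‑integrating these over a geodesic segment from $X$ to $Y$ then reproves Theorem~\ref{thm.Rvol-WP-T}.

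The argument is genuinely short once Theorem~\ref{thm.dRvol} and Corollary~\ref{lem.Nehari} are in hand, and I do not expect a real obstacle. The only points deserving care are the standard facts from Teichm\"uller theory that I would invoke without proof: the explicit Teichm\"uller form of the velocity of a Teichm\"uller geodesic (so that its point‑wise norm is identically $1$), and the precise normalization identifying the Teichm\"uller and WP metrics with the $L^{1}$ and $L^{2}$ point‑wise norms on $\QD$. Modulo these, the estimate is purely a duality/H\"older computation against the Nehari bound.
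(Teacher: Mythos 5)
Your proposal is correct and is essentially the paper's own argument: the paper also combines the first-variation formula (Theorem \ref{thm.dRvol}) with the observation that the Teichm\"uller and WP metrics are the $L^1$ and $L^2$ metrics for the point-wise norm, and then invokes Nehari's inequality in the form of Corollary \ref{lem.Nehari}. You merely spell out the duality/H\"older step that the paper leaves implicit.
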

\begin{proof}
	This is essentially Nehari's inequality, Theorem \ref{thm.Nehari} and Lemma \ref{lem.Nehari}.
\end{proof}
Then Theorem \ref{thm.Rvol-WP-T} is obtained by integrating quantities in Lemma \ref{lem.diff-bound} along corresponding geodesic segments.

%%%%%%%%%%%%%%%%%%%%%%%%%%%%%%%%%%%%%%%%%%%%%%%%%%%%%%%%%%%%%%%%%%%%%%%%%%%%%%%%%%%%%%%%%%%%%%%%%%%%%%%%%%%%%%%%%%%%%%%%%%%%%%%%%%%%%%%%%%%%%%%%%%%%%%%%%
\subsection{Embed Teichm\"uller space into space of Lipschitz functions}\label{sec.cpt}
Imitating horofunctions defined with distances, we define a function on $\T$ via the renormalized volume as follows.
Let us fix a base point $b\in\T$.
\begin{defi}
Let $Z\in\T$. We define $\nu_{Z}:\mathcal{T}(S)\rightarrow\mathbb{R}$ by
$$\nu_{Z}(X):=\VR(X,Z)-\VR(b,Z)$$
for $X\in\T$.
We call $\nu_{Z}$ a {\em volume horofunction}.
\end{defi}

The variation formula of $\VR$ (Theorem \ref{thm.dRvol}) gives the following integral expression of $\nu_{Z}$:
\begin{prop}\label{prop.int1}
Let $X,Z\in\T$ and let $\sigma:[0,T]\rightarrow\T$ be a piecewise differentiable path connecting $X$ and $b$.
Then 
$$\nu_{Z}(X):=\int_0^T-\mathrm{Re}\langle q_Z(\sigma(t)),\dot\sigma(t)\rangle dt.$$
\end{prop}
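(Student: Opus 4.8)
The plan is to compute the derivative of the function $t\mapsto \nu_Z(\sigma(t))$ and then integrate. First I would recall that, by definition, $\nu_Z(X) = \VR(X,Z) - \VR(b,Z)$, so the constant term $\VR(b,Z)$ drops out upon differentiation along any path; thus $\frac{d}{dt}\nu_Z(\sigma(t)) = \frac{d}{dt}\VR(\sigma(t),Z)$ whenever $\sigma$ is differentiable at $t$. Applying Theorem \ref{thm.dRvol} (the Krasnov--Schlenker/Kojima--McShane first variation formula) with the path $s\mapsto \sigma(t+s)$, which has initial point $\sigma(t)$ and initial velocity $\dot\sigma(t)\in L^\infty(\sigma(t))/K$, gives
$$\frac{d}{dt}\VR(\sigma(t),Z) = -\mathrm{Re}\langle q_Z(\sigma(t)),\dot\sigma(t)\rangle$$
at every $t$ where $\sigma$ is differentiable.

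Next I would integrate this identity over $[0,T]$. Since $\sigma$ is only piecewise differentiable, I would split $[0,T]$ at the finitely many break points $0 = t_0 < t_1 < \cdots < t_k = T$, on each subinterval $[t_{i-1},t_i]$ apply the fundamental theorem of calculus to the (differentiable) function $t\mapsto \VR(\sigma(t),Z)$, and then telescope:
$$\VR(\sigma(T),Z) - \VR(\sigma(0),Z) = \sum_{i=1}^k \big(\VR(\sigma(t_i),Z) - \VR(\sigma(t_{i-1}),Z)\big) = \int_0^T -\mathrm{Re}\langle q_Z(\sigma(t)),\dot\sigma(t)\rangle\,dt.$$
Because $\sigma(T) = X$ and $\sigma(0) = b$, the left-hand side equals $\VR(X,Z) - \VR(b,Z) = \nu_Z(X)$, which is exactly the claimed formula. (One should note the parametrization convention here connects $\sigma(0)=b$ to $\sigma(T)=X$, matching the sign as written; if instead one had $\sigma(0)=X$, $\sigma(T)=b$ one would get an extra minus sign, so I would make sure the orientation matches the statement.)

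The only point requiring a word of care — and the closest thing to an obstacle — is justifying the integrability of $t\mapsto \mathrm{Re}\langle q_Z(\sigma(t)),\dot\sigma(t)\rangle$ and the applicability of the fundamental theorem of calculus, i.e. that $\VR(\cdot,Z)$ is not merely differentiable but $C^1$ (or at least that its derivative along $\sigma$ is integrable). This is guaranteed by the smoothness of $\VR$ on $\T\times\T$ (stated in the introduction) together with the continuity of the map $X\mapsto q_Z(X)\in\QD(S)$, so the integrand is continuous on each subinterval and the integral is well-defined; the Nehari-type bounds of Corollary \ref{lem.Nehari} additionally give a uniform bound $|\mathrm{Re}\langle q_Z(\sigma(t)),\dot\sigma(t)\rangle|\leq \Con\,|\dot\sigma(t)|_{\mathrm{wp}}$, confirming absolute integrability. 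Everything else is a routine application of Theorem \ref{thm.dRvol}.
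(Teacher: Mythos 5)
Your proposal is correct and follows essentially the same route as the paper: both integrate the first variation formula of Theorem \ref{thm.dRvol} along the path, relying on the smoothness of $\VR$ to justify the fundamental theorem of calculus. The only cosmetic difference is that the paper produces the $\VR(b,Z)$ term by integrating along an auxiliary path from $b$ to $Z$, whereas you simply observe that this constant drops out under differentiation of $\nu_Z$; your remark about the orientation convention $\sigma(0)=b$, $\sigma(T)=X$ is the right reading of the statement.
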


\begin{proof}
Let $\sigma_{1}:[0,T_1]\rightarrow\T$ be a piecewise differentiable path connecting $b$ and $Z$.
As the renormalized volume function $\VR$ is smooth, the derivation formula (Theorem \ref{thm.dRvol}) implies
\begin{align*}
\VR(X,Z) &= \int_0^T-\mathrm{Re}\langle q_Z(\sigma(t)), \dot\sigma(t) \rangle dt  +
\int_0^{T_1}-\mathrm{Re}\langle q_Z(\sigma_{1}(t)), \dot\sigma_{1}(t) \rangle dt\\
&= \int_0^T-\mathrm{Re}\langle q_Z(\sigma(t)), \dot\sigma(t) \rangle dt + \VR(b,Z).
\end{align*}
\end{proof}

By Proposition \ref{prop.int1}, we see that the function $\nu_Z$ is a Lipschitz map:
\begin{prop}\label{prop.Lip}
	The function $\nu_Z:\T\rightarrow\R$ is a Lipschitz map with respect to both the Teichm\"uller metric and the WP metric, i.e.
	\begin{enumerate}
		\item $|\nu_Z(X)-\nu_Z(Y)|\leq\Con \dwp(X,Y)$, and
		\item $|\nu_Z(X)-\nu_Z(Y)|\leq\ConT \dt(X,Y)$.
	\end{enumerate}
\end{prop}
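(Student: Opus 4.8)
The plan is to deduce both inequalities directly from the integral expression for $\nu_Z$ in Proposition \ref{prop.int1} together with the pointwise bounds coming from Nehari's inequality, exactly in the way Theorem \ref{thm.Rvol-WP-T} follows from Lemma \ref{lem.diff-bound}. The only subtlety is that $\nu_Z$ is defined with the fixed base point $b$ in the formula, so I cannot simply integrate along a geodesic from $X$ to $b$; instead I want to bound the difference $\nu_Z(X) - \nu_Z(Y)$ using a geodesic from $X$ to $Y$, and for this I should first re-derive a path-independent integral formula for that difference.

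\smallskip

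\textbf{Step 1: An integral formula for $\nu_Z(X)-\nu_Z(Y)$.} First I would observe that, since $\VR(\cdot,Z)$ is smooth on $\T$ (Theorem \ref{thm.dRvol}) and its derivative along any differentiable path $\sigma$ is $-\mathrm{Re}\langle q_Z(\sigma(t)),\dot\sigma(t)\rangle$, the quantity $\int_0^T-\mathrm{Re}\langle q_Z(\sigma(t)),\dot\sigma(t)\rangle\,dt$ depends only on the endpoints $\sigma(0),\sigma(T)$, being equal to $\VR(\sigma(T),Z)-\VR(\sigma(0),Z)$. In particular, taking $\sigma:[0,T]\to\T$ to be any piecewise differentiable path from $Y$ to $X$, we get
$$\nu_Z(X)-\nu_Z(Y) = \VR(X,Z)-\VR(Y,Z) = \int_0^T -\mathrm{Re}\langle q_Z(\sigma(t)),\dot\sigma(t)\rangle\,dt.$$
This is the formula I will estimate, with $\sigma$ chosen to be the relevant geodesic.

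\smallskip

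\textbf{Step 2: Pointwise estimate of the integrand.} Next I would bound $|\mathrm{Re}\langle q_Z(\sigma(t)),\dot\sigma(t)\rangle|\leq |\langle q_Z(\sigma(t)),\dot\sigma(t)\rangle|$ in two ways, using the pairing of a quadratic differential with a Beltrami differential. For the Teichm\"uller metric, $\dot\sigma(t)$ has $L^\infty$-norm equal to its Teichm\"uller speed, so $|\langle q,\dot\sigma\rangle|\leq \|q\|_1\,\|\dot\sigma\|_\infty$; taking $\sigma$ a Teichm\"uller geodesic parametrized by arc length and applying the bound $\|q_Z(\sigma(t))\|_1\leq \ConT$ from Corollary \ref{lem.Nehari} gives $|\mathrm{Re}\langle q_Z(\sigma(t)),\dot\sigma(t)\rangle|\leq \ConT$. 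For the WP metric, $\langle\cdot,\cdot\rangle$ against the WP-unit tangent is controlled by the $L^2$-norm of $q$, via Cauchy--Schwarz for the pointwise $L^2$ pairing, and $\|q_Z(\sigma(t))\|_2\leq \Con$ again by Corollary \ref{lem.Nehari}; with $\sigma$ a WP geodesic parametrized by arc length this yields $|\mathrm{Re}\langle q_Z(\sigma(t)),\dot\sigma(t)\rangle|\leq \Con$. This is really a repetition of Lemma \ref{lem.diff-bound}, which I may simply cite.

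\smallskip

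\textbf{Step 3: Integrate.} Finally, for (1) take $\sigma$ the WP-geodesic from $Y$ to $X$, so $T=\dwp(X,Y)$, and integrate: $|\nu_Z(X)-\nu_Z(Y)|\leq \int_0^{\dwp(X,Y)}\Con\,dt = \Con\,\dwp(X,Y)$. For (2) take $\sigma$ the Teichm\"uller geodesic from $Y$ to $X$, so $T = \dt(X,Y)$, and integrate to get $|\nu_Z(X)-\nu_Z(Y)|\leq \ConT\,\dt(X,Y)$. I expect the main (and only) obstacle is the bookkeeping in Step 1 — making sure the path-independence of the integral is invoked cleanly so that the base point $b$ drops out of the difference $\nu_Z(X)-\nu_Z(Y)$; once that is in place, Steps 2 and 3 are exactly the proof of Theorem \ref{thm.Rvol-WP-T} applied to a geodesic between $X$ and $Y$ rather than between $X$ and $Y$ meeting $b$, and in fact one can also simply note $|\nu_Z(X)-\nu_Z(Y)| = |\VR(X,Z)-\VR(Y,Z)|$ and quote Lemma \ref{lem.diff-bound} directly.
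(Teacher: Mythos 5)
Your proposal is correct and follows essentially the same route as the paper: the paper's proof likewise notes that the base point cancels in the difference $\nu_Z(X)-\nu_Z(Y)=\VR(X,Z)-\VR(Y,Z)$ and obtains the bounds by integrating the differential of $\VR$ (Theorem \ref{thm.dRvol}, via Proposition \ref{prop.int1}) along the WP or Teichm\"uller geodesic joining $X$ and $Y$, using the pointwise bounds of Lemma \ref{lem.diff-bound}. Your Steps 1--3 simply spell out the bookkeeping that the paper leaves implicit.
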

\begin{proof}
	By Lemma \ref{lem.diff-bound} and Proposition \ref{prop.int1},
	 one obtains these bound by integrating the differential of $\VR$ (Theorem \ref{thm.dRvol}) along the Teichm\"uller geodesic or the WP geodesic connecting $X$ and $Y$.
\end{proof}

Thus we see that $\nu_{Z}$ is a Lipchitz function and vanishes at the base point $b$.
From now on we consider the WP metric on $\T$ and Lipchitz functions with respect to the WP metric.
Let $\Lip\T$ denote the space of $C$-Lipchitz functions on $\T$ for $C = \Con$ which vanishes at $b$.
We have a map
$$\mathcal{V'}:\T\rightarrow\Lip\T$$
defined by $\mathcal{V'}(Z):=\nu_{Z}$.

\begin{prop}\label{prop.horo}
The map $\mathcal{V'}:\T\rightarrow\Lip\T$ is injective and continuous.
\end{prop}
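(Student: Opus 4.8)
The plan is to treat the two assertions separately; continuity is immediate from the smoothness of $\VR$, while injectivity is the substantive point, deduced from the variation formula (Theorem \ref{thm.dRvol}) together with the injectivity of the Bers parametrization $\qf$. For continuity, recall from Proposition \ref{prop.Lip-compact} and the remark following it that on $\Lip\T$ the topology of uniform convergence on compact sets coincides with that of pointwise convergence; hence it is enough to check that $Z_n\to Z$ in $\T$ implies $\nu_{Z_n}(X)\to\nu_Z(X)$ for each $X\in\T$. Since $\VR:\T\times\T\to\R$ is smooth, we have $\VR(X,Z_n)\to\VR(X,Z)$ and $\VR(b,Z_n)\to\VR(b,Z)$, so $\nu_{Z_n}(X)=\VR(X,Z_n)-\VR(b,Z_n)$ converges to $\nu_Z(X)$.

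For injectivity, suppose $\nu_{Z_1}=\nu_{Z_2}$ for some $Z_1,Z_2\in\T$. Then $\VR(X,Z_1)-\VR(X,Z_2)=\VR(b,Z_1)-\VR(b,Z_2)$ for all $X$, so the smooth function $X\mapsto\VR(X,Z_1)-\VR(X,Z_2)$ is constant on the connected manifold $\T$ and hence has vanishing differential. Fixing $X\in\T$, Theorem \ref{thm.dRvol} then gives $\mathrm{Re}\langle q_{Z_1}(X)-q_{Z_2}(X),\beta\rangle=0$ for every Beltrami differential $\beta$ on $X$. Writing $q:=q_{Z_1}(X)-q_{Z_2}(X)$ and testing against the Beltrami differential $\bar q/|q|$ (set equal to $0$ on the zero set of $q$, so that $||\bar q/|q|||_\infty\leq 1$) gives $||q||_{1}=\mathrm{Re}\langle q,\bar q/|q|\rangle=0$, hence $q_{Z_1}(X)=q_{Z_2}(X)$; equivalently $P_{Z_1}(X)=P_{Z_2}(X)$ as complex projective structures on $X$. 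In particular they have the same holonomy, which by construction is the quasi-Fuchsian group $\qf(X,Z_i)$; thus $\qf(X,Z_1)=\qf(X,Z_2)$, and since $\qf:\TT\to\QF$ is a homeomorphism by Bers simultaneous uniformization, it is injective and $Z_1=Z_2$.

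The only genuinely delicate step is extracting the pointwise equality $q_{Z_1}(X)=q_{Z_2}(X)$ from $\nu_{Z_1}=\nu_{Z_2}$: this rests on the fact that a constant smooth function has vanishing differential and on the non-degeneracy of the real pairing $\mathrm{Re}\langle\cdot,\cdot\rangle$ between $\QD(X)$ and the tangent space $T_X\T\cong L^{\infty}(X)/K$ (a standard fact, see e.g. \cite[Chapter 11]{FaM}, for which the test differential $\bar q/|q|$ above provides a direct proof). Both the continuity argument and the concluding step $Z_1=Z_2$ via simultaneous uniformization are then formal.
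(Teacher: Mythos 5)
Your proof is correct, and on the injectivity part it takes a genuinely different route from the paper. For continuity the two arguments are essentially equivalent: you invoke pointwise convergence (sufficient for the topology on $\Lip\T$) via smoothness of $\VR$, while the paper gets uniform convergence directly from the Lipschitz bound of Proposition \ref{prop.Lip} together with the symmetry $\VR(X,Y)=\VR(Y,X)$; both are fine. For injectivity, the paper simply quotes from \cite{KS,BBB} the fact that $\VR(\cdot,Z)$ has its unique critical point at $Z$, so that $Z$ is recovered from $\nu_Z$ as its only critical point. You instead differentiate the identity $\nu_{Z_1}=\nu_{Z_2}$, use Theorem \ref{thm.dRvol} and the non-degeneracy of the pairing $\mathrm{Re}\langle\cdot,\cdot\rangle$ between $\QD(X)$ and $L^{\infty}(X)/K$ (your test differential $\bar q/|q|$ is a legitimate bounded Beltrami differential, and the zeros of $q$ are isolated, so the computation $\mathrm{Re}\langle q,\bar q/|q|\rangle=\|q\|_1$ is valid) to conclude $q_{Z_1}(X)=q_{Z_2}(X)$ for every $X$, and then finish with injectivity of the Bers parametrization $\qf$ (equivalently, of the Bers embedding $Z\mapsto q_Z(X)$). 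What your argument buys is independence from the critical-point statement cited in the paper, replacing it by the classical injectivity of simultaneous uniformization, and it yields the stronger conclusion that equality of volume horofunctions already forces equality of the entire Bers-embedding data $q_{Z_1}(\cdot)=q_{Z_2}(\cdot)$ — in effect the injectivity of the map $\mathcal{V}$ of Definition \ref{def.vhoro}, not just of $\mathcal{V}'$. What the paper's route buys is brevity, and Remark \ref{rmk.positive} records yet a third argument (via the minimum of $\nu_Z$, using positivity of $\VR$ from \cite{BBB,BBP}); the critical-point fact the paper cites is itself provable by exactly the non-degeneracy computation you carry out, so the two proofs are of comparable depth.
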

\begin{proof}
For any $X,Y,Z\in\T$, we have by Proposition \ref{prop.Lip}
\begin{align*}
|\nu_{X}(Z)-\nu_{Y}(Z)|&\leq|\VR(Z,X) - \VR(Z,Y)| + |\VR(X,b)-\VR(Y,b)|\\
&\leq 2\cdot\Con \dwp(X,Y).
\end{align*}
So if $Y_{n}\rightarrow Y$ then $\nu_{Y_{n}}\rightarrow\nu_{Y}$ uniformly on $\T$.
Hence $\mathcal{V'}$ is continuous.

As pointed out in \cite{KS,BBB}, it can be readily seen from the differential formula (Theorem \ref{thm.dRvol}) that the function
$\VR(\cdot, Z):\T\rightarrow\R$ has its critical point only at $Z$.
This implies that $Z$ is the unique critical point of the function $\nu_Z$.
Therefore $\mathcal{V'}$ is injective.
\end{proof}

\begin{rmk}\label{rmk.positive}
	Recently in \cite{BBB,BBP}, it is shown
		%announced\footnote{A proof of this fact can now be found in a recent preprint by Bridgeman-Bromberg-Pallete \cite{BBP}.}
	 that the renormalized volume is a non-negative function on $\TT$ and it is zero only on the diagonal i.e. $\VR(X,Y)\geq 0$ and $\VR(X,Y) = 0$ if and only if $X = Y$.
	Then the injectivity of $\mathcal{V}'$ also follows by looking at the minimum of $\nu_Z$, similarly to the standard arguments for the case of horofunctions of distances.
	\end{rmk}

Recall that for $C = \Con$, 
$$\Lip\T\subset\prod_{x\in\T}[-C\cdot\dwp(b,X),C\cdot\dwp(b,X)],$$ and  $q_{X}(Y)\in\QD_{B}(Y)$\text{ (Bers embedding).}
Then we now define a map from $\T$ to the space $\LQ$ in Definition \ref{defi.LQ}.
\begin{defi}\label{def.vhoro}
We define a map
$$\mathcal{V}:\T\rightarrow\LQ$$%\prod_{X\in\T}\left\{[-C\cdot\dwp(b,X),C\cdot\dwp(b,X)]\times \QD_{B}(X)\right\}$$
by 
$\mathcal{V}(Z) = (\nu_{Z}(X), q_{Z}(X))_{X\in\T}$.
\end{defi}

Then the map $\mathcal{V}$ is an embedding:
\begin{prop}\label{prop.Nu}
The map $\mathcal{V}:\T\rightarrow\LQ$ %\prod_{X\in\T}\left\{[-\dwp(b,X),\dwp(b,X)]\times \QD_{B}(X)\right\}$
 is a homeomorphism onto its image.
\end{prop}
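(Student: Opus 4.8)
The plan is to show that $\mathcal{V}$ is injective, continuous, and has continuous inverse on its image, exploiting the fact that the $\QD_B$-coordinate already records the ``directions'' in the sense of \S\ref{sec.Angles}. Injectivity is immediate from Proposition \ref{prop.horo}, since already the first coordinate $Z\mapsto\nu_Z$ is injective. For continuity: if $Z_n\to Z$ in $\T$, then $\nu_{Z_n}\to\nu_Z$ uniformly on $\T$ by Proposition \ref{prop.horo}, so the first coordinate converges in $\Lip\T$; and for each fixed $X\in\T$ the Bers embedding $Z\mapsto q_Z(X)\in\QD_B(X)$ is continuous (the quasi-Fuchsian parametrization $\qf:\TT\to\QF$ is a homeomorphism and the Schwarzian derivative depends continuously on the complex projective structure), so the second coordinate converges pointwise, which is convergence in $\LQ$ by definition of the product topology. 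Hence $\mathcal{V}$ is continuous.

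The substantive point is that $\mathcal{V}^{-1}$ is continuous, i.e. that $\mathcal{V}(Z_n)\to\mathcal{V}(Z_\infty)$ with $Z_\infty\in\T$ forces $Z_n\to Z_\infty$ in $\T$. Here the key is that the second coordinate of $\mathcal{V}(Z)$, namely $X\mapsto q_Z(X)$, together with the derivative formula of Theorem \ref{thm.dRvol}, encodes the WP-gradient of $\VR(\cdot,Z)$: by that formula, $-\mathrm{Re}\langle q_Z(X),\cdot\rangle$ is the differential of $\VR(\cdot,Z)$ at $X$, whose unique zero is $X=Z$ (as used in the proof of Proposition \ref{prop.horo}, following \cite{KS,BBB}). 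Thus $q_Z(Z)=0$, and conversely $q_Z(X)=0$ only when $X=Z$. Now suppose $\mathcal{V}(Z_n)\to\mathcal{V}(Z_\infty)$. Evaluating the second coordinate at $X=Z_\infty$ gives $q_{Z_n}(Z_\infty)\to q_{Z_\infty}(Z_\infty)=0$ in $\QD_B(Z_\infty)$. Since $X\mapsto q_X(Z_\infty)$ is (a nonlinear reparametrization of) the Bers embedding $b_{Z_\infty}:\T\to\QD(Z_\infty)$, which is a homeomorphism onto its open image, the convergence $q_{Z_n}(Z_\infty)\to 0=q_{Z_\infty}(Z_\infty)$ in the Bers slice forces $Z_n\to Z_\infty$ in $\T$. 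This shows $\mathcal{V}^{-1}$ is continuous, completing the proof.

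I would organize the write-up in three short paragraphs matching these three points, and I expect the main obstacle to be the clean justification of the last step: one must cite that $b_{Z_\infty}$ is a homeomorphism onto an open subset of $\QD(Z_\infty)$ (Bers) and that the reparametrization $q\mapsto q_X(Z_\infty)$ relating $b_{Z_\infty}$ to the $\LQ$-coordinate is itself a homeomorphism onto its image. Everything else (injectivity, continuity of each coordinate) is routine given Proposition \ref{prop.horo} and the continuity of $\qf$. An alternative, perhaps cleaner route for $\mathcal{V}^{-1}$ would be to invoke directly that the first coordinate $\mathcal{V}'$ is already injective and continuous on the CAT(0)-flavored setup, and then argue as in Proposition \ref{prop.horo-homeo-riemann} that the ``direction'' data $q_Z(X)$ pins down $Z$ as the common zero of the gradient field — but I would still need the openness of the Bers image to convert pointwise convergence of directions into convergence of $Z_n$, so the Bers-slice input appears unavoidable.
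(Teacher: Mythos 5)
Your proposal is correct and follows essentially the same route as the paper: injectivity and first-coordinate continuity come from Proposition \ref{prop.horo}, continuity of $\mathcal{V}$ follows from the product topology together with continuity of the Bers embeddings, and continuity of $\mathcal{V}^{-1}$ rests on the fact that for a fixed $X$ the map $Z\mapsto q_Z(X)$ is the Bers embedding at $X$, hence a homeomorphism onto its image. The only cosmetic difference is that you specialize to $X=Z_\infty$ and use $q_{Z_\infty}(Z_\infty)=0$, while the paper argues with an arbitrary fixed $X$; no extra ``reparametrization'' step is actually needed, since the $\LQ$-coordinate at $X$ is the Bers embedding itself in the paper's conventions.
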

\begin{proof}
Injectivity, and continuity to the first coordinate $\prod_{X\in\T}[-C\cdot\dwp(b,X),C\cdot\dwp(b,X)]$  of $\LQ$
follow from Proposition \ref{prop.horo}.
Recall that $q_{(\cdot)}(X):\T\rightarrow\QD_{B}(X)$ defines the Bers embedding at $X$.
As $\LQ$ is equipped with the product topology, we see that the map $\mathcal{V}$ is continuous.
Suppose that $(\nu_{Z_{n}}, q_{Z_{n}}(X))$ converges to $(\nu_{Z},q_{Z}(X))$.
As $q_{Z_{n}}(X)\rightarrow q_{Z}(X)$ implies $Z_{n}\rightarrow Z$, we see that $\mathcal{V}^{-1}(\nu_{Z_n},q_{Z_n}) = Z_n\rightarrow Z = \mathcal{V}^{-1}(\nu_{Z},q_{Z})$.
Thus the inverse $\mathcal{V}^{-1}$ is continuous on $\mathcal{V}(\T)$.
\end{proof}

By Proposition \ref{prop.Nu} and \ref{prop.LQ}, the closure $\overline{\mathcal{V}(\T)}$ is compact.
Thus we get the desired compactification of $\T$ and Theorem \ref{thm.main-cpt} is proved.
\begin{defi}
We denote the closure by $\overline{\T}^{\mathrm{vh}}:=\overline{\mathcal{V}(\T)}$ (\underline volume and \underline horo) and 
the boundary by $\partial_{\mathrm{vh}}\T:=\overline{\mathcal{V}(\T)}\setminus \mathcal{V}(\T)$.
\end{defi}

The construction of $\Tvh$ is compatible with the action of the mapping class group $\mcg$ on $\T$.
\begin{prop}\label{prop.bh-mcg-action}
The action of $\mcg$ on $\T$ extends to a continuous action by homeomorphisms on $\Tvh$ by
\begin{align}\label{eq.action}
\psi\cdot\nu(X)&:=\nu(\psi^{-1}X) - \nu(\psi^{-1}b)\text{ for each $X\in\T$}\\\label{eq.action2}
 \psi\cdot q(X) &:= \psi_{*}q(\psi^{-1}X),
\end{align}
and $\psi(\nu,q) = (\psi\cdot\nu, \psi\cdot q)$,
where $\psi_*$ is the push-forward.
\end{prop}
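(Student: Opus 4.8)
The plan is to verify that the formulas \eqref{eq.action} and \eqref{eq.action2} are (a) well-defined on $\LQ$, (b) define a group action, (c) agree with the standard $\mcg$-action when restricted to $\mathcal V(\T)$, and (d) are continuous, so that by density they extend the action continuously to $\Tvh$. Step (a) is essentially bookkeeping: for $(\nu,q)\in\LQ$ we must check that $\psi\cdot\nu$ is again $C$-Lipschitz vanishing at $b$ and that $\psi\cdot q(X)=\psi_*q(\psi^{-1}X)$ lies in $\QD_B(X)$. The first holds because $\psi$ acts on $(\T,\dwp)$ by isometries, so $|\psi\cdot\nu(X)-\psi\cdot\nu(Y)| = |\nu(\psi^{-1}X)-\nu(\psi^{-1}Y)|\le C\,\dwp(\psi^{-1}X,\psi^{-1}Y)=C\,\dwp(X,Y)$, and clearly $\psi\cdot\nu(b)=0$; the second holds because $\psi_*$ carries $\QD(\psi^{-1}X)$ isometrically onto $\QD(X)$ for the $L^\infty$-norm (the hyperbolic metric is carried to the hyperbolic metric), so the Nehari bound $\|q(\psi^{-1}X)\|_\infty\le 3/2$ is preserved. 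Step (b), the cocycle identity $(\psi_1\psi_2)\cdot(\nu,q)=\psi_1\cdot(\psi_2\cdot(\nu,q))$, is a direct computation on each coordinate; on the $\nu$-coordinate it is exactly the computation already carried out in the proof of Lemma \ref{lem.action}, and on the $q$-coordinate it follows from functoriality of push-forward, $(\psi_1\psi_2)_* = (\psi_1)_*(\psi_2)_*$.

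For step (c) I would check that $\mathcal V$ is $\mcg$-equivariant, i.e. $\mathcal V(\psi Z) = \psi\cdot\mathcal V(Z)$. On the first coordinate this says $\nu_{\psi Z}(X)=\nu_Z(\psi^{-1}X)-\nu_Z(\psi^{-1}b)$, which unwinds, using $\nu_W(U)=\VR(U,W)-\VR(b,W)$, to the identity $\VR(X,\psi Z)=\VR(\psi^{-1}X,Z)$ (plus the normalizing constants that cancel). This identity is precisely the $\mcg$-invariance of the renormalized volume: $M(\psi^{-1}X,Z)$ and $M(X,\psi Z)$ are isometric hyperbolic $3$-manifolds — apply $\psi$ to the marking — hence have equal renormalized volume. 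On the second coordinate, equivariance reads $q_{\psi Z}(X)=\psi_*\bigl(q_Z(\psi^{-1}X)\bigr)$, which again expresses that the Schwarzian derivative associated to $\qf(\psi^{-1}X,Z)$ is carried by $\psi$ to the one associated to $\qf(X,\psi Z)$; since $\qf$ is $\mcg$-equivariant and the Schwarzian is a natural (holomorphically invariant) construction, this is immediate.

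Step (d) is where I expect the only genuine subtlety, though it is mild: I must show that for fixed $\psi\in\mcg$, the self-map of $\LQ$ given by \eqref{eq.action}--\eqref{eq.action2} is continuous. Since $\LQ$ carries the product (pointwise-convergence) topology, it suffices that for each fixed $X\in\T$ the map $(\nu,q)\mapsto(\psi\cdot\nu(X),\psi\cdot q(X))$ is continuous, and this is clear: $\psi\cdot\nu(X)=\nu(\psi^{-1}X)-\nu(\psi^{-1}b)$ depends continuously (indeed, only on two coordinates) on $\nu$, and $\psi\cdot q(X)=\psi_*q(\psi^{-1}X)$ depends continuously on $q$ because $\psi_*:\QD_B(\psi^{-1}X)\to\QD_B(X)$ is a homeomorphism. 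Thus each $\psi$ acts by a homeomorphism of $\LQ$, restricting to a homeomorphism of the closed invariant subset $\Tvh=\overline{\mathcal V(\T)}$ (closed invariance follows from step (c) together with continuity). Since by step (c) this homeomorphism agrees on the dense subset $\mathcal V(\T)$ with the original $\mcg$-action under the identification $\mathcal V$, and a continuous extension from a dense subset is unique, this is the desired continuous extension, and the group-action axioms on $\Tvh$ follow from step (b). The main obstacle, such as it is, is simply being careful that $\LQ$ is built fibrewise over $\T$ so that $\psi$ permutes the fibres (sending the $X$-fibre to the $\psi X$-fibre) rather than acting within a fixed fibre; once that is set up correctly, everything reduces to the isometric action on $(\T,\dwp)$ and the naturality of the Bers/Schwarzian construction.
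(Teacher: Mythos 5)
Your proposal is correct and follows essentially the same route as the paper's proof: check that the formulas \eqref{eq.action}--\eqref{eq.action2} agree with $(\nu_Z,q_Z)\mapsto(\nu_{\psi Z},q_{\psi Z})$ on $\mathcal{V}(\T)$ via the $\mcg$-invariance of $\VR$ and the naturality of the Schwarzian, and then get continuity for free from the product topology on $\LQ$, since each output coordinate depends on only finitely many input coordinates. The only (harmless) caveat is that a general point of $\LQ$ need not have a Lipschitz first coordinate, so your step (a) should be read as applying to the closed subset of $C$-Lipschitz sections vanishing at $b$ --- which contains $\Tvh$, being the pointwise closure of such functions --- and that is exactly where the action is needed.
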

\begin{proof}
Although the proof of the continuity for the first coordinate ($\nu$ of $(\nu,q)$) goes similarly to Lemma \ref{lem.action},
we demonstrate here for completeness.
Let $\psi\in\mcg$, and $\nu_{Z}$ be as defined in Definition \ref{def.vhoro}.
We define $$\psi\cdot\nu_{Z}:=\nu_{\psi Z}.$$
Then 
\begin{align*}
\psi\cdot\nu_{Z} (X) = \nu_{\psi Z}(X)
&= \VR(X,\psi Z)-\VR(b,\psi Z)\\
&= \VR(\psi^{-1}X, Z) - \VR(b,Z) + \VR(b,Z) - \VR(\psi^{-1}b, Z)\\
&= \nu_{Z}(\psi^{-1}X)-\nu_{Z}(\psi^{-1}b).
\end{align*}
For $Z\in\T$, the action of $\psi$ on $q_Z$ should be
$$\psi\cdot q_Z(X) = q_{\psi Z}(X) = \psi_*(q_Z(\psi^{-1}X)).$$
Hence the continuity of the action of $\psi$ on $\T$ implies that the actions defined in (\ref{eq.action}) and (\ref{eq.action2}) are continuous.
In other words, we have that if $(\nu_{n},q_{n})\rightarrow (\nu, q)$ in $\LQ$, then $\psi\cdot (\nu_{n},q_{n})\rightarrow \psi\cdot(\nu, q)$ in $\LQ$.
\end{proof}

The integral formula in Proposition \ref{prop.int1} extends to the boundary $\pvh\T$.
\begin{thm}\label{thm.integral}
Let $(\nu,q)\in\pvh\T$ and $\sigma:[0,T]\rightarrow \T$ be a piecewise differentiable path connecting $b$ and $X$.
Then we have
$$\nu(X)=\int_0^T-\mathrm{Re}\langle q(\sigma(t)),\dot\sigma(t)\rangle dt.$$
\end{thm}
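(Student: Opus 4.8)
The plan is to extend the integral formula of Proposition~\ref{prop.int1}, which holds on $\mathcal{V}(\T)$, to the boundary $\pvh\T$ by a limiting argument, using that the second coordinate $q$ of a point $(\nu,q)\in\Tvh$ encodes precisely the differential data along the path $\sigma$. Fix $(\nu,q)\in\pvh\T$ and a piecewise differentiable path $\sigma:[0,T]\to\T$ from $b$ to $X$. By definition of the closure there is a sequence $\{Z_n\}\subset\T$ with $\mathcal{V}(Z_n)\to(\nu,q)$ in $\LQ$; since the topology on $\LQ$ is the product (pointwise) topology, this means $\nu_{Z_n}(Y)\to\nu(Y)$ for every $Y\in\T$ and $q_{Z_n}(Y)\to q(Y)$ in $\QD_B(Y)$ for every $Y\in\T$. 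Proposition~\ref{prop.int1} applied to each $Z_n$ gives
$$\nu_{Z_n}(X)=\int_0^T-\mathrm{Re}\langle q_{Z_n}(\sigma(t)),\dot\sigma(t)\rangle\,dt,$$
so it suffices to pass to the limit on both sides: the left side tends to $\nu(X)$ by pointwise convergence, and I must show the right side tends to $\int_0^T-\mathrm{Re}\langle q(\sigma(t)),\dot\sigma(t)\rangle\,dt$.

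For the convergence of the integrals the natural tool is the dominated convergence theorem, exactly as in the proof of Corollary~\ref{cor.int-formula}. First I would observe that the integrand $t\mapsto-\mathrm{Re}\langle q_{Z_n}(\sigma(t)),\dot\sigma(t)\rangle$ converges pointwise in $t$: for fixed $t$, $\sigma(t)\in\T$ is a fixed point, $q_{Z_n}(\sigma(t))\to q(\sigma(t))$ in $\QD_B(\sigma(t))$ by hypothesis, and the pairing $\langle\cdot,\dot\sigma(t)\rangle$ with the fixed Beltrami differential $\dot\sigma(t)\in L^\infty(\sigma(t))/K$ is continuous on quadratic differentials, so $\mathrm{Re}\langle q_{Z_n}(\sigma(t)),\dot\sigma(t)\rangle\to\mathrm{Re}\langle q(\sigma(t)),\dot\sigma(t)\rangle$. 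For the domination, I would use Nehari's inequality (Corollary~\ref{lem.Nehari}): every $q_{Z_n}(\sigma(t))$ lies in $\QD_B(\sigma(t))$, hence has $L^2$-norm at most $\Con$, and likewise $q(\sigma(t))\in\QD_B(\sigma(t))$; therefore $|\mathrm{Re}\langle q_{Z_n}(\sigma(t)),\dot\sigma(t)\rangle|\le \|q_{Z_n}(\sigma(t))\|_2\,\|\dot\sigma(t)\|_2 \le \Con\,\|\dot\sigma(t)\|_2$, which is an integrable function of $t$ on $[0,T]$ since $\sigma$ is piecewise differentiable (so $\|\dot\sigma(t)\|_2$ is bounded on each piece). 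Dominated convergence then yields
$$\lim_{n\to\infty}\int_0^T-\mathrm{Re}\langle q_{Z_n}(\sigma(t)),\dot\sigma(t)\rangle\,dt=\int_0^T-\mathrm{Re}\langle q(\sigma(t)),\dot\sigma(t)\rangle\,dt,$$
completing the proof.

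The main obstacle I anticipate is not any single estimate but making the pointwise-in-$t$ convergence $q_{Z_n}(\sigma(t))\to q(\sigma(t))$ genuinely rigorous as a statement about sections of the bundle $\QD(S)\to\T$: one needs that convergence in the product topology on $\LQ$ (which is convergence of the value at each fixed $X\in\T$, in the fiber $\QD_B(X)$) really does give, for the particular points $X=\sigma(t)$, convergence of the pairings against the tangent vector $\dot\sigma(t)$ to the path. This is a matter of unwinding the definition of $\LQ$ and the topology on $\QD_B(X)\cong\mathbb{C}^{3g-3}$, together with continuity of the $L^1$–$L^\infty$ (or $L^2$–$L^2$) pairing of quadratic and Beltrami differentials on a fixed Riemann surface; once that bookkeeping is in place, the dominated-convergence step is routine. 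A secondary point worth noting is that the formula is independent of the chosen path $\sigma$ (and of the approximating sequence $\{Z_n\}$), which follows because the left-hand side $\nu(X)$ does not depend on either; this can be recorded as an immediate consequence.
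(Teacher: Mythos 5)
Your argument is correct and is essentially the paper's proof: approximate $(\nu,q)$ by points $\mathcal{V}(Z_n)$, apply Proposition~\ref{prop.int1} to each $Z_n$, and pass to the limit by dominated convergence, with the uniform bound coming from Nehari's inequality (Corollary~\ref{lem.Nehari}) and the boundedness of $\dot\sigma$ on $[0,T]$. You simply spell out the pointwise convergence of the pairings and the domination more explicitly than the paper does.
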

\begin{proof}
Let $(\nu_{n},q_{n})\rightarrow(\nu,q)$ be a sequence with $(\nu_{n},q_{n})\in\mathcal{V}(\T)$.
By Proposition \ref{prop.int1}, for each $n$ we have
$$\nu_{n}(X)=\int_0^T-\mathrm{Re}\langle q_{n}(\sigma(t)),\dot\sigma(t)\rangle dt.$$
Hence by the dominated convergence theorem, we have the conclusion.
\end{proof}

Finally we remark the following.
\begin{coro}
The identity map on $\T$ does not extend to a homeomorphism from $\overline{\mathcal{WP}(\T)}$ to $\Tvh$.
\end{coro}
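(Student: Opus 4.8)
The plan is to argue by contradiction. Suppose there were a homeomorphism $h\colon\overline{\mathcal{WP}(\T)}\to\Tvh$ with $h(\mathcal{WP}(Z))=\mathcal{V}(Z)$ for every $Z\in\T$. Then $h$ restricts to a homeomorphism $\pwp\T\to\pvh\T$, and in particular two sequences in $\T$ have the same limit in $\overline{\mathcal{WP}(\T)}$ if and only if they have the same limit in $\Tvh$. I would violate this by producing two sequences converging to \emph{distinct} points of $\overline{\mathcal{WP}(\T)}$ but to the \emph{same} point of $\Tvh$. The sequences will be points $r_1(t),r_2(t)$ along two Weil--Petersson geodesic rays whose Bers limits coincide while their Weil--Petersson directions do not; this is possible precisely because of non-unique ergodicity.

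Concretely, I would fix a minimal, filling, and \emph{non-uniquely ergodic} measured lamination on $S$ with support $\lambda$ (such laminations exist on every closed surface of genus at least two), and, building on the analysis of Thurston limit sets of Weil--Petersson geodesics in \cite{BLMR}, choose two \emph{distinct} WP geodesic rays $r_1,r_2\colon[0,\infty)\to\T$ with $r_1(0)=r_2(0)=b$ whose Thurston limit sets are non-empty and consist only of projective classes of transverse measures on $\lambda$. Since $(\T,\dwp)$ is uniquely geodesic and the WP exponential map at $b$ is a diffeomorphism (Lemma \ref{lem.wp-exp}), a geodesic ray from $b$ is determined by its initial tangent, so $\dot r_1(0)\neq\dot r_2(0)$. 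On the Weil--Petersson side, $\dwp(b,r_i(t))=t\to\infty$ forces the limit into $\pwp\T$; moreover $D_{r_i(t)}(b)=f(t)\,\dot r_i(0)\to\dot r_i(0)$, and for every $X\in\T$ the CAT(0) geometry makes $[X,r_i(t)]$ converge to the unique ray from $X$ asymptotic to $r_i$, so $D_{r_i(t)}(X)$ converges; by compactness of $\overline{\mathcal{WP}(\T)}$ and the fact that a point of it is determined by its direction coordinate (Corollary \ref{cor.int-formula-WP}), $\mathcal{WP}(r_i(t))$ converges to a point $w_i\in\pwp\T$, and $w_1\neq w_2$ because their $b$-direction coordinates are $\dot r_1(0)$ and $\dot r_2(0)$.

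On the renormalized-volume side I would show $\mathcal{V}(r_1(t))$ and $\mathcal{V}(r_2(t))$ converge to the same point. Fix $X\in\T$. If a subsequence of $r_i(t)$ converges both in Thurston's boundary and in the Bers compactification based at $X$ to some $\xi$, then the Thurston limit is a transverse measure on the minimal lamination $\lambda$, so its support is $\lambda$, and Brock's Theorem \ref{thm.Brock} gives $\lambda\subseteq E(\xi)$; since $\lambda$ is filling this forces $E(\xi)=\lambda$ and precludes accidental parabolics, so $\xi$ is singly degenerate, and the ending lamination theorem (Theorem \ref{thm.elt}) shows $\xi$ depends only on $X$ and $\lambda$. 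Hence every Bers subsequential limit of $r_i(t)$ equals one and the same $\xi^X\in\partial_B^X\T$, so $q_{r_i(t)}(X)=b_X(r_i(t))\to\xi^X$ in $\QD_B(X)$ for $i=1,2$ and all $X$. Combining precompactness of the first coordinate (the equi-Lipschitz bound, Proposition \ref{prop.Lip}) with the integral formula of Theorem \ref{thm.integral} (which recovers the first coordinate of a point of $\pvh\T$ from its second), $\mathcal{V}(r_i(t))$ converges, to a point of $\pvh\T$ that is the same for $i=1$ and $i=2$ since the second coordinates agree. Applying $h$ and using its continuity yields $h(w_1)=h(w_2)$ with $w_1\neq w_2$, the desired contradiction. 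The step I expect to be the main obstacle is the input from \cite{BLMR}: obtaining two genuinely distinct WP geodesic rays whose Thurston limit behaviour stays inside a single minimal filling \emph{non}-uniquely ergodic lamination (for a uniquely ergodic one the WP ray realizing that end invariant is unique and the argument would collapse); the remaining delicate point is the reduction of all Bers subsequential limits to the single point $\xi^X$ via Theorems \ref{thm.Brock} and \ref{thm.elt}.
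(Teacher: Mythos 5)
Your argument is a genuinely different route from the paper's: the paper disposes of this corollary in one line by citing Brock's Theorem 1.8 of \cite{Brock2} (the identity does not extend between the WP visual sphere and a Bers boundary) and transferring that obstruction to $\overline{\mathcal{WP}(\T)}$ and $\Tvh$, whereas you build an explicit obstruction from a non-uniquely ergodic lamination. Your ``collapsing'' argument on the $\Tvh$ side is sound in outline: if the Thurston subsequential limits of $r_i(t)$ are all measures supported on one minimal filling lamination $\lambda$, then Theorem \ref{thm.Brock} plus minimality/fillingness of $\lambda$ and the ending lamination theorem (Theorem \ref{thm.elt}) force every Bers subsequential limit based at any $X$ to be the single singly degenerate point $\xi^X$, and Theorem \ref{thm.integral} then pins down the first coordinate from the second, so both sequences converge to the same point of $\pvh\T$. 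This is exactly the mechanism by which $\Tvh$ forgets the measure on a non-uniquely ergodic ending lamination, and it would indeed contradict injectivity of an extension $h$ once the WP limits are distinct.

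However, there is a genuine gap, and it sits at the crux you yourself flag: the existence of two \emph{distinct} WP geodesic rays $r_1,r_2$ with $r_1(0)=r_2(0)=b$ whose Thurston limit sets are both contained in the projective simplex of measures on one and the same minimal filling non-uniquely ergodic $\lambda$. This is not a statement you can simply quote from \cite{BLMR}: that paper (building on Brock--Modami-type constructions) produces particular rays with non-uniquely ergodic end invariants and analyzes their Thurston limit sets, but it does not hand you two non-asymptotic rays from a \emph{common} basepoint sharing the \emph{same} $\lambda$; without that, no pair $w_1\neq w_2$ with $h(w_1)=h(w_2)$ is ever produced and the contradiction evaporates. (Your parenthetical remark is exactly the danger: for uniquely ergodic $\lambda$ such a pair cannot exist, so the construction must exploit non-unique ergodicity in an essential, and so far unproven, way.) A secondary, fixable point: the convergence of $D_{r_i(t)}(X)$ for every $X$ does not follow from ``CAT(0) geometry'' in the incomplete space $(\T,\dwp)$ alone; the paper itself, in the analogous Theorem \ref{thm.wp-ue-thurston}, invokes \cite{BMM} (Theorem 1.2 there) for the convergence of initial tangents of the geodesics from $X$ to $r(t)$, and you should do the same (or work in the completion $\hatT$ and rule out the limit ray refracting into a stratum). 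If you want a complete proof along independent lines, the cheapest repair is the paper's own: quote Brock's Theorem 1.8 in \cite{Brock2}, whose Dehn-twist-iterate examples already exhibit sequences distinguishing the two compactifications.
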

\begin{proof}
	Brock \cite[Theorem 1.8]{Brock2}  has shown that the identity map does not extend to a homeomorphism from a WP visual sphere to a Bers boundary. Hence the same holds for $\overline{\mathcal{WP}(\T)}$ and $\Tvh$.
\end{proof}
\subsection{(In)finite points in the boundary and Weil-Petersson gradient flows}\label{sec.wp-grad}
Similarly to the case of Weil-Petersson metric, we have the following decomposition:

\begin{align*}
	\pvh^\mathrm{fin}\T &:= \{(\nu, q)\in\pvh\mathcal T(S)\mid \inf_{X\in\T}\nu(X)>-\infty\} \text{ and, }\\
	\pvh^\infty\T &:= \pvh\T\setminus\pvh^\mathrm{fin}\T.
\end{align*}

In \S\ref{sec.WP}, we have discussed (in)finite horocoordinates by looking at the WP geodesic rays determined by the directions.
When we study $\pvh\T$, one natural object corresponding to those geodesic rays is the notion of {\em Weil-Petersson(WP) gradient flows} of $-\VR$.
As we have seen in \S\ref{sec. QF and S}, %TODO
the space of quadratic differentials $\QD(S)$ can be identified with the cotangent bundle of the Teichm\"uller space $\T$.
%To each $\phi\in \QD(X)$, we may associate the harmonic Beltrami differential by $-\overline\phi/\rho_X$, where $\rho_X$ is the hyperbolic area form on $X$.
The $q$ of each $(\nu,q)\in\Tvh$ determines a section of the cotangent bundle $\QD(S)$, and hence gives a Weil-Petersson gradient flow see e.g. \cite{BBB,BBP}.
It was proved very recently by Bridgeman-Bromberg-Pallete \cite{BBP} that for $(\nu_X,q_X)\in\Tvh$ corresponding to $X\in\T$, the unique attracting point of the WP gradient flow determined by $q_X$ is $X$.
This work shows that it is natural to regard $q$
 of $(\nu,q)\in\Tvh$ as a 
 ``direction" associated to $\nu$.
As it is remarked in \cite{BBB} the WP gradient flow line exists all the time due to the completeness of the Teichm\"uller metric.
Along the flow $X_t$, the differential of $-\VR$ is equal to $||q(X_t)||_2^2$, see \cite{BBB,BBP}.

 \begin{defi}
 	Let $(\nu,q)\in\Tvh$ and $X\in\T$. 
 	Let $X_t$ denote the WP gradient flow line given by $q$ starting at $X$ i.e. $X_0=X$.
 	Then we say that $q$ gives a {\em finite flow} at $X$ if
 	$$\int_0^\infty ||q(X_t)||^2_2dt<\infty.$$
 	Conversely, we say that $q$ determines an {\em infinite flow} at $X$ if 
 	$$\int_0^\infty ||q(X_t)||^2_2dt=\infty.$$
 \end{defi}

With this definition, we have the following as an easy consequence of the works in \cite{BBB,BBP}.

\begin{prop}[c.f. \cite{BBB,BBP}]
	Let $X\in\T$ and $(\nu_X,q_X)\in\Tvh$ associated horocoordinate.
	Then for every $Y\in\T$, $q_X$ determines a finite flow at $Y$.
\end{prop}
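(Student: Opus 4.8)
The plan is to identify the flow line determined by $q_X$ with the Weil--Petersson gradient flow of $-\VR(\cdot,X)$, and then to bound the energy integral $\int_0^\infty\|q_X(X_t)\|_2^2\,dt$ from above by $\VR(Y,X)$ using the non-negativity of the renormalized volume (Remark \ref{rmk.positive}).

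First I would spell out that, by the variation formula of Theorem \ref{thm.dRvol}, the differential of $\VR(\cdot,X)$ at a point $Z\in\T$ is $\dot\sigma\mapsto-\mathrm{Re}\langle q_X(Z),\dot\sigma\rangle$; hence, under the Weil--Petersson identification of $\QD(Z)$ with $T_Z\T$, the section $q_X\colon\T\to\QD(S)$ is (up to sign) the Weil--Petersson gradient of $-\VR(\cdot,X)$, and the flow line $X_t$ given by $q_X$ with $X_0=Y$ is precisely the Weil--Petersson gradient flow of $-\VR(\cdot,X)$ starting at $Y$. I would then recall two inputs already quoted in the text: (i) this flow is defined for all $t\geq 0$, because the Teichm\"uller metric is complete (\cite{BBB}); and (ii) along it one has $\frac{d}{dt}\VR(X_t,X)=-\|q_X(X_t)\|_2^2$ (\cite{BBB,BBP}), so that $t\mapsto\VR(X_t,X)$ is non-increasing.

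Next I would invoke Remark \ref{rmk.positive}, that $\VR\geq 0$ on $\TT$: the function $t\mapsto\VR(X_t,X)$ is non-increasing and bounded below by $0$, hence converges to some $L\geq 0$, and the fundamental theorem of calculus gives
$$\int_0^\infty\|q_X(X_t)\|_2^2\,dt=-\int_0^\infty\frac{d}{dt}\VR(X_t,X)\,dt=\VR(Y,X)-L\leq\VR(Y,X)<\infty,$$
so $q_X$ determines a finite flow at $Y$. The computation itself is routine bookkeeping of the gradient-flow identity; the only points needing genuine care are that the estimate rests entirely on the non-negativity of $\VR$ (Remark \ref{rmk.positive}, from \cite{BBB,BBP}) — a lower bound on $\lim_{t\to\infty}\VR(X_t,X)$ is exactly what makes the improper integral controllable — and on the forward-completeness of the Weil--Petersson gradient flow, which must be justified via completeness of the \emph{Teichm\"uller} metric rather than of the incomplete Weil--Petersson metric. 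I expect this last point, properly cited, to be the main thing worth making explicit.
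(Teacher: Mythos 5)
Your argument is correct, but it is not the route the paper takes. The paper's proof is a two-line appeal to the recent theorem of Bridgeman--Bromberg--Pallete \cite{BBP} that the Weil--Petersson gradient flow line determined by $q_X$ has $X$ as its (global attracting) endpoint; granting that, the energy integral is exactly the total change of $\VR(\cdot,X)$ along the flow, namely $\VR(Y,X)-\VR(X,X)=\VR(Y,X)$, hence finite. You instead avoid the attractor theorem altogether: you use only the variation formula identifying $q_X$ with the Weil--Petersson gradient of $-\VR(\cdot,X)$, the forward-completeness of the flow (correctly attributed to \cite{BBB} via completeness of the Teichm\"uller metric, not the WP metric), the identity $\frac{d}{dt}\VR(X_t,X)=-\lVert q_X(X_t)\rVert_2^2$, and the non-negativity of $\VR$ from \cite{BBB,BBP}, getting $\int_0^\infty\lVert q_X(X_t)\rVert_2^2\,dt=\VR(Y,X)-L\leq \VR(Y,X)$ with $L=\lim_{t\to\infty}\VR(X_t,X)\geq 0$. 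The trade-off: your monotonicity argument is more elementary and more robust (it would apply to any section along which $\VR(\cdot,X)$ decreases at rate $\lVert q\rVert_2^2$ and stays bounded below), but it only yields an upper bound $\VR(Y,X)$ for the energy; the paper's use of \cite{BBP} pins down the exact value (the displayed formula in the paper has an evident sign slip, as the integral equals $+\VR(X,Y)$), since the attractor result forces $L=\VR(X,X)=0$. Both proofs are complete and correct given the cited inputs.
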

\begin{proof}
	For every $Y$, the flow line determined by $q_X(Y)$ terminates at $X$ by \cite{BBP}.
	In this case we have $\int_0^\infty ||q(X_t)||^2_2dt=-\VR(X,Y)$,
	and hence it is finite.
\end{proof}

\begin{thm}\label{thm.vh-finite}

\begin{enumerate}
	\item For each $Y_\infty\in\hatT\setminus\T$, 
	we may define $\VR(\cdot, Y_\infty):\T\rightarrow\R$ and 
	$\nu_\infty(X):=\VR(X,Y_\infty)-\VR(b,Y_\infty)$ is in $\pvh^\mathrm{fin}\T$.
	\item Let $(\nu,q)\in\Tvh$. Suppose there exists $X\in\T$ such that $q$ gives an infinite flow line at $X$. Then we have $(\nu,q)\in\pvh^\infty\T$.
\end{enumerate}
\end{thm}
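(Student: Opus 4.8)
The plan is to handle the two statements separately, with statement (1) following the template of Theorem \ref{thm.wp-finite} and statement (2) mirroring the argument used there for infinite rays. For statement (1): I would fix a Cauchy sequence $\{Y_n\}\subset\T$ with $Y_n\to Y_\infty\in\hatT\setminus\T$ and first argue that $\VR(\cdot,Y_n)$ converges pointwise (indeed uniformly on compact sets) to a function which we name $\VR(\cdot,Y_\infty)$. The key input is the Lipschitz estimate in Proposition \ref{prop.Lip} applied in the \emph{second} variable via the symmetrization bound already used in the proof of Proposition \ref{prop.horo}: for any fixed $X$, $|\VR(X,Y_n)-\VR(X,Y_m)|\leq 2\Con\,\dwp(Y_n,Y_m)$, so $\{\VR(X,Y_n)\}$ is Cauchy in $\R$. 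Setting $\nu_\infty(X):=\VR(X,Y_\infty)-\VR(b,Y_\infty)$, we get $\nu_{Y_n}\to\nu_\infty$; by Proposition \ref{prop.Lip} each $\nu_{Y_n}$ is $\Con$-Lipschitz, hence so is $\nu_\infty$, so $\nu_\infty\in\Lip\T$. Passing to a subsequence so that the Bers embeddings $q_{Y_n}(\cdot)$ also converge in $\LQ$ (possible by compactness of $\LQ$, Proposition \ref{prop.LQ}), the limit $(\nu_\infty,q)$ lies in $\Tvh$. Finiteness of $\inf_X\nu_\infty(X)$ then follows from Remark \ref{rmk.positive}: $\VR(X,Y_n)\geq 0$ for all $n$, hence $\VR(X,Y_\infty)\geq 0$, so $\nu_\infty(X)\geq -\VR(b,Y_\infty)>-\infty$; thus $(\nu_\infty,q)\in\pvh^\mathrm{fin}\T$.

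For statement (2): suppose $(\nu,q)\in\Tvh$ and $q$ gives an infinite flow line at some $X\in\T$, i.e. for $X_0=X$ and $X_t$ the WP gradient flow line of $q$ we have $\int_0^\infty\|q(X_t)\|_2^2\,dt=\infty$. Let $\{Y_n\}\subset\T$ with $\mathcal{V}(Y_n)\to(\nu,q)$. I would argue as in the proof of Theorem \ref{thm.wp-finite}: since $q_{Y_n}(X)\to q(X)$ in $\QD_B(X)$, and along the flow line of $q_{Y_n}$ (which terminates at $Y_n$) the derivative of $-\VR(\cdot,Y_n)$ equals $\|q_{Y_n}(\cdot)\|_2^2$, the flow lines of $q_{Y_n}$ starting at $X$ converge (on compact time intervals) to the flow line $X_t$ of $q$, because the WP gradient flow depends continuously on the section and the exponential/flow maps are well-behaved. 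Fix any $T>0$ and let $X_T^{(n)}$ denote the time-$T$ point of the $q_{Y_n}$-flow from $X$; then $X_T^{(n)}\to X_T$. Along the $q_{Y_n}$-flow from $X$ to $X_T^{(n)}$, monotonicity gives
\begin{align*}
\nu_{Y_n}(X_T^{(n)})-\nu_{Y_n}(X)
&=\VR(X_T^{(n)},Y_n)-\VR(X,Y_n)\\
&=-\int_0^T\|q_{Y_n}(X_t^{(n)})\|_2^2\,dt.
\end{align*}
Taking $n\to\infty$ and using $\nu_{Y_n}\to\nu$ pointwise together with the convergence $q_{Y_n}(X_t^{(n)})\to q(X_t)$, the right side converges to $-\int_0^T\|q(X_t)\|_2^2\,dt$, so $\nu(X_T)=\nu(X)-\int_0^T\|q(X_t)\|_2^2\,dt$. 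Letting $T\to\infty$ and invoking the hypothesis $\int_0^\infty\|q(X_t)\|_2^2\,dt=\infty$ forces $\nu(X_T)\to-\infty$, hence $\inf_{Z\in\T}\nu(Z)=-\infty$ and $(\nu,q)\in\pvh^\infty\T$.

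The main obstacle I expect is the interchange of limits in statement (2): justifying that the WP gradient flow lines of $q_{Y_n}$ converge to the flow line of $q$ uniformly on compact time intervals, and that one may pass $n\to\infty$ inside the time integral. For the former, I would combine the completeness of the Teichm\"uller metric (so flow lines exist for all time, as remarked in \cite{BBB}) with continuous dependence of solutions of ODEs on the vector field, noting that $q_{Y_n}\to q$ in the product topology of $\LQ$ gives pointwise convergence of the sections and the Nehari bound (Corollary \ref{lem.Nehari}) gives a uniform $L^2$ bound $\|q_{Y_n}(\cdot)\|_2\leq\Con$, hence equi-boundedness of the flow speeds; for the integral interchange, this uniform bound also supplies the dominating function for the dominated convergence theorem on $[0,T]$. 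A secondary point needing care is that the flow line of the limit section $q$ starting at $X$ is indeed the limit of the flow lines of $q_{Y_n}$ starting at $X$ rather than some other solution — this is exactly where continuous dependence (together with uniqueness of flow lines for the smooth section $q$, which holds since $q$ is a continuous, in fact smooth away from the boundary, section) is used.
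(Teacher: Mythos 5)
Your argument for part (1) is essentially the paper's: the symmetrized Lipschitz bound $|\nu_{Y_n}(X)-\nu_{Y_m}(X)|\leq 2\Con\,\dwp(Y_n,Y_m)$ from the proof of Proposition \ref{prop.horo} gives convergence of $\VR(\cdot,Y_n)$ and of $\nu_{Y_n}$, and positivity of $\VR$ (Remark \ref{rmk.positive}) gives $\inf_X\nu_\infty(X)\geq-\VR(b,Y_\infty)>-\infty$. One small omission: membership in $\pvh^{\mathrm{fin}}\T$ requires the limit to lie in the \emph{boundary} $\pvh\T$, not merely in $\Tvh$; this follows because $\mathcal{V}$ is a homeomorphism onto its image, so if the limit were $\mathcal{V}(Z)$ for some $Z\in\T$ then $Y_n\to Z$ in $\T$, contradicting $Y_\infty\in\hatT\setminus\T$. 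Add that sentence and (1) is complete.

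For part (2) there is a genuine gap, and it sits exactly where you flagged it. Your argument needs $X_T^{(n)}\to X_T$ and $q_{Y_n}(X_t^{(n)})\to q(X_t)$, i.e.\ continuous dependence of the WP gradient flow on the section as $q_{Y_n}\to q$. But convergence in $\LQ$ is only convergence in the product topology, i.e.\ pointwise in the base variable $X$; pointwise convergence of vector fields, even with the uniform Nehari bound $\|q_{Y_n}(\cdot)\|_2\leq\Con$, does not give convergence of the associated flows, and nothing established in the paper shows that the limiting section $q$ of a boundary point is continuous (let alone smooth or Lipschitz) in the base point, so the ``uniqueness and continuous dependence for the smooth section $q$'' you invoke is unsubstantiated. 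The dominating function only handles the integrand size, not the fact that it is evaluated along the moving curves $X_t^{(n)}$. The paper avoids this entirely: Theorem \ref{thm.integral} (already proved, by dominated convergence along a \emph{fixed} path) is applied to the fixed piecewise differentiable path obtained by concatenating a path from $b$ to $X=X_0$ with the flow line $X_t$ of $q$ itself, whose existence is part of the hypothesis. Along that flow line $-\mathrm{Re}\langle q(X_t),\dot X_t\rangle=-\|q(X_t)\|_2^2$, so
\begin{equation*}
\nu(X_T)=\nu(X_0)-\int_0^T\|q(X_t)\|_2^2\,dt\longrightarrow-\infty
\end{equation*}
as $T\to\infty$, giving $(\nu,q)\in\pvh^{\infty}\T$ with no limit interchange along varying paths. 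Replacing your approximation-by-$q_{Y_n}$-flows argument with this direct application of Theorem \ref{thm.integral} closes the gap.
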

\begin{proof}
Let $\{Y_n\}$ be a Cauchy sequence with respect to the WP metric with limit $Y_\infty\in\hatT$.
As we proved in the proof of  Proposition \ref{prop.horo}, we have for any $X\in\T$
$$|\nu_{Y_n}(X) - \nu_{Y_m}(X)|\leq 2\cdot\Con\cdot\dwp(Y_n, Y_m).$$
Therefore $\nu_{Y_n}$ converges to some volume horofunction $\nu_\infty$ which depends only on the limit $Y_\infty\in\hatT$ of $\{Y_n\}$.
Since $\mathcal{V}$ is a homeomorphism onto its image, $\nu_\infty\in\pvh\T$.
As $$|\VR(X, Y_n) - \VR(X, Y_m)|\leq \Con\cdot\dwp(Y_n, Y_m)$$
for every $X\in\T$, we may define $\VR(X,Y_\infty)$.
Then we have $\nu_\infty(X) = \VR(X,Y_\infty)-\VR(b,Y_\infty)$, which  implies $\nu_\infty\in\pvh^\mathrm{fin}\T$ since $\VR(b,Y_\infty)$ is finite.
%We may also extend $\VR(\cdot, Y_\infty)$ to $\hatT$ (by the same argument that distances extend to their metric completions). 

%The Lipschitz property and the fact that $V_R(X,Y)=0\iff X=Y$ show that 
%for $Y_\infty\not=Y'_\infty\in\hatT\setminus\T$, $\VR(Y_\infty,Y'_\infty)>0$.
%Hence we see that $\iota:\hatT\setminus\T\rightarrow\pvh^\mathrm{fin}\T$ is injective.

For (2), by definition we have $\int_0^\infty||q(X_t)||_2^2 dt = \infty$ 
along the WP geodesic flow line $X_t$ starting at $X$ determined by $q$.
Then by Theorem \ref{thm.integral}, considering a path first connecting $b$ and $X=X_0$ and then following the WP flow line $X_t$, we have
$$\nu(X_T) = \nu(X_0)-\int^T_0||q(X_t)||^2_2dt.$$
Hence we have $\nu(X_T)\rightarrow -\infty$ as $T\rightarrow\infty$,
which implies $(\nu,q)\in\pvh^\infty\T$.

\end{proof}

\begin{rmk}
By the work of Kerckhoff-Thurston, in general, the limit point of a sequence $\{Y_{n}\}\subset\T$ in $\partial_{B}^{X}\T$ may depend on $X$.
This phenomena happen when for example $\{Y_{n}\}$ is obtained by iterations of Dehn twists.
It is interesting to characterize $\nu$ in $(\nu,q)\in\pvh^\mathrm{fin}\T$ for general case precisely.
\end{rmk}

%%%%%%%%%%%%%%%%%%%%%%%%%%%%%%%%%%%%%%%%%%%%%%%%%%%%%%%%%%%%%%%%%%%%%%%%%%%%%%%%%%%%%%%%%%%%%%%%%%%%%%%%%%%%%%%%%%%%%%%%%%%%%%%%%%%%%%%%%%%%%%%%%%%%%%%%%%%%%%%%%%%%%%%%%%%
\subsection{Volume of mapping tori and volume horofunctions}
A mapping class $\psi\in\mcg$ is called {\em pseudo-Anosov}
if $\psi$ has exactly two fixed points $F_{+}(\psi), F_{-}(\psi)\in\pmf$ which we may characterize as $\lim_{n\rightarrow\infty}\psi^n(X) = F_+(\psi)$ and
$\lim_{n\rightarrow-\infty}\psi^n(X) = F_-(\psi)$
 for any $X\in\T$ in the Thurston compactification.
Thurston has shown that the mapping torus 
$$M(\psi):= S\times[0,1]/((\psi(x),0)\sim (x,1))$$
admits a complete hyperbolic metric of finite volume.
Let $\Vol(M(\psi))$ denote the hyperbolic volume of $M(\psi)$.
First, we recall the following.
\begin{prop}\label{prop.KoMA}
Let $\psi\in\mcg$ be pseudo-Anosov.
Then $$\lim_{n\rightarrow\infty}\frac{1}{n}\VR(b,\psi^{-n}b) = \Vol(M(\psi)).$$
\end{prop}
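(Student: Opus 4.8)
The plan is to relate the asymptotics of $\VR(b,\psi^{-n}b)$ to the hyperbolic volume of $M(\psi)$ by sandwiching it between two known quantities: the renormalized volume of the quasi-Fuchsian manifold $M(b,\psi^{-n}b)$ on one side, and the volume of a suitable convex core (or of the doubled/filled manifold obtained by Dehn-filling the mapping torus of $\psi^n$) on the other. The key external inputs are: (i) the fact that $\VR(X,Y)$ differs from the volume of the convex core $\Vol(C(M(X,Y)))$ by a bounded amount (indeed, by work summarized in the references \cite{Schlenker, Kojima-McShane}, one has $\VR(M)\leq \Vol(C(M))$ and a lower bound of the form $\Vol(C(M))\leq \VR(M)+ c\cdot(\text{length of the bending lamination})$ or the cleaner estimate that the difference is controlled); and (ii) Thurston's hyperbolic Dehn surgery theorem, which gives $\Vol(M(\psi^n)_{1/n\text{-filling or analogous}}) \to \Vol(M(\psi))$ — more precisely, the quasi-Fuchsian manifold $M(b,\psi^{-n}b)$ is commensurable-up-to-bounded-error with a cyclic cover or a Dehn filling of $M(\psi)$, and McMullen's / Brock's results on volumes of mapping tori give $\frac{1}{n}\Vol(C(M(b,\psi^{-n}b)))\to \Vol(M(\psi))$.

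Concretely, first I would recall (from Brock's work relating convex core volume to Weil–Petersson distance, and from the standard picture of quasi-Fuchsian manifolds converging geometrically) that $M(b,\psi^{-n}b)$, after quotienting by the $\mathbb{Z}$-action generated by $\psi^n$ in the appropriate sense, is close to $M(\psi^n) = M(\psi)$-covered-$n$-times; equivalently, the convex core of $M(b,\psi^{-n}b)$ has volume $n\cdot\Vol(M(\psi)) + o(n)$. This is essentially the content of the inequality $\tfrac{1}{n}\Vol_{\mathrm{core}}(M(b,\psi^{-n}b))\to \Vol(M(\psi))$, which follows from Thurston's geometrization of mapping tori together with geometric convergence of the quasi-Fuchsian groups $\qf(b,\psi^{-n}b)$ (after re-centering) to the fiber subgroup of $M(\psi)$ — the double-limit / hyperbolization picture. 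Second, I would invoke the two-sided comparison between $\VR$ and core volume: $\Vol_{\mathrm{core}}(M(X,Y)) - c \le \VR(X,Y)\le \Vol_{\mathrm{core}}(M(X,Y))$ for a universal constant $c$ depending only on the genus (or, using the renormalized-volume literature, the sharper statement that $|\VR - \Vol_{\mathrm{core}}|$ is bounded by a constant times the length of the bending measured lamination, which is itself uniformly bounded for quasi-Fuchsian manifolds). Dividing by $n$ and letting $n\to\infty$, the constant $c$ disappears and both sides converge to $\Vol(M(\psi))$, giving the claim.

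The main obstacle I anticipate is making the first step — the geometric convergence statement $\tfrac{1}{n}\Vol_{\mathrm{core}}(M(b,\psi^{-n}b))\to\Vol(M(\psi))$ — rigorous and properly attributed, since it is where all the 3-dimensional hyperbolic geometry is concentrated. One clean route is to use the known result (Brock, on the Weil–Petersson translation length of pseudo-Anosov maps) that $\tfrac{1}{n}\dwp(b,\psi^n b)\to \ell_{\mathrm{wp}}(\psi)$ together with the bilipschitz-type comparison between $\Vol_{\mathrm{core}}(M(X,Y))$ and $\dwp(X,Y)$ — but that only gives coarse two-sided bounds, not the exact limit, so instead one really does need the geometric-limit argument: re-center $\qf(b,\psi^{-n}b)$ so that a fundamental domain for $\psi^n$ stays in a fixed region, extract a geometric limit which must be the infinite cyclic cover of $M(\psi)$ (by the ending lamination theorem, Theorem \ref{thm.elt}, applied to the doubly-degenerate limit, whose end invariants are the stable/unstable laminations of $\psi$), and then conclude by continuity of volume of convex cores under geometric convergence. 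The bookkeeping to turn "geometric limit is the $\mathbb{Z}$-cover of $M(\psi)$" into the precise estimate $\Vol_{\mathrm{core}}(M(b,\psi^{-n}b)) = n\,\Vol(M(\psi)) + o(n)$ — controlling the contribution of the two "caps" near $b$ and $\psi^{-n}b$, which is $O(1)$ — is the technically delicate part, but it is standard in the Dehn-surgery / mapping-torus literature, so I would cite it rather than reprove it.
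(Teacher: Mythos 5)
Your proposal is correct and follows essentially the same route as the paper: the paper's proof simply cites Brock--Bromberg and the appendix of Kojima--McShane for the convex-core version $\frac1n\Vol_{\mathrm{core}}(M(b,\psi^{-n}b))\to\Vol(M(\psi))$, and Bridgeman--Canary or Schlenker for the fact that $\VR$ and the convex core volume differ by a bounded amount, so the error vanishes after dividing by $n$. The extra geometric-convergence discussion you sketch is the content of the cited convex-core result, which both you and the paper ultimately invoke rather than reprove.
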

\begin{proof}
In \cite{BB} and {\cite[Appendix]{Kojima-McShane}}, the version of convex core volume is proven.
Then in \cite{BC} or \cite{Schlenker-MRL}, it is proved that the renormalized volume and the convex core volume differ at most finite amount.
\end{proof}

Let us recall the work of Ohshika \cite{Ohshika} on so-called the reduced Bers boundary.
As we have recalled in \S \ref{sec.Bers-end}, associated to each point in the Bers boundary $\partial_B^X\T$, we have the end invariant which is a union of geodesic laminations, parabolic loci, and conformal structures on subsurfaces of $S$.
By collapsing each deformation spaces of conformal structures on subsurfaces, Ohshika defined the {\em reduced Bers boundary} denoted by $\partial_{RB}\T$.
Let us denote $$O_X:\partial_B^X\T\rightarrow \partial_{RB}\T$$ the projection map. The space $\partial_{RB}\T$ is equipped with the quotient topology by $O_X$.
As the notation suggests, Ohshika showed that the reduced Bers boundary $\partial_{RB}\T$ is independent of the base point $X$.
As all the information remained are geodesic laminations and parabolic loci (which are multicurves), we may regard $\partial_{RB}\T\subset \mathcal{GL}(S)$, where $\mathcal{GL}(S)$ is the space of geodesic laminations on $S$.
Note that Ohshika showed that the topology of $\partial_{RB}\T$ is not compatible with the inclusion $\partial_{RB}\T\subset\mathcal{GL}(S)$.
We refer the original paper by Ohshika \cite{Ohshika} for the precise definition and properties.
We have the following natural projection from $\pvh\T$ to $\partial_{RB}\T$:
\begin{lem}\label{lem.RB-ue}
	There is a natural continuous $\mcg$-equivariant map 
	$$\mathcal{O}:\pvh\T\rightarrow\partial_{RB}\T$$	
	defined by $\mathcal{O}(\nu,q) = O_X(q(X))$ for some (in fact, any) $X$.
	Moreover, $\mathcal O$ is bijective on $\mathcal{O}^{-1}(\mathcal{UE}(S))$ where $\mathcal{UE}(S)$ is the space of uniquely ergodic geodesic laminations.
	Furthermore, if $\psi\in\mcg$ is pseudo-Anosov, then provided $\mathcal{O}(\nu,q)\not=F_-(\psi)$, 
	$\psi^n(\nu,q)$ converges to $\mathcal{O}^{-1}(F_+(\psi))$ as $n\rightarrow\infty$.
\end{lem}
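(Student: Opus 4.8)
The plan is to dispatch the four parts of the lemma in order, the real content being the last one. Write any boundary point as $(\nu,q)=\lim_m\mathcal V(Z_m)$ with $Z_m\in\T$; since $\mathcal V$ is a homeomorphism onto its image (Proposition~\ref{prop.Nu}), the $Z_m$ leave every compact subset of $\T$, and for each $X$ the second coordinates converge, $q_{Z_m}(X)=b_X(Z_m)\to q(X)$, where $b_X\colon\T\to\QD_B(X)$ is the Bers embedding at $X$. As $b_X(\T)$ is open in $\QD_B(X)$ and $Z_m\to\infty$, the limit $q(X)$ lies in the Bers boundary $\partial_B^X\T$, so $O_X(q(X))\in\partial_{RB}\T$ makes sense. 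Since $\{Z_m\}$ then converges in \emph{every} Bers compactification simultaneously, the basepoint independence of the reduced Bers boundary (Ohshika \cite{Ohshika}) gives $O_X(q(X))=O_{X'}(q(X'))$ for all $X,X'$, so $\mathcal O$ is well defined. Continuity follows from the quotient topology on $\partial_{RB}\T$: if $(\nu_n,q_n)\to(\nu,q)$ in $\pvh\T$ then $q_n(b)\to q(b)$ in $\partial_B^b\T$, whence $O_b(q_n(b))\to O_b(q(b))$. Equivariance is the computation $\mathcal O(\psi\cdot(\nu,q))=O_X\bigl(\psi_*q(\psi^{-1}X)\bigr)=\psi\cdot O_{\psi^{-1}X}\bigl(q(\psi^{-1}X)\bigr)=\psi\cdot\mathcal O(\nu,q)$, using Proposition~\ref{prop.bh-mcg-action} and that a change of marking carries end invariants forward.

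For bijectivity over $\mathcal{UE}(S)$: if $\lambda\in\mathcal{UE}(S)$ and $\xi\in\partial_B^X\T$ satisfies $O_X(\xi)=\lambda$, then, $\lambda$ being minimal and filling, $\xi$ has no accidental parabolics and no degenerate proper subsurface, so $\xi$ is singly degenerate with geometrically finite end $X$ and ending lamination $\lambda$; by the ending lamination theorem (Theorem~\ref{thm.elt}) such a $\xi$ is unique. Hence $\mathcal O(\nu,q)=\mathcal O(\nu',q')=\lambda$ forces $q(X)=q'(X)$ for all $X$, so $q=q'$, and then $\nu=\nu'$ by the integral formula of Theorem~\ref{thm.integral}; thus $\mathcal O$ is injective on $\mathcal O^{-1}(\mathcal{UE}(S))$. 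For surjectivity onto $\mathcal{UE}(S)$, pick $Z_m\to\lambda$ in Thurston's compactification: by Corollary~\ref{coro.elt} the Bers limits $q(X):=\lim_mb_X(Z_m)$ exist with ending lamination $\lambda$, and using Theorem~\ref{thm.integral} together with compactness of $\Tvh$ one sees $\mathcal V(Z_m)\to(\nu,q)\in\pvh\T$ with $\mathcal O(\nu,q)=\lambda$. In particular, $F_+(\psi)$ and $F_-(\psi)$ being uniquely ergodic, $\mathcal O^{-1}(F_+(\psi))$ and $\mathcal O^{-1}(F_-(\psi))$ are well-defined single points.

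Now fix a pseudo-Anosov $\psi$ and $(\nu,q)\in\pvh\T$ with $\mu:=\mathcal O(\nu,q)\neq F_-(\psi)$, and put $(\nu_+,q_+):=\mathcal O^{-1}(F_+(\psi))$. Writing $(\nu,q)=\lim_m\mathcal V(Z_m)$ and combining Proposition~\ref{prop.bh-mcg-action} with the identity $(\psi^n)_*q_Z(\psi^{-n}X)=q_{\psi^nZ}(X)$, one gets, for each $X$, $(\psi^n\cdot q)(X)=\lim_m b_X(\psi^n Z_m)=:\zeta^X_n\in\partial_B^X\T$. The accumulation set $L$ of $\{Z_m\}$ in Thurston's compactification is a compact subset of $\pmf$; were $F_-(\psi)\in L$, some subsequence $Z_{m_k}\to F_-(\psi)$ and then, $F_-(\psi)$ being uniquely ergodic, Corollary~\ref{coro.elt} would give $\mathcal O(\nu,q)=O_b(q(b))=F_-(\psi)$, a contradiction; so $F_-(\psi)\notin L$. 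By the classical north--south dynamics of $\psi$ on $\pmf$, $\psi^nL$ converges to $\{F_+(\psi)\}$, so a diagonal choice of $m_n\to\infty$ produces $W_n:=\psi^n Z_{m_n}$ with $W_n\to F_+(\psi)$ in Thurston's compactification and $\|b_X(W_n)-\zeta^X_n\|\to 0$. Since $\partial_B^X\T$ is compact, any subsequential limit of $\{\zeta^X_n\}$ equals $\lim_j b_X(W_{n_j})$ for a suitable $W_{n_j}\to F_+(\psi)$, hence, by Corollary~\ref{coro.elt}, is the singly degenerate point with ending lamination $F_+(\psi)$, namely $q_+(X)$; thus $\zeta^X_n\to q_+(X)$ for every $X$. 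Therefore $\psi^n\cdot q\to q_+$ in $\LQ$, and by Theorem~\ref{thm.integral} with dominated convergence $\psi^n\cdot\nu\to\nu_+$, so $\psi^n(\nu,q)\to(\nu_+,q_+)$.

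The main obstacle is this last paragraph: one must transport the good dynamics of $\psi$ from the Thurston boundary (where it is classical) to the Bers compactifications, and the diagonal argument feeding into Corollary~\ref{coro.elt} and the ending lamination theorem is exactly what effects this transfer. A secondary, nontrivial ingredient is Ohshika's basepoint independence of $\partial_{RB}\T$, on which the well-definedness of $\mathcal O$ rests; granting it, continuity and equivariance are routine.
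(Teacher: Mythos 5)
Your proof is correct and follows essentially the same route as the paper: Ohshika's basepoint-independence and the quotient topology give the well-defined, continuous, equivariant map, the ending lamination theorem handles the uniquely ergodic points, and the final part is the same compactness-plus-diagonal argument that transfers the north--south dynamics on the Thurston compactification to $\Tvh$ via Brock's theorem (Corollary \ref{coro.elt}). The only differences are that you make explicit a few steps the paper leaves implicit --- exactly where the hypothesis $\mathcal{O}(\nu,q)\neq F_-(\psi)$ is used to keep the approximating sequence away from $F_-(\psi)$, the surjectivity of $\mathcal{O}$ onto $\mathcal{UE}(S)$, and the recovery of $\nu$ from $q$ via Theorem \ref{thm.integral} --- which amplifies rather than changes the method.
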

Assuming Lemma \ref{lem.RB-ue}, let us define uniquely ergodic points.
\begin{defi}	
	A point $(\nu,q)\in\partial_\mathrm{vh}\T$	is said to be {\em uniquely ergodic} if the image $\mathcal{O}(\nu,q)$ is a uniquely ergodic geodesic lamination.
\end{defi}

\begin{proof}[Proof of Lemma \ref{lem.RB-ue}]
%	Let $\{Y_n\}\subset\T$ be a sequence converging to some $(\nu,q)\in\Tvh$.
%	By definition of $\Tvh$, we see that $q_{Y_n}(X)$ converges to $q(X)$ for any $X\in\T$.
	Let $(\nu,q)\in\pvh\T$. 
	Note that for each $X\in\T$, $q(X)\in\partial_B^X\T$.
	One key ingredient of the construction of $\partial_{RB}\T$ by Ohshika is that the image $O_X(q(X))$ is independent of $X$ \cite[Theorem 3.7]{Ohshika}.
	Hence $\mathcal{O}$ is well-defined and independent of the choice of $X$.
	Since $\partial_{RB}\T$ has the quotient topology by $O_X$, $\mathcal{O}$ is continuous by definition.
	Ohshika also showed that the action of $\mcg$ on $\T$ extends continuously to $\partial_{RB}\T$ \cite[Corollary 3.8]{Ohshika}.
	Since the same holds for $\mcg$ action to $\pvh\T$, 
	%the action of $\mcg$ both on $\partial_{RB}\T$ and $\pvh\T$ is the change of markings.
	$\mathcal{O}$ is $\mcg$-equivariant.
	
	Since the uniquely ergodic points in $\pvh\T$ are uniquely determined by their supporting geodesic lamination, Theorem \ref{thm.elt} implies that $\mathcal{O}$ is bijective on the space of uniquely ergodic points.
	
%	Note that $\psi^n(\mathcal{O}(\nu,q)) = \mathcal{O}(\psi^n(\nu,q))\rightarrow F_+(\psi)$ as $n\rightarrow\infty$ provided $\mathcal{O}(\nu,q)\not=F_-(\psi)$.
	Now, let $(\nu,q)\in\Tvh$ and suppose that $\mathcal O(\nu,q)\not=F_-(\psi)$.
	Consider a sequence $\{Y_n\}\subset\T$ with $Y_n\rightarrow(\nu,q)$ in $\Tvh$.
	Since $\Tvh$ is compact and metrizable (Proposition \ref{prop.LQ}), $\{\psi^n(\nu,q)\}$ converges as $n\rightarrow\infty$ to some $(\nu_\infty,q_\infty)\in\Tvh$ after taking subsequence if necessary.
	Then by the diagonal argument, we may find a sequence $\psi^N Y_{k_N}$ which converges to $(\nu_\infty,q_\infty)$.
	As $F_+(\psi)$ is the unique attracting point of $\psi$ in the Thurston compactification, we may suppose that $\psi^NY_{k_N}$ also converges to $F_+(\psi)$ in $\T\cup\partial_{\rm Th}\T$ after taking a subsequence.
	Then by the work of Brock (Theorem \ref{thm.Brock}) and Ohshika \cite{Ohshika}, we have that $\mathcal O(\nu_\infty,q_\infty) = F_+(\psi)$ since $F_+(\psi)$ is uniquely ergodic (see also Lemma \ref{lem.ue-inter}).
	Hence we have $(\nu_\infty,q_\infty) = \mathcal O^{-1}F_+(\psi)$ for any limit point $(\nu_\infty,q_\infty)$ of $\psi^n(\nu,q)$.
	Therefore $\psi^n(\nu,q)$ itself converges to $\mathcal O^{-1}(F_+(\psi))$.
\end{proof}

\begin{thm}\label{thm.lim-nu}
Let $\psi\in\mcg$ be pseudo-Anosov and $(\nu,q)\not=\mathcal{O}^{-1}(F_-(\psi))\in\Tvh$.
Let $(\nu_+,q_+):=\mathcal{O}^{-1}(F_+(\psi))$.
Then $$\lim_{n\rightarrow\infty}\frac{1}{n}\nu(\psi^{-n}b) = \nu_{+}(\psi^{-1}b) = \Vol(M(\psi)).$$
\end{thm}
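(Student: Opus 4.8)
The plan is to translate the statement into one about the $\mcg$-action on $\Tvh$, extract the limit by a Stolz--Ces\`aro argument, and then identify its value by specializing to the base point and invoking Proposition~\ref{prop.KoMA}.

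First I would iterate the action formula (\ref{eq.action}). An easy induction gives, for every $X\in\T$ and every $n\geq 1$,
\[
(\psi^{n}\cdot\nu)(X)=\nu(\psi^{-n}X)-\nu(\psi^{-n}b).
\]
Evaluating at $X=\psi^{-1}b$ yields
\[
(\psi^{n}\cdot\nu)(\psi^{-1}b)=\nu(\psi^{-(n+1)}b)-\nu(\psi^{-n}b).
\]
Since $(\nu,q)\neq\mathcal{O}^{-1}(F_{-}(\psi))$, Lemma~\ref{lem.RB-ue} (for a boundary point, or the north--south dynamics argument in its proof if $(\nu,q)$ is an interior point) gives $\psi^{n}(\nu,q)\to(\nu_{+},q_{+})$ in $\Tvh$; as the topology of $\LQ$ is pointwise convergence, $(\psi^{n}\cdot\nu)(\psi^{-1}b)\to\nu_{+}(\psi^{-1}b)$. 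Setting $c_{n}:=\nu(\psi^{-n}b)$ we thus have $c_{n+1}-c_{n}\to\nu_{+}(\psi^{-1}b)$, and since $c_{0}=\nu(b)=0$ (every element of $\Tvh$ vanishes at $b$), telescoping and taking Ces\`aro means give $\frac{1}{n}c_{n}\to\nu_{+}(\psi^{-1}b)$. This is the first equality, and it holds with the same value for every admissible $(\nu,q)$ because the attracting fixed point $(\nu_{+},q_{+})$ is unique.

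For the second equality I would apply the first one to the \emph{interior} point $(\nu,q)=\mathcal{V}(b)=(\nu_{b},q_{b})$, which trivially satisfies $(\nu,q)\neq\mathcal{O}^{-1}(F_{-}(\psi))$; here $\psi^{n}\mathcal{V}(b)=\mathcal{V}(\psi^{n}b)\to(\nu_{+},q_{+})$ because $\psi^{n}b\to F_{+}(\psi)$ in Thurston's compactification, $F_{+}(\psi)$ is uniquely ergodic, and Brock's Theorem~\ref{thm.Brock} together with Lemma~\ref{lem.ue-inter} pins down the limit, exactly as in the proof of Lemma~\ref{lem.RB-ue}. Hence $\frac{1}{n}\nu_{b}(\psi^{-n}b)\to\nu_{+}(\psi^{-1}b)$. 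On the other hand $\nu_{b}(\psi^{-n}b)=\VR(\psi^{-n}b,b)-\VR(b,b)=\VR(\psi^{-n}b,b)=\VR(b,\psi^{-n}b)$, where the last equality is the symmetry of the renormalized volume under interchanging the two conformal ends. By Proposition~\ref{prop.KoMA}, $\frac{1}{n}\VR(b,\psi^{-n}b)\to\Vol(M(\psi))$, so $\nu_{+}(\psi^{-1}b)=\Vol(M(\psi))$.

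The induction on the action formula and the Ces\`aro limit are routine; the point needing care is the convergence $\mathcal{V}(\psi^{n}b)\to(\nu_{+},q_{+})$ of the orbit of the interior base point to the attracting fixed point, which is where the full north--south dynamics on $\Tvh$ (hence Brock's theorem and unique ergodicity) really enters, together with the symmetry $\VR(X,Y)=\VR(Y,X)$ needed to match $\nu_{b}(\psi^{-n}b)$ with the quantity appearing in Proposition~\ref{prop.KoMA}.
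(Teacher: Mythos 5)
Your proposal is correct and takes essentially the same route as the paper's proof: the telescoping identity $\psi^{i}\cdot\nu(\psi^{-1}b)=\nu(\psi^{-i-1}b)-\nu(\psi^{-i}b)$ combined with the convergence $\psi^{n}(\nu,q)\to(\nu_{+},q_{+})$ from Lemma \ref{lem.RB-ue} yields the Ces\`aro limit $\nu_{+}(\psi^{-1}b)$, and Proposition \ref{prop.KoMA} then identifies this value by evaluating at an interior point. The only cosmetic difference is that the paper first handles all interior points $\nu_{Z}$ (reducing to $Z=b$ via the Lipschitz bound of Proposition \ref{prop.Lip}) and then passes to general $(\nu,q)$, whereas you do the general case first and specialize to $Z=b$, which bypasses that Lipschitz step.
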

\begin{proof}
Suppose first that $\nu=\nu_Z$ for some $Z\in\T$.
By definition, $\nu(\psi^{-n}b) = \VR(\psi^{-n}b,Z)-\VR(b,Z)$.
As $\left|\VR(\psi^{-n}b,Z) - \VR(\psi^{-n}b,b)\right|\leq \Con\dwp(Z,b)$ by Proposition \ref{prop.Lip}, 
we have 
\begin{equation}\label{eq.limit-nu}
\lim_{n\rightarrow\infty}\frac{1}{n}\nu(\psi^{-n}b) = \Vol(M(\psi)),
\end{equation}
by Proposition \ref{prop.KoMA}.
Now suppose $(\nu,q)\in\Tvh$.
Note that by $\psi^i\nu(\psi^{-1}b) = \nu(\psi^{-i-1}b)  - \nu(\psi^{-i}b)$, we have
$$\nu(\psi^{-n}b) = \sum_{i=0}^{n-1}\psi^i\cdot\nu(\psi^{-1}b).$$

As $\psi^i(\nu,q)\rightarrow \nu_+$ (Lemma \ref{lem.RB-ue}), for any $\epsilon>0$ there exists $N\in\mathbb{N}$ such that for any $m\geq N$, 
$\left|\psi^m\nu(\psi^{-1}b)-\nu_+(\psi^{-1}b)\right|<\epsilon$.
Hence 
\begin{align*}
&\lim_{n\rightarrow\infty}\frac{1}{n}\left|\nu(\psi^{-n}b) - n\nu_+(\psi^{-1}b)\right|\\
= &\lim_{n\rightarrow\infty}\frac{1}{n}\left|\sum_{i=0}^{N}\left(\psi^i\cdot\nu(\psi^{-1}b) - \nu_+(\psi^{-1}b)\right) + 
\sum_{i=N}^{n-1}\left(\psi^i\cdot\nu(\psi^{-1}b)-\nu_+(\psi^{-1}b)\right)\right|
 \leq \epsilon.
\end{align*}

 This is true for any $\epsilon>0$, and hence 
 $$\lim_{n\rightarrow\infty}\frac{1}{n}\nu(\psi^{-n}b) = \nu_+(\psi^{-1}b).$$
 This holds regardless $(\nu,q)\in\Tvh\setminus\pvh\T$ or $(\nu,q)\in\pvh\T$, and thus by (\ref{eq.limit-nu}) we have
$$\lim_{n\rightarrow\infty}\frac{1}{n}\nu(\psi^{-n}b) = \nu_+(\psi^{-1}b) = \Vol(M(\psi)).$$
\end{proof}

\begin{coro}\label{coro.lim_nu}
Let $\psi\in\mcg$ be pseudo-Anosov and let $(\nu_-,q_-):=\mathcal{O}^{-1}(F_-(\psi))$, then we have
$$\nu_-(\psi^{-1}b) = -\Vol(M(\psi)).$$
\end{coro}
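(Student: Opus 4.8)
The plan is to derive this from Theorem \ref{thm.lim-nu} applied to the inverse class $\psi^{-1}$, the preliminary observation being that $(\nu_-,q_-)$ is a fixed point of $\psi$ acting on $\Tvh$. Indeed, the fixed point $F_-(\psi)\in\pmf$ of a pseudo-Anosov map is uniquely ergodic (as used in the proof of Lemma \ref{lem.RB-ue}), and, regarded as a point of $\partial_{RB}\T\subset\mathcal{GL}(S)$, it is preserved by $\psi$. Since $\mathcal O$ is $\mcg$-equivariant and injective on $\mathcal O^{-1}(\mathcal{UE}(S))$ by Lemma \ref{lem.RB-ue}, and both $(\nu_-,q_-)$ and $\psi\cdot(\nu_-,q_-)$ are sent by $\mathcal O$ to $F_-(\psi)$, we get $\psi\cdot(\nu_-,q_-)=(\nu_-,q_-)$, hence $\psi\cdot\nu_-=\nu_-$. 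Using the action formula (\ref{eq.action}) and the normalization $\nu_-(b)=0$, this yields the identity
$$\nu_-(\psi b)=(\psi\cdot\nu_-)(\psi b)=\nu_-(\psi^{-1}\psi b)-\nu_-(\psi^{-1}b)=-\nu_-(\psi^{-1}b).$$

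Next I would invoke Theorem \ref{thm.lim-nu} with $\psi$ replaced by $\psi^{-1}$. The attracting and repelling fixed points of $\psi^{-1}$ in $\pmf$ are $F_+(\psi^{-1})=F_-(\psi)$ and $F_-(\psi^{-1})=F_+(\psi)$; hence the point playing the role of ``$(\nu_+,q_+)$'' for $\psi^{-1}$ is $\mathcal O^{-1}(F_+(\psi^{-1}))=\mathcal O^{-1}(F_-(\psi))=(\nu_-,q_-)$, while the point excluded from the hypothesis of Theorem \ref{thm.lim-nu} is $\mathcal O^{-1}(F_-(\psi^{-1}))=\mathcal O^{-1}(F_+(\psi))$. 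As $F_-(\psi)\neq F_+(\psi)$ and $\mathcal O$ is injective on uniquely ergodic points, $(\nu_-,q_-)$ is not the excluded point, so the theorem applies to $(\nu_-,q_-)$ and gives
$$\nu_-(\psi b)=\nu_-\big((\psi^{-1})^{-1}b\big)=\Vol(M(\psi^{-1}))=\Vol(M(\psi)),$$
where the last equality holds because $M(\psi^{-1})$ is homeomorphic to $M(\psi)$ (reverse the $I$-coordinate), hence of equal hyperbolic volume by Mostow rigidity. Combining this with the identity of the previous paragraph gives $\nu_-(\psi^{-1}b)=-\nu_-(\psi b)=-\Vol(M(\psi))$, as desired.

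The only point needing care is the bookkeeping of fixed points under $\psi\leftrightarrow\psi^{-1}$ and the check that $(\nu_-,q_-)$ is not the point excluded by Theorem \ref{thm.lim-nu} for $\psi^{-1}$; there is no new analytic content beyond that theorem (and, through it, Proposition \ref{prop.KoMA}). One may also phrase the argument so that the $\psi$-invariance of $(\nu_-,q_-)$ does more of the work: it gives $\psi^i\cdot\nu_-(\psi^{-1}b)=\nu_-(\psi^{-1}b)$ for every $i$, hence $\nu_-(\psi^{-n}b)=n\,\nu_-(\psi^{-1}b)$ for all $n$, so that $\tfrac1n\nu_-(\psi^{-n}b)$ is already constant and equal to $\nu_-(\psi^{-1}b)$; the remaining task is to evaluate this constant, which is exactly what the application of Theorem \ref{thm.lim-nu} to $\psi^{-1}$, together with the relation $\nu_-(\psi b)=-\nu_-(\psi^{-1}b)$, accomplishes.
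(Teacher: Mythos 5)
Your proposal is correct and follows essentially the same route as the paper: use the $\psi$-invariance of $(\nu_-,q_-)$ together with the action formula to get $\nu_-(\psi^{-1}b)=-\nu_-(\psi b)$, then apply Theorem \ref{thm.lim-nu} to $\psi^{-1}$ via $F_+(\psi^{-1})=F_-(\psi)$. The extra details you supply (equivariance/injectivity of $\mathcal{O}$ for the fixed-point claim, the non-excluded-point check, and $\Vol(M(\psi^{-1}))=\Vol(M(\psi))$) are points the paper leaves implicit, and they are handled correctly.
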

\begin{proof}
	Notice that $\psi(\nu_-,q_-) = (\nu_-,q_-)$.
	Hence $$\nu_-(\psi^{-1}b) = \psi^{-1}\cdot\nu_-(\psi^{-1}b) 
	= \nu_-(b) - \nu_-(\psi (b)) = -\nu_-(\psi(b)).$$
	Then as $F_+(\psi^{-1}) = F_-(\psi)$, we have $\nu_-(\psi(b)) = \Vol(M(\psi))$ by Theorem \ref{thm.lim-nu}.
\end{proof}

\begin{rmk}
Theorem \ref{thm.lim-nu} and Corollary \ref{coro.lim_nu} holds independent of the choice of the base point $b$.
This kind of phenomenon is also observed for the standard horofunction of distances and translation lengths.
\end{rmk}

%%%%%%%%%%%%%%%%%%%%%%%%%%%%%%%%%%%%%%%%%%%%%%%%%%%%%%%%%%%%%%%%%%%%%%%%%%%%%%%%%%%%%%%%%%%%%%%%%%%%%%%%%%%%%%%%%%%%%%%%%%%%%%%%%%%%%%%%%%%%%%%%%%%%%%%%%%%%%%%%%%%%%%%%%%%%%%%%%%%%%%%%%%%%%%%%%%%%%%%%%%%%%%%%%%%%%%%%%%%%%%%%%%%%%%%%%%%%%%%%%%%%%%%%%%%%%%%%%%%%%%%%%%%%%%%%%%%%%%%%%%%%%%%%%%%%%%%%%%%%%%%%%%%%%%%%%%%%%%%%%%%%%%%%
\section{A distance on $\T$ via renormalized volume}\label{sec.dist}
\subsection{Renormalized volume function $V_R$ is not a distance}\label{sec.not-distance}
As discussed above, the renormalized volume of quasi-Fuchsian manifolds defines a function
$$\VR:\T\times\T\rightarrow \mathbb{R}.$$
The function $\VR$ satisfies
\begin{itemize}
\item $\VR(X,Y)\geq 0$ and $\VR(X,Y) = 0$ if and only if $X = Y$ (\cite{BBB,BBP}), and
\item $\VR(X,Y) = \VR(Y,X)$ (by definition of quasi-Fuchsian manifolds).
\end{itemize}
Therefore it is natural to ask if $\VR$ defines a distance on $\T$ 
(see e.g. \cite[Problem 5.7(Agol)]{DHM}).

We now prove that $\VR$ does not satisfy the triangle inequality.
\begin{lem}\label{lem.not-distance}
Given any two points $X,X'\in\T$ and any $\epsilon>0$.
There exists a sequence of points $\{Y_{i}\}_{i=0}^{n}\subset \T$ with $Y_{0} = X$ and $Y_{n} = X'$ so that
$$\sum_{i=0}^{n-1}\VR(Y_{i},Y_{i+1})\leq \epsilon. $$%\cdot\dt(X,X').$$
\end{lem}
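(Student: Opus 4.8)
The plan is to exploit that $\VR$ is a smooth function on $\TT$ which vanishes \emph{to second order} along the diagonal; consequently, over any fixed segment, the sum of the $\VR$-values of a sufficiently fine subdivision tends to $0$. Note at the outset that mere continuity of $\VR$ is not enough here: the number of subdivision points grows with the fineness, so one genuinely needs the quadratic (or at least uniformly $o(\mathrm{distance})$) decay of $\VR$ near the diagonal to make the telescoped sum vanish.

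First I would take the Weil--Petersson geodesic $\gamma:[0,L]\to\T$ from $X$ to $X'$, parametrized by arc length, where $L=\dwp(X,X')$ (recall $(\T,\dwp)$ is uniquely geodesic), and consider the smooth function $g(s,t):=\VR(\gamma(s),\gamma(t))$ on the compact triangle $\Delta:=\{(s,t):0\le s\le t\le L\}$. The crucial observation is that $g$ vanishes on the diagonal $\{s=t\}$ together with its first-order part: $g(t,t)=\VR(\gamma(t),\gamma(t))=0$ by Remark \ref{rmk.positive}, and by Theorem \ref{thm.dRvol} the derivative of $g$ in the first slot at $s=t$ equals $-\mathrm{Re}\langle q_{\gamma(t)}(\gamma(t)),\dot\gamma(t)\rangle$, which vanishes because $q_X(X)=P_X(X)-P_X(X)=0$ (equivalently, $\gamma(t)$ is the unique critical point of $\VR(\cdot,\gamma(t))$, as recalled in the proof of Proposition \ref{prop.horo}).

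Since $g$ extends smoothly to a neighborhood of the compact set $\Delta$, the second partial $\partial_s^2 g$ is bounded on $\Delta$, say $|\partial_s^2 g|\le M$. Taylor's theorem in the first variable, expanded at the diagonal point $(t,t)$, then gives $\VR(\gamma(s),\gamma(t))=g(s,t)\le \tfrac{M}{2}(t-s)^2$ for all $(s,t)\in\Delta$. Now subdivide: for $n\in\N$ put $Y_i:=\gamma(iL/n)$, so that $Y_0=X$, $Y_n=X'$, and
$$\sum_{i=0}^{n-1}\VR(Y_i,Y_{i+1}) = \sum_{i=0}^{n-1} g\big(iL/n,\,(i+1)L/n\big) \le \sum_{i=0}^{n-1}\frac{M}{2}\Big(\frac{L}{n}\Big)^{2} = \frac{ML^{2}}{2n}.$$
Choosing $n>ML^2/(2\epsilon)$ yields the desired sequence.

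\textbf{Main obstacle.} The step requiring the most care is the quadratic estimate $\VR(\gamma(s),\gamma(t))\le \tfrac{M}{2}(t-s)^2$, i.e.\ making rigorous that $\VR$ vanishes to second order along the diagonal. Smoothness of $\VR$ on $\TT$ is standard and is used throughout the paper; the vanishing of the first-order term reduces to $q_X(X)=0$, which is immediate from the definition of $q_Y(X)$ as a Schwarzian difference (alternatively, from the non-negativity of $\VR$ with equality exactly on the diagonal, Remark \ref{rmk.positive}). If one prefers to avoid second derivatives, the same bound follows by writing $\VR(\gamma(s),\gamma(t))=\int_s^t -\mathrm{Re}\langle q_{\gamma(s)}(\gamma(u)),\dot\gamma(u)\rangle\,du$ via Theorem \ref{thm.dRvol} and using that $\|q_{Y}(Y')\|_2$ is, on compact sets, bounded by a constant times $\dwp(Y,Y')$ (since $Y'\mapsto q_{Y}(Y')$ is smooth and vanishes at $Y'=Y$).
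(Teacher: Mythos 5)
Your proof is correct and takes essentially the same route as the paper: subdivide the Weil--Petersson geodesic from $X$ to $X'$ and exploit that $\VR$ vanishes on the diagonal together with its first variation (since $q_X(X)=0$, i.e.\ the critical-point property), so that $\VR$ between consecutive subdivision points is of higher than first order in their separation and the telescoped sum can be made arbitrarily small. The only difference is quantitative: the paper makes the $s$-derivative of $\VR(\sigma(t),\sigma(s))$ smaller than $\epsilon/T$ near the diagonal by continuity and compactness, whereas you use a second-order Taylor bound $\tfrac{M}{2}(t-s)^2$ relying on the (assumed throughout the paper) smoothness of $\VR$; both yield the same conclusion.
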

\begin{proof}
	Let $\sigma:[0,T]\rightarrow\T$ denote the Weil-Petersson geodesic with $\sigma(0) = X$ and $\sigma(T) = X'$ which is parametrized by arc length.
	Note that the function $\VR(Z,\cdot):\T\rightarrow\T$ is smooth with critical value at $Z$.
	Then for each $t\in[0,T]$, there exists $\delta_t>0$ such that 
	$\frac{d}{ds}\VR(\sigma(t),\sigma(s))<\epsilon/T$ for any $s$ with $|t-s|<\delta_t$.
	As $\sigma([0,T])$ is compact, we may take finite points 
	$0 = t_0<t_1<\cdots<t_{n-1}<t_n=T$ so that 
	$$\VR(\sigma(t_i),\sigma(t_{i+1}))\leq \epsilon/T\cdot|t_{i+1}-t_i|$$
	for all $0\leq i<n$, which gives 
	$$\sum_i^{n-1}\VR(\sigma(t_i),\sigma(t_{i+1}))\leq\epsilon.$$
\end{proof}
Lemma \ref{lem.not-distance} says that if we divide the WP geodesic in a very small pieces, 
the total of the renormalized volume defined on each piece can be arbitrarily small.
Hence as an immediate consequence of Lemma \ref{lem.not-distance}, we have
\begin{thm}
The function $\VR:\T\times\T\rightarrow\mathbb{R}$ does NOT satisfy the triangle inequality.
\end{thm}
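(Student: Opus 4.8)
The statement is an immediate consequence of Lemma \ref{lem.not-distance}, so the plan is simply to make that deduction explicit. I would argue by contradiction: suppose $\VR$ satisfied the triangle inequality. Iterating it along a finite chain $Y_0, Y_1, \dots, Y_n \in \T$ yields the telescoped bound $\VR(Y_0, Y_n) \leq \sum_{i=0}^{n-1} \VR(Y_i, Y_{i+1})$.

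Next I would fix any two distinct points $X \neq X'$ in $\T$ and an arbitrary $\epsilon > 0$, and apply Lemma \ref{lem.not-distance} to obtain a chain from $X$ to $X'$ whose consecutive renormalized volumes sum to at most $\epsilon$. The telescoped inequality then gives $\VR(X, X') \leq \epsilon$, and since $\epsilon$ was arbitrary, $\VR(X, X') = 0$. By the positivity property recalled at the start of \S\ref{sec.not-distance} (from \cite{BBB, BBP}), this forces $X = X'$, contradicting the choice $X \neq X'$. Hence $\VR$ violates the triangle inequality, and in particular does not define a distance.

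There is no real obstacle at this last step, since everything substantive has already been absorbed into Lemma \ref{lem.not-distance}, whose proof finely subdivides the Weil-Petersson geodesic from $X$ to $X'$ and uses smoothness of $\VR$ together with the fact that $\VR(Z, \cdot)$ attains its critical (and minimal) value $0$ at $Z$, so that each short segment contributes an arbitrarily small amount of renormalized volume. The only input worth double-checking is the positivity statement $\VR(X,X')=0 \iff X=X'$; this is stated in the same subsection and attributed to \cite{BBB, BBP}, so it may be cited without further comment.
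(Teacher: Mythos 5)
Your deduction is correct and matches the paper's intent exactly: the paper also derives the theorem as an immediate consequence of Lemma \ref{lem.not-distance}, with the telescoping of a hypothetical triangle inequality along the chain and the positivity $\VR(X,X')>0$ for $X\neq X'$ from \cite{BBB,BBP} left implicit, which you have simply spelled out. No gaps.
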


\subsection{A distance via the renormalized volume}
We now define a distance on $\T$.
\begin{defi}\label{def.dv}
	Given $X,Y\in\T$	, let
	$$d_R(X,Y):=\sup_{(\nu,q)} \nu(X)-\nu(Y),$$
	where the supremum is taken over $(\nu,q)\in\Tvh$.
\end{defi}
\begin{rmk}
We remark that as $\Tvh$ is compact the supremum is actually attained by some $(\nu,q)\in\Tvh$. Hence for any piecewise differentiable path $\sigma:[0,T]\rightarrow\T$ connecting $X$ and $Y$, we have
	\begin{equation}\label{eq.dv-int}
		d_R(X,Y) = \int_0^T -\mathrm{Re}\langle q(\sigma(t)),\dot\sigma(t)\rangle dt,
	\end{equation}
	for some $(\nu,q)\in\Tvh$.
Note also that if one takes the supremum over $\T$ (not $\Tvh$), one still gets the same distance as $\T\subset\Tvh$ is open dense.

It is also worth mentioning that if one considers the horofunctions with respect to a distance, say $d$, then the distance defined similarly to the one in Definition \ref{def.dv} recovers the original distance $d$ by the triangle inequality.
Due to the lack of the triangle inequality for $\VR$, the function $d_R$ differs from $\VR$.
\end{rmk}

\begin{thm}\label{thm.d_R-bound}
	We have the following estimates of $d_R$ in terms of $\dwp$, $\dt$, and $\VR$.
	\begin{enumerate}
		\item $d_R(X,Y)\leq\Con\dwp(X,Y)$,
		\item $d_R(X,Y)\leq\ConT\dt(X,Y)$,
		\item $\VR(X,Y)\leq d_R(X,Y)$.
	\end{enumerate}
\end{thm}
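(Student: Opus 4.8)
The plan is to treat the two upper bounds (1) and (2) by a limiting argument and the lower bound (3) by exhibiting one explicit element of $\Tvh$ that already realizes $\VR(X,Y)$. For (1), recall that every $(\nu,q)\in\Tvh=\overline{\mathcal V(\T)}$ is a pointwise limit $\nu=\lim_n\nu_{Z_n}$ for some sequence $\{Z_n\}\subset\T$, since $\LQ$ carries the topology of pointwise convergence and the first coordinate of $\mathcal V(Z)$ is the volume horofunction $\nu_Z$. By Proposition \ref{prop.Lip}(1) each $\nu_{Z_n}$ satisfies $|\nu_{Z_n}(X)-\nu_{Z_n}(Y)|\leq\Con\,\dwp(X,Y)$, and since this inequality is closed under pointwise limits it persists for $\nu$; hence $\nu(X)-\nu(Y)\leq\Con\,\dwp(X,Y)$, and taking the supremum over $(\nu,q)\in\Tvh$ gives (1). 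Replacing Proposition \ref{prop.Lip}(1) by Proposition \ref{prop.Lip}(2) and $\dwp$ by $\dt$ yields (2) verbatim. Alternatively, both bounds follow directly from the integral formula of Theorem \ref{thm.integral}: parametrize the WP-geodesic, resp. the Teichm\"uller geodesic, from $Y$ to $X$ by arc length; since each $q(\sigma(t))$ lies in $\QD_{B}(\sigma(t))$, Corollary \ref{lem.Nehari} bounds $|\mathrm{Re}\langle q(\sigma(t)),\dot\sigma(t)\rangle|$ by $\Con$, resp. $\ConT$, and integrating over an interval of length $\dwp(X,Y)$, resp. $\dt(X,Y)$, produces the stated estimates.

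For (3) I would simply test the definition of $d_R$ against the single point $\mathcal V(Y)=(\nu_Y,q_Y)\in\mathcal V(\T)\subset\Tvh$. Directly from the definition of the volume horofunction the base-point terms cancel, so
\[
\nu_Y(X)-\nu_Y(Y)=\VR(X,Y)-\VR(Y,Y)=\VR(X,Y),
\]
where the last equality uses that the renormalized volume vanishes on the diagonal, $\VR(Y,Y)=0$ (see Remark \ref{rmk.positive} and \S\ref{sec.not-distance}; this is just the classical fact that the Fuchsian manifold $M(Y,Y)$ has zero renormalized volume). Since $d_R(X,Y)$ is by definition the supremum of $\nu(X)-\nu(Y)$ over all $(\nu,q)\in\Tvh$, we conclude $d_R(X,Y)\geq\nu_Y(X)-\nu_Y(Y)=\VR(X,Y)$, which is (3).

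There is essentially no serious obstacle here. The only point requiring a little care is the observation that the Lipschitz estimates of Proposition \ref{prop.Lip} are pointwise-closed conditions, hence survive passing from $\mathcal V(\T)$ to its closure $\Tvh$; part (3) is then a one-line computation resting on $\VR(Y,Y)=0$. It is worth noting that (3) only gives comparability of $d_R$ and $\VR$ from one side: by Lemma \ref{lem.not-distance} equality is impossible in general, which is consistent with $d_R$ satisfying the triangle inequality while $\VR$ does not.
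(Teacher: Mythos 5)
Your proof is correct and essentially matches the paper's: the paper derives (1) and (2) from the integral formula $d_R(X,Y)=\int_0^T-\mathrm{Re}\langle q(\sigma(t)),\dot\sigma(t)\rangle\,dt$ together with the Nehari-type derivative bounds (Lemma \ref{lem.diff-bound}), which is exactly your alternative argument, and proves (3) by testing against $\nu_Y$ using $\VR(Y,Y)=0$, exactly as you do. Your first argument for (1)--(2), passing the Lipschitz bounds of Proposition \ref{prop.Lip} to pointwise limits over the metrizable space $\LQ$, is a valid and equivalent rephrasing, since that proposition is itself obtained by integrating the same bounds.
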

\begin{proof}
	By integral formula (\ref{eq.dv-int}) and Lemma \ref{lem.diff-bound}, we have upper bounds (1) and (2).
	Also by definition, we have $\nu_Y(X) - \nu_Y(Y) = \VR(X,Y)$, and hence (3) follows from the definition of $d_R$.
\end{proof}

\begin{thm}\label{thm.dv-dist}
	The function $d_R:\T\times\T\rightarrow\R$ gives a (possibly asymmetric) distance, that is: for any $X,Y,Z\in\T$, we have
	\begin{enumerate}
		\item $d_R(X,Y)\geq 0$ and $d_R(X,Y)=0\iff X = Y$.
		\item $d_R(X,Y)\leq d_R(X,Z) + d_R(Z,Y)$.
	\end{enumerate}
	Furthermore, the action of the mapping class group $\mcg$ on $(\T,d_R)$ is by isometries.
\end{thm}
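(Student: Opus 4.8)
The plan is to verify the three assertions directly from the definition $d_R(X,Y)=\sup_{(\nu,q)\in\Tvh}\bigl(\nu(X)-\nu(Y)\bigr)$, using the compactness of $\Tvh$ (Proposition~\ref{prop.LQ}) and the fact that the $\mcg$-action extends to $\Tvh$ (Proposition~\ref{prop.bh-mcg-action}); the argument is essentially formal once these are in hand.

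First I would settle positivity. Finiteness is immediate from Theorem~\ref{thm.d_R-bound}(1), which gives $d_R(X,Y)\le\Con\,\dwp(X,Y)<\infty$, so $d_R$ is real-valued. For the characterization of $d_R(X,Y)=0$, the key observation is that the single element $\mathcal V(Y)=(\nu_Y,q_Y)\in\Tvh$ already contributes $\nu_Y(X)-\nu_Y(Y)=\VR(X,Y)$, whence $d_R(X,Y)\ge\VR(X,Y)\ge 0$ (this is exactly Theorem~\ref{thm.d_R-bound}(3), together with the non-negativity of $\VR$ recalled in \S\ref{sec.not-distance}). If $X\ne Y$ then $\VR(X,Y)>0$ by \cite{BBB,BBP}, so $d_R(X,Y)>0$; and trivially $d_R(X,X)=\sup_{(\nu,q)}\bigl(\nu(X)-\nu(X)\bigr)=0$. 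Alternatively, if one prefers not to invoke the strict positivity of $\VR$, one can argue the same way using that $\nu_Y$ attains its minimum uniquely at $Y$ (Proposition~\ref{prop.horo}); this is the one spot where a genuine input rather than bookkeeping is needed, and I regard it as the main (mild) obstacle.

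Next I would prove the triangle inequality, which is the standard ``recover the metric from its horofunctions'' computation and does not use any triangle inequality for $\VR$: for every $(\nu,q)\in\Tvh$ and every $Z\in\T$,
\[
\nu(X)-\nu(Y)=\bigl(\nu(X)-\nu(Z)\bigr)+\bigl(\nu(Z)-\nu(Y)\bigr)\le d_R(X,Z)+d_R(Z,Y),
\]
and taking the supremum over $(\nu,q)\in\Tvh$ gives $d_R(X,Y)\le d_R(X,Z)+d_R(Z,Y)$.

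Finally, for $\mcg$-invariance I would use Proposition~\ref{prop.bh-mcg-action}: each $\psi\in\mcg$ acts on $\Tvh$ by a homeomorphism with $(\psi\cdot\nu)(W)=\nu(\psi^{-1}W)-\nu(\psi^{-1}b)$, so the base-point shifts cancel in differences, $(\psi\cdot\nu)(X)-(\psi\cdot\nu)(Y)=\nu(\psi^{-1}X)-\nu(\psi^{-1}Y)$. Since $(\nu,q)\mapsto\psi\cdot(\nu,q)$ is a bijection of $\Tvh$, reindexing the supremum yields
\[
d_R(\psi^{-1}X,\psi^{-1}Y)=\sup_{(\nu,q)\in\Tvh}\bigl(\nu(\psi^{-1}X)-\nu(\psi^{-1}Y)\bigr)=\sup_{(\nu,q)\in\Tvh}\bigl((\psi\cdot\nu)(X)-(\psi\cdot\nu)(Y)\bigr)=d_R(X,Y),
\]
and replacing $\psi$ by $\psi^{-1}$ gives $d_R(\psi X,\psi Y)=d_R(X,Y)$ for all $\psi\in\mcg$. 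Thus the whole proof reduces to elementary manipulations of suprema, the substantive content having already been absorbed into the construction and properties of $\Tvh$.
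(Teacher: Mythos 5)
Your proposal is correct and follows essentially the same route as the paper: positivity via $d_R\geq \VR\geq 0$ and the strict positivity of $\VR$ from \cite{BBB,BBP}, the triangle inequality by splitting $\nu(X)-\nu(Y)$ at $Z$ and taking suprema, and $\mcg$-invariance by the cancellation of base-point terms together with $\psi(\Tvh)=\Tvh$. The only cosmetic difference is that you take the supremum over all $(\nu,q)$ directly rather than first fixing a maximizer (and your aside about Proposition~\ref{prop.horo} really concerns the unique critical point, the minimum interpretation being Remark~\ref{rmk.positive}), but the substance is identical.
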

\begin{proof}
	As we have noted above, results \cite{BBB,BBP} mentioned in Remark \ref{rmk.positive} imply that $\VR(X,Y)\geq 0$ and
	$\VR(X,Y) = 0 \iff X = Y$.
	By Theorem \ref{thm.d_R-bound} we have $d_R(X,Y)\geq \VR(X,Y)\geq 0$ and 
	hence we have
	$$d_R(X,Y)=0\Rightarrow \VR(X,Y) =0\Rightarrow X = Y.$$
	As $d_R(X,X) = 0$ by definition, we have property (1).
	
	Now let us consider triangle inequality (2).
	The quantity $d_R(X,Y)$ is expressed as $\nu(X)-\nu(Y)$ for some $(\nu,q)\in\Tvh$.
	Then 
	\begin{align*}
		d_R(X,Y) &= \nu(X) - \nu(Y)\\
				 &= (\nu(X) - \nu(Z)) + (\nu(Z)-\nu(Y))\\
				 &\leq  \sup_{(\nu',q')}(\nu'(X) - \nu'(Z)) + \sup_{(\nu',q')}(\nu'(Z)-\nu'(Y))\\
				 &= d_R(X,Z) + d_R(Z,Y).
	\end{align*}
	
	Given $\psi\in \mcg$ and $(\nu,q)\in\Tvh$, we have 
	$$\nu(\psi X)-\nu(\psi Y) = \psi^{-1}\nu(X)-\psi^{-1}\nu(Y).$$
	As $d_R(X,Y)$ is the supremum of $\nu(X)-\nu(Y)$ over $(\nu,q)\in\Tvh$, and $\psi(\Tvh) = \Tvh$, we see that $d_R(\psi X,\psi Y) = d_R(X,Y)$ for any $X,Y\in\T$ and $\psi\in\mcg$.
\end{proof}
\begin{rmk}
	The definition of $d_R$	may be compared with the expression as ratios of the Teichm\"ulelr distance (\ref{eq.Kerckhoff})  and Thurston's distance (\ref{eq.dTh}), and characterization of horofunctions in \cite{Wal, LS}.
\end{rmk}

\begin{rmk}
	Although we focus on $d_R$ in this paper, $d_R$ could be asymmetric. One may consider the symmetrization of $d_R$ by 
	$$\bar d_R(X,Y) = \sup_{(\nu,q)\in\Tvh}\left|\nu(X)-\nu(Y)\right|.$$
	
\end{rmk}

\begin{thm}\label{thm.wp-dv}
	The distance $d_R$  is quasi isometric to the WP distance $\dwp$.
	More precisely, there exists constants $L\geq 1$ and $K\geq 0$ which depends only on $S$ such that
	$$\frac{1}{L}\dwp(X,Y)-K\leq d_R(X,Y)\leq \Con\dwp(X,Y).$$
\end{thm}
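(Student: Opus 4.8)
The plan is to note that the upper bound is already in hand and then derive the lower bound by chaining three comparisons: $d_R\geq\VR$, $\VR$ is comparable to the convex core volume, and the convex core volume is quasi-isometric to $\dwp$ (Brock).

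First, the upper bound $d_R(X,Y)\leq\Con\dwp(X,Y)$ requires no new argument: it is exactly Theorem \ref{thm.d_R-bound}(1), which comes from the integral expression (\ref{eq.dv-int}) together with the Nehari-type pointwise estimate in Lemma \ref{lem.diff-bound}.

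For the lower bound I would start from Theorem \ref{thm.d_R-bound}(3), i.e.\ $\VR(X,Y)\leq d_R(X,Y)$ (immediate, since $\nu_Y(X)-\nu_Y(Y)=\VR(X,Y)$ and $(\nu_Y,q_Y)\in\Tvh$). Hence it suffices to bound $\dwp$ from above by an affine function of $\VR$. Next I would pass to the volume $\Vol_C(M(X,Y))$ of the convex core of $M(X,Y)$: as already used in the proof of Proposition \ref{prop.KoMA}, the work of Bridgeman--Canary \cite{BC} (see also Schlenker \cite{Schlenker-MRL}) shows that $\VR$ and $\Vol_C$ differ by at most a constant $C_0=C_0(S)$, so in particular $\Vol_C(M(X,Y))\leq\VR(X,Y)+C_0$. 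Finally I would invoke Brock's theorem \cite{Brock-vol} relating the Weil--Petersson metric to convex core volumes: there are constants $L_0\geq 1$ and $K_0\geq 0$ depending only on $S$ with $\dwp(X,Y)\leq L_0\Vol_C(M(X,Y))+K_0$. Chaining these inequalities,
$$\dwp(X,Y)\leq L_0\bigl(\VR(X,Y)+C_0\bigr)+K_0\leq L_0\,d_R(X,Y)+(L_0C_0+K_0),$$
and rearranging with $L:=L_0$ and $K:=C_0+K_0/L_0$ (both depending only on $S$) gives $\tfrac{1}{L}\dwp(X,Y)-K\leq d_R(X,Y)$, as desired.

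I do not expect a genuine obstacle, since every step is either an inequality already proved in this paper or a direct citation. The only substantive input is Brock's quasi-isometry between $\dwp$ and the convex core volume, and the half we need---$\dwp$ controlled from above by volume---is the nontrivial direction of that result; but we may simply quote it. The one point deserving a moment of care is to confirm that the constants $C_0$ (from the renormalized-versus-convex-core comparison) and $L_0,K_0$ (from Brock) depend only on the topological type of $S$, which is indeed how they are stated in the references.
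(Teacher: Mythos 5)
Your proof is correct and follows essentially the same route as the paper: the upper bound is quoted from Theorem \ref{thm.d_R-bound}(1), and the lower bound is obtained by chaining Brock's comparison of $\dwp$ with the convex core volume, the Schlenker/Bridgeman--Canary bound relating convex core volume and $\VR$, and the inequality $\VR\leq d_R$ from Theorem \ref{thm.d_R-bound}(3). The only difference is a harmless bookkeeping choice in how the additive constants are combined.
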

\begin{proof}
	For any given $X,Y\in\T$, let $V_C(X,Y)$ denote the volume of the convex core of the quasi-Fuchsian manifold $q(X,Y)$.
	Then by the work of Brock \cite{Brock-vol}, we see that there exists $L\geq 1$ and $K'$ such that 
	$$\frac{1}{L}\dwp(X,Y)-K'\leq V_C(X,Y).$$
	The work of Schlenker \cite{Schlenker-MRL} (c.f. \cite{BC}), there is a constant $K''>0$ such that $V_C(X,Y)\leq \VR(X,Y) + K''$.
	Combined with the bound by Brock, we have
	$$\frac{1}{L}\dwp(X,Y)-K'\leq V_C(X,Y)\leq \VR(X,Y) + K''.$$
	By letting $K:=K'+K''$, we have the desired lower bound by (3) of Theorem \ref{thm.d_R-bound}.
	The upper bound is obtained in (1) of Theorem \ref{thm.d_R-bound}.
\end{proof}

Now we prove Theorem \ref{thm.TL=HV}.
The proof we demonstrate here utilizes some ergodic theory, which is inspired by Karlsson-Ledrappier \cite[Proof of Theorem 1.1]{KL}.
\begin{thm}
Let $\psi\in\mathrm{MCG}(S)$	 be a pseudo-Anosov mapping class and 
$M(\psi)$ the mapping torus of $\psi$.
Then the translation length $\tau_R(\psi)$ of $\psi$ with respect to $d_R$ is equal to the hyperbolic volume of the mapping torus $M(\psi)$, i.e. for any $X\in\T$, 
$$\tau_R(\psi):=\lim_{k\rightarrow\infty}\frac{d_R(X,\psi^k(X))}{k} = \Vol(M(\psi)).$$
\end{thm}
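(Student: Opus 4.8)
The plan is to reduce the computation of the translation length to an ergodic-optimization problem on the compact space $\Tvh$, exploiting the north--south dynamics of Lemma \ref{lem.RB-ue}. First I would dispose of two routine reductions. Because $\mcg$ acts on $(\T,d_R)$ by isometries (Theorem \ref{thm.dv-dist}), the triangle inequality gives $d_R(b,\psi^{n+m}b)\le d_R(b,\psi^n b)+d_R(b,\psi^m b)$, so $\tfrac1n d_R(b,\psi^n b)$ converges by subadditivity; and $|d_R(X,\psi^n X)-d_R(b,\psi^n b)|$ is bounded independently of $n$ (again by the triangle inequality and $\mcg$-invariance, since $d_R(\psi^nX,\psi^nb)=d_R(X,b)$), so it suffices to show $\lim_n\tfrac1n d_R(b,\psi^n b)=\Vol(M(\psi))$.

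Next I would rewrite $d_R(b,\psi^n b)$ as a Birkhoff-type supremum. Using Definition \ref{def.dv}, the $\mcg$-action formula (\ref{eq.action}), and the identity $\psi^{-i}\cdot\nu(X)=\nu(\psi^i X)-\nu(\psi^i b)$, one obtains the telescoping relation
\[
-\nu(\psi^n b)=\sum_{i=0}^{n-1}\bigl(\nu(\psi^i b)-\nu(\psi^{i+1}b)\bigr)=\sum_{i=0}^{n-1}F\bigl(\psi^{-i}\cdot(\nu,q)\bigr),\qquad F(\nu,q):=\nu(b)-\nu(\psi b),
\]
valid for every $(\nu,q)\in\Tvh$. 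Since $\nu(b)=0$ for every $(\nu,q)\in\Tvh$, taking the supremum over $\Tvh$ yields $d_R(b,\psi^n b)=\sup_{(\nu,q)\in\Tvh}\sum_{i=0}^{n-1}F(\psi^{-i}\cdot(\nu,q))$. Here $\Tvh$ is compact metrizable (Proposition \ref{prop.LQ}), the map $T\colon(\nu,q)\mapsto\psi^{-1}\cdot(\nu,q)$ is a homeomorphism of $\Tvh$ (Proposition \ref{prop.bh-mcg-action}), and $F$ is continuous since $\Tvh$ carries the product topology and $F$ factors through evaluation of $\nu$ at $\psi b$. For such data the standard variational identity of ergodic optimization applies: $n\mapsto\sup_{\Tvh}\sum_{i=0}^{n-1}F\circ T^i$ is subadditive and $\tfrac1n\sup_{\Tvh}\sum_{i=0}^{n-1}F\circ T^i\to\max_{\mu}\int_{\Tvh}F\,d\mu$, the maximum being over $T$-invariant Borel probability measures $\mu$ (the $\ge$ inequality is integration against an invariant $\mu$; the $\le$ inequality comes from passing to a weak-$\ast$ limit of empirical measures along near-optimal orbits, as in Karlsson--Ledrappier). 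Hence $\tau_R(\psi)=\max_\mu\int F\,d\mu$.

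The final step identifies this maximum. Applying Lemma \ref{lem.RB-ue} to $\psi$ and to $\psi^{-1}$ shows that $T=\psi^{-1}$ has north--south dynamics on $\Tvh$: its only fixed points are $(\nu_-,q_-):=\mathcal{O}^{-1}(F_-(\psi))$ and $(\nu_+,q_+):=\mathcal{O}^{-1}(F_+(\psi))$, and $T^n(\nu,q)\to(\nu_-,q_-)$ for every $(\nu,q)\ne(\nu_+,q_+)$. Consequently every $T$-invariant probability measure has the form $t\,\delta_{(\nu_+,q_+)}+(1-t)\,\delta_{(\nu_-,q_-)}$: if $\mu(\{(\nu_+,q_+)\})<1$, then the restriction of $\mu$ to the invariant set $\Tvh\setminus\{(\nu_+,q_+)\}$ is invariant and collapses to $\delta_{(\nu_-,q_-)}$ under iteration, hence already is a multiple of it. So $\max_\mu\int F\,d\mu=\max\{F(\nu_+,q_+),F(\nu_-,q_-)\}$. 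Since $(\nu_\pm,q_\pm)$ is $\psi$-fixed, the action formula (\ref{eq.action}) with $X=\psi b$ gives $\nu_\pm(\psi b)=\nu_\pm(b)-\nu_\pm(\psi^{-1}b)=-\nu_\pm(\psi^{-1}b)$, so $F(\nu_\pm,q_\pm)=\nu_\pm(\psi^{-1}b)$; Theorem \ref{thm.lim-nu} gives $\nu_+(\psi^{-1}b)=\Vol(M(\psi))$ and Corollary \ref{coro.lim_nu} gives $\nu_-(\psi^{-1}b)=-\Vol(M(\psi))$. Hence $\max_\mu\int F\,d\mu=\Vol(M(\psi))$, which together with the reductions of the first paragraph gives $\tau_R(\psi)=\Vol(M(\psi))$ for every base point $X$.

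I expect the main obstacle to be the third paragraph: one must verify that the north--south dynamics of Lemma \ref{lem.RB-ue} genuinely holds on all of $\Tvh$ --- including the interior points $\mathcal{V}(X)$, on which $\mathcal{O}$ is only defined a priori on the boundary --- and then run the invariant-measure classification carefully, since that is exactly what forces the optimum onto the attracting fixed point $(\nu_+,q_+)$ and prevents $\tau_R(\psi)$ from equaling a larger quantity such as $d_R(b,\psi b)$. The ergodic-optimization identity itself is soft, and the two numerical inputs $\nu_\pm(\psi^{-1}b)=\pm\Vol(M(\psi))$ are precisely Theorem \ref{thm.lim-nu} and Corollary \ref{coro.lim_nu}; the substantive work lies in organizing the dynamics on the compactification.
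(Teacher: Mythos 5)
Your proposal is correct, and its skeleton is the same ergodic-theoretic one the paper uses (inspired by Karlsson--Ledrappier): the telescoping cocycle identity $-\nu(\psi^n b)=\sum_{i}F(\psi^{-i}\cdot(\nu,q))$, compactness and metrizability of $\Tvh$ (Proposition \ref{prop.LQ}), weak-$\ast$ limits of empirical measures, and the two numerical inputs $\nu_\pm(\psi^{-1}b)=\pm\Vol(M(\psi))$ from Theorem \ref{thm.lim-nu} and Corollary \ref{coro.lim_nu}. Where you diverge is the finishing mechanism. The paper picks maximizers $(\nu_n,q_n)$ realizing $d_R(b,\psi^n b)$, passes to an invariant weak limit of their Ces\`aro orbit averages, extracts an \emph{ergodic} component by Krein--Milman, applies the Birkhoff ergodic theorem to get a single point whose Birkhoff averages are $\geq\tau_R(\psi)$, and then uses the everywhere-valid limit statement of Theorem \ref{thm.lim-nu} (plus Corollary \ref{coro.lim_nu}) to force that value down to $\Vol(M(\psi))$; the lower bound is handled separately via Theorem \ref{thm.d_R-bound}. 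You instead package the whole computation as the ergodic-optimization identity $\tau_R(\psi)=\max_\mu\int F\,d\mu$ and then classify \emph{all} $\psi$-invariant measures on $\Tvh$ as convex combinations of the two Dirac masses at $\mathcal{O}^{-1}(F_\pm(\psi))$, using north--south dynamics; this dispenses with Krein--Milman and Birkhoff and yields both bounds at once. The cost is exactly the point you flag: you need the attraction statement of Lemma \ref{lem.RB-ue} (applied to $\psi$ and to $\psi^{-1}$) on all of $\Tvh$, including interior points where $\mathcal{O}$ is not literally defined. That gap is genuine as a matter of the lemma's statement but not of substance: the paper's proof of Lemma \ref{lem.RB-ue} takes an arbitrary $(\nu,q)\in\Tvh$ and a sequence $Y_n\to(\nu,q)$ (a constant sequence for interior points), so the convergence $\psi^n(\nu,q)\to\mathcal{O}^{-1}(F_+(\psi))$ holds on $\Tvh\setminus\{\mathcal{O}^{-1}(F_-(\psi))\}$ --- the exceptional set being a single point because $F_\pm(\psi)$ are uniquely ergodic and $\mathcal{O}$ is injective over uniquely ergodic laminations --- and this is the same input the paper's Theorem \ref{thm.lim-nu} rests on. With that verification made explicit (and with the continuity of $(\nu,q)\mapsto-\nu(\psi b)$ and of the extended action from Proposition \ref{prop.bh-mcg-action}, which you correctly invoke), your argument goes through and is a clean alternative to the paper's Birkhoff-based ending.
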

\begin{proof}
	By Theorem \ref{thm.lim-nu} and Theorem \ref{thm.d_R-bound},
	we have that $\tau_R(\psi)\geq \Vol(M(\psi))$.
	
	For the converse, first note that $\tau_R(\psi)$ is independent of $X\in\T$ by the triangle inequality.
	Hence we use our base point $b\in\T$, and let $(\nu_n,q_n)\in\Tvh$ be such that 
	$$d_R(b,\psi^n b) = \nu_n(b)-\nu_n(\psi^n b) = -\nu_n(\psi^n b).$$
	We define $F:\Tvh\rightarrow\R$ by $F(\nu,q) = -\nu(\psi^{-1}b)$.
	Recall that for any $(\nu,q)\in\Tvh$, we have 
	\begin{equation}\label{eq.cocycle-nu}
			-\nu(\psi^{-n}b) = -\sum_{i=0}^{n-1}\psi^i\nu(\psi^{-1}b) = \sum_{i=0}^{n-1}F(\psi^i(\nu,q)).		
	\end{equation}
	Now let us define a probability measure $\eta_n$ by 
	$$\eta_n=\frac{1}{n}\sum_{i=0}^{n-1}(\psi^i)_*\delta_{(\nu_n,q_n)},$$ where $\delta_{(\nu,q)}$ is the Dirac measure at $(\nu,q)$ on $\Tvh$.
	Then $\eta_n$ is a Borel provability measure on $\Tvh$, which satisfies
	$$\int F(\nu,q)d\eta_n(\nu,q) = \frac{1}{n}d_R(b,\psi^{-n}b)$$
	by (\ref{eq.cocycle-nu}).
	Notice that by the triangle inequality, we have
	$$\frac{1}{n}d_R(b,\psi^{-n}b)\geq \lim_{k\rightarrow\infty}\frac{1}{k}d_R(b,\psi^{-k}b)=\tau_R(\psi).$$
	Since $\Tvh$ is compact, by taking a subsequence if necessary, we may suppose $\eta_n$ converges weakly to some $\eta_\infty$.
	Then by the definition of the weak limit, we have
	$$\int F(\nu,q)d\eta_\infty(\nu,q)\geq \tau_R(\psi).$$
	Furthermore, the definition of $\eta_n$ implies that $\eta_\infty$ is $\psi$ invariant, i.e. $\psi_*\eta_\infty = \eta_\infty$.
	We now consider the space $\mathcal{M}_\psi(\Tvh)$ of $\psi$ invariant measures on $\Tvh$.
	As $\mathcal{M}_\psi(\Tvh)$ is convex and $\eta_\infty\in\mathcal{M}_\psi(\Tvh)$, the Krein-Milman theorem shows that there is an extreme point $\mu\in\mathcal{M}_\psi(\Tvh)$ such that
	$$\int F(\nu,q)d\mu(\nu,q)\geq \tau_R(\psi).$$
	The standard theory of ergodic measures says that ergodic measures in $\mathcal{M}_\psi(\Tvh)$ are precisely the extreme points.
	Therefore, we see that $\mu$ is ergodic.
	Then by the Birkhoff ergodic theorem, for $\mu$-a.e. $(\nu,q)\in\Tvh$ we have
	$$\lim_{n\rightarrow\infty}\frac{1}{n}\sum_{i=0}^{n-1}F(\psi^i(\nu,q)) = \int F(\nu,q)d\mu(\nu,q)\geq \tau_R(\psi).$$
	Hence, by (\ref{eq.cocycle-nu}), 
	$$\lim_{n\rightarrow\infty}\frac{1}{n}(-\nu(\psi^{-n}b))\geq \tau_R(\psi).$$
	On the other hand, as $-\nu(\psi^{-n}b)\leq d_R(b,\psi^{-n}b)$, we have 
	$$\lim_{n\rightarrow\infty}\frac{1}{n}(-\nu(\psi^{-n}b))\leq \tau_R(\psi).$$
	Hence we have the equality.
	As $\tau_R(\psi)\geq 0$, by Theorem \ref{thm.lim-nu} and Corollary \ref{coro.lim_nu}, we see that $\tau_R(\psi) = \Vol(M(\psi))$.
\end{proof}
\section{Questions}
The distance $d_R$ is still very mysterious.
Let us conclude the paper with some questions.

Similarly to the case of WP metric, as $(\T,d_R)$ is not complete, we may not use Hopf-Rinow Theorem to find geodesics.
\begin{question}
	Is $(\T,d_R)$ a geodesic space? 
\end{question}
As $d_R(\cdot,\cdot)\leq\Con\dwp(\cdot,\cdot)$, one easily sees that  $\hatT$ is contained in the completion of $(\T,d_R)$.
\begin{question}
	What is the metric completion of $(\T,d_R)$?
\end{question}
Given any $X,Y\in\T$, there is $(\nu,q)\in\Tvh$ such that
$d_R(X,Y) = \nu(X) - \nu(Y)$.
As we have discussed in \S \ref{sec.wp-grad}, such $(\nu,q)$ defines the WP gradient flow. 
\begin{question}
	Let $X,Y\in\T$. Suppose that $d_R(X,Y)=\nu(X) - \nu(Y)$ for some $(\nu,q)$.
	Then does the WP gradient flow $X_t$ starting at $X$ determined by $(\nu,q)$ pass through $Y$?
\end{question}
Also it is interesting to understand the action of pseudo-Anosov maps.
Let $\psi\in\mcg$ be pseudo-Anosov.
Then the axis of $\psi$ should be a geodesic of $d_R$ invariant under $\psi$.
\begin{question}
	Does every pseudo-Anosov map have a (unique?) geodesic axis?
\end{question}
The distance $d_R$ is quasi-isometric to $\dwp$ (Theorem \ref{thm.wp-dv}), and $(\T,\dwp)$ is CAT(0) \cite{Yamada}.
Although CAT(0)-ness is not invariant under quasi-isometry, we might expect:
\begin{question}
	Is $(\T,d_R)$ a CAT(0) space?
\end{question}
The horoboundaries may be used to identify isometry groups (see e.g. \cite{Wal}).
In \cite{Wal}, except for some sporadic cases, Walsh identified the isometry group of the Thurston metric with the so-called extended mapping class groups (see \cite{Wal} for the definition).

\begin{question}
	Is $\mathrm{Isom}(\T,d_R)$ equal to the extended mapping class group?
	What about self-maps on $\T$ preserving $\VR$?
\end{question}
Since we are taking supremum in the definition of $d_R$, several properties of $V_R$ (say, smoothness) is not a priori inherited to $d_R$.
Let us finish the paper with the following question.
\begin{question}
	Is there a Riemannian or a Finsler metric on $\T$ which defines $d_R$?
\end{question}
\begin{bibdiv}
\begin{biblist}

%\bib{Abikoff}{article}{
%   author={Abikoff, William},
%   title={Degenerating families of Riemann surfaces},
%   journal={Ann. of Math. (2)},
%   volume={105},
%   date={1977},
%   number={1},
%   pages={29--44},
%   issn={0003-486X},
%   review={\MR{442293}},
%   doi={10.2307/1971024},
%}

\bib{Bers}{article}{
   author={Bers, Lipman},
   title={On boundaries of Teichm\"{u}ller spaces and on Kleinian groups. I},
   journal={Ann. of Math. (2)},
   volume={91},
   date={1970},
   pages={570--600},
   issn={0003-486X},
   review={\MR{297992}},
   doi={10.2307/1970638},
}

\bib{BBB}{article}{
   author={Bridgeman, Martin},
   author={Brock, Jeffrey},
   author={Bromberg, Kenneth},
   title={Schwarzian derivatives, projective structures, and the
   Weil-Petersson gradient flow for renormalized volume},
   journal={Duke Math. J.},
   volume={168},
   date={2019},
   number={5},
   pages={867--896},
   issn={0012-7094},
   review={\MR{3934591}},
   doi={10.1215/00127094-2018-0061},
}

\bibitem[BBB2]{BBB2}
Bridgeman, M., Brock, J., \& Bromberg, K. (2020). The Weil-Petersson gradient flow of renormalized volume and 3-dimensional convex cores. arXiv preprint arXiv:2003.00337.

\bibitem[BBP]{BBP}
Bridgeman, M., Bromberg, K., and Pallete, F. V. (2021). The Weil-Petersson gradient flow of renormalized volume on a Bers slice has a global attracting fixed point. arXiv preprint arXiv:2105.01207.

\bib{BC}{article}{
   author={Bridgeman, Martin},
   author={Canary, Richard D.},
   title={Renormalized volume and the volume of the convex core},
   language={English, with English and French summaries},
   journal={Ann. Inst. Fourier (Grenoble)},
   volume={67},
   date={2017},
   number={5},
   pages={2083--2098},
   issn={0373-0956},
   review={\MR{3732685}},
}

\bib{BH}{book}{
   author={Bridson, Martin R.},
   author={Haefliger, Andr\'{e}},
   title={Metric spaces of non-positive curvature},
   series={Grundlehren der Mathematischen Wissenschaften [Fundamental
   Principles of Mathematical Sciences]},
   volume={319},
   publisher={Springer-Verlag, Berlin},
   date={1999},
   pages={xxii+643},
   isbn={3-540-64324-9},
   review={\MR{1744486}},
   doi={10.1007/978-3-662-12494-9},
}

\bib{Brock}{article}{
   author={Brock, Jeffrey F.},
   title={Boundaries of Teichm\"{u}ller spaces and end-invariants for hyperbolic
   $3$-manifolds},
   journal={Duke Math. J.},
   volume={106},
   date={2001},
   number={3},
   pages={527--552},
   issn={0012-7094},
   review={\MR{1813235}},
   doi={10.1215/S0012-7094-01-10634-0},
}

\bib{Brock-vol}{article}{
   author={Brock, Jeffrey F.},
   title={The Weil-Petersson metric and volumes of 3-dimensional hyperbolic
   convex cores},
   journal={J. Amer. Math. Soc.},
   volume={16},
   date={2003},
   number={3},
   pages={495--535},
   issn={0894-0347},
   review={\MR{1969203}},
   doi={10.1090/S0894-0347-03-00424-7},
}

\bib{Brock2}{article}{
   author={Brock, Jeffrey F.},
   title={The Weil-Petersson visual sphere},
   journal={Geom. Dedicata},
   volume={115},
   date={2005},
   pages={1--18},
   issn={0046-5755},
   review={\MR{2180039}},
   doi={10.1007/s10711-005-4044-4},
}

\bib{BB}{article}{
   author={Brock, Jeffrey F.},
   author={Bromberg, Kenneth W.},
   title={Inflexibility, Weil-Peterson distance, and volumes of fibered
   3-manifolds},
   journal={Math. Res. Lett.},
   volume={23},
   date={2016},
   number={3},
   pages={649--674},
   issn={1073-2780},
   review={\MR{3533189}},
   doi={10.4310/MRL.2016.v23.n3.a4},
}

\bib{BCM}{article}{
   author={Brock, Jeffrey F.},
   author={Canary, Richard D.},
   author={Minsky, Yair N.},
   title={The classification of Kleinian surface groups, II: The ending
   lamination conjecture},
   journal={Ann. of Math. (2)},
   volume={176},
   date={2012},
   number={1},
   pages={1--149},
   issn={0003-486X},
   review={\MR{2925381}},
   doi={10.4007/annals.2012.176.1.1},
}

\bib{BLMR}{article}{
   author={Brock, Jeffrey},
   author={Leininger, Christopher},
   author={Modami, Babak},
   author={Rafi, Kasra},
   title={Limit sets of Weil-Petersson geodesics},
   journal={Int. Math. Res. Not. IMRN},
   date={2019},
   number={24},
   pages={7604--7658},
   issn={1073-7928},
   review={\MR{4043830}},
   doi={10.1093/imrn/rny002},
}
	
\bib{BMM}{article}{
   author={Brock, Jeffrey},
   author={Masur, Howard},
   author={Minsky, Yair},
   title={Asymptotics of Weil-Petersson geodesic. I. Ending laminations,
   recurrence, and flows},
   journal={Geom. Funct. Anal.},
   volume={19},
   date={2010},
   number={5},
   pages={1229--1257},
   issn={1016-443X},
   review={\MR{2585573}},
   doi={10.1007/s00039-009-0034-2},
}

\bib{Buser}{book}{
   author={Buser, Peter},
   title={Geometry and spectra of compact Riemann surfaces},
   series={Modern Birkh\"{a}user Classics},
   note={Reprint of the 1992 edition},
   publisher={Birkh\"{a}user Boston, Inc., Boston, MA},
   date={2010},
   pages={xvi+454},
   isbn={978-0-8176-4991-3},
   %review={\MR{2742784}},
   %doi={10.1007/978-0-8176-4992-0},
}

\bib{CB}{book}{
   author={Casson, Andrew J.},
   author={Bleiler, Steven A.},
   title={Automorphisms of surfaces after Nielsen and Thurston},
   series={London Mathematical Society Student Texts},
   volume={9},
   publisher={Cambridge University Press, Cambridge},
   date={1988},
   pages={iv+105},
   isbn={0-521-34203-1},
   review={\MR{964685}},
   doi={10.1017/CBO9780511623912},
}

\bib{DHM}{article}{
  author={Delp, Kelly}
  author={Hoffoss, Diane}
  author={Manning, Jason Fox},
  title={Problems in groups, geometry, and three-manifolds},
  journal={arXiv preprint arXiv:1512.04620},
  year={2015}
}

\bib{FaM}{book}{
   author={Farb, Benson},
   author={Margalit, Dan},
   title={A primer on mapping class groups},
   series={Princeton Mathematical Series},
   volume={49},
   publisher={Princeton University Press, Princeton, NJ},
   date={2012},
   pages={xiv+472},
   isbn={978-0-691-14794-9},
   review={\MR{2850125}},
}

\bib{FLP}{collection}{
   author={Fathi, Albert}
   author={Laudenbach, Fran\c{c}ois}
   author={Po\'enaru, Valentin et al.}
   title={Travaux de Thurston sur les surfaces},
   language={French},
   series={Ast\'{e}risque},
   volume={66},
   note={S\'{e}minaire Orsay;
   With an English summary},
   publisher={Soci\'{e}t\'{e} Math\'{e}matique de France, Paris},
   date={1979},
   pages={284},
   review={\MR{568308}},
}

%\bib{Gardiner}{article}{
%   author={Gardiner, Frederick P.},
%   title={Measured foliations and the minimal norm property for quadratic
%   differentials},
%   journal={Acta Math.},
%   volume={152},
%   date={1984},
%   number={1-2},
%   pages={57--76},
%   issn={0001-5962},
%   review={\MR{736212}},
%   doi={10.1007/BF02392191},
%}

\bib{GM}{article}{
   author={Gardiner, Frederick P.},
   author={Masur, Howard},
   title={Extremal length geometry of Teichm\"{u}ller space},
   journal={Complex Variables Theory Appl.},
   volume={16},
   date={1991},
   number={2-3},
   pages={209--237},
   issn={0278-1077},
   review={\MR{1099913}},
   doi={10.1080/17476939108814480},
}

\bib{GW}{article}{
   author={Graham, C. Robin},
   author={Witten, Edward},
   title={Conformal anomaly of submanifold observables in AdS/CFT
   correspondence},
   journal={Nuclear Phys. B},
   volume={546},
   date={1999},
   number={1-2},
   pages={52--64},
   issn={0550-3213},
   review={\MR{1682674}},
   doi={10.1016/S0550-3213(99)00055-3},
}

\bib{Gromov}{article}{
   author={Gromov, M.},
   title={Hyperbolic manifolds, groups and actions},
   conference={
      title={Riemann surfaces and related topics: Proceedings of the 1978
      Stony Brook Conference},
      address={State Univ. New York, Stony Brook, N.Y.},
      date={1978},
   },
   book={
      series={Ann. of Math. Stud.},
      volume={97},
      publisher={Princeton Univ. Press, Princeton, N.J.},
   },
   date={1981},
   pages={183--213},
   review={\MR{624814}},
}

\bib{GMR}{article}{
   author={Guillarmou, Colin},
   author={Moroianu, Sergiu},
   author={Rochon, Fr\'{e}d\'{e}ric},
   title={Renormalized volume on the Teichm\"{u}ller space of punctured
   surfaces},
   journal={Ann. Sc. Norm. Super. Pisa Cl. Sci. (5)},
   volume={17},
   date={2017},
   number={1},
   pages={323--384},
   issn={0391-173X},
   review={\MR{3676051}},
}

\bibitem[Gup19]{Gupta}
Subhojoy Gupta, {\it Holomorphic quadratic differentials in Teichm\"uller theory, survey article}, to appear in Handbook of Teichm\"uller Theory Vol VII (2019), arXiv:1902.06406.

\bib{HM}{article}{
   author={Hubbard, John},
   author={Masur, Howard},
   title={Quadratic differentials and foliations},
   journal={Acta Math.},
   volume={142},
   date={1979},
   number={3-4},
   pages={221--274},
   issn={0001-5962},
   review={\MR{523212}},
   doi={10.1007/BF02395062},
}

\bib{Karlsson-two}{article}{
   author={Karlsson, Anders},
   title={Two extensions of Thurston's spectral theorem for surface
   diffeomorphisms},
   journal={Bull. Lond. Math. Soc.},
   volume={46},
   date={2014},
   number={2},
   pages={217--226},
   issn={0024-6093},
   review={\MR{3194741}},
   doi={10.1112/blms/bdt086},
}

\bib{KL}{article}{
   author={Karlsson, Anders},
   author={Ledrappier, Fran\c{c}ois},
   title={On laws of large numbers for random walks},
   journal={Ann. Probab.},
   volume={34},
   date={2006},
   number={5},
   pages={1693--1706},
   issn={0091-1798},
   review={\MR{2271477}},
   doi={10.1214/009117906000000296},
}

\bib{Kerckhoff}{article}{
   author={Kerckhoff, Steven P.},
   title={The asymptotic geometry of Teichm\"{u}ller space},
   journal={Topology},
   volume={19},
   date={1980},
   number={1},
   pages={23--41},
   issn={0040-9383},
   review={\MR{559474}},
   doi={10.1016/0040-9383(80)90029-4},
}

\bib{KT}{article}{
   author={Kerckhoff, Steven P.},
   author={Thurston, William P.},
   title={Noncontinuity of the action of the modular group at Bers' boundary
   of Teichm\"{u}ller space},
   journal={Invent. Math.},
   volume={100},
   date={1990},
   number={1},
   pages={25--47},
   issn={0020-9910},
   review={\MR{1037141}},
   doi={10.1007/BF01231179},
}

\bib{Kojima-McShane}{article}{
   author={Kojima, Sadayoshi},
   author={McShane, Greg},
   title={Normalized entropy versus volume for pseudo-Anosovs},
   journal={Geom. Topol.},
   volume={22},
   date={2018},
   number={4},
   pages={2403--2426},
   issn={1465-3060},
   review={\MR{3784525}},
   doi={10.2140/gt.2018.22.2403},
}

\bib{KS}{article}{
   author={Krasnov, Kirill},
   author={Schlenker, Jean-Marc},
   title={On the renormalized volume of hyperbolic 3-manifolds},
   journal={Comm. Math. Phys.},
   volume={279},
   date={2008},
   number={3},
   pages={637--668},
   issn={0010-3616},
   review={\MR{2386723}},
   doi={10.1007/s00220-008-0423-7},
}

\bib{LS}{article}{
   author={Liu, Lixin},
   author={Su, Weixu},
   title={The horofunction compactification of the Teichm\"{u}ller metric},
   conference={
      title={Handbook of Teichm\"{u}ller theory. Vol. IV},
   },
   book={
      series={IRMA Lect. Math. Theor. Phys.},
      volume={19},
      publisher={Eur. Math. Soc., Z\"{u}rich},
   },
   date={2014},
   pages={355--374},
   review={\MR{3289706}},
   doi={10.4171/117-1/9},
}

\bib{MT}{article}{
   author={Maher, Joseph},
   author={Tiozzo, Giulio},
   title={Random walks on weakly hyperbolic groups},
   journal={J. Reine Angew. Math.},
   volume={742},
   date={2018},
   pages={187--239},
   issn={0075-4102},
   review={\MR{3849626}},
   doi={10.1515/crelle-2015-0076},
}

\bibitem[Masa21]{Masai} Hidetoshi Masai, ``On continuity of drifts of the mapping class group",  Math. Res. Lett. Volume 28, Number 3, 823-847, 2021,  arXiv:1812.06651.

\bib{Masur}{article}{
   author={Masur, Howard},
   title={Two boundaries of Teichm\"{u}ller space},
   journal={Duke Math. J.},
   volume={49},
   date={1982},
   number={1},
   pages={183--190},
   issn={0012-7094},
   review={\MR{650376}},
}

\bib{Masur-augmented}{article}{
   author={Masur, Howard},
   title={Extension of the Weil-Petersson metric to the boundary of
   Teichm\"uller space},
   journal={Duke Math. J.},
   volume={43},
   date={1976},
   number={3},
   pages={623--635},
   issn={0012-7094},
   review={\MR{417456}},
}

\bib{Nehari}{article}{
   author={Nehari, Zeev},
   title={The Schwarzian derivative and schlicht functions},
   journal={Bull. Amer. Math. Soc.},
   volume={55},
   date={1949},
   pages={545--551},
   issn={0002-9904},
   review={\MR{29999}},
   doi={10.1090/S0002-9904-1949-09241-8},
}

\bib{Ohshika}{article}{
   author={Ohshika, Ken'ichi},
   title={Reduced Bers boundaries of Teichm\"{u}ller spaces},
   language={English, with English and French summaries},
   journal={Ann. Inst. Fourier (Grenoble)},
   volume={64},
   date={2014},
   number={1},
   pages={145--176},
   issn={0373-0956},
   review={\MR{3330544}},
   doi={10.5802/aif.2842},
}

\bib{Schlenker-MRL}{article}{
   author={Schlenker, Jean-Marc},
   title={The renormalized volume and the volume of the convex core of
   quasifuchsian manifolds},
   journal={Math. Res. Lett.},
   volume={20},
   date={2013},
   number={4},
   pages={773--786},
   issn={1073-2780},
   review={\MR{3188032}},
   doi={10.4310/MRL.2013.v20.n4.a12},
}

\bibitem[Sch19]{Schlenker}
Jean-Marc Schlenker, {\it Volumes of quasifuchsian manifolds.} To appear, Surveys in Differential Geometry, vol. 24, 2020. arXiv:1903.09849.

\bibitem[Thu]{Thurston}
Thurston, William P. "Minimal stretch maps between hyperbolic surfaces." arXiv preprint math/9801039 (1998).

\bib{Wal}{article}{
   author={Walsh, Cormac},
   title={The horoboundary and isometry group of Thurston's Lipschitz
   metric},
   conference={
      title={Handbook of Teichm\"{u}ller theory. Vol. IV},
   },
   book={
      series={IRMA Lect. Math. Theor. Phys.},
      volume={19},
      publisher={Eur. Math. Soc., Z\"{u}rich},
   },
   date={2014},
   pages={327--353},
   review={\MR{3289705}},
   doi={10.4171/117-1/8},
}

%\bib{Wentworth}{article}{
%   author={Wentworth, Richard A.},
%   title={Energy of harmonic maps and Gardiner's formula},
%   conference={
%      title={In the tradition of Ahlfors-Bers. IV},
%   },
%   book={
%      series={Contemp. Math.},
%      volume={432},
%      publisher={Amer. Math. Soc., Providence, RI},
%   },
%   date={2007},
%   pages={221--229},
%   review={\MR{2342819}},
%   doi={10.1090/conm/432/08311},
%}

\bib{Wolpert-book}{book}{
   author={Wolpert, Scott A.},
   title={Families of Riemann surfaces and Weil-Petersson geometry},
   series={CBMS Regional Conference Series in Mathematics},
   volume={113},
   publisher={Published for the Conference Board of the Mathematical
   Sciences, Washington, DC; by the American Mathematical Society,
   Providence, RI},
   date={2010},
   pages={viii+118},
   isbn={978-0-8218-4986-6},
   review={\MR{2641916}},
   doi={10.1090/cbms/113},
}

\bib{Wolpert-survey}{article}{
   author={Wolpert, Scott A.},
   title={Geometry of the Weil-Petersson completion of Teichm\"{u}ller space},
   conference={
      title={Surveys in differential geometry, Vol. VIII},
      address={Boston, MA},
      date={2002},
   },
   book={
      series={Surv. Differ. Geom.},
      volume={8},
      publisher={Int. Press, Somerville, MA},
   },
   date={2003},
   pages={357--393},
   review={\MR{2039996}},
   doi={10.4310/SDG.2003.v8.n1.a13},
}
	
\bib{Yamada}{article}{
   author={Yamada, Sumio},
   title={On the geometry of Weil-Petersson completion of Teichm\"{u}ller
   spaces},
   journal={Math. Res. Lett.},
   volume={11},
   date={2004},
   number={2-3},
   pages={327--344},
   issn={1073-2780},
   review={\MR{2067477}},
   doi={10.4310/MRL.2004.v11.n3.a5},
}

\end{biblist}
 \end{bibdiv}	

\end{document}